\theoremstyle{definition}
\newtheorem{definition}{Definition}[section]
\newtheorem{definition and proposition}[definition]{Definition and Proposition}
\newtheorem*{defprop}{Definition and Proposition}
\newtheorem*{defin}{Definition}
\newtheorem{remark}[definition]{Remark}
\newtheorem{warning}[definition]{Warning}
\newtheorem{example}[definition]{Example}
\newtheorem{reminder}[definition]{Reminder}
\theoremstyle{plain}
\newtheorem{theorem}[definition]{Theorem}
\newtheorem*{theo}{Theorem}
\theoremstyle{plain}
\newtheorem{proposition}[definition]{Proposition}
\newtheorem*{prop}{Proposition}
\newtheorem{corollary}[definition]{Corollary}
\newtheorem{lemma}[definition]{Lemma}
\newcommand {\Fun}{
	\mathrm{Fun}}
\newcommand{\Pro}{\mathcal{P}\mathrm{ro}}
\newcommand {\Cond }{
	\mathcal{C}\mathrm{ond(}\mathcal{S}\mathrm{et)}}
\newcommand {\cat }{
	\mathcal{C}}
\newcommand {\Hom }{
	\mathrm{Hom}}
\newcommand {\colim }{
	\mathrm{colim}}
\newcommand {\Top}{
	\mathcal{T}\mathrm{op}}
\newcommand {\Set}{
	\mathcal{S}\mathrm{et}}
\newcommand{\Grp}{
	\mathcal{G}\mathrm{rp}}
\newcommand{\AbGrp}{
	\mathcal{A}\mathrm{b}\mathcal{G}\mathrm{rp}}
\newcommand {\Ani}{
	\mathcal{A}\mathrm{ni}}
\newcommand {\CoHa}{
	\mathcal{CH}\mathrm{aus}}
\newcommand {\SFin}{\mathcal{S}\mathrm{et^{fin}}}
	\newcommand {\PFin}{\mathcal{P}\mathrm{ro}\mathcal{F}\mathrm{in}}
\newcommand {\Profin}{\mathcal{P}\mathrm{ro}(\mathcal{S}\mathrm{et^{fin}})}
\newcommand {\extdis}{\mathcal{E}\mathrm{xtr}\mathcal{D}\mathrm{isc}}
\newcommand {\CondAni}{
	\mathcal{C}\mathrm{ond(}\Ani\mathrm{)}}
\newcommand {\AniCond}{
	\mathcal{A}\mathrm{ni(}\Cond\mathrm{)}}
\newcommand {\Kan}{
	\mathcal{K}\mathrm{an}}
\newcommand {\sSet}{
	\mathrm{s}\mathcal{S}\mathrm{et}}
\newcommand {\CW}{
	\mathcal{CW}}
\newcommand {\AniC}{
	\mathcal{A}\mathrm{ni(}\cat\mathrm{)}}
\begin{document}
\pagestyle{empty}
\begin{titlepage}
  \begin{center}
    
    

    \huge{Animated Condensed Sets and Their Homotopy Groups}
     \medskip
    
    \large{Catrin Mair}\\

   \large{May 17, 2021}
   
  \end{center}
 \bigskip
\noindent 
\textbf{Abstract.}
The theory of condensed mathematics by Dustin Clausen and Peter Scholze claims that topological spaces should be replaced by the definition of condensed sets. The main purpose of this paper is to investigate in which way the theory of homotopy groups on topological spaces can be extended to the theory of condensed sets.\\
We show that there exist functors from the category of condensed sets to the category of pro-groups, s.t. restriction to CW-spaces coincides with the ordinary notion of homotopy groups on topological spaces. These functors can be extended to the $\infty$-categories of condensed anima and pro-anima. 
On our way to these results we will prove that the compact projective objects in the category of condensed sets are given by the extremally disconnected profinite sets.\\
\medskip
\\
\textbf{MSC.} 14F35, 18F10, 18F20, 18N60, 54A05, 54G05, 55P65, 55Q05

\end{titlepage}

%
\tableofcontents
\pagestyle{headings}
\chapter*{Introduction}\index{Introduction}
\addcontentsline{toc}{chapter}{Introduction}\index{Introduction}
The theory of \textit{Condensed sets} was started to be developed by Dustin Clausen and Peter Scholze in 2018 and 2019, whereas the first definition already occurred 2013 in the paper \cite{Bhatt2014} by Bhargav Bhatt and Peter Scholze. It is closely related to the theory of \textit{Pyknotic sets} by Clark Barwick and Peter Haine as both theories aim to provide a setting in which working with algebraic objects that carry a topology is possible without trouble\footnote{This master thesis is mainly based on the theory by Clausen and Scholze \cite{Scholze2019, Scholze2019a, Scholze2019b, Clausen} but is also oriented towards the work of Barwick and Haine \cite{Barwick2018, Barwick2019, Haine2020, Haine2020a}.}. Moreover, Scholze states in his guest post on the \textit{Xena Project} \cite{Scholze2020} that the condensed formalism "can be fruitfully applied to real functional analysis". Nevertheless, this master thesis places emphasis on another direction of the notion of condensed sets, namely viewing them as replacement for and as improvement of topological spaces.
\begin{defin}{(cf.\ Prop.\ \ref{condsetequi})}
Let $\extdis$ be the category of extremally disconnected compact Hausdorff spaces, or equivalently, of extremally disconnected profinite sets (cf.\ Def.\ \ref{extdisdef}). A \textit{condensed set} is a contravariant functor
\begin{align*}
T \colon \extdis^{\mathrm{op}} &\rightarrow \Set\\
S&\mapsto T(S)
\end{align*}
satisfying $T(\emptyset)=\ast$ and sending finite disjoint unions to finite products.
\end{defin}
\begin{prop}{(cf.\ Ex.\ \ref{extop}, Prop.\ \ref{equivcomphaus})}
Every (sufficiently nice) topological space T can be considered as condensed set by \begin{align*}
\underline{T}\colon\extdis^{\mathrm{op}} &\rightarrow \Set\\
S &\mapsto \Hom_{\Top}(S,T)
\end{align*}
and one obtains a fully faithful embedding $\CW \hookrightarrow \Cond$ of the category $\CW$ of CW-spaces into the category $\Cond$ of condensed sets.
\end{prop}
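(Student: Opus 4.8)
The plan is to prove the statement in two steps: (1) the assignment $T\mapsto\underline T$ is a well-defined functor into $\Cond$, and (2) its restriction to $\CW$ is fully faithful.

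Step (1) is a matter of unwinding definitions. Precomposition turns $\underline T=\Hom_{\Top}(-,T)$ into a contravariant functor on the full subcategory $\extdis\subseteq\Top$; since a finite disjoint union of extremally disconnected profinite sets is again one, this makes sense, and the universal property of the coproduct in $\Top$ gives both $\underline T(\emptyset)=\Hom_{\Top}(\emptyset,T)=\ast$ and $\underline T(S_1\sqcup S_2)\cong\underline T(S_1)\times\underline T(S_2)$, so $\underline T$ is a condensed set. A continuous map $f\colon T\to T'$ induces $\underline f\colon\underline T\to\underline T'$ by postcomposition, which is natural in $S$ and respects identities and composites; hence $T\mapsto\underline T$ is a functor $\Top\to\Cond$, in particular $\CW\to\Cond$.

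For step (2), faithfulness is immediate: the one-point space $\ast$ lies in $\extdis$ and $\underline T(\ast)=\Hom_{\Top}(\ast,T)$ is canonically the underlying set of $T$, so $\eta\mapsto\eta_\ast$ sends $\underline f$ to the underlying map of $f$, which determines $f$. Fullness is the real content. Given a natural transformation $\eta\colon\underline X\to\underline Y$, let $f\colon X\to Y$ be the map of underlying sets obtained from $\eta_\ast$ via the identification above. Naturality of $\eta$ along each point inclusion $\{s\}\hookrightarrow S$ (a morphism in $\extdis$) shows $\eta_S(g)(s)=f(g(s))$ for all $S\in\extdis$, all $g\in\Hom_{\Top}(S,X)$ and all $s\in S$; thus the continuous map $\eta_S(g)$ has underlying function $f\circ g$, and in particular $f\circ g$ is continuous whenever $g$ is a continuous map into $X$ out of an extremally disconnected set. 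To upgrade this to continuity of $f$ itself I would use that a CW-complex carries the final topology with respect to its characteristic maps $\Phi_\alpha\colon D^{n_\alpha}\to X$, that each disk $D^{n_\alpha}$ --- being compact Hausdorff --- admits a continuous surjection $p_\alpha\colon S_\alpha\twoheadrightarrow D^{n_\alpha}$ from an extremally disconnected profinite set (e.g.\ the Stone--\v{C}ech compactification of its underlying discrete set), and that such a surjection is automatically a quotient map (being closed onto a Hausdorff space). Then $\Phi_\alpha\circ p_\alpha$ is continuous, hence $f\circ\Phi_\alpha\circ p_\alpha$ is continuous, hence $f\circ\Phi_\alpha$ is continuous for every $\alpha$, whence $f$ is continuous. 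Finally $\underline f=\eta$, since for each $S\in\extdis$ and each $g$ the maps $\underline f_S(g)=f\circ g$ and $\eta_S(g)$ are continuous maps $S\to Y$ with the same underlying function.

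The bookkeeping in step (1) and in faithfulness is routine; the main obstacle is the continuity argument inside fullness, which rests on two non-formal inputs: that every compact Hausdorff space is a continuous image of an extremally disconnected one (which I expect to be available from the earlier treatment of $\extdis$), and the compact generation of CW-complexes through their characteristic maps. Once these are granted, descent of continuity along the quotient maps $p_\alpha$ closes the proof.
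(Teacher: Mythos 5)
Your argument is correct, and it is a genuinely self-contained proof, whereas the paper essentially outsources both halves: well-definedness of $\underline{T}$ is asserted as clear in Example \ref{extop}, and fully faithfulness is quoted from Clausen--Scholze via Proposition \ref{equivcomphaus}, whose proof (in the cited lecture notes) runs through the left adjoint $X \mapsto X(\ast)_{\mathrm{top}}$ of Proposition \ref{equivcomphaus}(1.) and the observation that the counit $\underline{T}(\ast)_{\mathrm{top}} \to T$ is a homeomorphism for every compactly generated $T$. The substantive difference is in how continuity of the candidate map $f$ is recovered: the paper's route tests continuity against \emph{all} continuous maps from compact Hausdorff spaces into $X$ (the definition of compactly generated), reduces to extremally disconnected sources via Corollary \ref{Surjext} and Proposition \ref{surjcomphaus}, and thereby gets fully faithfulness on the whole category of compactly generated spaces at once; you instead test continuity only against the characteristic maps $\Phi_\alpha \colon D^{n_\alpha} \to X$ of the cells, using the weak topology of a CW-complex, and then apply the same two facts about $\extdis$ to the disks. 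Your version is more elementary and proves exactly the stated claim about $\CW$, but does not by itself yield the stronger statement of Proposition \ref{equivcomphaus}(2.); conversely, the paper's version buys the general compactly generated case, of which $\CW \hookrightarrow \Cond$ is then a special case. Both hinge on the same two non-formal inputs you isolate (surjections from extremally disconnected spaces onto compact Hausdorff spaces, and surjections of compact Hausdorff spaces being quotient maps), so the proofs agree where it matters; only the choice of "test family" for the final topology differs.
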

\section*{Aim and structure}
The ultimate goal that we pursue is to examine a notion of \textit{homotopy groups} for condensed sets. As the class of topological spaces that are of most interest within algebraic topology, the CW-spaces, embeds fully faithfully into condensed sets, there are high hopes to develop a good notion. We are following an idea by Peter Scholze communicated via a short email conversation in September 2020.\\
\newline
The following schematic diagram, where just the small right triangle is commutative, provides an outline of the structure of this master thesis indicating central $\infty$-categories and connecting functors which will be discussed in the corresponding chapters.
\begin{equation*}
\begin{tikzcd}[row sep=huge]
&
\CW  \arrow[hookrightarrow, dl, "\text{Ch.\ } \ref{condset}"] 
 \arrow[dr] & & 
\\
\Cond \arrow[hookrightarrow,dr,"\text{Ch.\ } \ref{compro}\text{,}\ref{animation}"] & & \Ani(\Set)\arrow[hookrightarrow,dl,swap,"\text{Ch.\ } \ref{animation}\text{,}\ref{condani}"] \arrow[hookrightarrow,dr, "\text{Ch.\ } \ref{procatbase}"] & 
\\ 
& \CondAni \arrow[rr, yshift=0.7ex] & & \Pro(\Ani) \arrow[hookrightarrow, ll, yshift=-0.7ex, "\text{Ch.\ } \ref{homotopy}"]
\end{tikzcd}
\end{equation*}
\section*{Overview of important results}
Besides the fully faithful embedding $\CW \hookrightarrow \Cond$, one can consider a CW-space up to (weak) homotopy equivalence as a Kan complex by the singular simplicial complex construction. This leads to a (non-faithful) functor $\CW \rightarrow \Ani(\Set)$, where $\Ani(\Set)$ or $\Ani$ (Ch.\ \ref{animation}) denotes the $\infty$-category obtained by inverting weak homotopy equivalences of topological spaces or simplicial sets, equivalently. In the terminology of Scholze and Clausen, its objects are called \textit{anima}\footnote{In the pyknotic setting, they are called \textit{spaces}.}. The category of condensed sets $\Cond$ (Ch.\ \ref{condset}) and the $\infty$-category $\Ani(\Set)$ again have fully faithful embeddings $\Cond \hookrightarrow \CondAni$ and $\Ani(\Set) \hookrightarrow \CondAni$ in a common generalized $\infty$-category $\CondAni$ (Ch.\ \ref{condani}) which combines the topological space direction and the homotopy theory direction. The objects of this $\infty$-category are, on behalf of the claimed combination, called \textit{condensed anima}\footnote{The corresponding notion in the pyknotic setting is the $\infty$-category of \textit{pyknotic spaces}.}. The following description of condensed anima represents how the $\infty$-category $\CondAni$ of condensed anima can be seen as the higher categorical analogue of $\Cond$.
\begin{defin}{(cf.\ Def.\ \ref{condanidef})}
We identify the category $\extdis$ with its $\infty$-category obtained by the nerve construction. A \textit{condensed anima} is a contravariant $\infty$-functor
\begin{align*}
T \colon \extdis^{\mathrm{op}} &\rightarrow \Ani\\
S&\mapsto T(S)
\end{align*}
taking finite disjoint unions to finite products.
\end{defin}
Before coming to this definition, the $\infty$-category $\CondAni$ will arise from another point of view in this master thesis. Namely, the $\infty$-categories $\Ani$ and $\CondAni$ can be obtained by a general process called \textit{animation} (Ch.\ \ref{animation}). 
\begin{defprop}{(cf.\ Def.\ \ref{defani}, Prop.\ \ref{truncationprop2})}
Let $\cat$ be an ordinary cocomplete category generated under small colimits by its subcategory of compact projective objects $\cat^{\mathrm{cp}}$. The \textit{animation} of $\cat$ is defined as the $\infty$-category $\Ani(\cat)$ freely generated under sifted colimits by $\cat^{\mathrm{cp}}$. One has a fully faithful embedding $\cat \hookrightarrow \Ani(\cat)$.
\end{defprop}
The $\infty$-category $\Ani(\Set)$, respectively $\CondAni$, is (equivalent to) the animation of $\Set$, respectively $\Cond$ (cf.\ Def.\ \ref{aniset}, Def.\ \ref{anicond}, Thm.\ \ref{thmAniCond})\footnote{The original source for these results is Chapter 11, escpecially Example 11.6., of \cite{Scholze2019a}.}. Our main purpose of the first half of this master thesis will be to determine the compact projective objects of $\Cond$ in order to define $\Ani(\Cond)$ and to obtain the fully faithful embedding $\Cond \hookrightarrow \Ani(\Cond)\cong\CondAni$.
\begin{theo}{(cf.\ Thm.\ \ref{thmcompro})}
The subcategory $\Cond^{\mathrm{cp}}$ of compact projective objects in the category $\Cond$ of condensed sets is given by the category $\extdis$ of extremally disconnected compact Hausdorff spaces.
\end{theo}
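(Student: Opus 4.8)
The plan is to prove the two directions of the set-theoretic equality separately: first that every extremally disconnected compact Hausdorff space $S$ defines a compact projective object $\underline{S} \in \Cond$, and second that every compact projective object of $\Cond$ arises this way. Throughout I would work with the description of condensed sets as sheaves on the site $\extdis$ (with finite jointly surjective families as covers), using that on this site every cover is refined by a single surjection $S' \twoheadrightarrow S$ because finite coproducts of extremally disconnected sets are extremally disconnected, and because a surjection of compact Hausdorff spaces from an extremally disconnected source admits — after passing to a suitable closed subspace — essentially the splitting property that makes $\extdis$ the "projective objects" of $\CoHa$ (Gleason's theorem). I would state this Gleason input as a cited lemma early on, since it is the geometric engine behind everything.

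\textbf{Step 1: the Yoneda embedding lands in compact projectives.} For $S \in \extdis$, write $\underline{S} = \Hom_{\extdis}(-,S)$, the representable sheaf. I would check \emph{projectivity}: given an epimorphism $X \twoheadrightarrow Y$ in $\Cond$ and a map $\underline{S} \to Y$, by Yoneda this is a point of $Y(S)$; an epimorphism of condensed sets is a map that is surjective after evaluation on extremally disconnected sets (here one uses that $S$ is its own cover, i.e.\ that $\Hom(-,S)$ sees no nontrivial covering sieve), so the point of $Y(S)$ lifts to a point of $X(S)$, giving the desired lift $\underline{S}\to X$. Then \emph{compactness} (preservation of filtered colimits of $\Hom(\underline{S},-)$): filtered colimits in $\Cond$ are computed sectionwise because sheafification commutes with filtered colimits on a site where covers are generated by \emph{single} maps and the objects $S$ are \emph{quasi-compact} in the sense that a covering surjection onto $S$ already splits off a section after a finite stage — this is exactly where extremal disconnectedness (Gleason) is used a second time, to ensure $\colim_i X_i(S) \xrightarrow{\sim} (\colim_i X_i)(S)$. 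Hence $\Hom(\underline S, -) = \mathrm{ev}_S$ preserves filtered colimits. Combining, $\underline S$ is compact projective, and Yoneda is fully faithful, so $\extdis \hookrightarrow \Cond^{\mathrm{cp}}$.

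\textbf{Step 2: every compact projective is representable by an extremally disconnected set.} Let $P \in \Cond^{\mathrm{cp}}$. Since $\Cond$ is generated under colimits by the $\underline S$, write $P$ as a colimit of representables; refine this to a sifted (indeed filtered) colimit using that representables are closed under finite coproducts in $\extdis$, so a general colimit diagram can be rewritten as a filtered colimit of finite coproducts of representables, i.e.\ of representables $\underline{S_i}$ again. Compactness of $P$ makes $\mathrm{id}_P$ factor through some $\underline{S_i} \to P$; projectivity of $P$ makes this split, so $P$ is a retract of a representable $\underline S$. A retract of $\underline S$ in $\Cond$ corresponds, by fully faithfulness of Yoneda and idempotent-completeness of $\extdis$ (closed subspaces of compact Hausdorff spaces exist; a retract of an extremally disconnected space is extremally disconnected), to a retract of $S$ in $\extdis$, hence is itself $\underline{S'}$ for some $S' \in \extdis$. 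Therefore $\Cond^{\mathrm{cp}} \subseteq \extdis$, and together with Step 1 this gives the equality.

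\textbf{The main obstacle} I expect is Step 1's compactness claim: rigorously justifying that evaluation at an extremally disconnected $S$ commutes with filtered colimits of \emph{sheaves}, not just presheaves. The subtlety is that the colimit in $\Cond$ is the sheafification of the presheaf colimit, so one must show the presheaf colimit is \emph{already a sheaf} on $\extdis$, or at least that sheafification does not change its value at objects of $\extdis$. This reduces to the statement that for a surjection $T \twoheadrightarrow S$ in $\extdis$, the descent datum $X(S) = \mathrm{eq}(X(T) \rightrightarrows X(T\times_S T))$ is preserved by filtered colimits — true because finite limits commute with filtered colimits in $\Set$ — \emph{provided} one knows $T \times_S T$ is again (covered by) something in $\extdis$ and that the cover $T \twoheadrightarrow S$ suffices, which is again Gleason plus the observation that it is enough to check the sheaf condition on single surjections. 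I would isolate this as a lemma ("on $\extdis$, a presheaf taking finite coproducts to finite products is a sheaf iff it satisfies descent along surjections, and such sheaves are stable under filtered colimits") and prove it before the theorem; the rest of the argument is then formal.
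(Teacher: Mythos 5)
Your Step 1 is sound in substance and close in spirit to the paper's Proposition \ref{extdiscompro}, though your "main obstacle" is resolved more cleanly than you anticipate: by Proposition \ref{condsetequi}(ii) the sheaf condition on $\extdis$ is nothing but "finite disjoint unions to finite products" (descent along surjections is automatic because every surjection in $\extdis$ splits), and this condition is visibly preserved by pointwise filtered --- indeed $1$-sifted --- colimits, since finite products commute with such colimits in $\Set$. No discussion of $T\times_S T$ or of sheafification is needed. Note also that the paper's notion of projective is "$\Hom(X,\cdot)$ commutes with reflexive coequalizers" rather than your lifting property against epimorphisms; the two are reconciled via regular projectivity and Proposition \ref{stabimpl}, but you should make that bridge explicit, since it is the reflexive-coequalizer form that the animation machinery actually uses.

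The genuine gap is in Step 2. You claim that a general colimit of representables "can be rewritten as a filtered colimit of finite coproducts of representables, i.e.\ of representables $\underline{S_i}$ again." This is false: the condensed sets expressible as filtered colimits of representables form only the ind-completion of $\extdis$ inside $\Cond$, not all of $\Cond$. For instance $\underline{[0,1]}$ is a quotient of a representable and is compact in $\Cond$ (being quasicompact quasiseparated), so if it were a filtered colimit of representables the identity would factor through one of them and $[0,1]$ would be a retract of an extremally disconnected space, hence extremally disconnected --- a contradiction. A general colimit is a filtered colimit of finite \emph{colimits} (which involve coequalizers), not of finite coproducts, so your application of compactness to factor $\mathrm{id}_P$ through a single $\underline{S_i}$ is unjustified. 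The repair is to decouple the two properties, as the paper does: compactness of $P$ lets $\mathrm{id}_P$ factor through a finite colimit of representables (Proposition \ref{retandfincol}(ii)), and regular projectivity then splits the regular epimorphism from the coproduct of the terms of that finite diagram, exhibiting $P$ as a retract of a \emph{finite coproduct} of extremally disconnected spaces (Proposition \ref{retandfincol}(i)). Equivalently, one may first use projectivity to realize $P$ as a retract of a possibly infinite coproduct and only then invoke compactness on that coproduct written as the filtered colimit of its finite sub-coproducts. From a retract of a finite coproduct your endgame does go through: such an object is quasicompact quasiseparated, hence corresponds to a compact Hausdorff space by Proposition \ref{equivcomphaus}, and as a retract of a finite coproduct of regular projectives of $\CoHa$ it is regular projective there, hence extremally disconnected by Gleason's theorem (Example \ref{exampleext}).
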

We will see that the fully faithful embedding $\Cond \hookrightarrow \CondAni$ is given by composition of a functor in $\Cond$, viewed as $\infty$-functor, with the fully faithful embedding $\Set \hookrightarrow \Ani(\Set)$ to obtain an $\infty$-functor in $\CondAni$ (cf.\ Thm.\ \ref{thmAniCond}). In the same theorem, we will prove that the fully faithful embedding $\Ani \hookrightarrow \CondAni$ is given by the so called \textit{discrete functor} $\underline{(\cdot)}^{\mathrm{disc}}$, which is induced by our key construction of Chapter \ref{condani}: The \textit{discrete condensed anima}. On our way to fully faithfulness of the discrete functor we will introduce the \textit{global sections functor} $\Gamma\colon \CondAni \rightarrow \Ani$, which provides a right adjoint to the discrete functor (cf.\ Def.\ \ref{global}, Prop.\ \ref{leftadjoint}). Discrete condensed anima are defined as follows.
%
\begin{defprop}{(cf.\ Lemma \ref{constinC}, Prop.\ \ref{constsheaf}, Def.\ \ref{discr})}
Every extremally disconnected profinite set is given by a cofiltered limit $X=\mathrm{lim}_{i \in I}X_i$ of finite discrete spaces (cf.\ Prop.\ \ref{profinlim}). And every $X_i$ can be viewed as anima by the fully faithful embedding $\Set \hookrightarrow \Ani(\Set)$. Under these assumptions, we have:\\
To every anima $A\in \Ani$, one can functorially assign a condensed anima $$\underline{A}^{\mathrm{disc}}\colon \extdis^{\mathrm{op}}\rightarrow \Ani$$ defined by the formula
\begin{align*}
X \mapsto \colim_{i \in I^{\mathrm{op}}}\Hom_{\Ani}(X_i,A),
\end{align*}
where the colimit is filtered. This condensed anima preserves cofiltered limits in $\extdis$ and is called \textit{discrete condensed anima} attachted to $A$.
\end{defprop}
\section*{Homotopy groups for animated condensed sets}
In order to define homotopy groups for condensed sets, we want to establish a notion of homotopy groups for the $\infty$-category of condensed anima, respectively animated condensed sets, $\CondAni \simeq \Ani(\Cond)$. Then, one can transfer this notion to condensed sets via the fully faithful embedding $\Cond \hookrightarrow \CondAni$.\\ The $\infty$-category $\Ani$ is the classical $\infty$-topos to be used for homotopy theory and every CW-space is represented as an anima via the functor $\CW \rightarrow \Ani$. One can define \textit{simplicial homotopy groups} for objects in $\Ani$ (cf.\ Def. \ref{simphomgr}) such that for every CW-space the topological homotopy groups coincide with the simplicial homotopy groups of its image in $\Ani$. Unfortunately, there is no hope for defining ordinary homotopy groups on $\CondAni$ which extend the construction on topological spaces in a natural way. Meaning, it does not exist a left adjoint to the fully faithfully embedding $\Ani \hookrightarrow \CondAni$ which allows us to identify every condensed anima with an object of $\Ani$ (cf.\ Section \ref{example}) and such that its restriction to $\CW$ sends a CW-space to its image under $\CW \rightarrow \Ani$. \\ But at least, introducing pro-$\infty$-categories (Ch.\@ \ref{procatbase}) allows us to define homotopy pro-groups. For this, we use the fact that, under certain conditions, a functor on an $\infty$-category can be extended to a functor on a pro-$\infty$-category which has a left adjoint (cf.\ Prop.\ \ref{proext}), even when the functor we started with does not. Chapter \ref{homotopy} unites our efforts in proving the claims of the subsequent theorem. The corresponding results can be found in Section \ref{groups}. The aim of Section \ref{example} is to determine the homotopy pro-groups of profinite sets. This example illustrates the fact that $\Ani \hookrightarrow \CondAni$ cannot admit a left adjoint (cf.\ Rem.\ \ref{noleftadj}).
\begin{theo}
The fully faithful embedding $\Ani \hookrightarrow \CondAni$ extends to a unique embedding $\Pro(\Ani)\hookrightarrow \CondAni$ preserving cofiltered limits. Here, $\Pro(\Ani)$ denotes the pro-$\infty$-category of pro-anima. This functor admits a left adjoint functor $\CondAni \rightarrow \Pro(\Ani)$. Moreover, extending the homotopy group functors on (pointed) anima
\begin{align*}
&\pi_0\colon \Ani \ \rightarrow \Set\\
&\pi_1\colon \Ani_{*} \rightarrow \Grp\\
&\pi_n \colon \Ani_{*} \rightarrow \AbGrp
\end{align*}
to unique cofiltered limit preserving functors on the corresponding pro-$\infty$-categories 
\begin{align*}
&\Pro(\pi_0)\colon \Pro(\Ani) \ \rightarrow \Pro(\Set)\\
&\Pro(\pi_1)\colon \Pro(\Ani)_{*} \rightarrow \Pro(\Grp)\\
&\Pro(\pi_n) \colon \Pro(\Ani)_{*} \rightarrow \Pro(\AbGrp)
\end{align*}
yields, by composition with the left adjoint $\CondAni \rightarrow \Pro(\Ani)$, a notion of homotopy pro-groups on condensed anima
\begin{align*}
&\widetilde{\pi}_0\colon \CondAni \ \rightarrow \Pro(\Set)\\
&\widetilde{\pi}_1\colon \CondAni_{*} \rightarrow \Pro(\Grp)\\
&\widetilde{\pi}_n \colon \CondAni_{*} \rightarrow \Pro(\AbGrp),
\end{align*}
whose restriction to CW-spaces coincides with the ordinary notion of homotopy groups on topological spaces.
\end{theo}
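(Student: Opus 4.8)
The plan is to assemble the theorem from three independent pieces: (1) the pro-extension $\Pro(\Ani)\hookrightarrow\CondAni$ and its left adjoint, (2) the pro-extensions of the homotopy group functors, and (3) the compatibility with the classical homotopy groups on CW-spaces. For (1), I would invoke Proposition \ref{proext}: the fully faithful embedding $\iota\colon\Ani\hookrightarrow\CondAni$ (the discrete functor, identified in Theorem \ref{thmAniCond}) extends essentially uniquely to a cofiltered-limit-preserving functor $\Pro(\Ani)\to\CondAni$ because $\CondAni$ admits cofiltered limits and every object of $\CondAni$ can be probed by such limits. Fully faithfulness of the pro-extension should follow from the universal property of $\Pro(-)$ together with the fact that $\Gamma\colon\CondAni\to\Ani$, the right adjoint to the discrete functor (Prop.\ \ref{leftadjoint}), already exhibits $\iota$ as fully faithful; concretely, $\Hom_{\CondAni}(\iota A,\iota B)\simeq\Hom_\Ani(A,\Gamma\iota B)\simeq\Hom_\Ani(A,B)$ passes to cofiltered limits on the source. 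The left adjoint $L\colon\CondAni\to\Pro(\Ani)$ is then produced by the adjoint functor theorem in the $\infty$-categorical/pro setting, or more explicitly as the left Kan extension along $\extdis\hookrightarrow\CondAni$ of the composite $\extdis\hookrightarrow\Profin\to\Pro(\Ani)$ sending an extremally disconnected $X=\lim_i X_i$ to the pro-anima $\{X_i\}_i$; one checks this is left adjoint to the pro-embedding using that the $X_i$ are discrete anima and the adjunction on discrete objects.

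For (2), each of $\pi_0,\pi_1,\pi_n$ on anima preserves finite products (and $\pi_1,\pi_n$ send the point to the trivial group), and the target categories $\Set,\Grp,\AbGrp$ are accessible with all small limits; by the functoriality of $\Pro(-)$ (Ch.\ \ref{procatbase}), any functor $F\colon\cat\to\mathcal D$ extends uniquely to a cofiltered-limit-preserving $\Pro(F)\colon\Pro(\cat)\to\Pro(\mathcal D)$, simply by $\{c_i\}_i\mapsto\{F(c_i)\}_i$. So $\Pro(\pi_0),\Pro(\pi_1),\Pro(\pi_n)$ exist and are unique with the stated property. Composing with $L$ (and its pointed variant $L_*\colon\CondAni_*\to\Pro(\Ani)_*$, obtained by working in slice/coslice categories) gives $\widetilde\pi_0=\Pro(\pi_0)\circ L$, $\widetilde\pi_1=\Pro(\pi_1)\circ L_*$, $\widetilde\pi_n=\Pro(\pi_n)\circ L_*$; uniqueness of each ingredient gives a well-defined notion.

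For (3), I must show that for a CW-space $Y$, viewed in $\CondAni$ via $\CW\hookrightarrow\Cond\hookrightarrow\CondAni$, the pro-group $\widetilde\pi_n(\underline Y)$ is (pro-isomorphic to, hence identified with) the constant pro-object on the ordinary $\pi_n(Y)=\pi_n(\mathrm{Sing}\,Y)$. The key point is that the composite $\CW\hookrightarrow\CondAni\xrightarrow{L}\Pro(\Ani)$ agrees with the constant-pro-object embedding of $\CW\to\Ani$. This should reduce, since both functors out of $\CW$ preserve the relevant colimits and $\CW$ is generated by cells (disks and spheres, which are compact CW-spaces), to checking it on a point — where $\underline{\ast}\in\Cond$ is the terminal/discrete object, $L(\underline\ast)=\ast\in\Ani$ is constant — and then propagating along the cell attachments, using that $\iota$ is fully faithful so $L\iota\simeq\mathrm{id}$ on the essential image of $\Ani$, together with the identification of $\underline Y$ as the discrete condensed anima $\underline{\mathrm{Sing}\,Y}^{\mathrm{disc}}$ for nice $Y$. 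Once $L(\underline Y)$ is the constant pro-anima on $\mathrm{Sing}\,Y$, applying $\Pro(\pi_n)$ yields the constant pro-group on $\pi_n(Y)$, as desired.

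The main obstacle is step (3), specifically proving that $L$ restricted to CW-spaces lands in \emph{constant} pro-anima and recovers $\CW\to\Ani$ — equivalently, that a CW-space is "already discrete" as a condensed anima in the sense that $L\underline Y\simeq$ const. This is exactly the content that makes the right-hand triangle in the structural diagram commute, and it is delicate because $L$ is only a left adjoint (not limit-preserving), so one cannot naively compute $L$ on the cofiltered presentations of profinite sets appearing inside $\underline Y$; instead one must argue via the colimit presentation of $Y$ by cells and the compatibility of $L$ with colimits, being careful that the fully faithful embedding $\CW\hookrightarrow\Cond$ does not commute with all colimits (only with the CW-cell pushouts, which are the ones that matter). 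Verifying this compatibility, and that the unit/counit of the $(L,\iota)$-adjunction are equivalences on the subcategory coming from $\CW$, is where the real work lies; the remaining assertions are formal consequences of Prop.\ \ref{proext}, the functoriality of $\Pro(-)$, and the adjunction of Prop.\ \ref{leftadjoint}.
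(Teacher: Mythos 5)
Your parts (1) and (2) track the paper's route fairly closely: the extension $R_{\Ani}\colon\Pro(\Ani)\to\CondAni$ comes from the universal property of pro-$\infty$-categories (Prop.\ \ref{equiv}, Lemma \ref{proright}), the left adjoint from the pro-existent left adjoint of Prop.\ \ref{proext} (Prop.\ \ref{proleft}), and the pro-extensions of $\pi_0,\pi_1,\pi_n$ from Cor.\ \ref{funcacc} together with the identification $\Pro(\Ani_*)\simeq\Pro(\Ani)_*$ (Prop.\ \ref{progroups}). Two caveats. The paper's left adjoint is given explicitly by $C\mapsto\Hom_{\CondAni}(C,\underline{(\cdot)}^{\mathrm{disc}})$ rather than by your left Kan extension along $\extdis$; your description is consistent with Prop.\ \ref{notdisc}, but you would still need to verify the adjunction, which Prop.\ \ref{proext} gives for free. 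More seriously, your argument for fully faithfulness of the pro-extension ("passes to cofiltered limits on the source") silently uses that $\Hom_{\CondAni}(\mathrm{lim}_{\alpha}\,\underline{A_\alpha}^{\mathrm{disc}},\underline{B}^{\mathrm{disc}})\simeq\colim_{\alpha}\Hom_{\CondAni}(\underline{A_\alpha}^{\mathrm{disc}},\underline{B}^{\mathrm{disc}})$, i.e.\ a cocompactness of discrete condensed anima that holds by construction in $\Pro(\Ani)$ but is not automatic in $\CondAni$ --- the failure of $\Ani\hookrightarrow\CondAni$ to preserve cofiltered limits (Rem.\ \ref{noleftadj}) is exactly a warning that such interchanges need justification.

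The genuine gap is in (3). The paper (Prop.\ \ref{CW}) does not argue by cell induction: it quotes Lemma 11.9 of \cite{Scholze2019a}, which supplies a universal map $C\to\underline{\mathrm{Sing}(X)}^{\mathrm{disc}}$ for $C=\Hom_{\Top}(\cdot,X)$ with $X$ a CW-space; uniqueness of adjoints then forces $L_{\Ani}(C)\simeq j_{\Ani}(\mathrm{Sing}(X))$, a corepresentable (constant) pro-anima, and the statement about homotopy pro-groups follows. Your proposed replacement --- check on a point and propagate along cell attachments --- does not go through as stated, for two reasons. First, $\CW\hookrightarrow\Cond\hookrightarrow\CondAni$ is a composite of right adjoints, and there is no reason it carries the topological cell-attachment pushouts and skeletal colimits to (homotopy) colimits in $\CondAni$; without that, colimit-preservation of $L_{\Ani}$ gives you nothing to induct with. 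Second, even granting such compatibility, the base case of the induction is not the point but the disks: you must show $\Hom_{\CondAni}(\underline{D^n},\underline{B}^{\mathrm{disc}})\simeq B$, and this is not a formal consequence of $L_{\Ani}(\underline{\ast})\simeq\ast$, since $\underline{D^n}$ is a highly non-discrete condensed anima; establishing it is essentially the entire content of the cited Lemma 11.9 (a \v{C}ech-type computation over profinite covers of the interval). So the step you correctly flag as "where the real work lies" is in fact absent, and the reduction you sketch for it would, at the disk, be circular.
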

\section*{Connection to \'etale homotopy theory}
Finally, I want to give an idea\footnote{This idea was also suggested by Peter Scholze in his email from September 2020.} of how the results of this master thesis might be of interest for other theories: To a sufficiently nice scheme $X$, one can assign a pro-anima via the construction of the Artin-Mazur étale homotopy type, i.e. an object of the pro-$\infty$-category $\Pro(\Ani)$. This can be considered as a condensed anima. In classical theory, the \'etale fundamental group of such a scheme $X$ is a profinite group obtained similarly to the construction for condensed anima in the preceding theorem. In this way, we receive a relation to the étale homotopy theory.

\chapter*{Acknowledgement}\index{Danksagung}
First of all, I would like to express my gratitude to my supervisor Prof.\@ Dr.\@ Torsten Wedhorn for suggesting the topic of this master thesis and still leaving me the freedom to decide according to my own interests in which direction I would like to research. The regular meetings have been very helpful in keeping track of my own process and getting feedback on it. Most of all, however, I would like to thank him for encouraging me to trust in my mathematical abilities and not being afraid to repeat this reinforcement. \\
\newline
I would like to express special thanks to Prof.\@ Dr.\@ Peter Scholze who together with Dr.\@ Dustin Clausen developed the theory of condensed sets on which this master thesis is based on. In his quick reply to an email from me in September 2020, he outlined an idea to define some kind of homotopy groups for condensed sets. Pursuing this idea will be the central objective of the second half.\\
\newline
Furthermore, my grateful thanks are extended to my family and friends who believed in me. In particular, I would like to thank my parents for supporting me since always and also during my studies enabling me to do what I enjoy, and my brothers for always being there for me. A big thanks goes to my friends Anton Güthge, Haolin Jiang, Carsten Litzinger, Till Rampe, Lukas Saul and Esther Thomas for their support and thorough proofreading.\\
\newline
By writing this master thesis, I learned a lot, not only regarding mathematical aspects but also beyond that. I am profoundly grateful for this experience.
\begin{flushright}
Catrin Mair, February 2021
\end{flushright}
%
%
\chapter{Topological and categorical preliminaries}
I expect the reader to be acquainted with fundamental concepts of category theory and topology, especially higher categories and homotopy theory. This chapter shall give an overview and a summary of the most important basics and common results beyond these topics. These will be necessary to follow the proofs appearing in this master thesis. Most of the time, the order of this chapter will represent the occurrence of results in the following chapters. Underlying sources, which do not occur as explicit citation, as well as suggestions for further reading are specified at the beginning of the sections in this chapter and the following chapters.
\section{Topological notions}
The category of compact Hausdorff spaces and continuous maps $\CoHa$ as well as its subcategories of totally disconnected spaces $\mathcal{S}\mathrm{tone}$ and extremally disconnected spaces $\extdis$ play an important role in the definition of \textit{condensed sets}. This section aims to provide an overview of these notions and to collect all basic topological results we will need in order to properly work with the definition of condensed sets. The category $\mathcal{S}\mathrm{tone}$ is also known as the category $\PFin$ of profinite sets. This will be discussed in Section \ref{procat}.
\subsection{Compact Hausdorff spaces}
\begin{proposition}\label{surjcomphaus}
Every surjection of compact Hausdorff spaces is a quotient map.
\end{proposition}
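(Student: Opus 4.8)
The plan is to show that a continuous surjection $f\colon X \to Y$ of compact Hausdorff spaces is a quotient map, i.e.\ that a subset $U \subseteq Y$ is open whenever $f^{-1}(U)$ is open in $X$. The key observation is that compact Hausdorff spaces are particularly well-behaved with respect to closed sets: a continuous map from a compact space to a Hausdorff space is automatically a closed map, since closed subsets of a compact space are compact, their continuous images are compact, and compact subsets of a Hausdorff space are closed.

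First I would reformulate the quotient-map condition in terms of closed sets: $f$ is a quotient map if and only if for every $C \subseteq Y$ with $f^{-1}(C)$ closed in $X$, the set $C$ is closed in $Y$. So let $C \subseteq Y$ be such that $f^{-1}(C)$ is closed. Since $X$ is compact, $f^{-1}(C)$ is compact, hence its image $f(f^{-1}(C))$ is compact in $Y$, and since $Y$ is Hausdorff this image is closed. Finally, because $f$ is \emph{surjective}, we have $f(f^{-1}(C)) = C$, so $C$ is closed. This establishes that $f$ is a quotient map.

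I do not expect any real obstacle here; the only subtlety is making sure surjectivity is used at exactly the right point (to get $f(f^{-1}(C)) = C$ rather than merely $\subseteq$), and recalling the two standard facts about compact Hausdorff spaces (images of compact sets are compact, compact subsets of Hausdorff spaces are closed). One could equivalently phrase the whole argument by saying $f$ is a closed continuous surjection, and every closed continuous surjection is a quotient map — this is perhaps the cleanest packaging.
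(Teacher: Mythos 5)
Your proof is correct and follows essentially the same route as the paper: both arguments rest on the fact that a continuous map from a compact space to a Hausdorff space is closed, together with the observation that a surjective closed map is a quotient map (your closed-set verification with $f(f^{-1}(C))=C$ is just this last fact unpacked). No issues.
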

\begin{proof}
A well known result states that maps from compact spaces to Hausdorff spaces are closed. Indeed, every closed set in a compact space is compact and its image under a continuous map is compact again. As compact sets in Hausdorff spaces are closed, the image is closed. The statement follows as surjective closed maps are automatically quotient maps.
\end{proof}
\begin{definition and proposition}\textbf{Stone-\v{C}ech compactification}\label{stonecech}\\
To every topological space $X$, one can assign  a compact Hausdorff space $\beta X$ and a continuous map $\iota_X \colon X \rightarrow \beta X$ satisfying the following universal property: For every compact Hausdorff space $K$ and every continuous map $f\colon X\rightarrow K$ there exists a unique continuous map $\beta f\colon \beta X\to K$ s.t. $ f=\beta f\circ \iota _{X}$. The space $\beta X$ is called \textit{Stone-\v{C}ech compactification} of $X$. The assignment $X\mapsto \beta X$ yields a left adjoint functor to the embedding functor from the category of compact Hausdorff spaces to the category of topological spaces.
\end{definition and proposition}
\begin{proof}
A construction of the compactification and a proof of the universal property  can be found in \cite[\href{https://stacks.math.columbia.edu/tag/0908 }{Tag 0908}]{Stacks} and \cite[\href{https://stacks.math.columbia.edu/tag/0909}{Tag 0909}]{Stacks}. The left adjointness follows as a direct corollary from the universal property.
\end{proof}
\begin{corollary}\label{limitcomp}
Every limit of compact Hausdorff spaces is compact Hausdorff.
\end{corollary}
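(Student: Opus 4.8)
The plan is to reduce an arbitrary limit to the two operations that behave well for compact Hausdorff spaces: arbitrary products (handled by Tychonoff's theorem) and closed subspaces.

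First I would recall how limits are computed in $\Top$: the limit of a diagram $(X_i)_{i\in I}$ of compact Hausdorff spaces is the subspace
\[
L \;=\; \Bigl\{\, (x_i)_i \in {\textstyle\prod_{i\in I}} X_i \;\Big|\; f(x_i)=x_j \text{ for every morphism } (f\colon i\to j)\in I \,\Bigr\}
\]
of the product, together with the restricted projections $L\to X_i$. So it suffices to show that $L$ is compact Hausdorff.

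Next I would argue as follows. By Tychonoff's theorem the product $P:=\prod_{i\in I}X_i$ is compact, and an arbitrary product of Hausdorff spaces is Hausdorff, so $P$ is compact Hausdorff. For a fixed morphism $f\colon i\to j$, the condition $f(x_i)=x_j$ is exactly the locus on which the two continuous maps $P\to X_j$ given by $(x_i)_i\mapsto f(x_i)$ and $(x_i)_i\mapsto x_j$ agree; since $X_j$ is Hausdorff this locus is closed, being the preimage of the (closed) diagonal of $X_j$. Hence $L$ is an intersection of closed subsets of $P$, and therefore closed in $P$. Finally, a closed subspace of a compact space is compact and any subspace of a Hausdorff space is Hausdorff, so $L$ is compact Hausdorff, as claimed.

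I do not expect a genuine obstacle here; the only mildly delicate point is the observation that makes $L$ closed, namely that the equality of two continuous maps into a Hausdorff space is a closed condition. As a side remark, this argument simultaneously shows that $\CoHa$ is complete and that the inclusion $\CoHa\hookrightarrow\Top$ preserves limits, which is consistent with it being a right adjoint by \ref{stonecech}.
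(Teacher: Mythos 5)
Your proof is correct, but it takes a genuinely different route from the paper. The paper disposes of this corollary in one line by appealing to the adjunction of Proposition \ref{stonecech}: since the Stone-\v{C}ech compactification is left adjoint to the inclusion $\CoHa \hookrightarrow \Top$, the subcategory $\CoHa$ is reflective in $\Top$, and a reflective full subcategory is closed under all limits computed in the ambient category. You instead give the explicit point-set argument: realize the limit as the subspace of the product cut out by the compatibility conditions, invoke Tychonoff's theorem for compactness of the product, observe that each condition is the preimage of the closed diagonal of a Hausdorff space and hence closed, and conclude via closed-subspace-of-compact and subspace-of-Hausdorff. Both are complete. The paper's argument is shorter given that the adjunction has just been established, and it avoids any explicit construction; yours is self-contained, makes the limit concrete, and identifies precisely where Hausdorffness of the targets is used (to make the equalizer conditions closed) --- at the cost of invoking Tychonoff's theorem, and hence the axiom of choice, directly. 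Your closing remark that the argument also shows $\CoHa$ is complete and that the inclusion preserves limits is exactly the content the paper extracts abstractly from the adjunction, so the two proofs are consistent with one another.
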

\begin{proof}
This directly follows from the left adjointness in the previous proposition.
\end{proof}
Recall that a retract of a topological space $X$ is a topological subspace $A$ that is fixed under some retraction $r$, i.e. a continuous map $r \colon A\rightarrow X$ such that its restriction to $A$ is the identity.
\begin{lemma}\label{compstabprop}
Compact Hausdorff spaces are closed under finite coproducts and retracts.
\end{lemma}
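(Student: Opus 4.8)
The plan is to treat the two closure properties separately, reducing each to standard point-set topology. For finite coproducts, I would first recall that the coproduct of a finite family $X_1,\dots,X_n$ of topological spaces is the disjoint union $\coprod_i X_i$ with the disjoint union topology, in which each $X_i$ sits as a clopen subspace. Hausdorffness is then immediate: two points in the same summand are separated inside that summand, while two points in distinct summands $X_i$ and $X_j$ are separated by the disjoint open sets $X_i$ and $X_j$ themselves. For compactness, given an open cover of $\coprod_i X_i$, intersecting it with each $X_i$ yields an open cover of $X_i$; picking a finite subcover for each of the finitely many summands and taking the union gives a finite subcover. Since $\coprod_i X_i$ is therefore compact Hausdorff and the universal property of the coproduct in $\Top$ restricts to $\CoHa$, this object is also the coproduct in $\CoHa$; the empty coproduct is the empty space, which is compact Hausdorff.

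For retracts, let $X$ be compact Hausdorff and let $A\subseteq X$ be a retract, with retraction $r\colon X\to A$ satisfying $r|_A=\mathrm{id}_A$. I would argue that $A$ is closed in $X$: writing $j\colon A\hookrightarrow X$ for the inclusion, the subset $A$ coincides with the equalizer $\{x\in X : j(r(x))=x\}$ of the two continuous maps $j\circ r,\ \mathrm{id}_X\colon X\to X$, which is closed because $X$ is Hausdorff. A closed subspace of a compact space is compact, and any subspace of a Hausdorff space is Hausdorff, so $A$ is compact Hausdorff. (Alternatively, without invoking the subspace structure: $r$ is a continuous surjection from the compact space $X$, so $A$ is compact as a continuous image, and the section $A\to X$ is a continuous injection into the Hausdorff space $X$, which lets one pull back separating open sets, so $A$ is Hausdorff.)

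Neither part poses a genuine obstacle; the only point needing slight care is checking that the coproduct formed in $\Top$ really serves as the coproduct in the full subcategory $\CoHa$, which is automatic once one knows the disjoint union of compact Hausdorff spaces is again compact Hausdorff.
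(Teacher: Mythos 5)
Your proof is correct and follows essentially the same route as the paper: the same summand-by-summand argument for compactness and Hausdorffness of finite coproducts, and for retracts the same closedness argument (your equalizer $\{x : j(r(x))=x\}$ is precisely the paper's preimage of the diagonal under $x\mapsto(x,r(x))$). The extra remarks about the coproduct in $\Top$ restricting to $\CoHa$ and the alternative continuous-image argument are fine but not needed.
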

\begin{proof}
A covering of a finite coproduct of compact Hausdorff spaces is open if and only if its restriction to all spaces of the coproduct is open. As these are compact by assumption, there exist finite subcoverings of the restrictions. Then, the finite union of these subcoverings forms a finite subcovering of the coproduct. This shows compactness for every finite coproduct.\\ Two points in a coproduct are automatically separated if they are contained in different spaces. If they are in the same space, disjoint open neighbourhoods are given by assumption of Hausdorffness for all spaces forming the coproduct. Hence, all coproducts of Hausdorff spaces are Hausdorff.
Now, consider a retract of a topological space $X$.
Then, Hausdorffness easily follows as subspaces of Hausdorff spaces are Hausdorff. For compactness, we can use that retracts of Hausdorff spaces are closed, and closed subsets of compact spaces are compact. Precisely, the retract is the preimage of the diagonal in $X \times X$ under the continuous map $x \mapsto (x, r(x))$. The diagonal is closed by Hausdorffness of $X$. Thus, the retract is also closed.
\end{proof}
\subsection{Totally and extremally disconnected spaces}
\begin{definition}{\textbf{Totally and extremally disconnected}}\label{extdisdef}
\begin{itemize}
\item[1.] A topological space is called \textit{totally disconnected} if all connected subspaces are singletons.
\item[2.] A topological space is called \textit{extremally disconnected} if the closure of every open subset is open. 
\end{itemize}
\end{definition}
\begin{remark}
Hausdorff extremally disconnected spaces are totally disconnected. Indeed for any two points in a connected subspace, one can find  disjoint open neighbourhoods. Then, the closure of such an open neighbourhood still does not contain the other point and is a proper open and closed subset, in contradiction to the connectedness. Hence, the connected subspace cannot contain more than one point.
\end{remark}
If we assume in addition that the considered topological space is compact Hausdorff, there are equivalent characterizations of extremally disconnected:
\begin{proposition}\label{characextdis} Let $X$ be a compact Hausdorff space. Then the following are equivalent.
\begin{itemize}
\item[(i)] The space $X$ is extremally disconnected.
\item[(ii)] For any surjective continuous map $f\colon Y \rightarrow X$ with Y compact Hausdorff, there exists a continuous section.
\item[(iii)] For any map $X \rightarrow Z$ and surjective map $Y \rightarrow Z$ of compact Hausdorff spaces, there exists a continuous map $X \rightarrow Y$ s.t. the diagram
\[
  \begin{tikzcd}
   X \arrow[dotted]{r} \arrow[swap]{dr} & Y \arrow{d} \\
     & Z
  \end{tikzcd}
\]
commutes.
\end{itemize}
\end{proposition}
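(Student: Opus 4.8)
The plan is to prove the cycle of implications (i) $\Rightarrow$ (iii) $\Rightarrow$ (ii) $\Rightarrow$ (i), where the first implication contains the real content and the other two are essentially formal.

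For (iii) $\Rightarrow$ (ii), I would simply take $Z = X$ and the identity map $X \to Z$: a diagonal filler $X \to Y$ for the square with $Y \twoheadrightarrow X$ on the right and $\mathrm{id}_X$ on the bottom is exactly a continuous section of $f$. For (ii) $\Rightarrow$ (i), I would argue contrapositively or directly: given an open $U \subseteq X$, I want $\overline{U}$ open. The standard trick is to consider the compact Hausdorff space $Y$ obtained by gluing two copies of $X$ along the open subset $U$ (or, more cleanly, take $Y = (\overline{U} \sqcup \overline{X \setminus U})$ with the evident surjection to $X$, noting that these are closed hence compact Hausdorff and cover $X$); the quotient map $Y \to X$ is surjective since $\overline{U} \cup \overline{X\setminus U} = X$, and a continuous section $s$ must send each point of $U$ into the copy $\overline{U}$ and each point of the interior of the complement into the other copy, so by continuity the preimage $s^{-1}(\overline U)$ is a clopen set sandwiched between $U$ and $\overline U$, forcing it to equal $\overline U$; hence $\overline U$ is open. (One uses Proposition \ref{surjcomphaus} to know $Y \to X$ is a quotient map, and Lemma \ref{compstabprop} for compact Hausdorffness of the coproduct.)

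The main work is (i) $\Rightarrow$ (iii). Given $X$ extremally disconnected, a map $g \colon X \to Z$ and a surjection $q \colon Y \twoheadrightarrow Z$ of compact Hausdorff spaces, I would first reduce to the case $Z = X$ by pulling back: form the fibre product $Y' = Y \times_Z X$, which is compact Hausdorff by Corollary \ref{limitcomp}, and observe $Y' \to X$ is still surjective (surjectivity is preserved under base change here because all spaces are compact Hausdorff and $q$ is surjective); a section of $Y' \to X$ composed with $Y' \to Y$ gives the desired diagonal map. So it suffices to produce a section of an arbitrary surjection $f \colon Y \twoheadrightarrow X$ with $Y$ compact Hausdorff. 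Here I would invoke a Zorn's lemma argument: consider the poset of closed subsets $C \subseteq Y$ such that $f|_C \colon C \to X$ is still surjective, ordered by reverse inclusion. Chains have upper bounds (the intersection of a chain of such closed sets still surjects onto $X$, using compactness — the key point is that for each $x \in X$ the fibres $C \cap f^{-1}(x)$ form a chain of nonempty closed subsets of the compact space $f^{-1}(x)$, so their intersection is nonempty by the finite intersection property). By Zorn there is a minimal such $C$; I claim $f|_C$ is a bijection, hence a homeomorphism (continuous bijection of compact Hausdorff spaces), and its inverse is the section. Minimality forces injectivity: if $f(c_1) = f(c_2)$ with $c_1 \neq c_2$, pick disjoint open neighbourhoods $V_1 \ni c_1$, $V_2 \ni c_2$ in $Y$; then $C \setminus V_1$ is a proper closed subset of $C$, and I must check it still surjects onto $X$ — this is where extremal disconnectedness of $X$ enters. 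The image $f(C \setminus V_1)$ is closed in $X$; its complement is an open set $W \subseteq X$ contained in $f(C \cap V_1)$, and one shows $\overline W$ — which is clopen by (i) — together with the closed set $X \setminus W$ can be used to carve $C$ into two closed pieces each surjecting onto $X$, contradicting minimality unless one of $V_1, V_2$ was superfluous. I would make this precise by a careful bookkeeping of the clopen decomposition of $X$ induced by $\overline W$.

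The step I expect to be the main obstacle is precisely this last injectivity argument in (i) $\Rightarrow$ (iii): getting the interaction between the minimal closed subset $C$, the disjoint neighbourhoods, and the clopen set $\overline W$ exactly right so that one genuinely produces a smaller closed set still surjecting onto $X$. The Zorn's lemma setup and the base-change reduction are routine, and the converse implications (ii) $\Rightarrow$ (i) and (iii) $\Rightarrow$ (ii) are short; but the passage from "not injective" to "not minimal" via extremal disconnectedness is the crux of the whole proposition and deserves the most care.
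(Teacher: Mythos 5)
The paper offers no proof of its own here---it simply cites \cite[Tag 08YH]{Stacks}---and your outline follows essentially the argument of that reference (Gleason's theorem). The implications (iii) $\Rightarrow$ (ii) and (ii) $\Rightarrow$ (i) are correct as you describe them, the reduction of (iii) to finding a section of $Y \times_Z X \to X$ is fine (surjectivity of this base change is purely set-theoretic and needs no compactness), and the Zorn's lemma production of a minimal closed $C \subseteq Y$ with $f(C) = X$, together with the observation that a continuous bijection of compact Hausdorff spaces is a homeomorphism, is the standard and correct device.

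The gap is exactly where you locate it, and your sketch of that step does not close as written. Given $c_1 \neq c_2$ in $C$ with $f(c_1) = f(c_2) = x$ and disjoint opens $V_1 \ni c_1$, $V_2 \ni c_2$, set $W = X \setminus f(C \setminus V_1)$, a nonempty open set all of whose points have every $C$-preimage inside $V_1$. The aim is not to ``carve $C$ into two closed pieces each surjecting onto $X$'' but to exhibit a \emph{single proper} closed subset of $C$ that still surjects onto $X$, and which subset works depends on where $x$ sits relative to $\overline{W}$. One first checks that $f\bigl(C \cap \overline{V_1} \cap f^{-1}(\overline{W})\bigr) = \overline{W}$ (it contains $W$ and is closed). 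If $x \in \overline{W}$, then $D = \bigl(C \cap \overline{V_1} \cap f^{-1}(\overline{W})\bigr) \cup \bigl(C \cap f^{-1}(X \setminus \overline{W})\bigr)$ is closed---this is precisely where extremal disconnectedness enters, since $\overline{W}$ is clopen and hence $X \setminus \overline{W}$ is closed---surjects onto $X$, and omits $c_2$ (because $c_2 \in V_2$ forces $c_2 \notin \overline{V_1}$). If instead $x \notin \overline{W}$, then $(C \setminus V_1) \cup \bigl(C \cap \overline{V_1} \cap f^{-1}(\overline{W})\bigr)$ is closed, surjects onto $X$, and omits $c_1$. Either way minimality is contradicted. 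Without this case distinction (or an equivalent symmetric construction), the passage from ``not injective'' to ``not minimal''---which you rightly identify as the crux of the whole proposition---is missing.
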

\begin{proof}
See \cite[\href{https://stacks.math.columbia.edu/tag/08YH}{Tag 08YH}]{Stacks}.
\end{proof}
\begin{definition}{\textbf{Disconnected compact Hausdorff spaces}}\\
Let $\CoHa$ denote the category of all compact Hausdorff spaces with continuous maps. We define two full subcategories
$$\extdis\subset \mathcal{S}\mathrm{tone}\subset \CoHa, $$ where
\begin{itemize}
\item[1.] the category $\mathcal{S}\mathrm{tone}$ is the subcategory of totally disconnected compact Hausdorff spaces. Its objects are called \textit{Stone spaces}.
\item[2.] the category $\extdis$ is the subcategory of extremally disconnected compact Hausdorff spaces, i.e. of spaces satisfying the equivalent properties of Proposition \ref{characextdis}. Some authors refer to its objects as \textit{Stonean spaces} (cf.\ \cite{Barwick2019}).
\end{itemize} 
\end{definition}
\begin{corollary}\label{stonecechext}
The Stone-\v{C}ech compactification of a discrete space is an extremally disconnected space. 
\end{corollary}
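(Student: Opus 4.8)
The plan is to verify that $\beta X$ satisfies the lifting property (iii) of Proposition~\ref{characextdis}; since $\beta X$ is compact Hausdorff (being a Stone-\v{C}ech compactification), this immediately yields that it is extremally disconnected. The key feature to exploit is that $X$ is discrete, so that \emph{any} set-theoretic map out of $X$ is automatically continuous, and the universal property of Definition and Proposition~\ref{stonecech} turns such a map into a continuous map on all of $\beta X$.

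Concretely, suppose we are given continuous maps $g\colon \beta X \to Z$ and a surjection $p\colon Y \twoheadrightarrow Z$ of compact Hausdorff spaces. Restricting along the canonical map $\iota_X\colon X \to \beta X$ yields a continuous map $g\circ\iota_X\colon X \to Z$. Using surjectivity of $p$ and the axiom of choice, I would pick for every $x \in X$ a point $y_x \in Y$ with $p(y_x) = g(\iota_X(x))$ and set $h(x) := y_x$. Since $X$ is discrete, $h\colon X \to Y$ is continuous, and $p\circ h = g\circ\iota_X$ by construction.

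Now $Y$ is compact Hausdorff, so the universal property of the Stone-\v{C}ech compactification provides a unique continuous extension $\beta h\colon \beta X \to Y$ with $\beta h \circ \iota_X = h$. It remains to check $p\circ\beta h = g$: both $p\circ\beta h$ and $g$ are continuous maps from $\beta X$ into the compact Hausdorff space $Z$, and both restrict along $\iota_X$ to $p\circ h = g\circ\iota_X$, so they agree by the uniqueness clause of the universal property. Hence $\beta h$ is the desired lift, property (iii) holds, and $\beta X \in \extdis$.

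There is no serious obstacle here beyond identifying the right tool: the only subtlety is that one must lift the map on the dense subspace $X$ rather than attempting to lift directly on $\beta X$, and it is precisely the discreteness of $X$ that makes the pointwise, choice-based lift continuous; the universal property then promotes it to $\beta X$, while the uniqueness part of that property forces the relevant triangle to commute. (One could equally phrase the argument via condition (ii), specializing to $Z = \beta X$ and $g = \mathrm{id}_{\beta X}$.)
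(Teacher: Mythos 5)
Your proof is correct and follows essentially the same route as the paper: verify condition (iii) of Proposition~\ref{characextdis} by choosing pointwise preimages on the discrete space $X$, extending via the universal property of $\beta X$, and invoking the uniqueness clause to see that the resulting triangle commutes. No gaps.
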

\begin{proof}
Let $X$ be a discrete space and $\beta X$ its Stone-\v{C}ech compactification. As $\beta X$ is compact Hausdorff, we can apply Proposition \ref{characextdis} as follows: 
If a map $h_1 \colon \beta X \rightarrow Z$ and a surjective map $h \colon Y \rightarrow Z$ of compact Hausdorff spaces are given, there exist continuous maps $f \colon X \overset{\iota_X}{\rightarrow} \beta X \overset{h_1}{\rightarrow} Z$ and $g \colon X \rightarrow Y$ s.t. 
\[
  \begin{tikzcd}
   X \arrow[r, dotted, "g"] \arrow[swap]{dr}{f} & Y \arrow{d}{h} \\
     & Z
  \end{tikzcd}
\]
commutes. Here, $g$ sends an element $x \in X$ to an arbitrary preimage in $h^{-1}(f(x))$. The map is continuous as $X$ is discrete. By the universal property of $\beta X$ we can lift $g$ to a map $\beta g \colon \beta X \rightarrow Y$ with $g=\beta g \circ \iota_X$. As one has $$f=h\circ g=h\circ \beta g \circ \iota_X,$$ the composition $h \circ \beta g$ is by uniqueness in Proposition \ref{stonecech} equal to $h_1$ and Condition (iii) of Proposition \ref{characextdis} is fulfilled.
\end{proof}
\begin{corollary}\label{Surjext}
Every compact Hausdorff space admits a surjection from an extremally disconnected space.
\end{corollary}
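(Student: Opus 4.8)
The plan is to build the required extremally disconnected cover directly out of the Stone-\v{C}ech compactification, exploiting the two corollaries just established. Given a compact Hausdorff space $X$, I would first forget its topology: let $X^{\delta}$ denote the underlying set of $X$ equipped with the discrete topology. Then the identity map on underlying sets $f\colon X^{\delta}\rightarrow X$ is continuous (any map out of a discrete space is) and obviously surjective.

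Next I would pass to the Stone-\v{C}ech compactification $\beta X^{\delta}$ together with the canonical map $\iota_{X^{\delta}}\colon X^{\delta}\rightarrow \beta X^{\delta}$. By Corollary \ref{stonecechext} the space $\beta X^{\delta}$ is extremally disconnected, so it is an object of $\extdis$ and is in particular a candidate for the desired source. Since $X$ is compact Hausdorff, the universal property of the Stone-\v{C}ech compactification (Proposition \ref{stonecech}) applied to $f$ yields a unique continuous map $\beta f\colon \beta X^{\delta}\rightarrow X$ satisfying $f=\beta f\circ \iota_{X^{\delta}}$.

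It remains to check surjectivity of $\beta f$, and this is immediate: the composite $\beta f\circ \iota_{X^{\delta}}=f$ is surjective, hence a fortiori $\beta f$ is surjective (its image already contains the image of $f$, which is all of $X$). Therefore $\beta f\colon \beta X^{\delta}\twoheadrightarrow X$ is the sought-after surjection from an extremally disconnected compact Hausdorff space.

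There is essentially no hard step here; the only point worth flagging is that the unit map $\iota_{X^{\delta}}$ is generally far from surjective (indeed $\beta X^{\delta}$ is typically enormous compared to $X$), but this is harmless since surjectivity of the composite forces surjectivity of $\beta f$ regardless. If desired, one could alternatively phrase the argument via property (ii) of Proposition \ref{characextdis}, but the construction above is the most economical route and reuses exactly the results already in place.
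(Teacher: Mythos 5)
Your proof is correct and follows essentially the same route as the paper's: equip the underlying set with the discrete topology, apply the Stone-\v{C}ech compactification, use Corollary \ref{stonecechext} for extremal disconnectedness, and lift via Proposition \ref{stonecech}. You merely spell out the (trivial) surjectivity of the lifted map, which the paper leaves implicit.
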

\begin{proof}
For any compact Hausdorff space $X$, we have a surjection $\widetilde{X} \rightarrow X$, where $\widetilde{X}$ is the underlying set of $X$ equipped with the discrete topology. We can lift this map to a map $\beta \widetilde{X} \rightarrow X$ by Proposition \ref{stonecech} with $\beta \widetilde{X}$ extremally disconnected by Corollary \ref{stonecechext}. 
\end{proof}
\begin{theorem}\label{coeqext}
Every compact Hausdorff space is a coequalizer of extremally disconnected compact Hausdorff spaces in the category of all topological spaces.
\end{theorem}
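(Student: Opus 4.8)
The plan is to manufacture the coequalizer diagram by hand out of the surjections supplied by Corollary~\ref{Surjext}. First I would choose a surjection $p\colon E \twoheadrightarrow X$ with $E$ extremally disconnected compact Hausdorff, which exists by Corollary~\ref{Surjext}. By Proposition~\ref{surjcomphaus} the map $p$ is a quotient map, so $X$ carries the quotient topology for $p$ and, as a set, $X = E/{\sim_p}$ where $e_1 \sim_p e_2 \iff p(e_1) = p(e_2)$. In particular $\sim_p$, regarded as a subset of $E \times E$, is an equivalence relation.

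Next I would form the fibre product $E \times_X E$. Being a limit of compact Hausdorff spaces it is again compact Hausdorff by Corollary~\ref{limitcomp} (it is in fact the closed subspace $\{(e_1,e_2) : p(e_1) = p(e_2)\}$ of $E \times E$), so Corollary~\ref{Surjext} gives a surjection $q\colon E' \twoheadrightarrow E \times_X E$ with $E'$ extremally disconnected compact Hausdorff. Composing $q$ with the two projections yields maps $f = \mathrm{pr}_1 \circ q$ and $g = \mathrm{pr}_2 \circ q$, $f, g \colon E' \rightrightarrows E$. I claim that $(X, p)$ is the coequalizer of $f$ and $g$ in $\Top$.

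To verify the claim I would compute the coequalizer directly. In $\Top$ the coequalizer of $f, g$ is $E/{\approx}$ with the quotient topology, where $\approx$ is the smallest equivalence relation on $E$ containing all pairs $(f(e'), g(e'))$. Since $q$ is surjective, $\{(f(e'), g(e')) : e' \in E'\}$ is exactly the image of $E \times_X E$ in $E \times E$, i.e. the set $\sim_p$; and $\sim_p$ is already an equivalence relation, so $\approx\, = \,\sim_p$. Hence $E/{\approx} = E/{\sim_p} = X$ as sets, and the topologies agree because both are the quotient topology for $p$. Finally one observes $p \circ f = p \circ g$ (both send $e'$ to $p(\mathrm{pr}_1 q(e')) = p(\mathrm{pr}_2 q(e'))$) and that any continuous $h\colon E \to Y$ with $h f = h g$ is constant on $\sim_p$-classes, hence factors uniquely through the quotient map $p$; this is precisely the universal property of the coequalizer.

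The only genuinely delicate point — and the one I would state most carefully — is the identification $\approx\, = \,\sim_p$: it uses both that $q$ is surjective (so that passing from $E' $ to $E\times_X E$ loses no pairs) and that the fibre-product relation $\sim_p$ is already transitive (so that forming the generated equivalence relation adds no pairs). Everything else is a formal consequence of $p$ being a quotient map (Proposition~\ref{surjcomphaus}) and of compact Hausdorff spaces being stable under limits (Corollary~\ref{limitcomp}) and admitting surjections from extremally disconnected spaces (Corollary~\ref{Surjext}).
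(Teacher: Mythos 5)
Your proof is correct and follows essentially the same route as the paper: the same diagram $E' \rightrightarrows E \to X$ built from Corollary~\ref{Surjext} applied to $X$ and to $E\times_X E$, with Proposition~\ref{surjcomphaus} supplying the quotient topology and the surjectivity of $E'\to E\times_X E$ plus the fact that $\sim_p$ is already an equivalence relation doing the real work. The only cosmetic difference is that you identify $X$ with the explicitly computed coequalizer $E/{\approx}$ in $\Top$, whereas the paper verifies the universal property of the diagram directly; these amount to the same verification.
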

\begin{proof}
Let $K$ be a compact Hausdorff space. Due to Corollary \ref{Surjext}, there exists a surjection $S\rightarrow K$ from an extremally disconnected compact Hausdorff space $S$. Moreover, as $S\times_K S$ is compact Hausdorff by Corollary \ref{limitcomp}, it also admits a surjection from some extremally disconnected compact Hausdorff space $S^{\prime}$. We claim that the diagram 
\[
\begin{tikzcd}
S^{\prime}  \ar[r,shift left=.75ex, "\tilde{p}_1"]
  \ar[r,shift right=.75ex,swap, "\tilde{p}_2"]
&
S \ar[r,"q"] 
&
K,
\end{tikzcd}
\]
where the maps $\tilde{p}_1, \tilde{p}_2$ are defined as compositions by
\[
\begin{tikzcd}
S^{\prime} \ar[r, "f"]
&
S\times_K S \ar[r,shift left=.75ex,"p_1"]
  \ar[r,shift right=.75ex,swap,"p_2"]
&
S,
\end{tikzcd}
\]
is a coequalizer. 
By definition of the fibre product $S\times_K S$, one clearly has $$q\circ (p_1\circ f)=q \circ (p_2 \circ f).$$ Consider some topological space $T$ and a continuous map $h\colon S \rightarrow T$ s.t. $$h\circ (p_1\circ f)=h \circ (p_2 \circ f)$$ or, equivalently 
\begin{align}\label{comm}
h\circ p_1=h \circ p_2
\end{align} as $f$ is surjective. Since any surjection of compact Hausdorff spaces is a quotient map by Proposition \ref{surjcomphaus}, any map $K\rightarrow T$ for which the composite $S\rightarrow K \rightarrow T$ is continuous is continuous itself.
Thus, it is enough to define a map on the underlying sets $K\rightarrow T$ s.t. the diagram
\begin{equation}\label{diagr}
  \begin{tikzcd}
  S \arrow{r}{q} \arrow[swap]{dr}{h} & K \arrow{d} \\
     & T
  \end{tikzcd}
\end{equation}
commutes. \\
For commutativity, we have to define the image of some $a \in K$ under the map $K\rightarrow T$ as image of some $\tilde{a} \in q^{-1}(a)$ under $h$. To receive a well-defined mapping, we have to assure that all elements of $q^{-1}(a)$ are mapped on the same element under $h$. This is clearly given, as by definition of $S\times_K S$ for all elements $x,y \in S$ with $q(x)=q(y)$ one has $(x,y)\in S\times_K S$ and by (\ref{comm}) then $h(x)=h(y)$. Thus, we can define a map $K\rightarrow T$ satisfying (\ref{diagr}), which is clearly unique by surjectivity of $S\rightarrow K$.
\end{proof}
\section{Categorical notions}
I want to start this section with a short remark about set theoretic conventions we will make for simplicity. The following subsection is based on \cite[Section 2]{Day2007}.
\subsection{Set theoretic conventions}\label{warn}
Many fundamental constructions in category theory demand the appearing categories to be small. An important example for this is the category of presheaves $\Fun(\cat^{\mathrm{op}}, \Set)$ on a small category $\cat$. For a large category, this construction may possibly not even result in a category as its hom-sets are not small in general. To solve this issue one can only consider  \textit{small presheaves} defined as in the following. 
\begin{definition}{\textbf{Small functors}}\\
Let $\cat$ be a possibly large category. A functor $F\colon \cat \rightarrow \Set$ is said to be \textit{small} if it is the left Kan extension of some presheaf with a small domain. Equivalently, it is the left Kan extension of its restriction to some small full subcategory of $\cat$.
\end{definition}
\begin{lemma}\label{smallcolimits}
The small functors are precisely given by the small colimits of representables. 
\end{lemma}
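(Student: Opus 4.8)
The plan is to prove both inclusions using two elementary facts about the left Kan extension $\mathrm{Lan}_k\colon \Fun(\mathcal D,\Set)\to\Fun(\cat,\Set)$ along any functor $k\colon\mathcal D\to\cat$ out of a small category: first, $\mathrm{Lan}_k$ is left adjoint to the restriction functor $k^{*}$, hence preserves all colimits; second, by the Yoneda lemma it carries a representable functor to a representable functor, $\mathrm{Lan}_k\,\Hom_{\mathcal D}(d,-)\cong\Hom_{\cat}(k(d),-)$, as one reads off from the natural isomorphisms $\Hom(\mathrm{Lan}_k\Hom_{\mathcal D}(d,-),H)\cong\Hom(\Hom_{\mathcal D}(d,-),k^{*}H)\cong (k^{*}H)(d)=H(k(d))\cong\Hom(\Hom_{\cat}(k(d),-),H)$. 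A third ingredient is the density theorem (also known as co-Yoneda): every functor $G\colon\mathcal D\to\Set$ on a small category is the colimit of the tautological diagram of representables indexed by its category of elements $\mathrm{el}(G)$, which is small since $\mathcal D$ is small and each $G(d)$ is a set.

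For the direction from small colimits of representables to small functors, write $F\cong\colim_{i\in I}\Hom_{\cat}(c_i,-)$ with $I$ small. Let $\cat_0\subseteq\cat$ be the full subcategory spanned by the objects $c_i$, $i\in I$ --- a small category --- with inclusion $j\colon\cat_0\hookrightarrow\cat$. By the second fact $\mathrm{Lan}_j\,\Hom_{\cat_0}(c_i,-)\cong\Hom_{\cat}(c_i,-)$, so by the first fact $F\cong\colim_i\mathrm{Lan}_j\Hom_{\cat_0}(c_i,-)\cong\mathrm{Lan}_j\big(\colim_i\Hom_{\cat_0}(c_i,-)\big)$, exhibiting $F$ as the left Kan extension of a functor on the small category $\cat_0$; thus $F$ is small. (As $j$ is fully faithful, $j^{*}\mathrm{Lan}_j\cong\mathrm{id}$, so moreover $\colim_i\Hom_{\cat_0}(c_i,-)\cong F|_{\cat_0}$ and $F\cong\mathrm{Lan}_j(F|_{\cat_0})$, which recovers the second formulation of smallness.)

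Conversely, suppose $F$ is small, say $F\cong\mathrm{Lan}_k G$ for some $k\colon\mathcal D\to\cat$ with $\mathcal D$ small. By the density theorem $G$ is a small colimit of representables $\Hom_{\mathcal D}(d,-)$; applying $\mathrm{Lan}_k$, which preserves colimits and sends each $\Hom_{\mathcal D}(d,-)$ to $\Hom_{\cat}(k(d),-)$, presents $F\cong\mathrm{Lan}_k G$ as that same small colimit of the representables $\Hom_{\cat}(k(d),-)$ on $\cat$. Hence $F$ is a small colimit of representables.

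Since the argument is purely formal, there is no deep obstacle; the care needed is set-theoretic bookkeeping. One must confirm that the indexing categories $\cat_0$ and $\mathrm{el}(G)$ are genuinely small, and that the notion of a small colimit of representables is well posed --- but such a colimit may be formed pointwise, where it becomes a colimit of a small diagram in the cocomplete category $\Set$ and hence is valued in small sets, so it is an honest object of $\Fun(\cat,\Set)$. One should also fix the variance convention (presheaf versus functor $\cat\to\Set$) once and for all; the two readings are exchanged by replacing $\cat$ with $\cat^{\mathrm{op}}$, and the proof above is written so as to be indifferent to the choice.
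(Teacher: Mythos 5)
Your proof is correct. The paper itself offers no argument for this lemma, deferring entirely to the discussion preceding Remark~2.1 of the cited Day--Lack paper \emph{Limits of small functors}; the argument you give --- that $\mathrm{Lan}_k$ preserves colimits and carries representables to representables, combined with the density theorem over the (small) category of elements --- is precisely the standard one underlying that reference, and your handling of the two directions, of the variance convention, and of the set-theoretic points (smallness of $\cat_0$ and $\mathrm{el}(G)$, pointwise existence of the colimits and of $\mathrm{Lan}_k$ when the domain is small) is complete and accurate.
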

\begin{proof}
See \cite[paragraph before Remark 2.1.]{Day2007}.
\end{proof}
\begin{remark}
The totality of small presheaves on a fixed category $\cat$ forms an ordinary category which is cocomplete, i.e. it contains all small colimits, as small colimits of small colimits are still small. It can be viewed as the full subcategory of the presheaf category spanned by small functors and is in fact
the free cocompletion of $\cat$ under the Yoneda embedding by small colimits. 
\end{remark}
\begin{warning}{\textbf{Dealing with large categories}}\\
Throughout this master thesis, we will consider large categories and want, amongst other things, to work with their presheaf categories. In order to handle set theoretic problems arising from this consideration in a simple way, we will implicitly assume functors to be small. For our purposes, it is enough to know that there is a way to deal with these problems. Nevertheless, we will mention the existence of set theoretic problems at relevant points and refer to some possibility of solving this issue.
\end{warning}
\subsection{Pro-categories in the ordinary setting}\label{procat}
In this section we provide some general background on pro-categories. The higher categorical analogue will be discussed in Chapter \ref{procatbase}. The basic definitions and results on pro-categories provided in this section are taken from \cite[Chapter 6]{Johnstone1982}. 
\begin{definition}{\textbf{Filtered category}}\\
A non-empty index category $I$ is called \textit{filtered} if the following conditions are satisfied. 
\begin{itemize}
\item[1.]For every pair of objects $x,y$ of $I$, there exists an object $z$ and morphisms $x\rightarrow z$, $y \rightarrow z$, and
\item[2.]for every pair of objects $x,y$ and every pair of morphisms $f,g\colon x \rightarrow y$ of $I$, there exists a morphism $h\colon y \rightarrow z$ of $I$ such that $h\circ f=h\circ g$ as morphisms in $I$.
\end{itemize}
A category $I$ is called \textit{cofiltered} if its opposite category $I^{\mathrm{op}}$ obtained by reversing all arrows in $I$  is filtered.
\end{definition}
\begin{definition}{\textbf{Filtered and cofiltered diagrams}}\\
A diagram with filtered (resp. cofiltered) index category is called \textit{filtered} (resp. \textit{cofiltered}). Accordingly, filtered (co)limits (resp. cofiltered (co)limits) are (co)limits with filtered (resp. cofiltered) index categories.
\end{definition}
\begin{definition}{\textbf{Pro-categories}}\\ 
Let $\cat$ be a (small) category. We define Pro($\cat$) to be the category whose objects are diagrams $X \colon I\rightarrow \cat$ over a small and cofiltered category $I$ and whose morphism sets are given by
$$\Hom_{\mathrm{Pro}(\cat)}(X, Y ) \coloneqq \mathrm{lim}_{j \in J} \colim_{i \in I} \Hom_{\cat}(X_i, Y_j),$$
where $X_i = X(i)$ and $Y_j=Y(j)$.
Composition of morphisms can be defined in a natural way. We refer to Pro($\cat$) as the \textit{pro-category} of $\cat$ and to objects of Pro($\cat$) as \textit{pro-objects}.
\end{definition}
For simplicity, a pro-object $X \colon I \rightarrow \cat$ will often be written as a cofiltered system $X = \{X_i\}_{i\in I}$.
\begin{remark}
A pro-category can also be defined as the dual notion of an ind-category. More precisely, the category of pro-objects in $\cat$ is the opposite category of that of ind-objects in the opposite category of $\cat$, i.e. one has
$$\mathrm{Pro}(\cat)\cong(\mathrm{Ind}(\cat^{\mathrm{op}}))^\mathrm{op}.$$
\end{remark}
\begin{proposition}{\textbf{Properties of pro-categories}}\\
Let $\cat$ be a (small) category.
\begin{itemize}
\item[1.] There is a canonical full embedding  $i \colon \cat \hookrightarrow \mathrm{Pro}(\cat)$ which associates to $X \in \cat$ the diagram $\widetilde{X}\colon 1 \rightarrow \cat$ which sends the unique object of the terminal index category $1$ to $X$.
\item[2.] The category $\mathrm{Pro}(\cat)$ is equivalent to the full subcategory of $\Fun(\cat, \Set)^{\mathrm{op}}$ whose objects are cofiltered limits of $\{\Hom_{\cat}(X_i, \cdot)\}_{i \in I}$ with $X_i \in \cat$ for all $i \in I$. It admits all cofiltered limits.
\end{itemize}
\end{proposition}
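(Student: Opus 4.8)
For part (1), the plan is a one-line hom-set computation. The assignment $X\mapsto\widetilde X$ is visibly functorial once one observes that, because the index category $1$ has a single object and only its identity, a morphism $\widetilde X\to\widetilde Y$ in $\mathrm{Pro}(\cat)$ is by definition an element of $\lim_{j\in 1}\colim_{i\in 1}\Hom_\cat(X,Y)=\Hom_\cat(X,Y)$, and that composition in $\mathrm{Pro}(\cat)$ restricts on constant diagrams to composition in $\cat$. The same identification $\Hom_{\mathrm{Pro}(\cat)}(\widetilde X,\widetilde Y)=\Hom_\cat(X,Y)$ shows $i$ is fully faithful, and clearly $\widetilde X=\widetilde Y$ forces $X=Y$, so $i$ is a full embedding.

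For part (2), I would realize the equivalence by hand. Applying the ordinary Yoneda lemma to $\cat^{\mathrm{op}}$ and passing to opposite categories yields the fully faithful (covariant) Yoneda embedding $h\colon\cat\hookrightarrow\Fun(\cat,\Set)^{\mathrm{op}}$, $X\mapsto\Hom_\cat(X,\cdot)$, together with the natural isomorphism $\Hom_{\Fun(\cat,\Set)}(\Hom_\cat(X,\cdot),F)\cong F(X)$ for every $F\in\Fun(\cat,\Set)$. I then define $\Phi\colon\mathrm{Pro}(\cat)\to\Fun(\cat,\Set)^{\mathrm{op}}$ by sending $\{X_i\}_{i\in I}$ to the filtered colimit $\colim_i\Hom_\cat(X_i,\cdot)$ formed in $\Fun(\cat,\Set)$ — equivalently, to the cofiltered limit $\lim_i\Hom_\cat(X_i,\cdot)$ formed in $\Fun(\cat,\Set)^{\mathrm{op}}$ — noting that $\Phi\circ i=h$. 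The crux is then a single computation, which at the same time defines $\Phi$ on morphisms and shows it is bijective on hom-sets: for $X=\{X_i\}$ and $Y=\{Y_j\}$,
\begin{align*}
\Hom_{\Fun(\cat,\Set)^{\mathrm{op}}}(\Phi X,\Phi Y)
&=\Hom_{\Fun(\cat,\Set)}\Big(\colim_{j}\Hom_\cat(Y_j,\cdot),\ \colim_{i}\Hom_\cat(X_i,\cdot)\Big)\\
&\cong\lim_{j}\Hom_{\Fun(\cat,\Set)}\Big(\Hom_\cat(Y_j,\cdot),\ \colim_{i}\Hom_\cat(X_i,\cdot)\Big)\\
&\cong\lim_{j}\Big(\colim_{i}\Hom_\cat(X_i,\cdot)\Big)(Y_j)=\lim_{j}\colim_{i}\Hom_\cat(X_i,Y_j),
\end{align*}
where I use that $\Hom$ out of a colimit is a limit of $\Hom$'s, then the Yoneda isomorphism, then that filtered colimits in $\Fun(\cat,\Set)$ are computed objectwise. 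The last term is $\Hom_{\mathrm{Pro}(\cat)}(X,Y)$ by definition, and unwinding the chain identifies the bijection with the map induced by $\Phi$; hence $\Phi$ is fully faithful. Finally, an object of $\Fun(\cat,\Set)^{\mathrm{op}}$ lies in the essential image of $\Phi$ if and only if it is a cofiltered limit of a small system of representables $\{\Hom_\cat(X_i,\cdot)\}_{i\in I}$ — precisely the full subcategory named in the statement — so $\Phi$ is the asserted equivalence, compatibly with part (1) via $\Phi\circ i=h$.

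For the existence of cofiltered limits, I would take a small cofiltered diagram $(X^{(a)})_{a\in A}$ in $\mathrm{Pro}(\cat)$, transport it along $\Phi$, and form its cofiltered limit in $\Fun(\cat,\Set)^{\mathrm{op}}$ — this exists because $\Fun(\cat,\Set)$ has all small filtered colimits. Writing $X^{(a)}=\{X^{(a)}_i\}_{i\in I_a}$, this limit is a cofiltered limit of the cofiltered limits $\lim_i\Hom_\cat(X^{(a)}_i,\cdot)$, and the standard flattening argument — replacing the double index by a Grothendieck construction on the family $(I_a)_{a}$ and passing to a small cofinal cofiltered subcategory — presents it again as a cofiltered limit of representables, hence in the essential image of $\Phi$. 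Since $\mathrm{Pro}(\cat)$ is, via $\Phi$, a full subcategory of $\Fun(\cat,\Set)^{\mathrm{op}}$ closed under these limits, they compute limits in $\mathrm{Pro}(\cat)$ as well.

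The hard part will not be the conceptual core — the Yoneda-plus-interchange computation is short — but the bookkeeping around it: first, checking that $\Phi$ is genuinely well defined and functorial on morphisms, since a morphism of pro-objects is only presented by a compatible system of actual morphisms and one must verify independence of the chosen representatives (this is the displayed computation read backwards, carried out with care); and second, the flattening argument for a cofiltered limit of cofiltered limits together with the attendant smallness bookkeeping. Both are routine, and where brevity is preferred one can defer to \cite[Chapter 6]{Johnstone1982}, as the section already does for the background. An alternative to all of part (2) is to dualize the corresponding facts about ind-categories through $\mathrm{Pro}(\cat)\cong(\mathrm{Ind}(\cat^{\mathrm{op}}))^{\mathrm{op}}$.
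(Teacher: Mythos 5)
Your proposal is correct, but it takes a different route from the paper: the paper's entire proof is a citation to Johnstone (Chapter VI, Sections 1.2, 1.4 and 1.9), transferring the corresponding statements about ind-categories through the duality $\mathrm{Pro}(\cat)\cong(\mathrm{Ind}(\cat^{\mathrm{op}}))^{\mathrm{op}}$ — exactly the alternative you mention in your last sentence — whereas you construct the equivalence by hand. Your central computation (hom out of a colimit, then Yoneda, then objectwise filtered colimits, landing on $\lim_j\colim_i\Hom_\cat(X_i,Y_j)$) is the standard argument and is carried out correctly, including the direction-reversal in $\Fun(\cat,\Set)^{\mathrm{op}}$; part (1) is likewise fine. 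What your approach buys is an explicit functor $\Phi$ and an explicit identification of hom-sets, which is genuinely useful here because the paper goes on to use precisely this presentation (pro-objects as cofiltered limits of corepresentables, and the $\lim\colim$ formula for morphisms) in the $\infty$-categorical chapters; the citation route is shorter but leaves the construction implicit. The two places where your writeup is a sketch rather than a proof are the ones you flag yourself: well-definedness and functoriality of $\Phi$ on morphisms (equivalently, that composition in $\mathrm{Pro}(\cat)$ — which the paper never spells out — matches composition of natural transformations under your bijection), and the flattening of a cofiltered limit of cofiltered limits of corepresentables into a single small cofiltered limit. Both are standard and correctly located, so deferring their details to the same reference the paper uses is reasonable.
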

\begin{proof}
The statements are formulated for ind-categories in Section 1.2. and Section 1.4. of \cite[Chapter VI]{Johnstone1982}. With regards to Section 1.9 in the same chapter, they can be transferred to the dual notion of pro-categories.
\end{proof}
\begin{remark}
Regarding part (2) of the proposition, we will speak of pro-objects as small cofiltered limits of objects in $\cat$. A pro-object in the essential image of the embedding $\cat \hookrightarrow \mathrm{Pro}(\cat)$ will be called \textit{corepresentable}. 
\end{remark}
The pro-category of profinite sets defined below will play an important role for us. The subsequent proposition states that this category actually already appeared in the first section of this chapter. We will specify how it can be described in terms of limits of certain topological spaces.
\begin{definition}{\textbf{Profinite sets}}\label{profindef}\\
A \textit{profinite set} is a pro-object in the category $\Set^{\mathrm{fin}}$ of finite sets. We denote the category of profinite sets by $\PFin$.
\end{definition}
\begin{proposition}\label{profinlim}
The category $\PFin$ of profinite sets is equivalent to the category $\mathcal{S}\mathrm{tone}$ of \textit{Stone spaces}, i.e. totally disconnected compact Hausdorff spaces. The equivalence is given by considering a profinite set as a cofiltered diagram of finite discrete spaces and taking the limit in the category of topological spaces. Using this identification of $\PFin$ and $\mathcal{S}\mathrm{tone}$, the fully faithful embedding of finite sets into the category of profinite sets is given by equipping them with the discrete topology.
 \end{proposition}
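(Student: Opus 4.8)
The plan is to define the comparison functor $F\colon \PFin \to \mathcal{S}\mathrm{tone}$ by the recipe in the statement and to check that it is fully faithful and essentially surjective; the description of the embedding $\SFin \hookrightarrow \PFin$ will then drop out. Given a profinite set $X = \{X_i\}_{i\in I}$, regard each finite set $X_i$ as a finite discrete space and put $F(X):=\lim_{i\in I}X_i$, the limit taken in $\Top$. Finite discrete spaces are compact Hausdorff, so $F(X)$ is compact Hausdorff by Corollary \ref{limitcomp}; moreover $F(X)$ is a subspace of $\prod_i X_i$, which is totally disconnected, hence $F(X)$ is totally disconnected and lies in $\mathcal{S}\mathrm{tone}$. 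On morphisms one uses that $\Hom_\Top(\lim_i X_i,-)$ carries a cofiltered limit $\lim_j Y_j$ to $\lim_j\Hom_\Top(\lim_i X_i,Y_j)$ and that each $Y_j$ is discrete: a representative map $X_i\to Y_j$ composed with the projection $F(X)\to X_i$ is continuous and independent of the representative, and assembling over $j$ gives the action of $F$ on $\Hom_{\PFin}(X,Y)=\lim_j\colim_i\Hom_\Set(X_i,Y_j)$. Functoriality is then a routine check.

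Fully faithfulness reduces, via the two limit decompositions just mentioned, to showing that for every profinite set $X=\{X_i\}$ and every finite discrete space $S$ the natural map
\[
\colim_i \Hom_\Set(X_i,S)\longrightarrow \Hom_\Top\bigl({\textstyle\lim_{i} X_i},\,S\bigr),\qquad [f]\mapsto f\circ\pi_i,
\]
is bijective. For surjectivity, a continuous $h\colon\lim_i X_i\to S$ decomposes $\lim_i X_i$ into the finitely many clopen sets $h^{-1}(s)$; the sets $\pi_i^{-1}(\{a\})$ with $a\in X_i$ form a basis of clopens of $\lim_i X_i$, so each $h^{-1}(s)$, being clopen and compact, is a finite union of such basic sets, and cofilteredness of $I$ lets one take all indices involved below a single $k$, so that $h$ factors through $\pi_k$ and comes from a map $X_k\to S$. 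Injectivity is a Mittag-Leffler argument: the image of a projection $\pi_k$ stabilises at some finite stage (the $X_k$ are finite and $I$ cofiltered), so two maps that agree after composing with the projections already agree at a common later index.

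Essential surjectivity is the core. Let $K$ be a Stone space and let $I_K$ be the codirected poset of finite partitions of $K$ into clopen sets, ordered by refinement; each $P\in I_K$ gives a finite discrete quotient $q_P\colon K\twoheadrightarrow X_P$ and refinements give transition maps, so $\{X_P\}_{P\in I_K}$ is a profinite set equipped with a canonical continuous map $\psi\colon K\to\lim_P X_P=F(\{X_P\})$. The map $\psi$ is injective because in a totally disconnected compact Hausdorff space the clopen sets separate points -- the quasi-component of a point equals its connected component, which is a singleton -- so distinct points are already separated by a two-element partition; and $\psi$ is surjective because for a compatible family $(x_P)_P$ the fibres $q_P^{-1}(x_P)$ are nonempty closed subsets of the compact space $K$ with the finite intersection property, hence admit a common point. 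A continuous bijection from a compact space to a Hausdorff space is a homeomorphism, so $\psi$ is an isomorphism in $\mathcal{S}\mathrm{tone}$ and $F$ is essentially surjective.

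Finally, a finite set $S$ viewed in $\PFin$ through the canonical embedding $\SFin\hookrightarrow\PFin$ (the constant diagram on the terminal index category) has $F(S)=S$ with the discrete topology, which is a Stone space; transporting along the equivalence $F$ therefore identifies $\SFin\hookrightarrow\PFin$ with the inclusion of finite discrete spaces into Stone spaces, as claimed. I expect the principal obstacle to be exactly the topological input used in essential surjectivity -- that clopen sets form a basis of a totally disconnected compact Hausdorff space (equivalently, that quasi-components coincide with connected components there) -- together with the compactness and cofinality bookkeeping needed to factor a continuous map to a finite discrete space through a single stage; once these ingredients are granted, the remainder is formal manipulation of (co)limits.
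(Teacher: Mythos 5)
Your proof is correct, but it takes a more self-contained route than the paper does. The paper's own proof is essentially a pair of citations: it invokes Stone duality (via Johnstone) for the equivalence $\PFin\simeq\mathcal{S}\mathrm{tone}$, notes that the explicit limit construction realizes it, and cites the Stacks project for the fact that every Stone space is a cofiltered limit of finite discrete spaces (the paper's subsequent Remark then spells out exactly the clopen-partition system you build for essential surjectivity). You instead prove everything by hand: the reduction of full faithfulness to the bijectivity of $\colim_i\Hom_{\Set}(X_i,S)\to\Hom_{\Top}(\lim_i X_i,S)$, with surjectivity by the compactness/clopen-basis factorization and injectivity by the Mittag--Leffler stabilization of images of the finite stages, is precisely the content the paper delegates to its references, and your essential-surjectivity argument (clopen partitions, separation of points via quasi-components equalling components, the finite intersection property, and the compact-to-Hausdorff bijection criterion) matches the paper's Remark and the cited Stacks tag. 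What your route buys is a complete elementary argument with the topological inputs made explicit; what the paper's route buys is brevity and the conceptual link to Boolean algebras through Stone duality. The only places where you should be slightly more careful if writing this out in full are (i) in the surjectivity step, checking that the factored map $X_k\to S$ is well defined on points of $X_k$ not in the image of $\pi_k$ (any choice works) and single-valued on points that are (disjointness of the fibres $h^{-1}(s)$), and (ii) in the injectivity step, noting that the stabilized image equals $\mathrm{im}(\pi_i)$ because a cofiltered limit of nonempty finite sets is nonempty; both are routine and your sketch already points at them.
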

\begin{proof}
For a full proof of the equivalence of profinite sets and Stone spaces using Stone duality see \cite[Chapter VI 2.3.]{Johnstone1982}. One can even construct the equivalence explicitly as described in the proposition. This yields an essentially surjective functor since every Stone space can be represented as a cofiltered limit of finite discrete spaces \cite[\href{https://stacks.math.columbia.edu/tag/08ZY}{Tag 08ZY}]{Stacks}. Then, the fully faithful embedding can clearly be obtained by equipping finite sets with the discrete topology.
\end{proof}
\begin{remark}
In the proof of the previous proposition, we noted that to every Stone space $X$ one can attach a cofiltered projective system of finite sets. We want to review how this can be realized:\\ If $X=\amalg_{i \in I}U_i$ is a finite disjoint union of non-empty open and closed subsets, every $U_i$, that is not a singleton, can be written itself as finite disjoint union of proper open and closed subsets since we assume $X$ to be totally disconnected. Hence, every such covering $\{U_i\}_{i \in I}$ can be refined by a covering $\{V_j\}_{j \in J}$, i.e. for every $i \in I$ there exists a $J_i\subset J$ with $U_i=\amalg_{j \in J_i}V_j$. Particularly, one can find a common refinement for two given decompositions. Let $\mathcal{I}$ be the set
of all finite decompositions of $X$ by open and closed subsets. For every $I \in \mathcal{I}$, we define a quotient space $X_I$ by setting  elements $x, y \in X$ to be equivalent if they are contained in the same $U_i$ of the decomposition. Then $X_I$ defines a finite discrete space. We have a canonical map $X_{I'}\rightarrow X_I$ of finite discrete spaces if the covering corresponding to $I'$ refines the covering corresponding to $I$. By this, we can view $\mathcal{I}$ as a cofiltered category with morphisms $I'\rightarrow I$ corresponding to refinements as for any two decompositions one can find a common refinement. Together with the maps $X\rightarrow X_I$, we obtain a cofiltered projective system and thus a continuous  map
$X\rightarrow \mathrm{lim}_{I \in \mathcal{I}}X_I,$
which is indeed a homeomorphism as argued in \cite[\href{https://stacks.math.columbia.edu/tag/08ZY}{Tag 08ZY}]{Stacks}. 
\end{remark}
\subsection{Grothendieck topologies and sheaves}
In this section, we will review the theory of sheaves associated to Grothendieck topologies. There is a very common definition of \textit{Grothendieck topologies} which only applies for categories with certain fibre products. In the following, we will call them \textit{Grothendieck pretopologies}. 
We will shortly discuss the original definition of a \textit{Grothendieck topology}, which does not require the existence of fibre products, since this will enable us to define \textit{condensed sets} in the next chapter in a more proper way. Note Remark \ref{pretoandtopo} for a comment about how the definitions mentioned above are related. This section is mainly based on \cite[Appendix B]{Lurie2018}.
\begin{definition}{\textbf{Grothendieck pretopology}}\label{cov1}\\
Let $\cat$ be a category with fibre products. A \textit{Grothendieck pretopology} on $\cat$ is a  collection of families $\{U_i \rightarrow U\}_{i \in I}$ of morphisms for each object $U\in \cat$, called \textit{covering families of $U$} or just \textit{coverings}, such that:
\begin{itemize}
\item[1.]Every isomorphism $V\xrightarrow{\cong} U$ forms a covering family $\{V\xrightarrow{\cong} U\}$.
\item[2.]If $\{U_i \rightarrow U\}_{i \in I}$ is a covering family and $f\colon V \rightarrow U$ is any morphism in $\cat$, then $\{V\times_U U_i \rightarrow V \}_{i \in I}$ is a covering family.
\item[3.]If $\{U_i\rightarrow U\}_{i\in I}$ is a covering family and for each $i$ also $\{U_{i,j}\rightarrow U_i\}_{j\in J_i}$ is a covering family, then the family of composites $\{U_{i,j}\rightarrow U_i \rightarrow U\}_{i \in I, j\in J_i}$ is a covering family as well.
\end{itemize}
We call the pair of such a category together with a family of coverings a $\textit{site}$.
\end{definition}
\begin{example}{\textbf{Pretopology on $\CoHa$ and $\mathcal{S}\mathrm{tone}$}}\label{examplepretopo}\\
Consider the category $\cat\coloneqq\CoHa$ (respectively $\cat\coloneqq\mathcal{S}\mathrm{tone}$) of (totally disconnected) compact Hausdorff spaces which is closed under limits.  Let $X$ be a (totally disconnected) compact Hausdorff space and $\{X_i\rightarrow X\}_{i\in I}$ a finite family of jointly surjective maps in $\cat$, i.e. the induced map $\amalg_{i \in I} X_i \rightarrow X$ is surjective. These families define a Grothendieck pretopology on $\cat$. Note that the category $\extdis$ of extremally disconnected compact Hausdorff spaces does not have fibre products. Hence, for this category we cannot define a Grothendieck pretopology in the same manner as in Definition \ref{cov1}. This issue will be resolved by Definition \ref{Grotop}.
\end{example}
The definition of a Grothendieck pretopology enables us to define a notion of sheaves for categories with certain fibre products that provides a nice extension of the notion of sheaves for topological spaces (defined on the category of open subsets of a topological space).
\begin{definition}{\textbf{Sheaves on sites}}\label{shv1}\\
Let $\cat$ be a site and let $F$ be a presheaf of sets on $\cat$, i.e. a contravariant functor $F\colon \cat^{\mathrm{op}}\rightarrow \Set$. We say that $F$
is a sheaf on the site $\cat$ if for all $U \in \cat$ and every covering $\{U_i \rightarrow U\}_{i\in I}$ the diagram
\[
\begin{tikzcd}[column sep = huge]
F(U) \ar[r]
&
\prod_{i \in I} F(U_i) \ar[r,shift left=.75ex, "\prod_{ij}F(p_{ij}^1)"]
  \ar[r,shift right=.75ex,swap, "\prod_{ij}F(p_{ij}^2)"]
&
\prod_{i,j}F(U_{i}\times_U U_{j})
\end{tikzcd}
\]
is an equalizer, where 
\begin{align*}
&p_{ij}^1\colon U_{i}\times_U U_{j} \rightarrow U_i\\
&p_{ij}^2\colon U_{i}\times_U U_{j} \rightarrow U_j
\end{align*} are the canonical projections.
The category of sheaves on $\cat$ is denoted by $\mathrm{Sh}(\cat)$.
\end{definition}
The following definition lays the foundation for a more general notion of a topology and hence for sheaves on a category that does not admit certain fibre products, e.g. the category of extremally disconnected compact Hausdorff spaces. In a nutshell, a covering given by a collection of families of morphisms will be replaced by one given by a collection of \textit{sieves}. And the covering family $\{V\times_U U_i\rightarrow V\}_{i \in I}$ of Definition \ref{cov1} (2) will be replaced by a so called \textit{pullback of a sieve} along some morphism $f$.
\begin{definition}{\textbf{Sieves on objects}}\\
Let $\cat$ be a category and $X\in \cat$. Recall that the over category $\cat_{/X}$ over $X$ consists of all pairs $(Y, f\colon Y \rightarrow X)$ whereby $f$ is a morphism in $\cat$. Morphisms $(Y^{\prime},f^{\prime})\rightarrow (Y,f)$ in $\cat_{/X}$ are defined by morphisms $g\colon Y^{\prime}\rightarrow Y$ in $\cat$ such that $f\circ g=f^{\prime}\colon Y^{\prime} \rightarrow X$.
\begin{itemize}
\item[1.] A \textit{sieve} on $X$ is a full subcategory $S(X)$ of the over category $\cat_{/X}$ consisting of pairs $(Y,f\colon Y \rightarrow X)$  such that for every morphism $g\colon (Y^{\prime}, f^{\prime}) \rightarrow (Y,f)$ in $\cat_{/X}$ the pair $(Y^{\prime}, f^{\prime} \colon Y^{\prime}\rightarrow X)$ belongs to $S(X)$ whenever $(Y, f\colon Y \rightarrow X)$ does.
\item[2.] The \textit{pullback} of a sieve $S(X)$ on $X$ along some morphism $f\colon Y\rightarrow X$ is defined as the sieve on $Y$ consisting of those $(V,g\colon V \rightarrow Y)$ for which the composite $(V, f\circ g \colon V \rightarrow X)$ belongs to $S(X)$. It is denoted by $f^{*}S(X)$.
\end{itemize}
\end{definition}
\begin{definition}{\textbf{Sieves generated by morphisms}}\\
Let $\cat$ be a category, $X\in \cat$ and $\{f_i\colon X_i \rightarrow X\}_{i \in I}$ a collection of morphisms with target $X$. The smallest sieve on $X$ which contains every $(X_i, f_i)$ is called \textit{sieve generated by the morphisms} $f_i$. It is given as the full subcategory of $\cat_{/X}$ spanned by those $(Y, f\colon Y \rightarrow X)$ for which there exists a factorization $Y \rightarrow X_i \rightarrow X$ for some $i \in I$.
\end{definition}
\begin{definition}{\textbf{Grothendieck topology}}\label{Grotop}\\
Let $\cat$ be an arbitrary category. A \textit{Grothendieck topology} $J$ on $\cat$ is a collection of sieves for each object $X\in \cat$, called \textit{covering sieves} and denoted by $J(X)$, such that:
\begin{itemize}
\item[1.]For each object $X\in \cat$, the category $\cat_{/X}$ (called maximal sieve) is a covering sieve on $X$.
\item[2.]For each morphism $f\colon Y\rightarrow X$ in $\cat$ and each covering sieve $S(X) \in J(X)$ on $X$, the pullback $f^{*}S(X)$ is a covering sieve on $Y$.
\item[3.] Let $S(X)\in J(X)$ be a covering sieve on $X$ and let $T(X)$ be another sieve on $X$ such that for each morphism $f\colon Y \rightarrow X$ belonging to $S(X)$ the pullback $f^{*}T(X)$ is a covering sieve on $Y$. Then $T(X)$ is a covering sieve on $X$.
\end{itemize}
A category together with a Grothendieck topology is also called a $\textit{site}$.
\end{definition}
\begin{example}{\textbf{Coherent topology}}\label{exampletopo}\\
Recall that an effective epimorphism in a category is a morphism $f\colon X\rightarrow Y$ such that $X\times_Y X$ is defined and 
\[
\begin{tikzcd}
X\times_Y X \ar[r,shift left=.75ex]
  \ar[r,shift right=.75ex,swap]
&
X \ar[r, "f"]
&
Y
\end{tikzcd}
\]
is a  coequalizer.
Let $\cat$ be a coherent category (e.g. a regular and extensive category or a (pre)topos). A sieve on some object $X\in \cat$ is a \textit{coherent covering sieve} if it contains a finite collection of morphisms $\{X_i \rightarrow X\}_{i \in I}$ for which the induced map $\amalg_{i \in I} X_i \rightarrow X$ is an effective epimorphism. The collection of coherent covering sieves determines a Grothendieck topology on $\cat$ by \cite[Proposition B.5.2.]{Lurie2018}, called the \textit{coherent topology} on $\cat$. 
\end{example}
\begin{definition}{\textbf{Coverings}}\label{cov2}\\
Let $\cat$ be equipped with a Grothendieck topology. A collection $\{X_i \rightarrow X\}_{i\in I}$ of morphisms in $\cat$ is called a \textit{covering} if it generates a covering sieve on $X \in \cat$.
\end{definition}
One may note that there seems to be an abuse of notation by defining \textit{covering} in Definition \ref{cov1} and Definition \ref{cov2}. The following remark dissolves this vagueness. 
\begin{remark}\label{pretoandtopo}
For categories with fibre products, every \textit{Grothendieck pretopology} can be converted into a \textit{Grothendieck topology}.  
Namely, by \cite[\href{https://stacks.math.columbia.edu/tag/00ZC}{Tag 00ZC}]{Stacks}, the collection of all sieves that contain a covering family from a chosen pretopology forms a Grothendieck topology. Thus, every site $\cat$ determines a topology called \textit{topology associated to} $\cat$ \cite[\href{https://stacks.math.columbia.edu/tag/00ZD}{Tag 0ZD}]{Stacks}
and every covering family satisfying the axioms of Definition \ref{cov1} also is  a covering in the sense of Definition \ref{cov2}. 
\end{remark}
\begin{example}{\textbf{Coherent topology on $\CoHa$ and $\mathcal{S}\mathrm{tone}$}}\label{cohcompH}\\
The category $\CoHa$ of compact Hausdorff spaces is a pretopos and the category $\PFin=\mathcal{S}\mathrm{tone}$ of totally disconnected compact Hausdorff spaces is regular extensive as this is true for $\SFin$ \cite[Warning 6.1.23.]{Lurie2018}. Therefore, both categories are coherent. One can define the coherent topology as in Example \ref{exampletopo} for them. Moreover, in these cases, every surjective continuous map is an effective epimorphism as one can show, similarly to the argument in the proof of Proposition \ref{coeqext}, that a diagram
\[
\begin{tikzcd}
S\times_K S \ar[r,shift left=.75ex,"p_1"]
  \ar[r,shift right=.75ex,swap,"p_2"]
&
S \ar[r,"q"] 
&
K 
\end{tikzcd}
\]
with $q$ surjective is a coequalizer. Conversely, every epimorphism of compact Hausdorff spaces is surjective. Thus, the coherent topology is exactly the topology associated to the pretopology of Example \ref{examplepretopo}.
\end{example}
\begin{definition}{\textbf{Sheaves on sites}}\label{shv2}\\
Let $\cat$ be a category equipped with a Grothendieck topology.
We say that a functor $F\colon \cat^{\mathrm{op}}\rightarrow \Set$ is a \textit{sheaf} if for each object $X\in \cat$ and each covering sieve $J(X)$ the canonical map
$$F(X)\rightarrow \lim_{D \in J(X)^{\mathrm{op}}} F(D)$$ is a bijection. We let $\mathrm{Sh}(\cat)$ denote the full subcategory of $\Fun(\cat^{\mathrm{op}}, \Set)$ spanned by those functors which are sheaves on $\cat$.
\end{definition}
The following remark assures that our two definitions of sheaves are compatible for categories with fibre products.
\begin{remark}
For categories with fibre products, a functor is a sheaf on a pretopology if and only if it is a sheaf on the corresponding Grothendieck topology of Remark \ref{pretoandtopo} by \cite[\href{https://stacks.math.columbia.edu/tag/00ZC}{Tag 00ZC}]{Stacks}. 
\end{remark}
\begin{example}{\textbf{Sheaves on coherent topology}}\label{sheavcoh}  \\
Let $\cat$ be a category equipped with the coherent topology as defined in Example \ref{exampletopo}. Then by \cite[Proposition B.5.5.]{Lurie2018}, a contravariant functor $F$ is a sheaf if and only if the following conditions are satisfied:
\begin{itemize}
\item[(I)] The functor preserves finite products, i.e. for every finite collection of objects $\{X_i\}_{i \in I}$ of $\cat$, the canonical map 
\begin{align*}F(\amalg_{i \in I}X_i) \rightarrow \prod_{i \in I}F (X_i)
\end{align*}
is bijective.
\item[(II)]For every effective epimorphism $Y\rightarrow X$ in $\cat$, the diagram of sets
\[
\begin{tikzcd}
F(X) \ar[r]
&
F(Y) \ar[r,shift left=.75ex]
  \ar[r,shift right=.75ex,swap]
&
F(Y\times_X Y)
\end{tikzcd}
\]
is an equalizer.
\end{itemize}
\end{example}
The following will be useful in order to work with condensed sets, which will be introduced in the next chapter. See Proposition \ref{condsetequi} for an application of the results stated below.
\begin{definition}{\textbf{Basis}}\label{basis}\\
Let $\cat$ be a category equipped with a Grothendieck topology. We call a full
subcategory $\mathcal{D} \subset \cat$ a \textit{basis} for $\cat$ if for every object $X\in \cat$, there exists a covering $\{f_i\colon D_i \rightarrow X\}_{i \in I}$, where
the set $I$ is small and each $D_i$ belongs to $\mathcal{D}$.
\end{definition}
\begin{proposition}\label{basistopo} Let $\cat$ be a category equipped with a Grothendieck topology and let $\mathcal{D} \subset \cat$ be a basis. Then, there is a unique Grothendieck topology on the category $\mathcal{D}$ such that a collection of morphisms $\{D_i \rightarrow D\}_{i \in I}$ in $\mathcal{D}$ is a covering if and only if it is a covering in $\cat$. Moreover, precomposition with the embedding $\mathcal{D} \hookrightarrow \cat$ induces an equivalence of categories 
$$\mathrm{Shv}(\cat) \rightarrow \mathrm{Shv}(\mathcal{D}), F \mapsto F|_{\mathcal{D}^{\mathrm{op}}}.$$
\end{proposition}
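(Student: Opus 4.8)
The plan is to first construct the Grothendieck topology on $\mathcal{D}$ by declaring, for $D\in\mathcal{D}$, a sieve $S$ on $D$ (a full subcategory of $\mathcal{D}_{/D}$) to be \emph{covering} exactly when the sieve $\overline{S}$ it generates inside $\cat$ — the full subcategory of $\cat_{/D}$ spanned by the morphisms $Y\to D$ that factor through some member of $S$ — is a covering sieve on $D$ for the given topology on $\cat$. I would then verify the three axioms of Definition~\ref{Grotop}. Axiom~1 is immediate, since $\mathcal{D}_{/D}$ contains $\mathrm{id}_D$ and hence generates the maximal sieve of $\cat_{/D}$. The content of axioms~2 and~3 comes from the basis hypothesis through the following observation: if $\overline{T}$ is a covering sieve on an object $D\in\mathcal{D}$ in $\cat$, then the sub-sieve of $\overline{T}$ spanned by those morphisms whose source lies in $\mathcal{D}$, re-generated inside $\cat$, is again a covering sieve. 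Indeed, for any $g\colon Y\to D$ in $\overline{T}$ one covers $Y$ by objects $E_k$ of $\mathcal{D}$; each composite $E_k\to Y\to D$ lies in $\overline{T}$ and has source in $\mathcal{D}$, so the pullback along $g$ of the re-generated sieve contains a covering of $Y$, and the local-character axiom in $\cat$ gives the claim. Granting this, pullback stability and local character on $\mathcal{D}$ translate directly from the corresponding properties on $\cat$, using the elementary identity $\overline{S}\cap\mathcal{D}_{/D}=S$, valid for any sieve $S$ on $D$ in $\mathcal{D}$. Uniqueness is then automatic: a Grothendieck topology is determined by its covering \emph{families} (each covering sieve is the sieve generated by the family of all its morphisms, cf.\ Definition~\ref{cov2}), and the stated compatibility condition pins those families down.

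For the second assertion I would first check that restriction sends sheaves to sheaves: given a sheaf $F$ on $\cat$ and a covering sieve $S$ on $D\in\mathcal{D}$, the sheaf property on $\cat$ gives $F(D)\xrightarrow{\sim}\lim_{\overline{S}^{\mathrm{op}}}F$, and the comparison map $\lim_{\overline{S}^{\mathrm{op}}}F\to\lim_{S^{\mathrm{op}}}F$ is bijective because every morphism of $\overline{S}$ factors through a member of $S$, the factorization becoming unique once one restricts along a $\mathcal{D}$-covering of its source and invokes that $F$ is separated. The candidate inverse is the right Kan extension $\iota_{\ast}$ along $\iota^{\mathrm{op}}\colon\mathcal{D}^{\mathrm{op}}\hookrightarrow\cat^{\mathrm{op}}$, namely $(\iota_{\ast}G)(X)=\lim_{(D\to X),\,D\in\mathcal{D}}G(D)$; fully faithfulness of $\iota$ makes $(\iota_{\ast}G)|_{\mathcal{D}^{\mathrm{op}}}\cong G$ automatic. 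Two lemmas carry the remaining work: \emph{(i)} every covering sieve on an \emph{arbitrary} object $Z\in\cat$ contains a covering family all of whose members have source in $\mathcal{D}$ — proved by covering $Z$ by objects of $\mathcal{D}$, pulling the sieve back to each, noting that the pullback restricts to a covering sieve of $\mathcal{D}$, and composing — and \emph{(ii)} the ``factor locally through $\mathcal{D}$'' principle used above. From (i) and (ii) one shows that $\iota_{\ast}G$ is a sheaf on $\cat$ (for a covering sieve $T$ on $X$, reduce to the sub-covering generated by a $\mathcal{D}$-valued covering sitting inside $T$ and descend, using that $G$ is a sheaf on $\mathcal{D}$), and that for a sheaf $F$ on $\cat$ the canonical map $F(X)\to\lim_{(D\to X)}F(D)$ is bijective — injective by separatedness together with the basis property, surjective by gluing a compatible family indexed by $\mathcal{D}_{/X}$, where well-definedness of the glued section again rests on (i) and (ii). Together these show that $F\mapsto F|_{\mathcal{D}^{\mathrm{op}}}$ and $\iota_{\ast}$ are mutually inverse on sheaves, hence an equivalence.

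The main obstacle is the descent bookkeeping of the second part — verifying that the right Kan extension of a sheaf is a sheaf and that the round trip $F\mapsto\iota_{\ast}(F|_{\mathcal{D}^{\mathrm{op}}})$ recovers $F$. There is no single deep idea here; everything reduces to transporting the sheaf, and especially the separatedness, condition across $\mathcal{D}\hookrightarrow\cat$, which is precisely what lemma~(i) and the local-factorization principle are engineered to do, so the real task is organizing these reductions cleanly rather than overcoming any conceptual difficulty. As a consistency check, this is exactly the mechanism behind the definition of condensed sets: taking $\cat=\CoHa$ (or $\mathcal{S}\mathrm{tone}$) with the coherent topology of Example~\ref{cohcompH} and $\mathcal{D}=\extdis$, which is a basis by Corollary~\ref{Surjext}, the proposition produces the equivalence of sheaf categories underlying Proposition~\ref{condsetequi}.
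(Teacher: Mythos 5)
Your proposal is correct, but it cannot be compared line-by-line with the paper, because the paper does not prove Proposition \ref{basistopo} at all: it simply cites \cite[Proposition B.6.3]{Lurie2018}. What you have written is essentially the standard ``comparison lemma'' argument that underlies that citation, and it is sound. The construction of the induced topology (a sieve $S$ on $D\in\mathcal{D}$ covers iff the sieve it generates in $\cat$ covers), the local-factorization observation, the identity $\overline{S}\cap\mathcal{D}_{/D}=S$, and the uniqueness argument via covering families are all correct; I checked in particular that pullback-stability on $\mathcal{D}$ follows by applying your key observation to $f^{*}\overline{S}$ together with $f^{*}\overline{S}\cap\mathcal{D}_{/D'}=f^{*}S$, which is what you implicitly do. For the equivalence of sheaf categories, your two lemmas and the identification of the inverse as the right Kan extension along $\mathcal{D}^{\mathrm{op}}\hookrightarrow\cat^{\mathrm{op}}$ are exactly the mechanism the paper itself records separately as Proposition \ref{rightKan} (Lurie's B.6.4): a presheaf on $\cat$ is a sheaf iff its restriction to $\mathcal{D}$ is a sheaf and it is the right Kan extension of that restriction. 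So your route recovers both of the paper's cited statements at once. The only caveat is that the two verifications you defer as ``descent bookkeeping'' (that $\iota_{*}G$ satisfies the sheaf condition on $\cat$, and that the round trip recovers $F$) are genuinely where most of the writing would go; your reductions via lemma~(i) and separatedness are the right tools and there is no gap in the ideas, but a referee would want those two checks written out. Your closing consistency check ($\cat=\CoHa$ or $\mathcal{S}\mathrm{tone}$ with the coherent topology of Example \ref{cohcompH}, $\mathcal{D}=\extdis$ a basis by Corollary \ref{Surjext}) is exactly how the paper applies the proposition in Proposition \ref{condsetequi}.
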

\begin{proof}
See \cite[Proposition B.6.3]{Lurie2018}.
\end{proof}
More precisely, the second statement is a consequence of the following.
\begin{proposition}\label{rightKan}
Let $\cat$ be a category equipped with a Grothendieck topology, let $\mathcal{D}\subset \cat$ be a basis equipped with the Grothendieck topology from Proposition
\ref{basistopo}, and let
$F\colon \cat^{\mathrm{op}}\rightarrow \Set$ be a functor. Then, $F$ is a sheaf if and only if it satisfies the following pair of conditions:
\begin{itemize}
\item[1.] The restriction $F|_{\mathcal{D}^{\mathrm{op}}}\colon \mathcal{D}^{\mathrm{op}}\rightarrow \Set$ is a sheaf.
\item[2.] The functor $F$ is a right Kan extension of its restriction $F| _{\mathcal{D}^{\mathrm{op}}}$. 
\end{itemize}
\end{proposition}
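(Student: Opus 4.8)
The plan is to work with the explicit pointwise formula for the right Kan extension. Write $j\colon\mathcal{D}\hookrightarrow\cat$ for the inclusion; for a functor $F_0\colon\mathcal{D}^{\mathrm{op}}\to\Set$ the right Kan extension of $F_0$ along $j^{\mathrm{op}}$ is
\[
(j_{*}F_0)(X)\;=\;\lim_{(D\to X)\in(\mathcal{D}_{/X})^{\mathrm{op}}}F_0(D),
\]
the limit being taken over the comma category of objects of $\mathcal{D}$ lying over $X$. Since $(D,\mathrm{id}_D)$ is terminal in $\mathcal{D}_{/D}$ one has $(j_{*}F_0)|_{\mathcal{D}^{\mathrm{op}}}\cong F_0$, and condition~(2) is precisely the assertion that the canonical map $F(X)\to\lim_{(\mathcal{D}_{/X})^{\mathrm{op}}}F(D)$ is a bijection for every $X\in\cat$. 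So it suffices to establish three things: (I) the restriction to $\mathcal{D}$ of a sheaf on $\cat$ is a sheaf on $\mathcal{D}$; (II) every sheaf $F$ on $\cat$ satisfies the bijection of condition~(2); (III) $j_{*}$ carries sheaves on $\mathcal{D}$ to sheaves on $\cat$. Granting these, the forward implication is (I) together with (II); conversely, if $F$ satisfies (1) and (2) then $F\cong j_{*}(F|_{\mathcal{D}^{\mathrm{op}}})$ with $F|_{\mathcal{D}^{\mathrm{op}}}$ a sheaf, hence $F$ is a sheaf by (III).

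The engine behind (I) and (II) is a reindexing lemma for limits over sieves. First I would note that for any $X$ the sieve $S_X$ on $X$ generated by \emph{all} morphisms $D\to X$ with $D\in\mathcal{D}$ is a covering sieve: it contains a covering sieve because $\mathcal{D}$ is a basis, and any sieve $T$ containing a covering sieve $S$ is itself covering --- for $f\in S\subseteq T$ the pullback $f^{*}T$ contains $\mathrm{id}$, hence is the maximal sieve, which is covering, so axiom~(3) of Definition~\ref{Grotop} applies. Next, I claim that if $F$ is a sheaf on $\cat$ and $T$ is a sieve on $X$ such that every object of $T$ factors through some object of $T$ lying in $\mathcal{D}$, then restriction induces a bijection $\lim_{T^{\mathrm{op}}}F\to\lim_{(T\cap\mathcal{D}_{/X})^{\mathrm{op}}}F$: injectivity is immediate from the factorization hypothesis, and for surjectivity one defines the missing values by choosing factorizations through $\mathcal{D}$ and checks independence of the choice by covering the source object by members of $\mathcal{D}$ (using that $\mathcal{D}$ is a basis and that $T$ is a sieve) and invoking that $F$ is separated. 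Applying this lemma with $T=S_X$ yields (II), and applying it with $T$ the sieve generated in $\cat_{/D}$ by a covering sieve of $D\in\mathcal{D}$ (in the induced topology on $\mathcal{D}$) yields (I).

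The remaining, and hardest, point is (III). Fix a sheaf $F_0$ on $\mathcal{D}$, put $F=j_{*}F_0$, and fix $X\in\cat$ together with a covering sieve $T$ on $X$; I must show $F(X)\to\lim_{(Y,f)\in T^{\mathrm{op}}}F(Y)$ is bijective. Using $F(Y)=\lim_{(\mathcal{D}_{/Y})^{\mathrm{op}}}F_0$ and a short computation showing that the component of a compatible family at $(E,E\to Y)$ depends only on $E$ and on the composite $E\to Y\to X$, one sees that this iterated limit collapses to $\lim_{T_{\mathcal{D}}^{\mathrm{op}}}F_0$, where $T_{\mathcal{D}}$ is the full subcategory of $\mathcal{D}_{/X}$ on those $(E\to X)$ belonging to $T$. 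It then remains to show that the restriction $\lim_{(\mathcal{D}_{/X})^{\mathrm{op}}}F_0\to\lim_{T_{\mathcal{D}}^{\mathrm{op}}}F_0$ is a bijection, and this is where the sheaf property of $F_0$ is genuinely needed, since the statement is false as a mere cofinality assertion. The key bookkeeping step is: for any $(D\to X)\in\mathcal{D}_{/X}$, the pullback of $T$ along $D\to X$, intersected with $\mathcal{D}_{/D}$, is a covering sieve of $D$ in the induced topology on $\mathcal{D}$, \emph{all} of whose members have composite into $X$ belonging to $T$; this uses the basis property of $\mathcal{D}$ together with axiom~(3) of the Grothendieck topology. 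Granting this, a compatible family indexed by $T_{\mathcal{D}}$ restricts to a compatible family over a cover of $D$ and hence glues, by the sheaf axiom for $F_0$, to a unique element of $F_0(D)$; checking that these glued elements form a compatible family over $\mathcal{D}_{/X}$ restricting back to the original one --- again via the uniqueness clause of the sheaf axiom --- finishes the argument. I expect the identification of this pulled-back covering sieve, and the compatibility verification around it, to be the main obstacle; everything else is formal manipulation of limits and sieves.
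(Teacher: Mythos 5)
The paper gives no argument for Proposition~\ref{rightKan}: its proof is the one-line citation to \cite[Proposition B.6.4]{Lurie2018}. Your proposal therefore necessarily takes a different route --- it reconstructs, essentially correctly, the standard proof of that comparison result (the $1$-categorical \emph{lemme de comparaison}). The decomposition into (I) restriction preserves sheaves, (II) a sheaf on $\cat$ satisfies the pointwise right-Kan-extension formula $F(X)\cong\lim_{(\mathcal{D}_{/X})^{\mathrm{op}}}F$, and (III) $j_{*}$ carries sheaves on $\mathcal{D}$ to sheaves on $\cat$, is exactly how the cited source organizes the argument. The individual steps check out: the sieve generated by all $D\to X$ with $D\in\mathcal{D}$ is covering by the basis hypothesis together with axiom~(3) of Definition~\ref{Grotop}; your reindexing lemma is sound (injectivity from the factorization hypothesis, surjectivity from separatedness of $F$ after refining by a $\mathcal{D}$-cover, using that $\mathcal{D}$ is \emph{full}, as Definition~\ref{basis} guarantees); the collapse of the iterated limit in (III) follows from outer compatibility applied to the morphism $(E,f\circ e)\to(Y,f)$ of $T$; and $f^{*}T\cap\mathcal{D}_{/D}$ is indeed covering in the induced topology, again by the basis property plus axiom~(3). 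The only compressed points are the verification that the extension built in the surjectivity half of the reindexing lemma is compatible over all of $T$ (not merely well defined), and the analogous compatibility of the glued elements over $\mathcal{D}_{/X}$ in (III); both go through by the same separatedness and uniqueness-of-gluing arguments you already invoke, so these are omissions of routine bookkeeping rather than gaps. What your approach buys is a self-contained proof from the sieve axioms, at the cost of length; the paper's citation covers the same ground in one line and in the generality it needs elsewhere.
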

\begin{proof}
See \cite[Proposition B.6.4]{Lurie2018}.
\end{proof}
One can apply the topos-theoretic concepts of quasicompactness and quasiseparatedness, defined as in \cite{Haine2019}, to sheaves on any (coherent) site. 
\begin{definition}{\textbf{Quasicompact and quasiseparated sheaves}}\label{quasi}\\
Let $\cat$ be a (small) category with Grothendieck topology and $\mathrm{Sh}(\cat)$ the corresponding category of sheaves. 
\begin{itemize}
\item[1.] A sheaf $F \in \mathrm{Sh}(\cat)$ is called \textit{quasicompact} if every covering has a finite subcovering. This means, for every covering $\{U_i \rightarrow F\}_{i \in I}$, there exists a finite subset $J\subset I$ for which the collection of maps $\{U_i \rightarrow F\}_ {i \in J}$ is also a covering. 
\item[2.] A sheaf $F \in \mathrm{Sh}(\cat)$ is called \textit{quasiseparated} if for every pair of morphisms $U \rightarrow F \leftarrow V$, where $U$ and $V$ are quasicompact, the fibre product $U \times_F V$ is also quasicompact. 
\end{itemize}
\end{definition}
\subsection{The formalism of \texorpdfstring{$\infty$}{infinity}-categories}\label{forminf}
There are different definitions realizing the general idea of an $\infty$-category. We will define $\infty$-categories by quasi-categories as in \cite{Kero}. This definition arises directly from the idea behind $\infty$-categories: We want a class of mathematical objects which can behave like categories and like topological spaces up to homotopy. The formalism of $\infty$-categories we formulate in this section is based on \cite{Kero, Hoermann2019,Lurie2009}.
\begin{reminder}{\textbf{$\infty$-categories by quasi-categories}}\\
Ordinary categories as well as topological spaces can be viewed as simplicial sets by the construction of the nerve of a category resp. the singular simplicial complex of a topological space. Both constructions represent classes of simplicial sets satisfying certain conditions. Weakening these conditions into a common generalized \textit{weak Kan condition} leads to a model of $\infty$-categories building on simplicial sets: so called \textit{quasi-categories} or \textit{weak Kan complexes}. See \cite[\href{https://kerodon.net/tag/003A}{Tag 003A}]{Kero} for a formal definition based on the theory by Andr\'e Joyal. We refer to $0$-simplices of the quasi-category as \textit{objects} and to $1$-simplices as \textit{morphisms}. We define $n$-morphisms recursively as morphisms of $n-1$-morphisms.
\end{reminder}
\begin{reminder}{\textbf{Kan complexes}}\label{kancompl}\\
 All weak Kan complexes satisfying the same \textit{Kan condition} as the singular simplicial complexes of topological spaces are called \textit{Kan complexes}. They represent a model of $\infty$-groupoids, i.e. $\infty$-categories in which all $n$-morphisms (for $n\geq 1$) are equivalences. Moreover, they are determined by topological spaces up to weak equivalence. This is also known as \textit{homotopy hypothesis} induced by the adjunction 
\[
 \begin{tikzcd}[column sep = huge]
            \sSet \arrow[r, shift left=1ex, "|\cdot|"] & \Top, \arrow[l, shift left=.5ex, "\mathrm{Sing}(\cdot)"]
           \end{tikzcd}
    \]
where the \textit{singular simplicial complex} construction $\mathrm{Sing}(\cdot)$ maps into the category $\Kan$ of Kan complexes and the \textit{geometric realization} $|\cdot|$ into the category $\CW$ of CW-spaces.
\end{reminder}
By the definition through quasi-categories, notions for $\infty$-categories can be defined in terms of simplicial sets.
\begin{definition}
Let $\cat$ and $\mathcal{D}$ be $\infty$-categories.
\begin{itemize}
\item[1.] An \textit{$\infty$-functor} $F\colon \cat \rightarrow \mathcal{D}$ is defined as a morphism of simplicial sets.
\item[2.] An $\infty$-functor is an \textit{equivalence of $\infty$-categories} if it induces an equivalence of simplicial sets. 
\end{itemize}
\end{definition}
The notion of an isomorphism of objects in a category will be replaced by an equivalence of objects in an $\infty$-category as defined in the subsequent definition.
\begin{definition}{\textbf{Equivalences in $\infty$-categories}}\\
Let $\cat$ be an $\infty$-category. Two objects $X,Y \in \cat$ are called \textit{equivalent} if there exist morphisms $f\colon X \rightarrow Y$ and $g\colon Y \rightarrow X$ and (necessarily invertible) $2$-morphisms  $g\circ f \rightarrow id_X$ and $f\circ g \rightarrow id_Y$.
\end{definition}
Many familiar concepts in ordinary category theory translate to $\infty$-categories in a straight-forward way. In this setting, the $\infty$-category $\Ani$ of animated sets takes on the role of the category $\Set$. This $\infty$-category will be discussed in more detail in Chapter \ref{animation}. \\
The following table shall give an insight on how concepts can be translated:
\begin{table}[H]
\centering
\begin{tabular}{|l|cc|}
\hline
         & \textbf{Categories}      & \textbf{$\infty$-Categories}     \\ \hline
\textbf{Morphisms} &    &  \\ \hline
\textbf{$\Hom_{\cat}(X,Y)$} &  Hom-set: $\Hom_{\cat}(X,Y) \in \Set$              &   Hom-space: $\Hom_{\cat}(X,Y) \in \Ani$            \\ \hline
\textbf{$X\cong Y$} &   Isomorphism             &    Equivalence           \\ \hline
\textbf{Functors} &     & \\ \hline
\textbf{$\Fun(\cat, \mathcal{D})$} & Category of functors & $\infty$-Category of $\infty$-functors\\ \hline
\textit{Fully faithful} & \multicolumn{2}{c|}{\textit{$\Hom_{\cat}(X,Y)\cong \Hom_{\cat}(FX,FY)$}} \\ \hline
\textit{Representable} & \multicolumn{2}{c|}{\textit{$\Hom_{\cat}(X,\cdot)\cong F(\cdot)$}} \\ \hline
\textit{Adjoint} & \multicolumn{2}{c|}{\textit{$\Hom_{\cat}(FX,Y)\cong \Hom_{\cat}(X,GY)$}} \\ \hline
\textbf{Yoneda embed.} &    $\cat \rightarrow \Fun(\cat^{\mathrm{op}},\Set)$ &  $\cat \rightarrow \Fun(\cat^{\mathrm{op}},\Ani)$         \\ \hline
\textbf{Uniqueness}& up to a unique isomorphism & up to a contractible choice \\ \hline
\end{tabular}
\end{table}
As in ordinary category theory, one can express the notion of an equivalence of $\infty$-categories in terms of fully faithfulness and essential surjectivity.
\begin{definition and proposition}
An $\infty$-functor $F: \cat \rightarrow \mathcal{D}$ of $\infty$-categories is called essentially surjective if for every object $y \in \mathcal{D}$, there is an object $x \in \cat$ and an equivalence $F(x)\rightarrow y$ in $\mathcal{D}$.\\ An $\infty$-functor between $\infty$-categories is an equivalence if and only if it is fully faithful and essentially surjective.
\end{definition and proposition}
\begin{proof}
This is \cite[\href{https://kerodon.net/tag/01JX}{Tag 01JX}]{Kero}.
\end{proof}
\begin{remark}{\textbf{Limits and colimits}}\\
If $\cat$ is an $\infty$-category and $K$ is a simplicial set, we say that a map $d\colon K \rightarrow \cat$ is a diagram indexed by $K$ in $\cat$. The diagram is called small whenever $K$ is a small simplicial set. One can define generalized notions of limits and colimits which are uniquely defined up to a contractible choice and which will be denoted as usual by \textit{lim} and \textit{colim}. We say that a (large) $\infty$-category $\cat$ is complete if every (small) diagram $K\rightarrow \cat$ has a limit. There is a dual notion of a cocomplete $\infty$-category. One can speak of functors preserving (co)limits. In the same way, notions like \textit{(co)filtered (co)limits, (co)continuous functors and (co)compact objects} can be transferred to the $\infty$-categorical setting.
\end{remark}
\begin{proposition}
Let $F\colon \cat \rightarrow \mathcal{D}$ be an $\infty$-functor between $\infty$-categories
which has a right adjoint $F\colon \cat \rightarrow \mathcal{D}$. Then $F$ preserves all (small) colimits which exist in $\cat$, and $G$ preserves all (small) limits which exist in $\mathcal{D}$.
\end{proposition}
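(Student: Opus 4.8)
The plan is to express the universal properties of colimit and of limit through mapping anima and then transport them across the adjunction equivalence on Hom-spaces. I would first recall the mapping-space characterisation of (co)limits (as in \cite{Lurie2009} or \cite{Kero}): an object $c\in\cat$ under a small diagram $d\colon K\to\cat$ is a colimit of $d$ precisely when, for every $y\in\cat$, the canonical map
\[
\Hom_\cat(c,y)\longrightarrow \lim_{k\in K^{\mathrm{op}}}\Hom_\cat(d(k),y)
\]
is an equivalence in $\Ani$, and dually for limits. Equivalently: $F$ preserves colimits existing in $\cat$ as soon as, for each $y\in\mathcal{D}$, the $\infty$-functor $\Hom_\mathcal{D}(F(-),y)\colon\cat^{\mathrm{op}}\to\Ani$ carries colimits in $\cat$ to limits in $\Ani$; and $G$ preserves limits existing in $\mathcal{D}$ as soon as, for each $x\in\cat$, the $\infty$-functor $\Hom_\cat(x,G(-))\colon\mathcal{D}\to\Ani$ carries limits in $\mathcal{D}$ to limits in $\Ani$.

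Next I would handle $F$. Fixing $y\in\mathcal{D}$, the adjunction gives a natural equivalence of $\infty$-functors $\cat^{\mathrm{op}}\to\Ani$
\[
\Hom_\mathcal{D}(F(-),y)\;\simeq\;\Hom_\cat(-,Gy).
\]
For the fixed object $Gy\in\cat$, the functor $\Hom_\cat(-,Gy)$ sends every colimit existing in $\cat$ to the corresponding limit in $\Ani$ — this is exactly the universal property recalled above. Hence $\Hom_\mathcal{D}(F(-),y)$ does too, for all $y$, which by the reformulation means $F$ preserves all small colimits that exist in $\cat$; tracing the equivalences identifies the resulting cocone on $F(\colim d)$ with one exhibiting it as $\colim(F\circ d)$.

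The case of $G$ is formally dual: for fixed $x\in\cat$ the adjunction yields a natural equivalence $\Hom_\cat(x,G(-))\simeq\Hom_\mathcal{D}(Fx,-)$ of $\infty$-functors $\mathcal{D}\to\Ani$, and $\Hom_\mathcal{D}(Fx,-)$ takes limits existing in $\mathcal{D}$ to limits in $\Ani$ by the universal property of limits; so the same holds for $\Hom_\cat(x,G(-))$ for every $x$, whence $G$ preserves all small limits existing in $\mathcal{D}$, with $G(\lim e)$ receiving the limiting cone.

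The part I expect to require the most care is not any computation but the bookkeeping that the chain of equivalences assembles into the \emph{canonical} comparison map, so that one really identifies $F(\colim d)$ with $\colim(F\circ d)$ via the map induced by the universal cocone (rather than merely abstractly). I would dispatch this by keeping track of the naturality of the unit and counit of the adjunction together with the ($\infty$-categorical) Yoneda lemma, which guarantees that an equivalence between corepresented functors is implemented by the expected morphism on objects.
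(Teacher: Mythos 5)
Your argument is correct; the paper itself offers no proof of this proposition but simply cites \cite[Proposition 5.2.3.5]{Lurie2009}, and your proof is essentially the standard one given there: reduce (co)limit preservation to the corepresentable/representable mapping-space criterion and transport it across the natural equivalence of Hom-anima supplied by the adjunction. Your closing remark correctly identifies the only delicate point in the $\infty$-categorical setting, namely that naturality of the unit and counit together with the Yoneda lemma is what guarantees the comparison map is the canonical one induced by the (co)limit cone.
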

\begin{proof}
See \cite[Proposition 5.2.3.5.]{Lurie2009}.
\end{proof}
\begin{warning}{\textbf{Terminology and notation}}\label{term}\\
From Chapter \ref{animation} on, we will work with $\infty$-categories. The preceding results lead to the following justification: Whenever dealing with $\infty$-categories, we will treat them like ordinary categories, meaning we will argue in proofs with arguments like the Yoneda embedding, adjointness of functors, equivalence of $\infty$-categories etc. just like in the ordinary setting. If we work with an ordinary category in the context of $\infty$-categories, we will identify it with its nerve without making this explicit. Moreover, we will omit the addition of "$\infty$" if it is clear that all objects are considered in the $\infty$-categorical setting.
\end{warning}
There are three more notions which will appear in the Chapters \ref{animation}-\ref{homotopy}.
\begin{definition}{\textbf{Accessible and presentable}}
\begin{itemize}
\item[1.] 
Let $\kappa$ be a regular cardinal. An $\infty$-category $\cat$ is $\kappa$-accessible if it admits small $\kappa$-filtered colimits and contains an essentially small full sub-$\infty$-category which consists of $\kappa$-compact objects and generates $\cat$ under small $\kappa$-filtered colimits. We say that $\cat$ is \textit{accessible} if it is $\kappa$-accessible for some regular cardinal $\kappa$. 
\item[2.] An $\infty$-category $\cat$ is \textit{presentable} if it is accessible and
admits small colimits.
\item[3.]
If $\cat$ is an accessible $\infty$-category, then an $\infty$-functor $F\colon \cat \rightarrow \mathcal{D}$ is \textit{accessible} if it preserves $\kappa$-filtered colimits for some regular cardinal $\kappa$ (called $\kappa$-continuous).
\end{itemize}
\end{definition}
Rougly speaking, an $\infty$-category is accessible if it is generated under small $\kappa$-filtered colimits by a small sub-$\infty$-category.

\chapter{Condensed sets}\label{condset}
This chapter introduces the main concept of condensed sets and states basic results. Note Warning \ref{ignorsetiss} for a comment about set-theoretic problems that come along when defining condensed sets in the way we do and how to deal with them. For further reading and a deeper insight in the concept of condensed sets, I suggest the reader to have a look at the lecture notes \cite{Scholze2019} and \cite{Scholze2019a} which present results of the joint work by Dustin Clausen and Peter Scholze. These lecture notes are the fundamental sources of the following sections. More on the theory of \textit{Condensed mathematics} was presented by Clausen and Scholze in several sessions \cite{Clausen, Scholze2019b}.
\section{Equivalent definitions of condensed sets}
\begin{definition}{\textbf{Condensed sets}}\label{condsetdef}\\
A \textit{condensed set} is a sheaf on the site $\PFin$ of profinite sets with coverings given by finite families of jointly surjective maps as introduced in Example \ref{examplepretopo}. \\
In other words, with regards to Example \ref{cohcompH} and Example \ref{sheavcoh}, a condensed set is a functor 
\begin{align*}
T \colon \PFin^{\text{op}} &\rightarrow \Set\\
S&\mapsto T(S)
\end{align*}
satisfying $T(\emptyset)=\ast$ and the following two sheaf conditions:
\begin{enumerate}
\item[(I)] For any profinite sets $S_1, S_2$, the natural map
$$T(S_1 \amalg S_2) \rightarrow T(S_1) \times T(S_2)$$
is a bijection.
\item[(II)] For any surjection $S^{\prime}\rightarrow S$ of profinite sets with the fibre product $S^{\prime} \times_S S^{\prime}$ and its two projections $p_1, p_2$ to $S^{\prime}$, the map
$$T(S)\rightarrow\{x\in T(S^{\prime})\ |\ T(p_1)(x)=T(p_2)(x) \in T(S^{\prime} \times_S S^{\prime})\}$$
is a bijection.
\end{enumerate}
\end{definition}
Note that condition (II) is just a reformulation of the equalizer diagram of Example \ref{sheavcoh} in the category of sets.
The definition of condensed sets can be modified by passing to other underlying categories. We have the following:
\begin{proposition}\label{condsetequi}
The category of condensed sets is equivalent to:
\begin{enumerate}
\item[(i)] The category of sheaves on the site $\CoHa$ of compact Hausdorff spaces  with coverings given by finite families of jointly surjective maps. This the category of contravariant functors
\begin{align*}
T \colon \CoHa^{\mathrm{op}} &\rightarrow \Set\\
S&\mapsto T(S)
\end{align*}
satisfying $T(\emptyset)=\ast$ and (I),(II) for compact Hausdorff spaces.
\item[(ii)] The category of contravariant functors on the category $\extdis$ of extremally disconnected profinite sets
\begin{align*}
T \colon \extdis^{\mathrm{op}} &\rightarrow \Set\\
S&\mapsto T(S)
\end{align*}
satisfying $T(\emptyset)=\ast$ and (I) for extremally disconnected profinite sets. 
\end{enumerate}
The functors inducing the equivalences are given by restriction and by right Kan extension.
\end{proposition}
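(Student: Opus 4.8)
The plan is to deduce both equivalences from the general machinery on bases of Grothendieck topologies already set up in the excerpt, namely Proposition~\ref{basistopo} and Proposition~\ref{rightKan}, together with the explicit description of sheaves on a coherent site from Example~\ref{sheavcoh}. The key structural observation is that $\extdis \subset \mathcal{S}\mathrm{tone} = \PFin \subset \CoHa$, and that each inclusion exhibits the smaller category as a \emph{basis} for the coherent topology on the larger one in the sense of Definition~\ref{basis}: every compact Hausdorff space, and a fortiori every profinite set, admits a surjection from an extremally disconnected compact Hausdorff space by Corollary~\ref{Surjext}, and such a single surjection generates a covering sieve since surjections are effective epimorphisms in these categories by Example~\ref{cohcompH}. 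Likewise $\PFin$ is a basis for $\CoHa$ since every compact Hausdorff space admits a surjection from a profinite set (again via $\beta\widetilde{X}$, whose underlying set with discrete topology maps onto, and which is even Stone). So both inclusions satisfy the hypotheses of Proposition~\ref{basistopo}.

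First I would treat part (i). Applying Proposition~\ref{basistopo} to $\mathcal{D} = \PFin \subset \cat = \CoHa$, we get that the coherent topology on $\CoHa$ restricts to \emph{a} Grothendieck topology on $\PFin$ whose coverings are exactly those collections that are coverings in $\CoHa$; I would check that this restricted topology is precisely the coherent topology on $\PFin$ of Example~\ref{cohcompH}, i.e.\ that a finite jointly surjective family in $\PFin$ is jointly surjective in $\CoHa$ and conversely, which is immediate since surjectivity is detected on underlying sets and the inclusion is full. Proposition~\ref{basistopo} then yields directly that restriction $\mathrm{Shv}(\CoHa) \to \mathrm{Shv}(\PFin)$, $F \mapsto F|_{\PFin^{\mathrm{op}}}$, is an equivalence of categories, with quasi-inverse given by right Kan extension along $\PFin \hookrightarrow \CoHa$ as described in Proposition~\ref{rightKan}. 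It remains only to match the sheaf condition on $\CoHa$ with the concrete conditions $T(\emptyset) = \ast$, (I), (II) for compact Hausdorff spaces: this is Example~\ref{sheavcoh} applied to $\CoHa$, using that effective epimorphisms in $\CoHa$ are exactly the surjections (Example~\ref{cohcompH}) and that preservation of finite products together with $T(\emptyset) = \ast$ (the empty product) gives condition (I).

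For part (ii) I would argue the same way with $\mathcal{D} = \extdis \subset \cat = \PFin$ (or directly $\extdis \subset \CoHa$, composing the two equivalences). Proposition~\ref{basistopo} gives a unique induced Grothendieck topology on $\extdis$ and an equivalence $\mathrm{Shv}(\PFin) \to \mathrm{Shv}(\extdis)$ via restriction, with inverse the right Kan extension; composing with part (i) identifies condensed sets with sheaves on $\extdis$. The point that needs care is the description of $\mathrm{Shv}(\extdis)$: a functor $T\colon \extdis^{\mathrm{op}} \to \Set$ with $T(\emptyset) = \ast$ is a sheaf for the induced topology if and only if it satisfies (I) alone, \emph{without} any analogue of (II). The reason is that in $\extdis$ every covering sieve already contains a \emph{split} surjection: by Proposition~\ref{characextdis}(ii), any surjection $Y \to S$ with $S$ extremally disconnected has a continuous section, so the sieve it generates contains $\mathrm{id}_S$ after composing with the section, forcing the sheaf condition along such surjections to be automatic. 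More precisely, the only coverings one must check are the finite disjoint-union coverings (which give (I)) because any other covering sieve, pulled back appropriately, is refined by identities; I would spell this out by showing that a finite jointly surjective family $\{X_i \to X\}$ in $\extdis$ generates the same sieve as $\{\amalg X_i \to X\}$, that $\amalg X_i$ is again extremally disconnected by Lemma~\ref{compstabprop} (finite coproducts) — actually one needs $\amalg X_i \in \extdis$, which holds as finite coproducts of extremally disconnected compact Hausdorff spaces are extremally disconnected — and that this single surjection splits.

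The main obstacle I anticipate is precisely this last verification: showing that on $\extdis$ the sheaf condition collapses to condition (I) only. One has to be careful because Proposition~\ref{rightKan} tells us a sheaf on $\CoHa$ restricted to $\extdis$ is a sheaf there, but the converse — that \emph{every} product-preserving $T$ on $\extdis^{\mathrm{op}}$ with $T(\emptyset)=\ast$ extends (via right Kan extension) to an honest sheaf on $\CoHa$ — requires knowing that the right Kan extension does not impose further constraints, equivalently that $\extdis$ with its induced topology has the property that condition~(I) implies the sheaf axiom. This hinges on the existence of sections for surjections onto extremally disconnected spaces (Proposition~\ref{characextdis}) and on a cofinality/refinement argument among covering sieves; assembling these into a clean proof that $\mathrm{Shv}(\extdis)$ equals the category of finite-product-preserving presheaves is the technical heart of the statement. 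Everything else is a matter of quoting Propositions~\ref{basistopo} and~\ref{rightKan} and unwinding Example~\ref{sheavcoh}.
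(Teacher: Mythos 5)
Your proposal follows essentially the same route as the paper: exhibit the smaller categories as bases for the coherent topology via Corollary~\ref{Surjext}, invoke Propositions~\ref{basistopo} and~\ref{rightKan} for the restriction/right-Kan-extension equivalences, and reduce the sheaf condition on $\extdis$ to condition (I) using that surjections onto extremally disconnected spaces split (Prop.~\ref{characextdis}). The step you flag as the ``technical heart'' is exactly the one the paper carries out explicitly, by choosing a section $g$ of a surjection $f$ and checking directly that $T(f)$ is a bijection onto the equalizer, so your plan is sound and matches the paper's argument.
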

\begin{proof}
The category $\extdis$ of extremally disconnected profinite sets is a full subcategory of the category of profinite sets $\PFin$ (resp. of compact Hausdorff spaces $\CoHa$). By Proposition \ref{Surjext}, every compact Hausdorff space admits a surjection from an extremally disconnected compact Hausdorff space. Thus by definition of the Grothendieck (pre)topology on the site $\PFin$ (resp. $\CoHa$ in (i)), the category of extremally disconnected profinite sets is a basis of this site (cf.\ Def.\ \ref{basis}). With regards to Proposition \ref{basistopo}, one obtains an equivalence of sheaf categories by restriction from profinite sets (resp. compact Hausdorff spaces). The reverse direction of the equivalence is given by taking the right Kan extension of the restriction as stated in Proposition \ref{rightKan}. 
As any covering of extremally disconnected profinite sets splits, condition (II) drops  by passing to the restriction and can thus be omitted in $(ii)$. Indeed, for every surjection $f \colon S^{\prime} \rightarrow S$ of extremally disconnected profinite sets, using Proposition \ref{characextdis} we can find  a section $g \colon S \rightarrow S^{\prime}$ with $f \circ g =\mathrm{id}_S$. Then for a condensed set $T$ by $T(g)\circ T(f)=id_{T(S)}$, the map $T(f)\colon T(S)\rightarrow T(S^{\prime})$ is injective. Moreover, its image is contained in $$\{x\in T(S^{\prime})\ |\ T(p_1)(x)=T(p_2)(x) \in T(S^{\prime} \times_S S^{\prime})\}$$ since $f\circ p_1=f\circ p_2$. It remains to show that $T(f)$ maps surjectively onto this set. Let $$x\in \{x\in T(S^{\prime})\ |\ T(p_1)(x)=T(p_2)(x) \in T(S^{\prime} \times_S S^{\prime})\},$$ then $$T((g\circ f)\times_{S}\mathrm{id}_{S^{\prime}})(T(p_1)(x))=T((g\circ f)\times_{S}\mathrm{id}_{S^{\prime}})(T(p_2)(x))$$ which implies $T(f)(T(g)(x))=x$. Hence, we found a preimage for $x$.
This proves the proposition.
\end{proof}
\begin{remark}
Particularly, the previous proposition states that any condensed set $T$ (resp. contravariant functor in (i)) is already determined by its values on extremally disconnected profinite sets. Furthermore, it suffices to consider equalizer diagrams: Let $K$ be a compact Hausdorff space and $S, S^{\prime}$ are extremally disconnected profinite sets given as in Theorem \ref{coeqext}. As we know that $S\rightarrow K$ is a surjection of compact Hausdorff spaces, one obtains an equalizer diagram
\[\begin{tikzcd}[column sep = huge]
T(K) \ar[r]
&
T(S) \ar[r,shift left=.75ex, "T(\tilde{p}_1)"]
  \ar[r,shift right=.75ex,swap, "T(\tilde{p}_2)"]
&
T(S\times_K S)
\end{tikzcd}
\] 
using sheaf condition (II). We have a similar equalizer diagram for the surjection $f\colon S^{\prime} \rightarrow S\times_K S$ which, in particular, induces injectivity of the map of sets $T(f)\colon T(S\times_K S) \rightarrow T(S^{\prime})$. Composition of this map with the equalizer diagram above yields again an equalizer diagram
\[\begin{tikzcd}[column sep = huge]
T(K) \ar[r]
&
T(S) \ar[r,shift left=.75ex, "T(f)\circ T(\tilde{p}_1)"]
  \ar[r,shift right=.75ex,swap, "T(f)\circ T(\tilde{p}_2)"]
&
T(S^{\prime})
\end{tikzcd}
\] 
by injectivity of $T(f)$.
\end{remark}
We will work with the characterization of $\Cond$ as in Proposition \ref{condsetequi} (ii) since it allows us to treat condensed sets like presheaves which send finite disjoint unions to finite products. 
\begin{remark}\label{generdefcond}
Similarly to Definition \ref{condsetdef} and Proposition \ref{condsetequi}, respectively, one can define condensed rings/groups/\dots \ as contravariant functors into the corresponding category or, equivalently, as ring/group/\dots \ obejct in the category of condensed sets.
\end{remark}
\begin{warning}{\textbf{Set-theoretic problems}}\label{ignorsetiss}\\
The definition of condensed sets by one of the equivalent characterizations has some set-theoretic problems because the categories of compact Hausdorff spaces, profinite sets and extremally disconnected profinite sets are large and hence the considered functor categories are no longer locally small. In \cite{Scholze2019}, one can find a way of resolving this issue. In a nutshell, in the case of Proposition \ref{condsetequi} (ii) one should add the requirement "[...] such that for some uncountable strong limit cardinal $\kappa$, it is the left Kan extension of its restriction to $\kappa$-small extremally disconnected sets." as in \cite[Definition 2.11.]{Scholze2019}. In other words, we should only consider small functors as mentioned in Warning \ref{warn}. Throughout this work, the problem will be ignored in most cases as it does not play a central role for our purposes. 
\end{warning}
\section{Topological spaces as condensed sets}
In this section, we investigate how the category $\Cond$ of condensed sets can be seen as replacement for the category $\Top$ of topological spaces.
\begin{example}\label{extop}
To every topological $T_1$-space $T$, we can associate a condensed set $\underline{T}$ by the assignment
\begin{align*}
\underline{T}\colon\extdis^{\mathrm{op}} &\rightarrow \Set\\
S &\mapsto \Hom_{\Top}(S,T).
\end{align*}
This clearly satisfies the conditions in Proposition \ref{condsetequi} (ii). If we do not assume $T$ to be $T_1$, there are set-theoretical issues \cite[Warning 2.14.]{Scholze2019}. Therefore, we will implicitly make this assumption.
\end{example}
\begin{corollary}\label{exttoall}
The extension of $\underline{T}$ to all compact Hausdorff spaces through right Kan extension as in Proposition \ref{condsetequi} also is given by the formula
\begin{align}\label{formula} 
S &\mapsto \Hom_{\Top}(S,T)
\end{align}
with $S$ an object of the category of all compact Hausdorff spaces. 
\end{corollary}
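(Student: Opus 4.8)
The plan is to verify that the functor defined on all compact Hausdorff spaces by the formula $S \mapsto \Hom_{\Top}(S, T)$ agrees with the right Kan extension of $\underline{T}|_{\extdis^{\mathrm{op}}}$ along the inclusion $\extdis^{\mathrm{op}} \hookrightarrow \CoHa^{\mathrm{op}}$. By Proposition \ref{rightKan}, the right Kan extension is characterized (among sheaves) by restricting correctly to the basis $\extdis$; so it suffices to check that the functor $G\colon S \mapsto \Hom_{\Top}(S,T)$ on $\CoHa^{\mathrm{op}}$ is (1) a sheaf for the coherent topology, and (2) a right Kan extension of its own restriction to $\extdis^{\mathrm{op}}$, which (together with the fact that its restriction agrees with $\underline{T}$ by definition) forces it to coincide with the claimed extension.

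First I would check the sheaf condition using the criterion of Example \ref{sheavcoh}. Condition (I), that $G$ sends finite coproducts to finite products, is immediate since $\Hom_{\Top}(S_1 \amalg S_2, T) \cong \Hom_{\Top}(S_1,T) \times \Hom_{\Top}(S_2,T)$ by the universal property of the coproduct. For condition (II), given an effective epimorphism $q\colon Y \to X$ of compact Hausdorff spaces, I need the diagram $\Hom_{\Top}(X,T) \to \Hom_{\Top}(Y,T) \rightrightarrows \Hom_{\Top}(Y \times_X Y, T)$ to be an equalizer. An element of the equalizer is a continuous map $g\colon Y \to T$ with $g \circ p_1 = g \circ p_2$, i.e. $g$ is constant on the fibres of $q$; since $q$ is surjective and, by Proposition \ref{surjcomphaus}, a quotient map, $g$ factors uniquely through a set-map $X \to T$ which is automatically continuous. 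This is exactly the argument already rehearsed in the proof of Theorem \ref{coeqext} and Example \ref{cohcompH}, so I would simply cite it.

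Next I would verify the right Kan extension condition, i.e. that for every compact Hausdorff space $X$ the canonical map
\[
\Hom_{\Top}(X,T) \;\longrightarrow\; \lim_{(S \to X) \in (\extdis_{/X})^{\mathrm{op}}} \Hom_{\Top}(S,T)
\]
is a bijection. Here one uses Theorem \ref{coeqext}: there exist extremally disconnected $S, S'$ with $S' \rightrightarrows S \to X$ a coequalizer in $\Top$, and in fact the comma category $\extdis_{/X}$ is cofiltered enough that the limit over it is computed by this cofork. Applying $\Hom_{\Top}(-,T)$ to the coequalizer $S' \rightrightarrows S \to X$ turns it into an equalizer of sets (contravariant $\Hom$ out of a colimit), which identifies $\Hom_{\Top}(X,T)$ with the equalizer of $\Hom_{\Top}(S,T) \rightrightarrows \Hom_{\Top}(S',T)$, and this equalizer is precisely the limit appearing above. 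Combined with Proposition \ref{rightKan}, since $G$ restricts on $\extdis^{\mathrm{op}}$ to $\underline{T}$ and is a sheaf which is a right Kan extension of that restriction, $G$ must equal the right Kan extension that Proposition \ref{condsetequi} uses to define the extension of $\underline{T}$ to $\CoHa$; hence that extension is given by formula \eqref{formula}.

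The main obstacle I anticipate is the bookkeeping in the right Kan extension step: one must be careful that the limit over the possibly large comma category $\extdis_{/X}$ really is computed by the finite cofork coming from Theorem \ref{coeqext}, rather than naively hoping the category is cofiltered (it need not be literally cofiltered). The clean way around this is to avoid computing the limit directly and instead invoke Proposition \ref{rightKan} in the reverse direction: show $G$ is a sheaf and that it satisfies condition (2) of that proposition by the coequalizer/equalizer argument above — or even more economically, note that $\underline{T}$ is \emph{defined} on $\extdis$ and Proposition \ref{condsetequi} tells us there is a \emph{unique} sheaf on $\CoHa$ restricting to it, so it is enough to exhibit $G$ as such a sheaf, which the two displayed checks accomplish. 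Everything else is a routine application of universal properties and of results already established in the excerpt.
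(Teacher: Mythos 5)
Your proposal is correct and, in its final (economical) form, is exactly the paper's argument: show that $S\mapsto \Hom_{\Top}(S,T)$ is a sheaf on $\CoHa$ via the coequalizer $K'\times_K K'\rightrightarrows K'\to K$ and the fact that $\Hom_{\Top}(-,T)$ turns colimits into limits, observe that its restriction to $\extdis^{\mathrm{op}}$ is $\underline{T}$, and conclude by the uniqueness of the extension from Proposition \ref{condsetequi}. The detour through verifying the right Kan extension condition pointwise over the comma category is unnecessary, as you yourself note, and the paper skips it.
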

\begin{proof}
For any surjection $q\colon K^{\prime}\rightarrow K$ of compact Hausdorff spaces, the diagram 
\[
\begin{tikzcd}
K^{\prime}\times_K K^{\prime} \ar[r,shift left=.75ex,"p_1"]
  \ar[r,shift right=.75ex,swap,"p_2"]
&
K^{\prime} \ar[r,"q"] 
&
K 
\end{tikzcd}
\]
is a coequalizer (cf.\ Ex.\ \ref{cohcompH}). Combining this with the natural isomorphism of sets
$$\mathrm{lim}_{i \in I}\Hom_{\Top}(B_i, A)=\Hom_{\Top}(\colim_{i\in I} B_i, A)$$ induces the bijection of sheaf condition (II). Hence, the presheaf defined by (\ref{formula}) is a condensed set for the characterization provided in Proposition \ref{condsetequi} (i). As its restriction to $\extdis^{\mathrm{op}}$ is clearly given by $\underline{T}$, it has to coincide with the unique extension of $\underline{T}$ to all compact Hausdorff spaces constructed through right Kan extension.
\end{proof}
The functor $T\mapsto \underline{T}$ involves more information about the relation between topological spaces and condensed sets, which is collected in the following proposition. Above all, the equivalence in the third statement has a key function for the next chapter.
\begin{proposition}\label{equivcomphaus}
\hspace{2em}
\begin{enumerate}
\item[1.] The functor $T \mapsto \underline{T}$ has a left adjoint $X\mapsto X(\ast)_{\mathrm{top}}$ sending any condensed set $X$ to its underlying set $X(\ast)$ equipped with the quotient topology given by the map
$$\amalg_{S, a\in X(S)} S\rightarrow X(\ast).$$
\item[2.] The functor $T\mapsto \underline{T}$ restricted to compactly generated topological spaces is fully faithful.
\item[3.] The functor $T\mapsto \underline{T}$ induces an equivalence between the category of compact Hausdorff spaces and the category of quasicompact quasiseparated condensed sets.
\end{enumerate}
\end{proposition}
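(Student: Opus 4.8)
The plan is to prove the three parts in order, since part 3 will rely on the adjunction from part 1.

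\textbf{Part 1.} First I would exhibit the left adjoint explicitly. Given a condensed set $X$, define $X(\ast)_{\mathrm{top}}$ to be the set $X(\ast)$ equipped with the quotient topology induced by the map $\amalg_{S,\,a\in X(S)} S \to X(\ast)$, where the component indexed by $(S,a)$ with $a\in X(S)$ is the map $S \to X(\ast)$ obtained by applying $X$ to the unique map $S\to\ast$ (so $X(S)\to X(\ast)$) and then evaluating at $a$ — more precisely, it sends $s\in S$ to the image of $a$ under $X(s)\colon X(S)\to X(\ast)$, where $s\colon \ast\to S$ picks out the point. To verify the adjunction $\Hom_{\Top}(X(\ast)_{\mathrm{top}}, T) \cong \Hom_{\Cond}(X, \underline{T})$, I would chase: a continuous map $X(\ast)_{\mathrm{top}}\to T$ is by the quotient-topology universal property the same as a compatible family of continuous maps $S\to T$ for each $(S,a)$, which (using that $\underline{T}(S)=\Hom_{\Top}(S,T)$ and Yoneda/the explicit description of condensed sets on $\extdis$) is exactly the data of a natural transformation $X\to\underline{T}$. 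One must check the compatibility conditions match up, which is routine but should be spelled out.

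\textbf{Part 2.} For full faithfulness on compactly generated spaces, the key point is that for a compactly generated space $T$, the topology is determined by maps from compact Hausdorff spaces into it, and in fact $T \cong T(\ast)_{\mathrm{top}}$ where the right side is computed from $\underline{T}$ — i.e. the unit of the adjunction is an isomorphism on compactly generated spaces. Given that, full faithfulness follows formally: $\Hom_{\Top}(S,T) \cong \Hom_{\Top}(S(\ast)_{\mathrm{top}}, T) \cong \Hom_{\Cond}(\underline{S},\underline{T})$ for $S$ compactly generated (taking $S$ compactly generated too, which CW-spaces are). So the work is to show the counit/unit iso: the underlying set of $\underline{T}$ is $\Hom_{\Top}(\ast,T)=T$ as a set, and the quotient topology from $\amalg_{S,a} S \to T$ is precisely the compactly generated topology, which agrees with the original topology when $T$ is already compactly generated.

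\textbf{Part 3.} Here I would show $T\mapsto\underline{T}$ restricted to $\CoHa$ lands in quasicompact quasiseparated (qcqs) condensed sets, is fully faithful (special case of Part 2, since compact Hausdorff spaces are compactly generated), and is essentially surjective onto qcqs condensed sets. Landing in qcqs: for $K$ compact Hausdorff, $\underline{K}$ is quasicompact because the covering $\underline{K}\to\underline{K}$ by $\underline{K}$ itself — or rather, any covering of $\underline K$ by representables $\amalg \underline{S_i}\to\underline{K}$ corresponds to a jointly surjective family of compact Hausdorff spaces mapping to $K$, and by compactness of $K$ a finite subfamily is already jointly surjective (using that the images are closed hence the complement of a finite union being empty can be detected finitely). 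Quasiseparatedness: a map $\underline{S}\to\underline{K}$ from a quasicompact condensed set factors appropriately and $\underline{S}\times_{\underline{K}}\underline{S'} = \underline{S\times_K S'}$ which is compact Hausdorff by Corollary~\ref{limitcomp}, hence quasicompact. For essential surjectivity, given a qcqs condensed set $X$: quasicompactness gives a surjection $\underline{S}\to X$ with $S$ extremally disconnected (a single one, by finiteness), and quasiseparatedness makes $\underline{S}\times_X\underline{S}$ quasicompact, so it receives a surjection from some $\underline{S'}$, $S'$ extremally disconnected; then $X$ is the coequalizer of $\underline{S'}\rightrightarrows\underline{S}$ in $\Cond$. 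Applying the left adjoint $(\ast)_{\mathrm{top}}$ and using Theorem~\ref{coeqext} (every compact Hausdorff space is such a coequalizer), one identifies $X(\ast)_{\mathrm{top}}$ with the compact Hausdorff space $K$ that is the coequalizer of $S'\rightrightarrows S$ in $\Top$, and checks $\underline{K}\cong X$ by comparing the equalizer presentations via the Remark following Proposition~\ref{condsetequi}.

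\textbf{Main obstacle.} The hardest part is the essential surjectivity in Part 3: one must show the coequalizer $X$ of $\underline{S'}\rightrightarrows\underline{S}$ in $\Cond$ is actually corepresented by the topological coequalizer $K$, i.e. that sheafification does not distort the coequalizer and that the equalizer diagram $\underline{K}(E)\to\underline{K}(S)\rightrightarrows\underline{K}(S')$ from the Remark agrees with the one defining $X$ on extremally disconnected $E$. This requires care about quotient topologies interacting with the condensed structure, and controlling that $S\times_K S$ (needed to present $\underline K$) is covered by $S'$. I would handle it by using Theorem~\ref{coeqext} and Proposition~\ref{surjcomphaus} to pin down $K$ concretely, then verifying both sides satisfy the same universal property against arbitrary condensed sets via the adjunction of Part 1.
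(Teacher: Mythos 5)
The paper does not actually prove this proposition: its ``proof'' is a citation to \cite{Scholze2019a} and \cite{Scholze2019}. Your strategy (construct the left adjoint via the quotient topology, deduce part 2 from the counit being an isomorphism on compactly generated spaces, and prove part 3 by presenting a qcqs condensed set as a coequalizer of representables) is exactly the strategy of the cited reference, so Parts 1 and 2 are fine in outline.

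There is, however, a genuine gap in your Part 3, at the step you yourself flag as the main obstacle. You assert that the coequalizer of $S'\rightrightarrows S$ in $\Top$ is ``the compact Hausdorff space $K$'', and propose to justify this with Theorem \ref{coeqext}. That theorem goes in the opposite direction: it says every compact Hausdorff space \emph{is} such a coequalizer, not that every such coequalizer is compact Hausdorff. In fact the paper's remark after Theorem \ref{thmcompro} explicitly exhibits the failure: a coequalizer of compact Hausdorff spaces in $\Top$ need not be Hausdorff. The missing idea is precisely where quasiseparatedness does its real work. Since $X$ is a sheaf, $\underline{S}\times_X\underline{S}\to\underline{S\times S}$ is a monomorphism; quasiseparatedness gives a surjection $\underline{S'}\to\underline{S}\times_X\underline{S}$ with $S'$ compact Hausdorff, so the image $R$ of $S'\to S\times S$ is a \emph{closed} subset (continuous image of a compact space in a Hausdorff space), and one checks it is an equivalence relation because $\underline{S}\times_X\underline{S}$ is one. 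Only then does the quotient $K=S/R$ come out compact Hausdorff, and only then is the topological coequalizer the naive quotient by $R$ rather than by some generated relation. Without this argument your identification of $X(\ast)_{\mathrm{top}}$ with a compact Hausdorff space is unsupported. A second, smaller omission: to verify $\underline{K}\cong X$ you must compare $\underline{K}(E)$ with the coequalizer presentation of $X$ evaluated at an extremally disconnected $E$, and this uses that every map $E\to K$ lifts along the surjection $S\to K$ by the projectivity characterization of Proposition \ref{characextdis}(ii); ``comparing the equalizer presentations'' does not go through without this lifting step.
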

\begin{proof}
This is \cite[Proposition 1.2.]{Scholze2019a} and stated with proof in \cite[Proposition 1.9.]{Scholze2019}.
\end{proof}
\begin{remark}\label{topascondset}
As we know that $T\mapsto \underline{T}$ sends a ($T_1$-)space on a condensed set, we can justify the following: Speaking of a compactly generated topological space in the context of condensed sets, we will usually mean the condensed set associated to the topological space under the fully faithful embedding of Proposition \ref{equivcomphaus}.
\end{remark}
\begin{remark}
The notions of quasicompactness and quasiseparatedness of Definition \ref{quasi} unwind for condensed sets as the following:
A condensed set $X$ is quasicompact if there is some profinite $S$ with a surjective map $S \rightarrow X$. A condensed set X is quasiseparated if for any two profinite sets $S_1, S_2$  with maps to $X$, the fibre product $S_1 \times_X S_2$ is quasicompact.
\end{remark}

\chapter{Compact projective objects}\label{compro}
In the fourth chapter, we want to define the so called \textit{animation} of a category - an $\infty$-category freely generated under sifted colimits by some special subcategory. The subcategory is given by the compact projective objects. At the end of this chapter, the compact projective objects in $\Cond$ will be determined in an explicit way. We start with the corresponding definitions taken from \cite{Adamek2011} and \cite{Adamek2010}.
\section{Definitions of compact and projective}
In the following, let $\cat$ be a category that admits all (small) colimits. Indeed, the category $\Cond$ satisfies this assumption \cite[Comment after Remark 2.13.]{Scholze2019}.
\begin{definition}{\textbf{Projectivity notions}}\\
An object $X\in \cat$ is called
\begin{enumerate}
\item[(i)] \textit{regular projective} if $\Hom_{\cat}(X, \cdot)$ commutes with regular epimorphisms, i.e. coequalizers of parallel arrows $Y \rightrightarrows Z$.
\item[(ii)] \textit{projective} if $ \Hom_{\cat}(X,\cdot)$ commutes with reflexive coequalizers, i.e. coequalizers of parallel arrows $Y \rightrightarrows Z$ with
a simultaneous section $Z \rightarrow Y$ of both maps.
\end{enumerate}
\end{definition}
\begin{definition}{\textbf{Compact objects}}\\
An object $X\in \cat$ is called \textit{compact} (or \textit{finitely presentable}) if $\Hom_{\cat}(X, \cdot)$ commutes with filtered colimits, i.e. colimits over filtered categories.
\end{definition}
%
The subsequent propositions state how the notions above are related and collect some stability properties we will need later on. 
\begin{definition}
A \textit{kernel pair} in a category is a pair of morphisms $R\rightrightarrows X$ forming the pullback of a given morphism $f\colon X\rightarrow Y$ along itself, i.e. the diagram
\[
\begin{tikzcd}
 R\ar[r]\ar[d] & X \ar[d, "f"] \\
 X \ar[r, "f"] & Y
\end{tikzcd}
\]
is a pullback square.
\end{definition}
\begin{proposition}\label{stabimpl}
All regular projective objects are projective. In a category with kernel pairs, also the other implication is true. 
\end{proposition}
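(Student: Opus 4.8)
The plan is to prove the two implications of the statement separately, reducing both to a comparison of two classes of maps: the coequalizers of arbitrary parallel pairs (the regular epimorphisms) and the coequalizers of \emph{reflexive} parallel pairs (the reflexive coequalizers). For the implication ``regular projective $\Rightarrow$ projective'', nothing is needed beyond unwinding the definitions: a reflexive pair $Y \rightrightarrows Z$ is in particular an ordinary parallel pair, so its coequalizer is a coequalizer of a parallel pair. Hence the hypothesis that $\Hom_{\cat}(X,\cdot)$ commutes with coequalizers of all parallel pairs specializes at once to the assertion that it commutes with reflexive coequalizers, i.e.\ that $X$ is projective. (Informally: regular projectivity is the stronger property, since strictly more diagrams must be preserved.)

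For the converse, under the standing assumption that $\cat$ has kernel pairs, the plan rests on the observation that \emph{in such a category every regular epimorphism is a reflexive coequalizer}. Concretely, let $p\colon A \to B$ be a regular epimorphism, say $p = \mathrm{coeq}(u,v)$ for some $u,v\colon R \rightrightarrows A$, and let $\pi_1,\pi_2\colon A\times_B A \rightrightarrows A$ be the kernel pair of $p$, which exists by hypothesis. First I would check that $p$ is the coequalizer of $\pi_1,\pi_2$: one has $p\pi_1 = p\pi_2$ by definition of the kernel pair, and if $t\colon A \to T$ satisfies $t\pi_1 = t\pi_2$, then, since $pu = pv$, the pair $(u,v)$ factors through the kernel pair as $u = \pi_1 w$ and $v = \pi_2 w$ for a unique $w\colon R \to A\times_B A$, so that $tu = t\pi_1 w = t\pi_2 w = tv$ and $t$ factors uniquely through $p = \mathrm{coeq}(u,v)$. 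Next I would observe that $\pi_1,\pi_2$ is a reflexive pair: the diagonal $\Delta\colon A \to A\times_B A$ induced by $(\mathrm{id}_A,\mathrm{id}_A)$ is a common section, $\pi_1\Delta = \pi_2\Delta = \mathrm{id}_A$. Hence $p$ is the coequalizer of the reflexive pair $\pi_1,\pi_2$; since $X$ is projective, $\Hom_{\cat}(X,\cdot)$ commutes with this reflexive coequalizer, and as $p$ was an arbitrary regular epimorphism this says exactly that $X$ is regular projective.

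I do not expect a serious obstacle. The only step that is not pure formality is the standard lemma that a regular epimorphism coincides with the coequalizer of its kernel pair, and even that reduces to the short factorization argument above, once one recalls that ``regular epimorphism'' comes equipped with \emph{some} presentation as a coequalizer whose defining pair then necessarily factors through the kernel pair. The hypothesis that $\cat$ has kernel pairs enters at precisely one point: it is what guarantees that this reflexive presentation of an arbitrary regular epimorphism actually exists.
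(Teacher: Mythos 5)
Your proof is correct and follows essentially the same route as the paper: the first implication is immediate because reflexive coequalizers are a special case of coequalizers of parallel pairs, and the second rests on the fact that in a category with kernel pairs every regular epimorphism is a reflexive coequalizer. The only difference is that the paper cites this last fact from the literature (Ad\'amek--Rosick\'y--Vitale), whereas you supply the standard argument yourself --- showing that a regular epimorphism coincides with the coequalizer of its kernel pair and that the kernel pair is reflexive via the diagonal --- which is a correct filling-in of the cited lemma rather than a different approach.
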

\begin{proof}
The first statement is clear by definition and the second follows as in a category with kernel pairs, every regular epimorphism is a reflexive coequalizer by \cite[Example 3.4]{Adamek2011}.
\end{proof}
\begin{proposition}{\textbf{Stability properties}}\label{stabprop}\\
One has the following closedness properties for the notions defined above.
\begin{enumerate}
\item[(i)]Compact objects are closed under finite colimits.
\item[(ii)]Regular projective objects are closed under coproducts.
\item[(iii)] The notions compact and regular projective
are closed under retracts.
\end{enumerate}
\end{proposition}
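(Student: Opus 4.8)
The plan is to treat the three closure statements one at a time, reducing each to an elementary fact about $\Set$, and to use throughout the identification $\Hom_\cat(X,-)$ with a limit whenever $X$ is a colimit. For (i), I would write a finite colimit of compact objects as $X=\colim_{d\in D}X_d$ over a finite index category $D$ with each $X_d$ compact, and then, for an arbitrary filtered diagram $(Y_i)_{i\in I}$, run the chain of natural isomorphisms $\Hom_\cat(X,\colim_i Y_i)\cong\lim_d\Hom_\cat(X_d,\colim_i Y_i)\cong\lim_d\colim_i\Hom_\cat(X_d,Y_i)\cong\colim_i\lim_d\Hom_\cat(X_d,Y_i)\cong\colim_i\Hom_\cat(X,Y_i)$. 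Here the outer steps are the universal property of the colimit $X$, the second step is compactness of each $X_d$, and the key middle step is the commutation of the finite limit $\lim_d$ with the filtered colimit $\colim_i$ in $\Set$; one checks the composite is the canonical comparison map, so $X$ is compact.

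For (ii), given regular projective objects $\{X_j\}_{j\in J}$ and $X=\coprod_j X_j$, I would use $\Hom_\cat(X,-)\cong\prod_j\Hom_\cat(X_j,-)$: for a regular epimorphism $e\colon Y\to Z$ the map $\Hom_\cat(X,e)$ is the product of the surjections $\Hom_\cat(X_j,e)$, hence surjective, so $X$ is regular projective. Concretely, a morphism out of $\coprod_j X_j$ is a family of morphisms out of the $X_j$, each lifts along $e$, and the lifts assemble by the universal property of the coproduct.

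For (iii), suppose $X$ is a retract of $X'$ via $i\colon X\to X'$ and $r\colon X'\to X$ with $ri=\mathrm{id}_X$; then $\Hom_\cat(X,-)$ is a retract of $\Hom_\cat(X',-)$, with section $r^{*}$ and retraction $i^{*}$, since $i^{*}r^{*}=(ri)^{*}=\mathrm{id}$. If $X'$ is compact, the canonical comparison map $\colim_i\Hom_\cat(X,Y_i)\to\Hom_\cat(X,\colim_i Y_i)$ is, as a map of sets, a retract of the corresponding comparison map for $X'$, which is an isomorphism; a retract of an isomorphism is an isomorphism, so $X$ is compact. If instead $X'$ is regular projective, I would argue directly: for a regular epimorphism $e\colon Y\to Z$ and $\varphi\colon X\to Z$, lift $\varphi r\colon X'\to Z$ to some $\psi'\colon X'\to Y$ and set $\psi\coloneqq\psi' i$, so that $e\psi=e\psi' i=\varphi r i=\varphi$.

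There is no serious obstacle here; everything is routine once one is careful to invoke the correct exchange property of $\Set$ — finite limits against filtered colimits — in (i) and in the compact half of (iii), and to notice that closure of compact objects under retracts is really just the statement that a retract of an isomorphism is an isomorphism and uses nothing about $\cat$ beyond the existence of the colimits involved.
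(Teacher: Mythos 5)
Your proposal is correct, and all three arguments are the standard ones. Note, however, that the paper itself gives no argument here: its ``proof'' is only a citation to Lemmata 5.9, 5.11 and 5.13 of the Ad\'amek--Rosick\'y--Vitale reference, so you have in effect supplied the proof the paper defers. Your part (i) correctly isolates the one nontrivial input, the commutation of finite limits with filtered colimits in $\Set$ (the index category $D^{\mathrm{op}}$ of the limit is finite because $D$ is), together with the check that the composite of your isomorphisms is the canonical comparison map. In (ii) you implicitly use the reading of ``$\Hom_{\cat}(X,\cdot)$ commutes with regular epimorphisms'' as ``sends regular epimorphisms to surjections''; this is the intended one (regular epimorphisms in $\Set$ are exactly the surjections), and it is the reading under which the product-of-surjections argument goes through, whereas a literal ``preserves coequalizer diagrams'' reading would not interact well with infinite products. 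Your (iii) is the usual retract-of-an-isomorphism argument for compactness (using naturality of the comparison map so that it really is a retract in the arrow category) plus the direct lifting argument for regular projectivity; both are fine and, as you observe, use nothing about $\cat$ beyond the existence of the relevant (co)limits.
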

\begin{proof}
See \cite[Lemmata 5.9, 5.11 and 5.13]{Adamek2011}.
\end{proof}
\begin{example}\label{exampleext}
Extremally disconnected spaces are regular projective in the category of compact Hausdorff spaces: In Example \ref{cohcompH}, we already argued that every epimorphism of compact Hausdorff spaces is effective. As the notions of regular and effective coincide in a category with fibre products, every epimorphism in this category is regular. And by Proposition \ref{characextdis} (iii), the functor $\Hom_{\Top}(S,\cdot)$ of some extremally disconnected compact Hausdorff space $S$ preserves all epimorphisms. As Proposition \ref{characextdis} actually states an equivalence, even the converse is true: The extremally disconnected spaces are precisely the regular projective compact Hausdorff spaces. This is also a theorem by Gleason stated in \cite[III 3.7.]{Johnstone1982}.
\end{example}
\section{Compact projective objects in \texorpdfstring{$\Cond$}{the category of condensed sets}}
In this section, we will prove that compact projective objects in $\Cond$ are given by extremally disconnected compact Hausdorff spaces. For this purpose, we will investigate in more detail what compact projective condensed sets look like. Hereby, the results of the main Proposition \ref{retandfincol} can be formulated in the context of an arbitrary presheaf category. Or even more generally, they can be translated to all categories where objects are given as colimits of some specific subcategory. Recall that we consider condensed sets as presheaves over $\extdis$ satisfying additional properties as in \ref{condsetequi} (ii).\\
For the next proposition, we introduce the notion of $1$-sifted colimits. Its $\infty$-categorical analogue will appear in Chapter \ref{animation}.\begin{definition}
A small category $\cat$ is called \textit{1-sifted} if $\cat$-colimits commute with finite products in $\Set$. That is, for a given diagram
$$F\colon \cat \times I \rightarrow \Set,$$
where $I$ is a finite discrete category, the canonical morphism
$$ \colim_{c \in \cat} \prod_{i \in I} F(c,i) \rightarrow \prod_{i \in I} \colim_{c \in \cat} F(c,i)$$
is an isomorphism. Colimits over $1$-sifted categories are called \textit{$1$-sifted colimits}.
\end{definition}
Every reflexive coequalizer is a sifted colimit and classic examples for $1$-sifted categories are filtered categories. 
\begin{proposition}\label{extdiscompro}
All extremally disconnected compact Hausdorff spaces are compact and projective as objects in $\Cond$.
\end{proposition}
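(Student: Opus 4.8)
The plan is to use the Yoneda lemma together with the sheaf condition (I) to reduce statements about $\underline{S}$ for $S\in\extdis$ to concrete statements about the values of condensed sets on $S$. Recall that by Proposition \ref{condsetequi}(ii) a condensed set is determined by its restriction to $\extdis$, and on that category only sheaf condition (I) (finite disjoint unions go to finite products) survives. The key observation is that the Yoneda embedding sends $S\in\extdis$ to the representable presheaf $\Hom_{\extdis}(-,S)$; one must first check that this representable presheaf actually \emph{is} a condensed set, i.e. that $\Hom_{\extdis}(S_1\amalg S_2, S)\cong \Hom_{\extdis}(S_1,S)\times\Hom_{\extdis}(S_2,S)$, which is the universal property of the coproduct in $\extdis$ (and $\extdis$ is closed under finite coproducts by Lemma \ref{compstabprop} together with the fact that a finite coproduct of extremally disconnected spaces is extremally disconnected). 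So $\underline{S}\in\Cond$, and by Yoneda $\Hom_{\Cond}(\underline{S}, T)\cong T(S)$ naturally in $T$.

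Given this, proving that $\underline{S}$ is compact amounts to showing that $T\mapsto T(S)$ commutes with filtered colimits of condensed sets, and proving $\underline{S}$ is projective amounts to showing $T\mapsto T(S)$ commutes with reflexive coequalizers. The crucial point is that colimits in $\Cond$ are \emph{not} computed pointwise in general (one must sheafify), so this is where the real work lies. First I would reduce to the statement that the inclusion of presheaves that are condensed sets into all presheaves on $\extdis^{\mathrm{op}}$ — equivalently, the sheafification functor — interacts well with the colimits in question. Concretely: for a filtered diagram $\{T_j\}$, the pointwise colimit presheaf $S\mapsto \colim_j T_j(S)$ already satisfies sheaf condition (I), because filtered colimits commute with finite products in $\Set$; hence no sheafification is needed and $(\colim_j T_j)(S)=\colim_j T_j(S)$. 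This gives compactness immediately. Likewise, for a reflexive coequalizer diagram $T_1\rightrightarrows T_0 \to Q$ in presheaves, the pointwise coequalizer already satisfies (I) since reflexive coequalizers commute with finite products in $\Set$ (this is exactly the $1$-sifted property invoked in the Definition just before the statement, applied to the finite discrete category indexing the product); again no sheafification is needed, so $Q(S)$ is the coequalizer of $T_1(S)\rightrightarrows T_0(S)$, and projectivity of $\underline{S}$ follows from $\Hom_{\Cond}(\underline{S},-)\cong \mathrm{ev}_S$.

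The main obstacle I anticipate is the verification that the relevant colimits in the presheaf-level description remain condensed sets — i.e. that sheaf condition (I) is preserved by filtered colimits and by reflexive coequalizers of sets. The filtered case is standard. The reflexive-coequalizer case is the substantive one: one needs that for a finite index set $I$, the canonical map $\colim_{c}\prod_{i\in I}F(c,i)\to \prod_{i\in I}\colim_c F(c,i)$ is an isomorphism when $c$ ranges over a reflexive-coequalizer (equivalently $1$-sifted) diagram, which is precisely the content recalled after the definition of $1$-sifted categories. Once that is in hand, everything else is an application of the Yoneda lemma via $\Hom_{\Cond}(\underline{S},-)\cong\mathrm{ev}_S$ and the definitions of compact and projective. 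I would also remark that one should double-check the edge case $S=\emptyset$: there $\underline{\emptyset}$ is the initial condensed set (its value everywhere is forced by $T(\emptyset)=\ast$ to be a point only on $\emptyset$ itself, and empty elsewhere after sheafification), and the claim holds trivially; but in any case $\emptyset\in\extdis$ and the general argument covers it.
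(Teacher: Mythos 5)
Your proposal is correct and follows essentially the same route as the paper's proof: the Yoneda identification $\Hom_{\Cond}(\underline{S},T)\cong T(S)$ combined with the observation that filtered colimits and reflexive coequalizers (i.e.\ $1$-sifted colimits) of condensed sets are computed pointwise because they commute with finite products in $\Set$, so that evaluation at $S$ preserves them. The only difference is cosmetic: the paper treats both cases at once via the notion of $1$-sifted colimits and a citation to Ad\'amek--Rosick\'y, while you verify the filtered and reflexive-coequalizer cases separately (and add a harmless extra check that the representable presheaf satisfies sheaf condition (I)).
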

\begin{proof}
Let $S$ be an extremally disconnected compact Hausdorff space. By the Yoneda lemma one has 
$$\Hom_{\Cond}(\Hom_{\Top}(\cdot,S),M)\cong M(S)$$
for all $M\in \Cond$. Further, $1$-sifted colimits in the category of condensed sets can be calculated pointwise: In a presheaf category, all colimits can be computed pointwise. If the colimit additionally commutes with finite products in $\Set$, i.e. is $1$-sifted, the pointwise colimit of condensed sets again defines a presheaf sending finite disjoint unions to finite products, i.e. it is a condensed set. Thus, by the above, the evaluation functor 
$$\Cond\rightarrow \Set, \ M \mapsto M(S)$$
commutes with all $1$-sifted colimits in 
the category of condensed sets. By \cite[Theorem 2.1]{Adamek2010}, a functor $F\colon \Cond \rightarrow \Set$ preserves $1$-sifted colimits if and only if it preserves filtered colimits and reflexive coequalizers.
Therefore, 
$$M \mapsto \Hom_{\Cond}(\Hom_{\Top}(\cdot,S),M)$$
commutes with 
filtered colimits and reflexive coequalizers and hence $\Hom_{\Top}(\cdot,S)$ is compact projective.
\end{proof}
\begin{remark}\label{reprecom}
\hspace{2em}
\begin{enumerate}
\item Note that the argument we applied in the above proposition cannot be applied in the same manner if we work with the definition of condensed sets as sheaves of sets over (totally disconnected) compact Hausdorff spaces. More precisely, due to the sheaf condition (II) in Definition \ref{condsetdef} a colimit of sheaves calculated in the presheaf category does not have to be a sheaf again.  This is where sheafification usually plays a role. Hence, colimits cannot be calculated pointwise in an arbitrary sheaf category.
\item With the same idea as in the proof of Proposition \ref{extdiscompro} one can show that in a presheaf category every (co)representable presheaf is compact projective.
\end{enumerate}
\end{remark}
\begin{lemma}\label{colimit}
All condensed sets are (small) colimits of extremally disconnected compact Hausdorff spaces.
\end{lemma}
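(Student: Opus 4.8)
The plan is to exhibit every condensed set as a colimit of representable presheaves coming from extremally disconnected profinite sets, essentially by the density (co-Yoneda) argument, and then check that the indexing category is small.

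First I would recall that we work with the characterization of $\Cond$ from Proposition \ref{condsetequi} (ii), i.e. as the full subcategory of $\Fun(\extdis^{\mathrm{op}}, \Set)$ spanned by functors sending finite disjoint unions to finite products. For a condensed set $M$, form its \emph{category of elements} $\int M$, whose objects are pairs $(S, a)$ with $S \in \extdis$ and $a \in M(S)$, and whose morphisms $(S', a') \to (S, a)$ are maps $f \colon S' \to S$ in $\extdis$ with $M(f)(a) = a'$. There is a canonical forgetful functor $\int M \to \extdis$, $(S,a) \mapsto S$, and composing with the Yoneda embedding gives a diagram $D_M \colon \int M \to \Cond$, $(S,a) \mapsto \Hom_{\Top}(\cdot, S)$. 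The standard density theorem for presheaf categories states that $M \cong \colim_{(S,a) \in \int M} \Hom_{\Top}(\cdot, S)$, with the colimit computed in the presheaf category $\Fun(\extdis^{\mathrm{op}}, \Set)$; the cocone maps are $a \colon \Hom_{\Top}(\cdot, S) \to M$ obtained from $a \in M(S)$ via Yoneda.

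Next I would argue that this presheaf colimit in fact lands in $\Cond$ and computes the colimit there. The inclusion $\Cond \hookrightarrow \Fun(\extdis^{\mathrm{op}},\Set)$ has a left adjoint (sheafification / the reflection onto product-preserving presheaves, which exists since $\Cond$ is a reflective subcategory — this is implicit in $\Cond$ being cocomplete, cf.\ the comment cited before Proposition \ref{extdiscompro}), so colimits in $\Cond$ are computed by applying this reflection to the presheaf colimit. But $M$ is already a condensed set, and the co-Yoneda colimit reproduces $M$ on the nose in presheaves, so the reflection fixes it and the same diagram computes the colimit in $\Cond$. Each $\Hom_{\Top}(\cdot, S)$ with $S \in \extdis$ is the condensed set attached to the compact Hausdorff space $S$ (it is a condensed set by Example \ref{extop} and Proposition \ref{condsetequi} (ii)), so $M$ is a colimit of extremally disconnected compact Hausdorff spaces.

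The main obstacle I anticipate is the set-theoretic one already flagged in Warning \ref{ignorsetiss}: for the colimit to be \emph{small}, the category of elements $\int M$ must be (essentially) small, which is exactly guaranteed by the smallness convention (every condensed set is the left Kan extension of its restriction to $\kappa$-small extremally disconnected sets), so $M$ is determined by, and is a small colimit of, representables on a small subcategory of $\extdis$. I would state this reliance on the smallness convention explicitly and otherwise treat it as in Warning \ref{ignorsetiss}. A minor point to get right is that the co-Yoneda/density isomorphism is natural and that the colimit in $\Cond$ agrees with the one in presheaves; both follow from the reflectivity of $\Cond$, so no extra work beyond citing cocompleteness of $\Cond$ is needed.
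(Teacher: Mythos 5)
Your proof is correct and follows essentially the same route as the paper's: both write a condensed set as the canonical (co-Yoneda/density) colimit of representables over its category of elements, viewing $\Cond$ inside $\Fun(\extdis^{\mathrm{op}},\Set)$, and both defer the smallness issue to the convention of Warning \ref{ignorsetiss}. Your additional remark that the presheaf colimit agrees with the colimit in $\Cond$ (via reflectivity, since the presheaf colimit already lands in $\Cond$) is a point the paper leaves implicit, and is a welcome clarification rather than a divergence.
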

\begin{proof}
Every condensed set is given as presheaf on the category of extremally disconnected profinite sets. Every presheaf on a small category is a colimit of representables by \cite[Theorem 1 of Section 7 in Chapter III]{Lane1998}. In Lemma \ref{smallcolimits} we stated the same result for small functors on large categories and thus the statement follows under notice of Warning \ref{ignorsetiss}.
\end{proof}
\begin{proposition}\label{retandfincol}
\hspace{2em}
\begin{enumerate}
\item[(i)] Every regular projective condensed set is a retract of a coproduct of extremally disconnected compact Hausdorff spaces.
\item[(ii)]Every condensed set that is compact as an object in the category of condensed sets is a finite colimit of extremally disconnected compact Hausdorff spaces.
\end{enumerate}
\end{proposition}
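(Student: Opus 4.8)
The overall plan is to feed the presentation of a condensed set as a colimit of extremally disconnected spaces (Lemma~\ref{colimit}) into the standard ``coequalizer of coproducts'' description of a colimit, and then to use (regular) projectivity, resp.\ compactness, of $X$ to split $X$ off as a retract; for (ii) a final idempotent-splitting step upgrades ``retract'' to ``is''.

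For (i): By Lemma~\ref{colimit} (together with Lemma~\ref{smallcolimits}) write $X=\colim_{i\in I}F(i)$ over a small category $I$, where $F(i)=\underline{S_i}$ with $S_i\in\extdis$. Since $\Cond$ is cocomplete, $X$ is the coequalizer
\[
\coprod_{u\colon i\to j\ \mathrm{in}\ I}F(i)\ \rightrightarrows\ \coprod_{i\in I}F(i)\ \xrightarrow{\ q\ }\ X,
\]
where on the $u$-summand the two parallel arrows are the inclusion of $F(i)$ into the $i$-th summand and $F(u)$ followed by the inclusion into the $j$-th summand. In particular $q$ is a regular epimorphism onto $X$. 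If $X$ is regular projective, then $\Hom_{\Cond}(X,-)$ turns $q$ into a surjection, so $\mathrm{id}_X$ lifts along $q$ to a section; hence $X$ is a retract of the coproduct $\coprod_{i\in I}F(i)$ of extremally disconnected compact Hausdorff spaces, which is (i).

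For (ii): Keep the presentation $X=\colim_{i\in I}F(i)$ and rewrite it as a filtered colimit of finite colimits. Expressing the two coproducts above as filtered colimits of their finite subcoproducts and using that finite coequalizers commute with filtered colimits, one obtains $X=\colim_{k\in K}Y_k$ over a filtered index category $K$ (indexed by compatible finite sets of objects and morphisms of $I$), with each $Y_k$ a finite colimit of objects $F(i)$, i.e.\ of extremally disconnected compact Hausdorff spaces. Compactness of $X$ then gives
\[
\mathrm{id}_X\ \in\ \Hom_{\Cond}(X,X)\ =\ \colim_{k\in K}\Hom_{\Cond}(X,Y_k),
\]
so $\mathrm{id}_X$ factors as $X\xrightarrow{\,s\,}Y_{k_0}\xrightarrow{\,r\,}X$ with $r\circ s=\mathrm{id}_X$; hence $X$ is a retract of the finite colimit $Y_{k_0}$.

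Finally, to replace ``retract of'' by ``is'': let $\mathcal{E}^{\mathrm{fin}}\subseteq\Cond$ be the full closure of $\extdis$ under finite colimits. It inherits all finite colimits from the cocomplete category $\Cond$, in particular coequalizers, so idempotents split in $\mathcal{E}^{\mathrm{fin}}$ (a category with coequalizers is idempotent-complete, the splitting of $e$ being the coequalizer of $e$ with the identity). Splitting the idempotent $e=s\circ r$ of $Y_{k_0}$ inside $\mathcal{E}^{\mathrm{fin}}$ and using uniqueness of splittings of an idempotent shows that the $\Cond$-splitting $X$ lies in $\mathcal{E}^{\mathrm{fin}}$, so $X$ is obtained from extremally disconnected compact Hausdorff spaces by finite colimits; since a finite colimit of finite colimits is again a finite colimit, $X$ is in fact a single such finite colimit. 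I expect the main obstacle to be precisely this last part: carefully justifying the ``filtered colimit of finite colimits'' rewriting and the fact that finite colimits of finite colimits are finite colimits (both standard, but worth a careful citation), together with observing that it is here, via idempotent splitting, that cocompleteness of $\Cond$ is used once more.
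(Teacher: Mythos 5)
Your part (i) is essentially the paper's proof verbatim: present $X=\colim_i F(i)$ via Lemma~\ref{colimit}, rewrite the colimit as a coequalizer of coproducts to get a regular epimorphism $\coprod_i F(i)\twoheadrightarrow X$, and lift $\mathrm{id}_X$ using regular projectivity. For part (ii) the first half also matches the paper (filtered colimit of finite subcolimits, then compactness factors $\mathrm{id}_X$ through one finite colimit $Y_{k_0}$), but your concluding step is genuinely different. The paper stops at the factorization $X\to\colim_{i\in I_{j'}}X_{ij'}\to X$ and then argues by a diagram chase that the finite subcolimit itself satisfies the universal property of $\colim_{k\in K}X_k$, hence \emph{equals} $X$; the delicate point there is the claim that the composites $X_{ij'}\to X\xrightarrow{s}\colim_i X_{ij'}$ agree with the canonical cocone maps (what one actually gets is the idempotent $s\circ\phi$, not the identity), and the uniqueness clause of the universal property is not really checked, so strictly speaking that argument only delivers that $X$ is a retract of the finite colimit. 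You instead accept the retract and upgrade it by splitting the idempotent $e=s\circ r$ as the coequalizer of $e$ with $\mathrm{id}_{Y_{k_0}}$, which is itself a finite colimit, and then use that finite colimits of finite colimits are finite colimits. This is the more robust route: it proves the stated conclusion without the fragile "equals the subcolimit" step, at the modest cost of the two standard facts you flag (the rewriting of a colimit as a filtered colimit of finite subcolimits, and closure of "finite colimit of objects of $\extdis$" under finite iteration), both of which the paper also uses or asserts without proof. Note also that the downstream Theorem~\ref{thmcompro} only ever uses that compact projectives are retracts of finite coproducts, so either formulation suffices for the application.
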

\begin{proof}
\begin{enumerate}
\item[(i)] 
Every colimit can be expressed as a coequalizer of a diagram of coproducts as in \cite[\href{https://stacks.math.columbia.edu/tag/002P}{Tag 002P}]{Stacks}.
Hence, by Lemma \ref{colimit} for every regular projective condensed set $P=\colim_{k\in K}X_k$ with $X_k$ extremally disconnected compact Hausdorff for all $k\in K$ there exists a regular epimorphism
$$\amalg_{k\in K}X_k\rightarrow P.$$
Consider the map of sets
$$\Hom(P, \amalg_{k\in K}X_k)\rightarrow \Hom(P,P)$$ 
given by postcomposition with the regular epimorphism. Then, by projectivity of $P$ this map is also an epimorphism. Thus, the identity $P\rightarrow P$ factors through a map $P\rightarrow \amalg_{k\in K}X_k$, i.e. one has
$$\mathrm{id}\colon P\rightarrow \amalg_{k \in K} X_k\rightarrow P.$$
Particularly, by this $P$ is a retract of $\amalg_{k \in K}X_k$ and the statement follows.
\item[(ii)]
We start with a similar argument as in the previous proof, but here we have to work a bit more. Every colimit can be expressed as a filtered colimit of its finite colimits. Hence, by Lemma \ref{colimit} some condensed set $X=\colim_{k\in K}X_k$ is given as a colimit
$$X=\colim_{j \in J} (\colim_{i \in I_j} X_{ij})$$
where $J$ is filtered, all $I_j$ are finite and every $X_{ij}$ is extremally disconnected compact Hausdorff space as it coincides with some $X_k$. We assume $X$ to be compact in the category of condensed sets. Then, by the property of compactness the identity morphism $X \rightarrow X$ factors through some finite colimit $X \rightarrow \colim_{i\in I_{j^{\prime}}}X_{ij^{\prime}}$. Indeed, one has
\begin{align*}
\Hom(X,X)&=\Hom(X, \colim_{j \in J} (\colim_{i \in I_j} X_{ij}))\\&=\colim_{j \in J} \Hom(X,(\colim_{i \in I_j} X_{ij}))
\end{align*}
and gets by this a surjective map 
\begin{align*}
\amalg_{j \in J}  \Hom(X,(\colim_{i \in I_j} X_{ij})) \rightarrow \Hom(X,X),
\end{align*}
given by postcomposition with the respective natural map $\colim_{i \in I_j} X_{ij} \rightarrow X$ for all $j\in J$. Thus, we have 
\begin{align*}
\mathrm{id}\colon X \rightarrow \colim_{i \in I_{j^{\prime}}} X_{ij^{\prime}} \overset{\phi}{\longrightarrow} X
\end{align*}
for some $j^{\prime} \in J$. We want to show that $X$ is already given by this finite colimit. For this, it is enough to show that $\colim_{i \in I_{j^{\prime}}} X_{ij^{\prime}}$ satisfies the colimit property of $X=\colim_{k\in K}X_k$. We assume another condensed set $Y$ with a family of morphisms $\psi_k\colon X_k\rightarrow Y$ s.t. for every morphism $f_{kl}\colon X_k\rightarrow X_l$ with $k,l \in I$ the commutativity property $\psi_l \circ f_{kl} =\psi_l$ is fulfilled. Then, there exist unique morphisms $u$ and $u_{j^{\prime}}$ s.t for every such morphism $f_{kl}\colon X_k\rightarrow X_l$ with $k,l \in I$ and $f^{j^{\prime}}_{kl}\colon X_{kj^{\prime}}\rightarrow X_{lj^{\prime}}$ with $k,l \in I_{j^{\prime}}$, respectively, the diagrams
\[
\begin{tikzcd}[row sep=4em, column sep={7em,between origins}]
X_{k} \arrow[rr,"f_{kl}"] \arrow[dr,]
  \arrow[ddr,swap,end anchor={[xshift=0.2em]north west},"\psi_{k}"]
&& 
X_{l} \arrow[dl,swap,] 
  \arrow[ddl,end anchor={[xshift=-0.2em]north east},"\psi_{l}"]
\\
& X \arrow[d,"u"]
\\
& Y
\end{tikzcd}
\]
and 
\[
\begin{tikzcd}[row sep=4em, column sep={7em,between origins}]
X_{kj^{\prime}} \arrow[rr,"f^{j^{\prime}}_{kl}"] \arrow[dr,]
  \arrow[ddr,swap,end anchor={[xshift=0.2em]north west},"\psi_{kj^{\prime}}"]
&& 
X_{lj^{\prime}} \arrow[dl,swap,] 
  \arrow[ddl,end anchor={[xshift=-0.2em]north east},"\psi_{lj^{\prime}}"]
\\
& \colim_{i \in I_{j^{\prime}}} X_{ij^{\prime}}  \arrow[d,"u_{j^{\prime}}"]
\\
& Y
\end{tikzcd}
\]
commute. Moreover, we have maps 
\begin{align*}
X_{k} \rightarrow X \overset{\phi}{\longrightarrow}  \colim_{i \in I_{j^{\prime}}} X_{ij^{\prime}}
\end{align*}
for all $k \in K$. Note that for all $i\in I_{j^{\prime}}$ the maps 
$$X_{ij^{\prime}} \rightarrow X \overset{\phi}{\longrightarrow}  \colim_{i \in I_{j^{\prime}}} X_{ij^{\prime}}$$ coincide with the natural maps $X_{ij^{\prime}}\rightarrow \colim_{i\in I_{j^{\prime}}}X_{ij^{\prime}}$ as $X_{ij^{\prime}}\rightarrow X$ factors through $X_{ij^{\prime}}\rightarrow \colim_{i \in I_{j^{\prime}}}X_{ij^{\prime}}$.
We get for all $f_{kl}\colon X_k\rightarrow X_l$ a bigger diagram 
$$\begin{tikzcd}[row sep=4em, column sep=7em]
X_k \arrow[r, "f_{kl}"] \arrow[d, ]
\arrow[to=2-2,]
& X_l \arrow[d, ]
\arrow[to=2-1, crossing over]
 \\
 \colim_{i \in I_{j^{\prime}}} X_{ij^{\prime}} \arrow[r,yshift=0.7ex,]\arrow[d,"u_{j^{\prime}}"]
& \colim_{k\in K}X_k \arrow[l, yshift=-0.7ex, "\phi" ] \arrow[to=3-1,"u"]
\\
Y
\end{tikzcd}$$
as combination of the diagrams above, where all sub-diagrams contained in the upper square naturally commute by construction. Then, the lower triangle commutes by uniqueness of $u$. Altogether, we get for all  $f_{kl}\colon X_k\rightarrow X_l$ a commutative diagram 
\[
\begin{tikzcd}[row sep=4em, column sep={7em,between origins}]
X_{k} \arrow[rr,"f_{kl}"] \arrow[dr,]
  \arrow[ddr,swap,end anchor={[xshift=0.2em]north west},"\psi_{k}"]
&& 
X_{l} \arrow[dl,swap,] 
  \arrow[ddl,end anchor={[xshift=-0.2em]north east},"\psi_{l}"]
\\
& \colim_{i \in I_{j^{\prime}}} X_{ij^{\prime}} \arrow[d,"u"]
\\
& Y
\end{tikzcd}
\]
where $u_{j^{\prime}}$ is unique as it was unique before. Thus, it is equal to $X$. This proves that every condensed set which is compact as an object in the category of condensed sets is a finite colimit of extremally disconnected compact Hausdorff spaces.
\end{enumerate}
\end{proof}
Now, by combining the previous results, we are able to determine the compact projective objects in $\Cond$.
\begin{theorem}\label{thmcompro}
The compact projective objects in the category $\Cond$ of condensed sets are exactly given by the extremally disconnected compact Hausdorff spaces.
\end{theorem}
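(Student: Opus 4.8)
The plan is to prove the two inclusions $\extdis \subseteq \Cond^{\mathrm{cp}}$ and $\Cond^{\mathrm{cp}} \subseteq \extdis$ separately. The first is already done: it is exactly Proposition \ref{extdiscompro}, which shows that every extremally disconnected compact Hausdorff space is compact and projective in $\Cond$. So all the work lies in the converse, i.e.\ in showing that an arbitrary compact projective condensed set $P$ is (the condensed set attached to) an object of $\extdis$.

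The first step would be to reduce $P$ to a retract of a \emph{finite} coproduct. Since $\Cond$, being a category of sheaves, admits all small limits, in particular kernel pairs, Proposition \ref{stabimpl} upgrades the projective object $P$ to a regular projective one, so Proposition \ref{retandfincol}(i) presents $P$ as a retract of a coproduct $\coprod_{k\in K}X_k$ with each $X_k \in \extdis$. Now I would invoke compactness of $P$: writing $\coprod_{k\in K}X_k = \colim \coprod_{k\in K_0}X_k$ as the filtered colimit over the finite subsets $K_0 \subseteq K$, the functor $\Hom_{\Cond}(P,-)$ commutes with this colimit, so the section $P \to \coprod_{k\in K}X_k$ factors through some $\coprod_{k\in K_0}X_k$ with $K_0$ finite; composing with the retraction exhibits $P$ as a retract of $S := \coprod_{k\in K_0}X_k$. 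A finite coproduct of extremally disconnected compact Hausdorff spaces is again such a space (compactness and Hausdorffness by Lemma \ref{compstabprop}; extremal disconnectedness since in a finite disjoint union the closure of an open set is the disjoint union of the closures of its clopen pieces, each of which is open; alternatively, finite coproducts of regular projectives in $\CoHa$ are regular projective by Proposition \ref{stabprop} and Example \ref{exampleext}), and this coproduct formed in $\Cond$ agrees with the one formed in $\CoHa$ under the embedding of Proposition \ref{equivcomphaus}. Hence $P$ is a retract of $\underline{S}$ in $\Cond$ with $S\in\extdis$.

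The final step is to descend this retract from $\Cond$ back to $\CoHa$. The retract $P$ of $\underline{S}$ is the splitting of an idempotent endomorphism of $\underline{S}$; by full faithfulness of $\CoHa \hookrightarrow \Cond$ (Proposition \ref{equivcomphaus}(2), or (3) via quasicompact quasiseparated condensed sets) this idempotent is $\underline{e}$ for a unique idempotent $e\colon S\to S$ in $\CoHa$. The category $\CoHa$ is idempotent complete: the equalizer $S' \hookrightarrow S$ of $e$ and $\mathrm{id}_S$ is a closed, hence compact Hausdorff, subspace onto which $e$ retracts, so $e$ splits through $S'$ and $S'$ is a retract of $S$ in $\CoHa$. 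Applying $\underline{(-)}$ yields a splitting of $\underline{e}$ through $\underline{S'}$, and since splittings of idempotents are unique up to canonical isomorphism, $P \cong \underline{S'}$. As a retract of the extremally disconnected compact Hausdorff space $S$, the space $S'$ is again extremally disconnected compact Hausdorff (retracts preserve compactness and Hausdorffness by Lemma \ref{compstabprop}, and regular projectivity, equivalently extremal disconnectedness by Example \ref{exampleext}, by Proposition \ref{stabprop}(iii)). This identifies $P$ with an object of $\extdis$ and finishes the proof.

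I expect the genuine obstacle to be this last step: a priori $P$ is only some condensed set squeezed between honest spaces, and it is full faithfulness of $\CoHa \hookrightarrow \Cond$ together with idempotent-completeness of $\CoHa$ that forces it to \emph{be} an honest space. A secondary point that should be spelled out is that finite coproducts of extremally disconnected spaces remain extremally disconnected and are preserved by the embedding into $\Cond$, so that the finite coproduct produced by the compactness reduction genuinely lies in $\extdis$; everything else is bookkeeping with the cited stability properties.
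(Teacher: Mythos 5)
Your proposal is correct and follows essentially the same route as the paper: one inclusion is Proposition \ref{extdiscompro}, and the converse combines Proposition \ref{stabimpl} with Proposition \ref{retandfincol} to exhibit a compact projective condensed set as a retract of a finite coproduct of objects of $\extdis$, then descends to $\CoHa$ using Lemma \ref{compstabprop}, Example \ref{exampleext} and Proposition \ref{stabprop}. The only divergence is in the final descent: the paper identifies the retract as an honest compact Hausdorff space via the equivalence of $\CoHa$ with quasicompact quasiseparated condensed sets (Proposition \ref{equivcomphaus}(3)), whereas you split the idempotent directly in $\CoHa$ using full faithfulness and idempotent completeness --- an equally valid, and somewhat more explicit, way to make the same point; your factorization of the section through a finite subcoproduct also spells out the finiteness reduction that the paper only sketches.
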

\begin{proof}
First, we note that the condensed sets which are compact projective in $\Cond$ are exactly the compact regular projective objects in $\Cond$ by Proposition \ref{stabimpl}.
By merging the results of Proposition \ref{retandfincol}, this implies that the condensed sets which are compact projective in $\Cond$ are given by all retracts of finite coproducts of extremally disconnected compact Hausdorff spaces and thus are quasicompact and quasiseparated. More precisely, following the proof of Proposition \ref{retandfincol} (i) under assumption of (ii) the constructed coproduct turns out to be finite. Furthermore, as compact Hausdorff spaces are closed under finite coproducts and retracts by Lemma \ref{compstabprop} this is, due to the equivalence of categories in Proposition \ref{equivcomphaus} $(3.)$, also true for quasicompact quasiseparated condensed sets. The statement of the theorem follows since extremally disconnected spaces are the regular projective objects in the category of compact Hausdorff spaces as stated in Example \ref{exampleext} and this property is closed under retracts and finite coproducts by Proposition \ref{stabprop}.
\end{proof}
\begin{remark}
In the theorem, we have indeed seen that all compact projective condensed sets are quasicompact and quasiseparated.
Note that projectivity plays an important role here: One can show that all quasicompact quasiseparated condensed sets are compact in the category of condensed sets whereas the other direction is not true.\\
Indeed, by Proposition \ref{stabprop} compact objects of a category are closed under finite colimits in the category. This does not hold for compact Hausdorff spaces in the category of (compactly generated) topological spaces. One can construct a coequliazer of compact Hausdorff spaces that is not Hausdorff in the category of all (compactly generated) topological spaces. Thus, it cannot identified with a quasicompact quasiseparated condensed set under the equivalence in Proposition \ref{equivcomphaus} $(3.)$. The categories of compact condensed sets and quasicompact quasiseparated condensed sets cannot coincide, i.e. there exist compact condensed sets which are not quasicompact quasiseparated. 
\end{remark}

\chapter{The animation of condensed sets}\label{animation}
This chapter introduces the $\infty$-category $\AniCond$ - the animation of the category $\Cond$ - which combines the topological space direction of condensed sets $\Cond$ with the homotopy theory direction of animated sets $\Ani$. The following (non-commutative) diagram provides an overview of the claimed combination and where to find corresponding results. 
\begin{equation*}
\begin{tikzcd}[row sep=huge]
\Top  \arrow[rightarrow,d,xshift=0.7ex, "\ref{extop}"] &
\mathcal{CG}\Top \arrow[hookrightarrow, dl, "\ref{equivcomphaus}"] \arrow[l,"\supseteq" description]&
\CW \arrow[l,"\supseteq" description] \arrow[r,yshift=0.7ex, "\text{Sing}(\cdot)"]  \arrow[dr, "\ref{aniset}" '] &
\Kan \arrow[d, "\ref{aniset}"] \arrow[l,yshift=-0.7ex, "\vert \cdot \vert"]
\\
\boldsymbol{\Cond} \boldsymbol{\arrow[hookrightarrow,d, xshift=0.7ex, "\ref{thmAniCond}"]} \arrow[u,xshift=-0.7ex, "\ref{equivcomphaus}"]& \extdis \arrow[dl, "\ref{defani} \ \& \ \ref{thmcompro}" ] \arrow[l, "\supseteq" description]& \Set  \arrow[hookrightarrow,r,yshift=0.7ex, "\ref{truncationprop}"] & \boldsymbol{\Ani} \boldsymbol{\arrow[hookrightarrow,d, xshift=0.7ex, "\ref{thmAniCond}"]} \arrow[l,yshift=-0.7ex, "\ref{truncationprop}"] 
\\
\boldsymbol{\AniCond} \boldsymbol{\arrow[rrr,"\boldsymbol{\sim}", "\ref{thmAniCond}" ']}\boldsymbol{\arrow[u,xshift=-0.7ex, "\ref{truncationprop2}"]}& & & \boldsymbol{\CondAni} \boldsymbol{\arrow[dashrightarrow, u, xshift=-0.7ex, "\ref{proleft}"]} 
\end{tikzcd}
\end{equation*}
We will start with the definition of the $\infty$-category of animated sets and then translate to the general case of an \textit{Animation}.\\
In Section \ref{condaninew} and Section \ref{topconani} we will see that not only condensed (cf.\ Rem.\ \ref{condemb}) but also animated sets can be embedded fully faithful into the animation of condensed sets which can be identified with the $\infty$-category of condensed anima (cf.\ Def.\ \ref{condanidef}). We will describe these fully faithful embeddings in more detail before passing to Chapter $\ref{homotopy}$ which will deal with a possible option of making homotopy theory with condensed sets by constructing a "left adjoint" to the fully faithful embedding $\Ani \xhookrightarrow{} \CondAni$.
\section{The \texorpdfstring{$\infty$}{infinity}-category of animated sets}
The $\infty$-category $\Ani$ of animated sets is an example for the more general construction called \textit{animation} in \cite{Cesnavicius2019} and \cite{Scholze2019a}. It plays a central role in the higher-categorical setting as it is the proper analogue of $\Set$. Indeed, all $\infty$-categories are enriched over $\Ani$. See Section \ref{forminf} for a comment on how to work with $\infty$-categories. From now on, inter alia, we will implicitly identify an ordinary category with the associated $\infty$-category obtained by the nerve construction. More on the underlying theory of $\infty$-categories presented in this chapter can be found in \cite[\href{https://kerodon.net/tag/00VJ}{Subsections 3.2.2} and \href{https://kerodon.net/tag/01YX}{5.4.1}]{Kero} and \cite[Sections 2.3.4. and 5.5.6.-5.5.8.]{Lurie2009}.\\
\newline
The $\infty$-category of animated sets can be obtained in several ways. Some of them are collected in the following definition. 
\begin{definition}{\textbf{Animated sets}}\label{aniset}\\
An \textit{animated set} or \textit{anima} is an object, i.e. a $0$-simplex, of the $\infty$-category described by one of the following equivalent characterizations. 
\begin{enumerate}
\item[(i)] Taking the category $\Top$ of topological spaces and inverting weak equivalences, universal for all $\infty$-categories (\textit{simplicial localization}).\\ \cite[Section 8]{Hoermann2019}
\item[(ii)] Taking the category $\sSet$ of simplicial sets and inverting weak equivalences, universal for all $\infty$-categories (\textit{simplicial localization}). \\ \cite[Section 8]{Hoermann2019}
\item[(iii)] Taking the simplicial (also called homotopy coherent) nerve of the category $\Kan$ of Kan complexes regarded as a simplicial category.\\ \cite[Definition 1.2.16.1.]{Lurie2009}
\item[(iv)] Taking the topological nerve of the category $\CW$ of CW complexes with continuous functions regarded as a topological category.\\ \cite[Remark 1.2.16.3.]{Lurie2009}
\item[(v)] The \textit{Animation} (cf.\ Def.\ \ref{defani}) of the category $\Set$ of sets where the compact projective sets are given by $\Set^{\mathrm{cp}}=\Set^{\mathrm{fin}}$.\\ \cite[Example 11.5.]{Scholze2019a}
\end{enumerate}
The $\infty$-category is denoted by $\Ani(\Set)$ or just $\Ani$ and is called \textit{$\infty$-category of spaces} or \textit{of $\infty$-groupoids} in standard language.
\end{definition}
\begin{remark}
As described in \cite[Theorem 1.2.]{Hoermann2019}, respectively in \cite[\href{https://kerodon.net/tag/01Z4}{Tag 01Z4}]{Kero}, the equivalence of the $\infty$-categories obtained by the constructions of (i) and (ii), respecively (iii) and (iv), is induced by the adjunction
\[
 \begin{tikzcd}[column sep = huge]
            \sSet \arrow[r, shift left=1ex, "|\cdot|"] & \Top, \arrow[l, shift left=.5ex, "\mathrm{Sing}(\cdot)"]
           \end{tikzcd}
   \]
which induces an equivalence on homotopy categories. The statement that these yield the same $\infty$-category as (v) can be found in \cite[Example 5.5.8.24.]{Lurie2009}.
\end{remark}
$\Ani$ is the classical $\infty$-topos to do homotopy theory with. 
Regarding it modeled by Kan complexes, i.e. viewing its objects as Kan complexes, yields the following notion of \textit{simplicial homotopy groups} which are defined in a similar way as for topological spaces.
This definition provides a reduction of the definition of \textit{categorical homotopy groups} for $\infty$-topoi. 
\begin{definition}{\textbf{Simplicial homotopy groups}}\label{simphomgr}\\
Let $A \in \Ani$.
\begin{enumerate}
\item The set $\pi_0(A)$ of \textit{path components} is the quotient set by the equivalence relation of being in the same path component, whereby a path between $a, b \in A$ is defined as a morphism $p \in \mathrm{Hom}_{\sSet}(\triangle^1,A)$ with initial point $p[0]=a$ and terminal point $p[1]=b$.
\item For $n\geq 1$ and $a\in A$ (meaning a map $\ast \rightarrow A$) the n\textit{th simplicial homotopy group} is the set
$$ \pi_n(A,a)\coloneqq [\triangle^{n}/\delta \triangle^{n}, A]_{*}$$
of pointed equivalence classes of morphisms 
$\triangle^{n}/\delta \triangle^{n} \rightarrow A$
with respect to $a$, i.e. morphisms that take the boundary $\delta \triangle^{n}$ of the simplicial $n$-simplex $\triangle^n$ to $a$, where the equivalence relation is defined by existence of a simplicial homotopy.
\end{enumerate}
\end{definition}
\begin{remark}\label{simphomgrre}
For a topological space $X$ the topological homotopy groups coincide with those of the singular simplicial complex $\mathrm{Sing}(X)$ \cite[\href{https://kerodon.net/tag/00VR}{Tag 00VR}]{Kero}. As in the setting of topological spaces, one can define a group structure if $n\geq 1$ and receives long exact sequences of homotopy groups by Kan fibrations and Whitehead's theorem for Kan complexes. Moreover, the groups are abelian for $n\geq 2$. See \cite[\href{https://kerodon.net/tag/00V2}{Tag 00V2}]{Kero} as a reference for these results.
\end{remark}
\begin{definition}{\textbf{Truncation in $\Ani$}}\label{truncationdef}\\
An anima is defined to be \textit{$n$-truncated} if $\pi_i(A,a)=0$ for all $a\in A$ and $i>n$. In standard language, such an anima is called an \textit{$n$-groupoid}.
\end{definition}
\begin{definition and proposition}{\textbf{Truncation functors}}\label{truncationprop}\\
There is a fully faithful embedding $v\colon \Set \hookrightarrow \Ani$ that identifies $\Set$ with the full subcategory of $0$-truncated objects of $\Ani$. The connected component functor $\pi_0 \colon \Ani \rightarrow \Set$ is left adjoint to the embedding. More generally, the embedding of $n$-truncated anima into all anima has an accessible left adjoint $\tau_{\leq n}\colon \Ani \rightarrow \Ani$.
\end{definition and proposition}
\begin{proof}
There is a fully faithful embedding $\Set \hookrightarrow \sSet$ given by the constant simplicial set construction which identifies $\Set$ with the category of discrete simplicial sets by \cite[\href{https://kerodon.net/tag/00G0}{Tag 00G0}]{Kero}. Moreover by \cite[\href{https://kerodon.net/tag/00GS}{Tag 00GS}]{Kero}, the connected component functor $\pi_0\colon \sSet \rightarrow \Set$ is left adjoint to this embedding. Actually, the construction of constant simplicial sets determines a fully faithful embedding to the category of Kan complexes by \cite[\href{https://kerodon.net/tag/00HB}{Tag 00HB}]{Kero}. Passing to (homotopy coherent) nerves, we obtain a fully faithful embedding $\Set \hookrightarrow \Ani$ of $\infty$-categories as described in \cite[\href{https://kerodon.net/tag/01Z2}{Tag 01Z2}]{Kero} under use of \cite[\href{https://kerodon.net/tag/00KZ}{Tag 00KZ}]{Kero}. By \cite[Example 2.3.4.17.]{Lurie2009}, a Kan complex is $0$-truncated if and only if it is homotopy equivalent to a discrete simplicial set. This proves the first statement. The general result is given by \cite[Proposition 5.5.6.18.]{Lurie2009}.
\end{proof}
%
%
\section{Animation of a category}
The notion of sifted colimits we will use to define the \textit{animation of a category} is probably unfamiliar to many readers. It provides a natural extension of the definition in ordinary category theory, although this is not obvious. To be complete, we will state the definition here shortly. In fact, it is not necessary to understand it in detail in order to follow the subsequent results.
\begin{definition}{\textbf{Sifted colimits}}\\
A simplicial set $K$ is \textit{sifted} if it satisfies the
following conditions:
\begin{itemize}
\item[1.] The simplicial set $K$ is nonempty.
\item[2.] The diagonal map $K\rightarrow K \times K$ is cofinal.
\end{itemize}
Correspondingly, a \textit{sifted colimit} in an $\infty$-category is a colimit of a diagram $K\rightarrow \cat$ with $K$ sifted.
\end{definition}
For the following definition, recall the notions of compact and projective defined in Chapter \ref{compro}.
\begin{definition}{\textbf{Animation}}\label{defani}\\
Let $\cat$ be an ordinary cocomplete category, i.e. it admits all small colimits,  generated under small colimits by the subcategory $\cat^{\mathrm{cp}}$ of compact projective objects. The \textit{animation} of $\cat$ is the $\infty$-category $\AniC$ freely generated under sifted colimits by $\cat^{\mathrm{cp}}$. It can be defined as the full-$\infty$-subcategory of
$$\mathrm{Fun}((\cat^{\mathrm{cp}})^{\mathrm{op}}, \Ani),$$ generated under sifted colimits by the Yoneda image.
\end{definition}
\begin{proposition}\label{presen}
For every category $\cat$ as above, the animation $\AniC$ is presentable, complete and cocomplete. 
\end{proposition}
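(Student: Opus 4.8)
The plan is to recognise the animation $\AniC$ as the sifted-colimit completion $\mathcal{P}_{\Sigma}(\cat^{\mathrm{cp}})$ in the sense of Lurie \cite[\S 5.5.8]{Lurie2009}, and then to deduce presentability, completeness and cocompleteness from the general theory of presentable $\infty$-categories. First I would record two preliminary observations about $\cat^{\mathrm{cp}}$. Because $\cat$ is cocomplete, the compact projective objects coincide with the compact regular projective objects (as in the proof of Theorem \ref{thmcompro}); by Proposition \ref{stabprop}, regular projective objects are closed under coproducts and compact objects under finite colimits, so $\cat^{\mathrm{cp}}$ is closed under finite coproducts and, taking the empty coproduct, contains an initial object. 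Second, following the set-theoretic conventions of Warning \ref{ignorsetiss} I would replace $\cat^{\mathrm{cp}}$ by an essentially small skeleton, so that the presheaf $\infty$-category $\mathcal{P}(\cat^{\mathrm{cp}}) := \Fun((\cat^{\mathrm{cp}})^{\mathrm{op}},\Ani)$ is a genuine presentable $\infty$-category.

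By Definition \ref{defani}, $\AniC$ is the smallest full $\infty$-subcategory of $\mathcal{P}(\cat^{\mathrm{cp}})$ that contains the Yoneda image and is closed under sifted colimits, which is exactly $\mathcal{P}_{\Sigma}(\cat^{\mathrm{cp}})$. Since $\cat^{\mathrm{cp}}$ admits finite coproducts, $\mathcal{P}_{\Sigma}(\cat^{\mathrm{cp}})$ can be described concretely as the full subcategory of $\mathcal{P}(\cat^{\mathrm{cp}})$ spanned by those functors $(\cat^{\mathrm{cp}})^{\mathrm{op}}\to\Ani$ that send finite coproducts to finite products, and this inclusion is an accessible localization of $\mathcal{P}(\cat^{\mathrm{cp}})$ (Lurie, \cite[Proposition 5.5.8.10]{Lurie2009}; the localization functor is the evident process forcing finite-product-preservation). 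Now $\mathcal{P}(\cat^{\mathrm{cp}})$ is presentable because presheaf $\infty$-categories on small $\infty$-categories are, and an accessible localization of a presentable $\infty$-category is again presentable; hence $\AniC$ is presentable.

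It then remains to invoke that every presentable $\infty$-category admits all small limits and all small colimits: cocompleteness is built into the definition, and completeness is standard (see \cite[\S 5.5.2]{Lurie2009}, in particular Corollary 5.5.2.4). Applying this to $\AniC$ gives that it is complete and cocomplete. The hard part will be the first identification: one must check carefully that the universal property "freely generated under sifted colimits" used in Definition \ref{defani} genuinely singles out $\mathcal{P}_{\Sigma}(\cat^{\mathrm{cp}})$, and that the finite-coproduct structure on $\cat^{\mathrm{cp}}$ needed for the concrete localization description is indeed available; the preliminary observations of the first paragraph supply the latter, and the former is part of Lurie's treatment. Everything after that is a formal consequence of the theory of presentable $\infty$-categories.
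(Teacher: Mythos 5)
Your proposal is correct and follows essentially the same route as the paper, which likewise deduces presentability from Lurie's Proposition 5.5.8.10 (together with Theorem 5.5.1.1) and then completeness and cocompleteness from Corollary 5.5.2.4. You merely spell out the identification $\AniC \simeq \mathcal{P}_{\Sigma}(\cat^{\mathrm{cp}})$ and the closure of $\cat^{\mathrm{cp}}$ under finite coproducts, which the paper leaves implicit.
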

\begin{proof}
With regards to \cite[Theorem 5.5.1.1., Proposition 5.5.8.10 (1)]{Lurie2009}, the $\infty$-category $\AniC$ is presentable and thus by \cite[Corollary 5.5.2.4]{Lurie2009} complete and cocomplete.
\end{proof}
\begin{remark}\label{inftopo}
The $\infty$-category $\Ani$ has more nice properties. As mentioned before, it is an $\infty$-topos and one can consider any $\infty$-topos as an $\infty$-category which "looks like" the
$\infty$-category of anima, just as an ordinary topos is a category which "looks
like" the category of sets. Have a look at \cite[Chapter 6]{Lurie2009} for more insight on $\infty$-topoi. As in every $\infty$-topos, filtered colimits commute with finite products in $\Ani$ by \cite[Lemma 5.5.8.11. and Remark 5.5.8.12.]{Lurie2009}.
\end{remark}
\begin{definition}{\textbf{Truncation in $\infty$-categories}}\\
An \textit{$n$-truncated} object in a general $\infty$-category is an object s.t. all mapping spaces into it, which are naturally objects in $\Ani$, are $n$-truncated as in Definition \ref{truncationdef}. If all mapping spaces are $(n-1)$-truncated, the $\infty$-category is called an \textit{$n$-category}.\end{definition} 
\begin{remark}
\begin{itemize}
\item[1.]In particular, $1$-categories as in the definition above coincide with the usual notion of a category. Every ordinary category defines a $1$-category as all mapping spaces are just sets by Proposition \ref{truncationprop}. The full claim can be obtained by combining the results of \cite[Proposition 2.3.4.5. and 2.3.4.18.]{Lurie2009}.
\item[2.] By \cite[Remark 5.5.6.4]{Lurie2009}, for the $\infty$-category $\Ani$ this more general definition of truncation coincides with the Definition \ref{simphomgr}.
\end{itemize}
\end{remark}
There is a general version of Proposition \ref{truncationprop} based on the truncation functors $\tau_{\leq i} \colon \Ani \rightarrow \Ani$.
\begin{proposition}\label{truncationprop2}
There is a fully faithful embedding $\cat \hookrightarrow \AniC$ that identifies $\cat$ with the full subcategory of $0$-truncated objects of $\AniC$. Its left adjoint is given by composition of $(\cat^{\mathrm{cp}})^{\mathrm{op}} \rightarrow \Ani$ in $\Ani(\cat)$ with the connected component functor $\pi_0 \colon \Ani \rightarrow \Set$. More generally, composition with the truncation functor $\tau_{\leq n} \colon \Ani \rightarrow \Ani$ induces a truncation $ \tau_{\leq n} \colon \AniC\rightarrow \AniC$ that is an accessible left adjoint to the embedding of the full subcategory of $n$-truncated objects of $\AniC$.
\end{proposition}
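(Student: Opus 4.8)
The plan is to run the proof of Proposition \ref{truncationprop} one level of generality up, working relative to $\cat^{\mathrm{cp}}$ instead of $\Set^{\mathrm{fin}}$, and transporting all statements about $\Ani$ through postcomposition of functors. First I would set up the two parallel pictures. By \cite[\S 5.5.8]{Lurie2009}, the $\infty$-category $\AniC$ (the free sifted cocompletion $\mathcal{P}_\Sigma(\cat^{\mathrm{cp}})$) is identified with the full subcategory of $\Fun((\cat^{\mathrm{cp}})^{\mathrm{op}},\Ani)$ spanned by the finite-product-preserving functors; meanwhile the hypotheses on $\cat$ in Definition \ref{defani} are precisely what is needed to identify $\cat$ with the $1$-categorical analogue, namely the full subcategory of $\Fun((\cat^{\mathrm{cp}})^{\mathrm{op}},\Set)$ of finite-product-preserving functors. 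Under these identifications the claimed embedding $\cat\hookrightarrow\AniC$ is postcomposition with the fully faithful $v\colon\Set\hookrightarrow\Ani$ of Proposition \ref{truncationprop}.

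Fully faithfulness is then formal: postcomposition with a fully faithful functor is fully faithful on functor $\infty$-categories (mapping spaces are computed objectwise), and it carries product-preserving functors to product-preserving functors since $v$ preserves finite products, hence restricts to a fully faithful $\cat\hookrightarrow\AniC$. To identify the essential image with the $0$-truncated objects I would first record that $\AniC$ is a reflective subcategory of $\Fun((\cat^{\mathrm{cp}})^{\mathrm{op}},\Ani)$: it is presentable (Proposition \ref{presen}) and closed under small limits, because finite-product-preserving functors are closed under limits. For a reflective full subcategory, an object is $n$-truncated in the subcategory if and only if it is $n$-truncated in the ambient $\infty$-category, and in the functor $\infty$-category truncation is computed objectwise; so an object of $\AniC$ is $n$-truncated exactly when all of its values are $n$-truncated anima. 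For $n=0$ this says the $0$-truncated objects of $\AniC$ are the finite-product-preserving functors landing in $\Set\subset\Ani$, i.e. precisely the objects of $\cat$.

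Next I would construct the left adjoint. The functor $\pi_0\colon\Ani\to\Set$ is left adjoint to $v$ and preserves finite products, so postcomposition with $\pi_0$ sends product-preserving functors to product-preserving functors and defines $(\pi_0)_*\colon\AniC\to\cat$, $F\mapsto(S\mapsto\pi_0(F(S)))$ --- this is exactly the composite of $F\in\AniC$ with $\pi_0$ named in the statement. Left adjointness follows by restricting the postcomposition adjunction $(\pi_0)_*\dashv v_*$ on the ambient functor $\infty$-categories: for $F\in\AniC$ and $G\in\cat$ one has $\Hom_{\AniC}(F,v_*G)=\Hom(F,v_*G)=\Hom((\pi_0)_*F,G)=\Hom_{\cat}((\pi_0)_*F,G)$, the outer equalities holding because both subcategories are full. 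For general $n$ I would repeat the argument verbatim with the inclusion $\Ani_{\le n}\hookrightarrow\Ani$ of $n$-truncated anima in place of $v$: its left adjoint $\tau_{\le n}$ is accessible by Proposition \ref{truncationprop} and preserves finite products (on homotopy groups it is the identity in degrees $\le n$ and zero above, so $\tau_{\le n}(X\times Y)\to\tau_{\le n}X\times\tau_{\le n}Y$ is an equivalence), whence $(\tau_{\le n})_*$ restricts to a functor $\AniC\to\AniC$ left adjoint to the inclusion of the $n$-truncated objects; it is accessible because postcomposition with an accessible functor is accessible and $\AniC$ is itself accessible.

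The one \emph{point to get right} --- and the only real obstacle --- is the finite-product bookkeeping that makes everything restrict correctly: that $\AniC$ and $\cat$ genuinely are the product-preserving functor categories over $\cat^{\mathrm{cp}}$ (which uses that $\cat^{\mathrm{cp}}$ admits finite coproducts --- true here since compact projective objects are closed under finite coproducts, cf.\ Proposition \ref{stabprop} --- together with \cite[\S 5.5.8]{Lurie2009}), and that $\pi_0$ and each $\tau_{\le n}$ preserve finite products. Once these are in hand, fully faithfulness, the truncation identification, and the two families of adjunctions are purely formal consequences, obtained by pushing the corresponding facts about $\Ani$ and its truncation functors through postcomposition, exactly as in the proof of Proposition \ref{truncationprop}.
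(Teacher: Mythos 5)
Your argument is correct. Note that the paper itself gives no proof at all here: it simply cites the third paragraph of \S 5.1.4 of \cite{Cesnavicius2019}, and what you have written out is essentially the argument carried out (more tersely) in that reference. The two load-bearing identifications you isolate are exactly the right ones: $\AniC\simeq\Fun^{\times}((\cat^{\mathrm{cp}})^{\mathrm{op}},\Ani)$ and $\cat\simeq\Fun^{\times}((\cat^{\mathrm{cp}})^{\mathrm{op}},\Set)$, using that $\cat^{\mathrm{cp}}$ admits finite coproducts; after that, everything is postcomposition with the adjunctions $\tau_{\leq n}\dashv\iota$ on $\Ani$, restricted to the product-preserving subcategories. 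Two small points you could tighten. First, your justification that $\tau_{\leq n}$ preserves finite products via homotopy groups is a little informal (one has to be careful about base points and $\pi_0$); the clean reference is that truncation commutes with finite products in any $\infty$-topos, \cite[Lemma 6.5.1.2]{Lurie2009}. Second, the accessibility of the resulting truncation functor on $\AniC$ is most easily seen by observing that it is a left adjoint between presentable $\infty$-categories and hence preserves all small colimits; your ``postcomposition with an accessible functor'' argument is more roundabout than needed. Neither point affects the correctness of the proof.
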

\begin{proof}[Proof]
See \cite[Third paragraph of 5.1.4.]{Cesnavicius2019}.
\end{proof}
Besides the $\infty$-category of animated sets, the animation of the category of condensed sets will play an important role throughout this work. Hence, we will put the definition down in writing here.
\begin{definition}{\textbf{Animation of condensed sets}}\label{anicond}\\
By Definition \ref{defani} and Chapter $3$, the $\infty$-category $\AniCond$ of animated condensed sets is freely generated under sifted colimits by the category $\extdis$ of extremally disconnected profinite sets. Thus, it can be defined as the full-$\infty$-subcategory of
$$\mathrm{Fun}(\extdis^{\mathrm{op}}, \Ani)$$ generated under sifted colimits by the Yoneda image.
\end{definition}
\begin{remark}\label{condemb}
By Proposition \ref{truncationprop2}, there is a fully faithful embedding $$\Cond \hookrightarrow \AniCond,$$ which admits a left adjoint given by composition with $\pi_0\colon \Ani \rightarrow \Set$.
\end{remark}

\chapter{Pro-\texorpdfstring{$\infty$}{infinity}-categories}\label{procatbase}
\section{Pro-categories in the \texorpdfstring{$\infty$}{infinity}-setting}
In Section \ref{procat}, we already defined pro-categories in the normal categorical setting. There, our main example was the category of profinite sets. In the following, we will generalise this notion to the context of (not necessarily small) $\infty$-categories. Recall that the notion of pro-categories is the dual concept of ind-categories and have a look at  Chapter 5 in \cite{Lurie2009} for a detailed discussion on ind-$\infty$-categories for small $\infty$-categories. Besides \cite{Lurie2009}, the content of this chapter is mainly based on \cite[Appendix A.8.1.]{Lurie2018a}.
\begin{definition}{\textbf{$\infty$-category of pro-objects}}\label{defpro}\\ 
Let $\cat$ be an accessible $\infty$-category which admits finite limits. We denote by $\mathrm{Fun^{lex}}(\cat, \Ani)$ the full subcategory of $\mathrm{Fun}(\cat, \Ani)$ spanned by functors $F \colon \cat \rightarrow \Ani$ which preserve finite limits (called left-exact functors) and are accessible. Then, we refer to the $\infty$-category defined by $$\Pro(\cat)\coloneqq\mathrm{Fun^{lex}}(\cat, \Ani)^{\mathrm{op}}$$ as \textit{$\infty$-category of pro-objects} of $\cat$.
\end{definition}
\begin{remark}{\textbf{Comment on accessibility}}\label{indcat}\\
By \cite[Definition 5.3.5.1. and Corollary 5.3.5.4.]{Lurie2009}, for a small $\infty$-category $\cat$ and some regular cardinal $\kappa$ the ind-$\infty$-category $\mathcal{I}\mathrm{nd}_{\kappa}(\cat^{\mathrm{op}})$ is given by all functors of $\Fun(\cat, \Ani)$ which are $\kappa$-filtered colimits of representable functors. Additionally, assume $\cat$ to be accessible and admit finite limits as in Definition \ref{defpro}. Then $\cat^{\mathrm{op}}$ admits finite colimits, so that $\mathcal{I}\mathrm{nd}(\cat^{\mathrm{op}})$ is the full subcategory of $\Fun(\cat, \Ani)$ spanned by those functors which preserve finite limits. In this setting, every functor $F\colon \cat\rightarrow \Ani$ is automatically accessible \cite[Corollary 5.4.3.6.]{Lurie2009} and one has a canonical isomorphism $\mathcal{I}\mathrm{nd}(\cat^{\mathrm{op}})^{\mathrm{op}}=\Pro(\cat)$ as stated in \cite[Remark A.8.1.2.]{Lurie2018a}. Thus, our definition of pro-$\infty$-categories constitutes a natural extension to a case where $\cat$ is not necessarily small and can be transferred in the same manner to the terminology of ind-$\infty$-categories. For technical reasons, it is convenient to add a hypothesis of accessibility \cite[Remark 7.1.6.2.]{Lurie2009}. 
\end{remark}
\begin{proposition}{\textbf{Yoneda embedding}} \label{Yoneda}\\
If $\cat$ is an accessible $\infty$-category which admits finite limits, then for every $C\in \cat$ the corepresentable functor $$\cat \rightarrow \Ani, \ X\mapsto \Hom_{\cat}(C, X)$$ is accessible and preserves finite limits. It follows that the opposite of the Yoneda embedding
$$\cat^{\mathrm{op}}\hookrightarrow \mathrm{Fun}(\cat, \Ani)$$ determines a fully faithful functor $$j_{\cat} \colon \cat \rightarrow \Pro(\cat),$$ which we will also refer to as the Yoneda embedding.
\end{proposition}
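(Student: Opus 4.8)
The plan is to show that, for each $C\in\cat$, the corepresentable functor $h^{C}\colon X\mapsto\Hom_{\cat}(C,X)$ lands inside the full subcategory $\Fun^{\mathrm{lex}}(\cat,\Ani)\subset\Fun(\cat,\Ani)$, so that the $\infty$-categorical Yoneda embedding corestricts to a functor $\cat\to\Pro(\cat)$, and then to read off full faithfulness directly from the Yoneda lemma. The first point to check is that $h^{C}$ preserves finite limits. This is the $\infty$-categorical analogue of the classical fact that representable functors preserve limits: for any small diagram $p\colon K\to\cat$ admitting a limit, the canonical comparison map $\Hom_{\cat}(C,\lim p)\to\lim_{k\in K}\Hom_{\cat}(C,p(k))$ is an equivalence in $\Ani$, which is essentially one of the defining properties of a limit in an $\infty$-category \cite{Lurie2009}. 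Applying this to the empty diagram and to cospans shows that $h^{C}$ preserves the terminal object and pullbacks, hence all finite limits.

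Next I would verify that $h^{C}$ is accessible. Since $\cat$ is an accessible $\infty$-category, every object of $\cat$ — in particular $C$ — is $\kappa$-compact for all sufficiently large regular cardinals $\kappa$; for such a $\kappa$, the functor $h^{C}$ preserves $\kappa$-filtered colimits by the very definition of $\kappa$-compactness, hence is $\kappa$-continuous, hence accessible (cf.\ \cite[Section 5.4]{Lurie2009}). Combining the two points, $h^{C}$ belongs to $\Fun^{\mathrm{lex}}(\cat,\Ani)$ for every $C\in\cat$.

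It follows that the opposite Yoneda embedding $y\colon\cat^{\mathrm{op}}\hookrightarrow\Fun(\cat,\Ani)$, $C\mapsto h^{C}$, factors through the full subcategory $\Fun^{\mathrm{lex}}(\cat,\Ani)$; passing to opposite $\infty$-categories turns this into a functor $j_{\cat}\colon\cat\to\Fun^{\mathrm{lex}}(\cat,\Ani)^{\mathrm{op}}=\Pro(\cat)$. Full faithfulness of $j_{\cat}$ is then immediate: $y$ is fully faithful by the $\infty$-categorical Yoneda lemma \cite{Lurie2009}, the corestriction of a fully faithful functor to a full subcategory containing its essential image is again fully faithful, and forming opposite $\infty$-categories preserves full faithfulness.

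The main obstacle is the accessibility statement: the genuinely non-formal input is the structural fact that objects of an accessible $\infty$-category become $\kappa$-compact for large enough $\kappa$, so that their corepresentable functors are accessible — everything else (preservation of finite limits, full faithfulness) is a formal consequence of the Yoneda lemma together with the definition $\Pro(\cat)=\Fun^{\mathrm{lex}}(\cat,\Ani)^{\mathrm{op}}$. A secondary point requiring care is bookkeeping the several occurrences of $\mathrm{op}$, so that $j_{\cat}$ ends up with the asserted variance $\cat\to\Pro(\cat)$ rather than $\cat^{\mathrm{op}}\to\Pro(\cat)$.
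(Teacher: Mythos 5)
Your argument is correct, and for two of the three claims it runs parallel to the paper: preservation of finite limits is read off from the defining mapping-space property of limits, and full faithfulness of $j_{\cat}$ comes from the $\infty$-categorical Yoneda lemma together with the bookkeeping of opposites in $\Pro(\cat)=\mathrm{Fun^{lex}}(\cat,\Ani)^{\mathrm{op}}$. Where you genuinely diverge is the accessibility of the corepresentable functor $h^{C}=\Hom_{\cat}(C,\cdot)$. You argue that accessibility of $\cat$ forces $C$ to be $\kappa$-compact for some sufficiently large regular cardinal $\kappa$, so that $h^{C}$ preserves $\kappa$-filtered colimits by the very definition of $\kappa$-compactness. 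The paper instead shows that $h^{C}$ is a compact \emph{object} of the functor category: by Yoneda, $\Hom_{\Fun(\cat,\Ani)}(h^{C},F)\simeq F(C)$, and evaluation at $C$ preserves all colimits since colimits in $\Fun(\cat,\Ani)$ are computed objectwise. These are different assertions --- compactness of $h^{C}$ in $\Fun(\cat,\Ani)$ concerns the functor $\Hom_{\Fun(\cat,\Ani)}(h^{C},-)$ on the functor category, whereas accessibility of $h^{C}$ concerns $h^{C}$ itself preserving $\kappa$-filtered colimits taken in $\cat$, i.e.\ $\kappa$-compactness of $C$ in $\cat$. Your route is the one that directly addresses the claim being proved, and it is the standard argument (compare Remark \ref{indcat}, where the analogous accessibility statement is quoted from Lurie); the paper's computation is not wasted --- it reappears in Remark \ref{pres} to show that corepresentables are cocompact in $\Pro(\cat)$ --- but on its own it does not establish accessibility of $h^{C}$ as a functor on $\cat$, so your version of this step is the more careful one. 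The only minor imprecision on your side is the phrase ``for all sufficiently large $\kappa$'': passing from $\kappa$-compact to $\tau$-compact requires $\kappa\ll\tau$ rather than merely $\kappa\leq\tau$, but since you only need \emph{some} $\kappa$, this does not affect the proof.
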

\begin{proof}
The corepresentable functor clearly preserves finite limits as it preserves in fact all limits. It preserves small filtered colimits, i.e. is accessible as we assumed $\cat$ to be accessible, as all corepresentable functors in functor categories are compact by an $\infty$-categorical translation of Remark \ref{reprecom} (2). More precisely, one has for some $C \in \cat$ by the $\infty$-categorical Yoneda lemma 
$$\Hom_{\Fun(\cat, \Ani)}(\Hom_{\cat}(C,\cdot),F)\cong F(C)$$
for all $F \in \Fun(\cat, \Ani)$. The evaluation functor 
$$\Fun(\cat,\Ani)\rightarrow \Ani, \ F \mapsto F(C)$$
commutes with all colimits as these can be calculated objectwise in a functor-$\infty$-category by \cite[Corollary 5.1.2.3.]{Lurie2009}. Therefore, 
$$F \mapsto \Hom_{\Fun(\cat, \Ani)}(\Hom_{\cat}(C,\cdot),F)$$
commutes with all colimits.
\end{proof}
\begin{remark}\label{pres}
The $\infty$-category $\mathrm{Fun^{lex}}(\cat, \Ani)$ of left-exact accessible functors is closed under small filtered colimits. It follows that the $\infty$-category $\Pro(\cat)$ admits small cofiltered limits. 
Moreover, $\cat$ can be seen as a subcategory of the cocompact pro-$\infty$-objects, as we have seen in the proof of Proposition \ref{Yoneda} that corepresentable functors are compact objects in $\Fun(\cat, \Ani)^{\mathrm{op}}$ .
\end{remark}
As a first example for a pro-$\infty$-category we will revisit the category of profinite sets - but now defined as an $\infty$-category. The following remark states how the usual categorical and the $\infty$-categorical definitions are correlated. 
\begin{remark}
We have the following nice fact: If $\cat$ is the nerve of an accessible ordinary category $I$, then $\Pro(\cat)$ is equivalent to the nerve of an ordinary category $J$ that is equivalent to the category $\mathrm{Pro}(I)$ of pro-objects of $I$ in the sense of ordinary category theory. Shortly, one has an equivalence of $\infty$-categories $N(\mathrm{Pro}(I))=\Pro(N(I))$, where $N(\mathcal{D})$ denotes the nerve of a category $\mathcal{D}$. This is the dual statement of \cite[Remark 5.3.5.6.]{Lurie2009}, again translated to accessible categories.
Hence, our notion of pro-categories in the usual categorical setting embeds in a natural way into the $\infty$-setting.
\end{remark}
\begin{example}{\textbf{Profinite sets}}\label{newpro}\\
By the remark above, we can identify the pro-$\infty$-category $\mathcal{P}\mathrm{ro}(\Set^{\mathrm{fin}})$ of the $\infty$-category $\Set^{\mathrm{fin}}$, i.e. the nerve of the ordinary category of finite sets, with the (nerve of the) pro-category $\PFin$. 
Therefore, we can transfer properties of $\PFin$ to our new setting.
\end{example}
\begin{proposition} \label{equiv}
Let $\cat$ be an accessible $\infty$-category which admits finite limits, let $\mathcal{D}$ be an $\infty$-category which admits small cofiltered limits, and let $\mathrm{Fun}'(\Pro(\cat),\mathrm{D})$ denote the full subcategory of $\mathrm{Fun}(\Pro(\cat), \mathcal{D})$ spanned by those functors which preserve small cofiltered limits. Then, composition with the Yoneda embedding restricts to an equivalence of $\infty$-categories $$\mathrm{Fun}'(\Pro(\cat), \mathcal{D}) \rightarrow \mathrm{Fun}(\cat, \mathcal{D}).$$
\end{proposition}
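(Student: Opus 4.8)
The plan is to produce the inverse equivalence as a right Kan extension along the Yoneda embedding $j_\cat$ of Proposition \ref{Yoneda}, and to reduce its verification to the theory of ind-$\infty$-categories recalled in this chapter. By Remark \ref{indcat} there is a canonical identification $\Pro(\cat) \simeq \mathrm{Ind}(\cat^{\mathrm{op}})^{\mathrm{op}}$ under which $j_\cat$ becomes the usual Yoneda embedding $\cat^{\mathrm{op}} \hookrightarrow \mathrm{Ind}(\cat^{\mathrm{op}})$, "preserves small cofiltered limits" becomes "preserves small filtered colimits", and — crucially — every pro-object acquires a presentation $X \simeq \lim_{i\in I} j_\cat(X_i)$ as a small cofiltered limit of corepresentables. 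Since $\cat^{\mathrm{op}}$ admits finite colimits (being opposite to a category with finite limits) and $\mathcal{D}^{\mathrm{op}}$ admits small filtered colimits, the assertion is exactly the opposite of \cite[Proposition 5.3.5.10]{Lurie2009} once the essential-smallness hypothesis appearing there is removed.

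I would first construct the candidate inverse $\mathrm{Fun}(\cat,\mathcal{D}) \to \mathrm{Fun}'(\Pro(\cat),\mathcal{D})$, sending $G$ to its right Kan extension $\widehat{G} \coloneqq \mathrm{Ran}_{j_\cat} G$. Concretely $\widehat{G}$ is forced by the formula $\widehat{G}(X) \simeq \lim_{i\in I} G(X_i)$ on any cofiltered presentation as above — the relevant comma $\infty$-category is essentially small and cofiltered, so this limit exists in $\mathcal{D}$ and is independent of the presentation. Full faithfulness of $j_\cat$ (Proposition \ref{Yoneda}) gives $\widehat{G}\circ j_\cat \simeq G$, and conversely every $H \in \mathrm{Fun}'(\Pro(\cat),\mathcal{D})$ commutes with the defining cofiltered limits and therefore satisfies $H \simeq \widehat{H\circ j_\cat}$; together with the obvious naturality this exhibits precomposition with $j_\cat$ and $G\mapsto \widehat{G}$ as mutually inverse equivalences.

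Two points require care. The non-smallness of $\cat$ is dealt with via accessibility: one fixes a regular cardinal $\kappa$ with $\cat$ being $\kappa$-accessible, works through an essentially small generating subcategory of $\kappa$-compact objects, and transports the universal property along the identifications of Remark \ref{indcat}, invoking \cite[Remark A.8.1.2.]{Lurie2018a} and \cite[Remark 7.1.6.2.]{Lurie2009}. The genuine obstacle — already the technical core of \cite[Proposition 5.3.5.10]{Lurie2009} — is to verify that $\widehat{G}$ preserves \emph{every} small cofiltered limit in $\Pro(\cat)$, not merely the tautological ones assembled from corepresentables. This is a cofinality argument: one shows that a cofiltered limit of pro-objects, each itself presented as a cofiltered limit of corepresentables, can be re-indexed as a single cofiltered limit of corepresentables, so that $\widehat{G}$ may be computed termwise and hence commutes with the original limit. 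Everything else (existence of the limits in play, full faithfulness, naturality of the comparison) is formal given the results cited above.
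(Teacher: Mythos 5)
The paper does not prove Proposition \ref{equiv} at all: its ``proof'' is the single citation \cite[Proposition A.8.1.6.]{Lurie2018a}. Your proposal correctly reconstructs the argument behind that citation (and behind its small-category antecedent \cite[Proposition 5.3.5.10]{Lurie2009}): the inverse is the right Kan extension along $j_{\cat}$, computed on a cofiltered presentation $X\simeq\mathrm{lim}_{i}\, j_{\cat}(X_i)$ as in Corollary \ref{cofilli}, with accessibility (via Remark \ref{indcat}) standing in for smallness; and you rightly isolate the only non-formal step---that $\widehat{G}$ preserves \emph{all} small cofiltered limits, which requires the reindexing/cofinality argument that is the technical core of the cited proofs---even though you describe rather than execute it, which matches the level of detail the paper itself delegates to the reference.
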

\begin{proof}
See \cite[Proposition A.8.1.6.]{Lurie2018a}.
\end{proof}
\begin{remark}
By translating \cite[Remark before 5.3.6.1.]{Lurie2009} to pro-$\infty$-categories, one can define them also by demanding the following properties which characterize $\Pro(\cat)$ up to equivalence:
\begin{itemize}
\item[1.] There exists a functor $j_{\cat}\colon \cat \rightarrow \Pro(\cat)$ (as in Proposition \ref{Yoneda}).
\item[2.] The $\infty$-category $\Pro(\cat)$ admits small cofiltered limits (as in Remark \ref{pres}).
\item[3.] For all $\infty$-categories $\mathcal{D}$ which admit small cofiltered limits the equivalence of $\infty$-categories in Proposition \ref{equiv} is satisfied.
\end{itemize} 
Hence, we will refer to Proposition \ref{equiv} as \textit{universal property} of $\Pro(\cat)$.
\end{remark}
\begin{lemma}
Let $F \colon \cat \rightarrow \Ani$ be an accessible functor which preserves finite limits. Then, $F$ regarded as an object in $\mathrm{Fun^{lex}}(\cat, \Ani)$ can be written as small filtered colimit $\colim_{\alpha \in A^{\mathrm{op}}}F_{\alpha}$, where each of the functors $F_{\alpha}\colon \cat \rightarrow \Ani$ is corepresentable by an object of $\cat$, i.e. $F$ is given as a functor $$Y\mapsto \colim_{\alpha \in A^{\mathrm{op}}}\Hom_{\cat}(X_{\alpha}, Y)$$ for some cofiltered diagram $X\colon A \rightarrow \cat$.
\end{lemma}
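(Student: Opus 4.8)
The plan is to realize $\mathrm{Fun^{lex}}(\cat, \Ani)$ as an ind-$\infty$-category and then quote the standard description of its objects. First I would recall from Remark \ref{indcat} that, since $\cat$ is accessible and admits finite limits, $\cat^{\mathrm{op}}$ admits finite colimits and there is an identification $\mathrm{Fun^{lex}}(\cat, \Ani) \simeq \mathrm{Ind}(\cat^{\mathrm{op}})$ of full subcategories of $\Fun(\cat, \Ani)$; in this situation every functor $\cat \to \Ani$ is automatically accessible, so $F$ is indeed an object of this $\infty$-category. Under the Yoneda embedding $\cat^{\mathrm{op}} \hookrightarrow \Fun(\cat, \Ani)$ an object $X \in \cat$ is sent to the corepresentable functor $\Hom_{\cat}(X, -)$, which lies in $\mathrm{Fun^{lex}}(\cat, \Ani)$ by Proposition \ref{Yoneda}.

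Next I would invoke the basic structural property of ind-$\infty$-categories (\cite[Section~5.3.5]{Lurie2009}, in the accessible form recalled in Remark \ref{indcat}): the essential image of the Yoneda embedding generates $\mathrm{Ind}(\cat^{\mathrm{op}})$ under small filtered colimits, and moreover every object of $\mathrm{Ind}(\cat^{\mathrm{op}})$ is itself a small filtered colimit of representables. Applied to $F$ this yields a small filtered $\infty$-category, which I write as $A^{\mathrm{op}}$ for a small cofiltered $A$, together with a diagram $\alpha \mapsto F_\alpha$ into $\mathrm{Fun^{lex}}(\cat, \Ani)$ whose values are corepresentable and with $F \simeq \colim_{\alpha \in A^{\mathrm{op}}} F_\alpha$. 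Since the Yoneda embedding $\cat \hookrightarrow \Pro(\cat)$ is fully faithful, the diagram of corepresentables lifts along it to a cofiltered diagram $X \colon A \to \cat$ with $F_\alpha \simeq \Hom_{\cat}(X_\alpha, -)$.

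Finally I would compute the colimit pointwise. By Remark \ref{pres} the full subcategory $\mathrm{Fun^{lex}}(\cat, \Ani) \subseteq \Fun(\cat, \Ani)$ is closed under small filtered colimits — here one uses the fact, noted in Remark \ref{inftopo}, that filtered colimits commute with finite limits in $\Ani$ — and colimits in the functor $\infty$-category $\Fun(\cat, \Ani)$ are formed objectwise. Hence for each $Y \in \cat$ we obtain $F(Y) \simeq \colim_{\alpha \in A^{\mathrm{op}}} \Hom_{\cat}(X_\alpha, Y)$, which is exactly the asserted presentation of $F$.

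The only real difficulty is a size-theoretic one: the cleanest form of ``every ind-object is a filtered colimit of representables'' in \cite{Lurie2009} is stated for small $\infty$-categories, so one must route through the accessibility reductions summarized in Remark \ref{indcat} before applying it to the possibly large category $\cat$; granting that, the argument is entirely formal.
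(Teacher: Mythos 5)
Your proposal is correct, and it differs from the paper mainly in that the paper offers no argument of its own: its ``proof'' is a single pointer to the proof of \cite[Proposition A.8.1.6.]{Lurie2018a}, from which the statement is extracted. You instead assemble an explicit derivation from the paper's own preliminaries: the identification $\mathrm{Fun^{lex}}(\cat,\Ani)\simeq \mathcal{I}\mathrm{nd}(\cat^{\mathrm{op}})$ of Remark \ref{indcat}, the fact that corepresentables land in $\mathrm{Fun^{lex}}(\cat,\Ani)$ (Proposition \ref{Yoneda}), closure of this subcategory under small filtered colimits (Remark \ref{pres}), and the objectwise computation of colimits in $\Fun(\cat,\Ani)$ to get the pointwise formula $F(Y)\simeq \colim_{\alpha\in A^{\mathrm{op}}}\Hom_{\cat}(X_\alpha,Y)$. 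This is the standard route and is morally what underlies the cited proof, so what your version buys is transparency: the reader sees exactly which structural facts are used. Be aware, though, that the genuinely non-formal input --- that for a possibly \emph{large} accessible $\cat$ every \emph{accessible} left-exact functor is a \emph{small} filtered colimit of corepresentables --- is not covered by the small-category statement in \cite[Section 5.3.5]{Lurie2009} and is precisely the point both you and the paper ultimately outsource (you to Remark \ref{indcat}, the paper to SAG); your closing caveat correctly identifies this, so the argument is sound as a derivation from the paper's stated framework rather than as an independent proof of that core fact. Two cosmetic points: Remark \ref{inftopo} as written only records commutation of filtered colimits with finite \emph{products} in $\Ani$, whereas closure of left-exact functors under filtered colimits needs commutation with all finite limits (true in $\Ani$, e.g.\ \cite[Proposition 5.3.3.3]{Lurie2009}); and one should also note that a small filtered colimit of accessible functors remains accessible, which follows by choosing a single sufficiently large regular cardinal.
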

\begin{proof}
See proof of Proposition \ref{equiv} in \cite[Proposition A.8.1.6.]{Lurie2018a}.
\end{proof}
Consequently, by turning to the opposite category $\mathrm{Fun^{lex}}(\cat, \Ani)^{\mathrm{op}}=\Pro(\cat)$, the colimit in the lemma is indeed a limit in $\Pro(\cat)$ as here the direction of morphisms is reversed. This observation yields the following important corollary about the presentation of pro-objects.
\begin{corollary}{\textbf{Presentation of pro-objects}}\label{cofilli}\\
Every object in $\Pro(\cat)$ can be written as limit of a cofiltered diagram $\{F_{\alpha}\}_{\alpha \in A}$ where each $F_{\alpha}$ belongs to the essential image of the Yoneda embedding $j_{\cat}\colon \cat \rightarrow \Pro(\cat)$, i.e. a pro-object is given as a functor by
$$\mathrm{lim}_{\alpha \in A}  F_{\alpha}=\mathrm{lim}_{\alpha \in A}  j_{\cat}(X_{\alpha})=\mathrm{lim}_{\alpha \in A} \Hom_{\cat}(X_{\alpha}, \cdot)$$ in $\Pro(\cat)$.
\end{corollary}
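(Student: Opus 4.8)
The plan is to deduce the corollary as an essentially formal consequence of the preceding Lemma together with the definition $\Pro(\cat)=\Fun^{\mathrm{lex}}(\cat,\Ani)^{\mathrm{op}}$ from Definition \ref{defpro}; the substantive work has already been done in the Lemma, so what remains is careful bookkeeping of opposite categories and of where the relevant (co)limits are computed.

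First I would fix an arbitrary object $P\in\Pro(\cat)$, which by Definition \ref{defpro} is precisely an accessible left-exact functor $F\colon\cat\rightarrow\Ani$. Applying the Lemma to $F$ produces a small cofiltered diagram $X\colon A\rightarrow\cat$, $\alpha\mapsto X_\alpha$, together with an equivalence
\[
F\;\simeq\;\colim_{\alpha\in A^{\mathrm{op}}}\Hom_{\cat}(X_\alpha,\cdot)
\]
in $\Fun^{\mathrm{lex}}(\cat,\Ani)$, where the colimit is filtered because $A^{\mathrm{op}}$ is filtered, and where by Remark \ref{pres} it may be formed inside $\Fun^{\mathrm{lex}}(\cat,\Ani)$ since that subcategory is closed under small filtered colimits. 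Each functor $\Hom_{\cat}(X_\alpha,\cdot)$ is accessible and left-exact by Proposition \ref{Yoneda}, and by construction of the Yoneda embedding it is exactly the object $j_{\cat}(X_\alpha)$, hence lies in the essential image of $j_{\cat}$.

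Next I would pass to opposite categories. Under the identification $\Pro(\cat)=\Fun^{\mathrm{lex}}(\cat,\Ani)^{\mathrm{op}}$, the filtered colimit over $A^{\mathrm{op}}$ displayed above becomes, when read in $\Pro(\cat)$, a cofiltered limit over $A$, and by Remark \ref{pres} this cofiltered limit indeed exists in $\Pro(\cat)$. Setting $F_\alpha\coloneqq j_{\cat}(X_\alpha)$, this yields
\[
P\;\simeq\;\lim_{\alpha\in A}F_\alpha\;=\;\lim_{\alpha\in A}j_{\cat}(X_\alpha)\;=\;\lim_{\alpha\in A}\Hom_{\cat}(X_\alpha,\cdot)
\]
in $\Pro(\cat)$, with $A$ cofiltered and each $F_\alpha$ in the essential image of $j_{\cat}$, which is exactly the claimed presentation.

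The only point requiring genuine care — and hence the closest thing to a main obstacle — is to apply the duality consistently: one must check that the indexing shape really flips from the filtered shape $A^{\mathrm{op}}$ of the Lemma to the cofiltered shape $A$, and that the limit taken in $\Pro(\cat)$ is literally the colimit taken in $\Fun^{\mathrm{lex}}(\cat,\Ani)$ under the mutual-opposite identification of these two $\infty$-categories. Both facts are supplied by Remark \ref{pres}, so no new argument beyond the Lemma is needed.
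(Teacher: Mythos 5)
Your proposal is correct and follows exactly the route the paper takes: the preceding Lemma supplies the filtered colimit of corepresentables in $\mathrm{Fun^{lex}}(\cat,\Ani)$, and the corollary is obtained by reading that colimit as a cofiltered limit in the opposite category $\Pro(\cat)$, using Remark \ref{pres} for closure under filtered colimits. The extra bookkeeping you spell out (identifying $\Hom_{\cat}(X_\alpha,\cdot)$ with $j_{\cat}(X_\alpha)$ and checking the indexing shape flips from $A^{\mathrm{op}}$ to $A$) is exactly the content the paper compresses into the sentence preceding the corollary.
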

This result can be stated as: The $\infty$-category $\Pro(\cat)$ consists of those functors which are "small cofiltered limits of corepresentables". We will abuse notation by identifying the diagram $\{F_{\alpha}\}_{\alpha \in A}$ with its limit in $\Pro(\cat)$ and denoting a pro-object as $\{X_{\alpha}\}_{\alpha \in A}$.
\begin{corollary}\label{collev}
If the $\infty$-category $\cat$ additionally admits finite coproducts, the same is true for its pro-$\infty$-category $\Pro(\cat)$ and these are given by the formula
$$\{X_{\alpha}\}_{\alpha \in A}\amalg \{Y_{\beta}\}_{\beta \in B} = \{X_{\alpha} \amalg Y_{\beta}\}_{(\alpha,\beta) \in A\times B}.$$ 
\end{corollary}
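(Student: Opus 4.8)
The plan is to work through the identification $\Pro(\cat)=\mathrm{Fun^{lex}}(\cat,\Ani)^{\mathrm{op}}$, under which finite coproducts in $\Pro(\cat)$ correspond to finite products in $\mathrm{Fun^{lex}}(\cat,\Ani)$; these are tractable because they can be computed pointwise. First I would check that the inclusion $\mathrm{Fun^{lex}}(\cat,\Ani)\hookrightarrow\Fun(\cat,\Ani)$ creates finite products: a finite product of left-exact functors is left-exact because products commute with finite limits, and a finite product of accessible functors is accessible because finite products commute with filtered colimits in $\Ani$ (Remark \ref{inftopo}). Since finite products in $\Fun(\cat,\Ani)$ are computed pointwise, so are those in $\mathrm{Fun^{lex}}(\cat,\Ani)$, and passing to opposite categories shows $\Pro(\cat)$ admits finite coproducts. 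The nullary case already matches the asserted formula: if $\emptyset\in\cat$ is initial, then $\Hom_\cat(\emptyset,-)$ is the constant functor at the terminal anima, hence the terminal object of $\mathrm{Fun^{lex}}(\cat,\Ani)$, so $j_\cat(\emptyset)$ is the initial object of $\Pro(\cat)$ --- the empty-index instance of the claim. It thus remains to treat binary coproducts, with the general finite case following by induction.

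Next I would present the two pro-objects as filtered colimits of corepresentables. By Corollary \ref{cofilli}, as objects of $\mathrm{Fun^{lex}}(\cat,\Ani)$ we may write $\{X_\alpha\}_{\alpha\in A}\simeq\colim_{\alpha\in A^{\mathrm{op}}}\Hom_\cat(X_\alpha,-)$ and $\{Y_\beta\}_{\beta\in B}\simeq\colim_{\beta\in B^{\mathrm{op}}}\Hom_\cat(Y_\beta,-)$, with $A^{\mathrm{op}}$ and $B^{\mathrm{op}}$ filtered. Their coproduct in $\Pro(\cat)$ is the product in $\mathrm{Fun^{lex}}(\cat,\Ani)$, which by the previous paragraph is computed pointwise, so for $Z\in\cat$ it is
\[
\Bigl(\colim_{\alpha\in A^{\mathrm{op}}}\Hom_\cat(X_\alpha,Z)\Bigr)\times\Bigl(\colim_{\beta\in B^{\mathrm{op}}}\Hom_\cat(Y_\beta,Z)\Bigr).
\]
I would then simplify this in $\Ani$ using: (a) the universal property of the coproduct $X_\alpha\amalg Y_\beta$ in $\cat$, which gives a natural equivalence $\Hom_\cat(X_\alpha,Z)\times\Hom_\cat(Y_\beta,Z)\simeq\Hom_\cat(X_\alpha\amalg Y_\beta,Z)$; (b) the commutation of finite products with filtered colimits in $\Ani$ (Remark \ref{inftopo}), moving the product inside each colimit; (c) associativity of colimits, rewriting the iterated colimit over $A^{\mathrm{op}}$ and $B^{\mathrm{op}}$ as a single colimit over $A^{\mathrm{op}}\times B^{\mathrm{op}}$ (which is again filtered). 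Combining (a)--(c), the displayed functor becomes $Z\mapsto\colim_{(\alpha,\beta)\in A^{\mathrm{op}}\times B^{\mathrm{op}}}\Hom_\cat(X_\alpha\amalg Y_\beta,Z)$. Since $(A\times B)^{\mathrm{op}}=A^{\mathrm{op}}\times B^{\mathrm{op}}$ and $(\alpha,\beta)\mapsto X_\alpha\amalg Y_\beta$ is the composite of the product diagram $A\times B\to\cat\times\cat$ with the coproduct functor, this is exactly the pro-object $\{X_\alpha\amalg Y_\beta\}_{(\alpha,\beta)\in A\times B}$ in the sense of Corollary \ref{cofilli}; it does lie in $\mathrm{Fun^{lex}}(\cat,\Ani)$, being a filtered colimit of corepresentable (hence left-exact accessible) functors, which stay in $\mathrm{Fun^{lex}}(\cat,\Ani)$ by Remark \ref{pres}.

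I expect the main obstacle to be carrying out the interchange in (b)--(c) rigorously at the level of $\infty$-categories rather than of homotopy categories --- that is, justifying $\bigl(\colim_{\alpha}\Hom_\cat(X_\alpha,-)\bigr)\times\bigl(\colim_{\beta}\Hom_\cat(Y_\beta,-)\bigr)\simeq\colim_{(\alpha,\beta)}\bigl(\Hom_\cat(X_\alpha,-)\times\Hom_\cat(Y_\beta,-)\bigr)$ from the commutation in $\Ani$ of finite products with filtered colimits together with associativity of colimits, while also being careful that the pointwise product of functors genuinely represents the product in $\mathrm{Fun^{lex}}(\cat,\Ani)=\Pro(\cat)^{\mathrm{op}}$. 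A slicker alternative that sidesteps this bookkeeping would be to verify the universal property directly: compute mapping spaces out of the candidate, $\Hom_{\Pro(\cat)}\bigl(\{X_\alpha\amalg Y_\beta\}_{(\alpha,\beta)},W\bigr)$, via $\Pro(\cat)=\mathrm{Fun^{lex}}(\cat,\Ani)^{\mathrm{op}}$ and the objectwise coproduct universal property, and exhibit a natural equivalence with $\Hom_{\Pro(\cat)}(\{X_\alpha\},W)\times\Hom_{\Pro(\cat)}(\{Y_\beta\},W)$; the essential content --- interchanging a finite product with the filtered colimits presenting the two pro-objects --- is the same either way.
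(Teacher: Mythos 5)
Your proposal is correct and follows essentially the same route as the paper: both identify coproducts in $\Pro(\cat)$ with pointwise products in $\mathrm{Fun^{lex}}(\cat,\Ani)$, present the pro-objects as filtered colimits of corepresentables, and combine the universal property of coproducts in $\cat$ with the commutation of finite products and filtered colimits in $\Ani$ (Remark \ref{inftopo}). You supply more detail than the paper's terse argument --- notably the verification that the inclusion into $\Fun(\cat,\Ani)$ creates finite products, and the nullary case --- but these are refinements of the same proof, not a different one.
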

\begin{proof}
Consider pro-objects given by $\{X_{\alpha}\}_{\alpha \in A}$ and $\{Y_{\beta}\}_{\beta\in B}$. Viewing them as objects in $\mathrm{Fun^{lex}}(\cat, \Ani)$ we have for every object $I\in \cat$ the equivalence
$$\colim_{(\alpha,\beta)\in A\times B}\Hom_{\cat}(X_{\alpha}\amalg Y_{\beta}, I)\simeq (\colim_{\alpha \in A}\Hom_{\cat}(X_{\alpha}, I)\times (\colim_{\beta \in B}\Hom_{\cat}(Y_{\beta}, I))$$ as filtered colimits commute with finite products in $\Ani$ by Remark \ref{inftopo}. Passing to $\Pro(\cat)$ turns every colimit and limit outside the hom-space into its dual notion. This suffices to prove the claim.
\end{proof}
\begin{proposition}{\textbf{Morphisms of pro-objects}}\label{Homsets}\\
For pro-objects $X = \{X_{\alpha}\}_{\alpha \in A}$ and $Y = \{Y_{\beta}\}_{\beta \in B}$ we obtain the formula $$\Hom_{\Pro(\cat)}(X,Y)\simeq \mathrm{lim}_{\beta \in B} \colim_{\alpha \in A}\Hom_{\cat}(X_{\alpha},Y_{\beta}).$$
\end{proposition}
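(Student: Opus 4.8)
The plan is to unwind the definition $\Pro(\cat)=\mathrm{Fun^{lex}}(\cat,\Ani)^{\mathrm{op}}$ together with the presentation of pro-objects recorded in Corollary \ref{cofilli}, and then reduce the computation to the $\infty$-categorical Yoneda lemma and the pointwise nature of colimits in functor $\infty$-categories. Write $\mathcal{E}\coloneqq\mathrm{Fun^{lex}}(\cat,\Ani)$, so that $\Pro(\cat)=\mathcal{E}^{\mathrm{op}}$. Under this identification the pro-objects $X=\{X_{\alpha}\}_{\alpha\in A}$ and $Y=\{Y_{\beta}\}_{\beta\in B}$ correspond, by Corollary \ref{cofilli}, to the objects
\[
F_X\;\simeq\;\colim_{\alpha\in A}\Hom_{\cat}(X_{\alpha},-),\qquad F_Y\;\simeq\;\colim_{\beta\in B}\Hom_{\cat}(Y_{\beta},-)
\]
of $\mathcal{E}$, these being the colimits of corepresentable functors that exist in $\mathcal{E}$ by Remark \ref{pres}; note that every corepresentable functor $\Hom_{\cat}(C,-)$ lies in $\mathcal{E}$ by Proposition \ref{Yoneda}.

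First I would pass to the opposite category, which by definition gives $\Hom_{\Pro(\cat)}(X,Y)\simeq\Hom_{\mathcal{E}}(F_Y,F_X)$. Next I would pull the colimit defining $F_Y$ out of the first variable: since $\mathcal{E}$ is a full subcategory of $\Fun(\cat,\Ani)$ closed under the relevant filtered colimits (Remark \ref{pres}), both the colimit $F_Y$ and this mapping space may be computed in $\Fun(\cat,\Ani)$, and mapping out of a colimit is the corresponding limit, so
\[
\Hom_{\mathcal{E}}(F_Y,F_X)\;\simeq\;\mathrm{lim}_{\beta\in B}\,\Hom_{\Fun(\cat,\Ani)}\bigl(\Hom_{\cat}(Y_{\beta},-),\,F_X\bigr).
\]
Then I would apply the $\infty$-categorical Yoneda lemma (as in the proof of Proposition \ref{Yoneda}) to obtain $\Hom_{\Fun(\cat,\Ani)}(\Hom_{\cat}(Y_{\beta},-),F_X)\simeq F_X(Y_{\beta})$, and finally evaluate $F_X$ at $Y_{\beta}$: since colimits in a functor $\infty$-category are computed pointwise \cite[Corollary 5.1.2.3.]{Lurie2009},
\[
F_X(Y_{\beta})\;\simeq\;\Bigl(\colim_{\alpha\in A}\Hom_{\cat}(X_{\alpha},-)\Bigr)(Y_{\beta})\;\simeq\;\colim_{\alpha\in A}\Hom_{\cat}(X_{\alpha},Y_{\beta}).
\]
Concatenating these equivalences yields $\Hom_{\Pro(\cat)}(X,Y)\simeq\mathrm{lim}_{\beta\in B}\colim_{\alpha\in A}\Hom_{\cat}(X_{\alpha},Y_{\beta})$, as claimed.

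I expect the only real obstacle to be the bookkeeping of variances: one must keep track of the fact that $\Pro(\cat)=\mathcal{E}^{\mathrm{op}}$ turns the defining cofiltered limits presenting $X$ and $Y$ into filtered colimits of corepresentables inside $\mathcal{E}$, and that ``mapping out of a colimit'' then reinstates a cofiltered limit in the outer ($\beta$) variable while leaving a filtered colimit in the inner ($\alpha$) variable --- which is exactly the asymmetry visible in the formula. Once the variances are pinned down, each individual step is a direct invocation of a fact already recorded above (Corollary \ref{cofilli}, Remark \ref{pres}, Proposition \ref{Yoneda}, and pointwise colimits in functor $\infty$-categories). As a consistency check one could instead specialize the universal property of $\Pro(\cat)$ (Proposition \ref{equiv}), or simply cite \cite[Proposition A.8.1.6.]{Lurie2018a}, where this hom-formula is implicit; but the explicit derivation above is more transparent and self-contained.
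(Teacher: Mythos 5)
Your proof is correct and follows essentially the same route as the paper: the paper's own argument is the identical three-step chain (pull the cofiltered limit in the target outside, use cocompactness of corepresentables to extract the filtered colimit in the source, then apply full faithfulness of the Yoneda embedding), carried out inside $\Pro(\cat)$ rather than in $\mathrm{Fun^{lex}}(\cat,\Ani)$. Your version merely unwinds the ``cocompactness of corepresentables'' step into its ingredients (the $\infty$-categorical Yoneda lemma plus pointwise colimits in functor categories), which is exactly how the paper established that cocompactness in Proposition \ref{Yoneda} in the first place.
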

\begin{proof}
The result follows from the fully faithfulness of the Yoneda embedding and cocompactness of corepresentables in $\Pro(\cat)$ by Remark \ref{pres}. More precisely, by applying Corollary \ref{cofilli}, one can express the left hand side as
\begin{align*}
&\Hom_{\Pro(\cat)}(\mathrm{lim}_{\alpha \in A}j_{\cat}(X_{\alpha}),\mathrm{lim}_{\beta \in B}j_{\cat}(Y_{\beta}))\\
\simeq &\mathrm{lim}_{\beta \in B}\Hom_{\Pro(\cat)}(\colim_{\alpha\in A} j_{\cat}( X_{\alpha}),j_{\cat}(Y_{\beta}))\\
\simeq &\mathrm{lim}_{\beta \in B}\colim_{\alpha \in A}\Hom_{\Pro(\cat)}(j_{\cat}(X_{\alpha}), j_{\cat}(Y_{\beta}))\\
\simeq &\mathrm{lim}_{\beta \in B}\colim_{\alpha \in A}\Hom_{\cat}(X_{\alpha},Y_{\beta}),
\end{align*}
where we used cocompactness of corepresentables in the first identification and fully faithfulness of the Yoneda embedding in the third. 
\end{proof}
\begin{corollary}\label{funcacc}
Let $f \colon \cat \rightarrow \mathcal{D}$ be a functor between accessible $\infty$-categories which admit finite limits. Then the composite functor $$\cat
\xrightarrow{f} \mathcal{D} \xrightarrow{j_{\mathcal{D}}} \Pro(\mathcal{D})$$ induces a unique functor $$\Pro(f)\colon \Pro(\cat) \rightarrow \Pro(\mathcal{D}),$$ which commutes with small cofiltered limits and whose restriction to $\cat$ coincides with $j_{\mathcal{D}}\circ f$. If $f$ is fully faithful, then $\Pro(f)$ is fully faithful as well. 
\end{corollary}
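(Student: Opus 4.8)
The plan is to construct $\Pro(f)$ from the universal property of $\Pro(\cat)$ (Proposition \ref{equiv}) and then to verify full faithfulness by means of the explicit hom-space formula of Proposition \ref{Homsets}.

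First I would invoke Proposition \ref{equiv} with source $\cat$ and target $\Pro(\mathcal{D})$: the latter admits small cofiltered limits by Remark \ref{pres}, and $\cat$ is accessible and admits finite limits by hypothesis, so composition with the Yoneda embedding $j_{\cat}\colon \cat \to \Pro(\cat)$ yields an equivalence of $\infty$-categories
$$\mathrm{Fun}'(\Pro(\cat), \Pro(\mathcal{D})) \xrightarrow{\ \sim\ } \mathrm{Fun}(\cat, \Pro(\mathcal{D})).$$
The functor $j_{\mathcal{D}} \circ f \colon \cat \to \Pro(\mathcal{D})$ lies in the right-hand side, hence has an essentially unique preimage $\Pro(f)\colon \Pro(\cat)\to\Pro(\mathcal{D})$ which preserves small cofiltered limits and satisfies $\Pro(f)\circ j_{\cat}\simeq j_{\mathcal{D}}\circ f$. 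This is precisely the asserted existence and uniqueness.

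Next I would describe $\Pro(f)$ on objects and morphisms. By Corollary \ref{cofilli}, every object of $\Pro(\cat)$ has the form $X=\lim_{\alpha\in A}j_{\cat}(X_\alpha)$ for a cofiltered diagram $X\colon A\to\cat$; since $\Pro(f)$ preserves cofiltered limits and $\Pro(f)\circ j_{\cat}\simeq j_{\mathcal{D}}\circ f$, we obtain $\Pro(f)(X)\simeq \lim_{\alpha}j_{\mathcal{D}}(f(X_\alpha))=\{f(X_\alpha)\}_{\alpha\in A}$, i.e.\ $\Pro(f)$ is computed ``levelwise''. Then for $X=\{X_\alpha\}_{\alpha\in A}$ and $Y=\{Y_\beta\}_{\beta\in B}$, applying Proposition \ref{Homsets} in $\Pro(\cat)$ and in $\Pro(\mathcal{D})$, together with the naturality of that formula in its entries, identifies the map on hom-spaces induced by $\Pro(f)$ with
$$\lim_{\beta\in B}\colim_{\alpha\in A}\Hom_{\cat}(X_\alpha,Y_\beta)\ \longrightarrow\ \lim_{\beta\in B}\colim_{\alpha\in A}\Hom_{\mathcal{D}}(f(X_\alpha),f(Y_\beta))$$
induced by $f$ on the individual hom-spaces. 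If $f$ is fully faithful, each $\Hom_{\cat}(X_\alpha,Y_\beta)\to\Hom_{\mathcal{D}}(f(X_\alpha),f(Y_\beta))$ is an equivalence, so the displayed map is an equivalence of limits of colimits, and $\Pro(f)$ is fully faithful.

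The main obstacle I expect is bookkeeping rather than conceptual: making precise that the equivalence of Proposition \ref{Homsets} is natural enough that the induced map on hom-spaces is genuinely $f$ applied entrywise (so that full faithfulness transfers), and tracking the compatibility of the indexing diagrams when $\Pro(f)$ is applied to the presentations $\{X_\alpha\}$ and $\{Y_\beta\}$. In the $\infty$-categorical setting these amount to handling coherence data, which, following Warning \ref{term}, I would treat exactly as in ordinary category theory; all of this can be extracted from \cite[Proposition A.8.1.6.]{Lurie2018a} and its proof.
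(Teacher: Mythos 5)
Your proposal is correct and follows essentially the same route as the paper: existence and uniqueness of $\Pro(f)$ via the universal property of Proposition \ref{equiv} (using Remark \ref{pres} for cofiltered limits in $\Pro(\mathcal{D})$), and full faithfulness via the hom-space formula of Proposition \ref{Homsets} applied levelwise to the presentations $\{X_\alpha\}$, $\{Y_\beta\}$. Your additional remarks on the levelwise description of $\Pro(f)$ and the naturality of the hom-formula only make explicit what the paper's chain of equivalences uses implicitly.
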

\begin{proof}
By Remark \ref{pres}, the $\infty$-category $\Pro(\mathcal{D})$ admits small cofiltered limits. Therefore, we can apply Proposition \ref{equiv} to the composite functor
\begin{align}\label{composite}
\cat\xrightarrow{f} \mathcal{D} \xrightarrow{j_{\mathcal{D}}} \Pro(\mathcal{D})
\end{align} to obtain a functor
$$\Pro(f)\colon \Pro(\cat) \rightarrow \Pro(\mathcal{D})$$ preserving small cofiltered limits and satisfying
\begin{align}\label{commu}
\Pro(f)\circ j_{\cat} = j_{\mathcal{D}} \circ f.
\end{align}
If $f$ is fully faithful, one can conclude the fully faithfulness of $\Pro(f)$ by Proposition \ref{Homsets}. Indeed, for $X=\{X_{\alpha}\}_{\alpha \in A},Y=\{Y_{\beta}\}_{\beta \in B} \in \Pro(\cat)$ one has
\begin{align*}
\Hom_{\Pro(\cat)}(X,Y)&\simeq \mathrm{lim}_{\beta \in B} \colim_{\alpha \in A}\Hom_{\cat}(X_{\alpha},Y_{\beta})\\
&\simeq \mathrm{lim}_{\beta \in B} \colim_{\alpha \in A}\Hom_{\mathcal{D}}(f(X_{\alpha}),f(Y_{\beta}))\\
&\simeq \Hom_{\Pro(\mathcal{D})}(\mathrm{lim}_{\alpha \in A}\ j_{\mathcal{D}}(f(X_{\alpha})),\mathrm{lim}_{\beta \in B}\ j_{\mathcal{D}} (f(Y_{\beta})))\\  
&\simeq \Hom_{\Pro(\mathcal{D})}(\Pro(f)(X),\Pro(f)(Y)),
\end{align*}
where we used (\ref{commu}) and that $\Pro(f)$ preserves cofiltered limits for the last identification.
\end{proof}
\begin{proposition}{\textbf{Pro-existent left adjoint}}\label{proext}\\
Let $\cat$ be an accessible $\infty$-category that admits finite limits and $\mathcal{D}$ be an accessible $\infty$-category that admits finite limits and small cofiltered limits. Then, by Proposition \ref{equiv} a left exact functor $$u\colon\cat\rightarrow \mathcal{D} $$ can be extended to a unique functor $$U:\Pro(\cat)\rightarrow \mathcal{D}$$ that preserves small cofiltered limits. In the other direction, one can consider the functor
$$L\coloneqq u^{*}\circ j_{\mathcal{D}}\colon \mathcal{D} \rightarrow \Pro(\mathcal{D})\rightarrow \Pro(\cat)$$ obtained by composition of the Yoneda embedding with the restriction along $u$. The functor $L$ carries an object $d\in \mathcal{D}$ to the assignment $$c\mapsto \Hom_{\mathcal{D}}(d, u(c))$$ and is left adjoint to $U$. Hence, the functor is called pro-existent left adjoint to $u$.
\end{proposition}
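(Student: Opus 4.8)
The plan is to treat the two halves of the statement separately. The functor $U$ is handed to us directly by the universal property of pro-$\infty$-categories: since $\mathcal{D}$ admits small cofiltered limits, Proposition \ref{equiv} gives an equivalence between $\mathrm{Fun}(\cat,\mathcal{D})$ and the full subcategory of $\mathrm{Fun}(\Pro(\cat),\mathcal{D})$ of cofiltered-limit-preserving functors, and $U$ is by definition the essentially unique preimage of $u$, so that $U\circ j_{\cat}\simeq u$. For the formula for $L$, I would first check that $L$ is even well defined: for $d\in\mathcal{D}$ the pro-object $j_{\mathcal{D}}(d)$ is the corepresentable functor $\Hom_{\mathcal{D}}(d,-)$, and $u^{*}$ precomposes with $u$, so $u^{*}j_{\mathcal{D}}(d)=\Hom_{\mathcal{D}}(d,u(-))$; this composite lands in $\mathrm{Fun^{lex}}(\cat,\Ani)$ because $u$ preserves finite limits, $\Hom_{\mathcal{D}}(d,-)$ preserves all limits, and every functor $\cat\to\Ani$ is automatically accessible in the sense of Remark \ref{indcat}. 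Thus $L=u^{*}\circ j_{\mathcal{D}}$ does define a functor $\mathcal{D}\to\Pro(\cat)$ with the claimed values.

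For the adjunction $L\dashv U$ I would establish the natural equivalence $\Hom_{\Pro(\cat)}(L(d),X)\simeq\Hom_{\mathcal{D}}(d,U(X))$ in two steps, first on corepresentable $X$ and then by passing to cofiltered limits. On a corepresentable $X=j_{\cat}(c)$, unwinding the definition $\Pro(\cat)=\mathrm{Fun^{lex}}(\cat,\Ani)^{\mathrm{op}}$ and applying the $\infty$-categorical Yoneda lemma inside the full subcategory $\mathrm{Fun^{lex}}(\cat,\Ani)\subset\Fun(\cat,\Ani)$ (which contains the corepresentables) gives
\[
\Hom_{\Pro(\cat)}(L(d),j_{\cat}(c))\simeq L(d)(c)=\Hom_{\mathcal{D}}(d,u(c))\simeq\Hom_{\mathcal{D}}(d,U(j_{\cat}(c))),
\]
the last equivalence because $U\circ j_{\cat}\simeq u$. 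For a general pro-object I would use Corollary \ref{cofilli} to write $X\simeq\mathrm{lim}_{\alpha\in A}j_{\cat}(X_{\alpha})$; then both sides turn this cofiltered limit into a limit of anima --- the right side because $U$ preserves cofiltered limits (Remark \ref{pres}) and $\Hom_{\mathcal{D}}(d,-)$ preserves all limits, the left side because in $\mathrm{Fun^{lex}}(\cat,\Ani)$ the same object is the filtered colimit $\colim_{\alpha}\Hom_{\cat}(X_{\alpha},-)$ and $\Hom(-,Z)$ carries colimits in its first variable to limits --- so the equivalence on corepresentables propagates to $\Hom_{\Pro(\cat)}(L(d),X)\simeq\mathrm{lim}_{\alpha}\Hom_{\mathcal{D}}(d,u(X_{\alpha}))\simeq\Hom_{\mathcal{D}}(d,U(X))$. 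Assembling these equivalences naturally in $d$ and $X$ and invoking the usual criterion for an adjunction (a natural equivalence of mapping spaces of this form is witnessed by a unit/counit pair) then finishes the proof.

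The step I expect to be the real work is not any single computation but the $\infty$-categorical bookkeeping: keeping every $\Hom$ pointed in the right direction through the opposite-category definition of $\Pro(\cat)$ (so that a ``cofiltered limit of corepresentables'' in $\Pro(\cat)$ becomes a ``filtered colimit of corepresentables'' in $\mathrm{Fun^{lex}}(\cat,\Ani)$), and upgrading the family of equivalences of mapping spaces above to a genuinely natural equivalence of functors $\mathcal{D}^{\mathrm{op}}\times\Pro(\cat)\to\Ani$ rather than a pointwise statement. I would finesse this by exhibiting the whole chain as the restriction along $j_{\cat}$ of the tautological Yoneda natural equivalence, so that naturality is built in; alternatively one can cite that a cofiltered-limit-preserving functor out of $\Pro(\cat)$ automatically admits the left adjoint given by this formula, which is the pro-analogue of the corresponding statement for ind-categories in the references on which this chapter is based.
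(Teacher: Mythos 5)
Your proposal is correct and follows essentially the same route as the paper: $U$ comes from the universal property of $\Pro(\cat)$, the formula for $L$ is verified to land in $\mathrm{Fun^{lex}}(\cat,\Ani)$ because $u$ is left exact, and the adjunction is the same chain of equivalences (Yoneda on corepresentables, then cofiltered limits using that $U$ preserves them), merely read in the opposite direction from the paper's computation starting at $\Hom_{\mathcal{D}}(d,U(C))$. Your closing remarks on naturality are extra care the paper leaves implicit, but they do not change the argument.
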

\begin{proof}
This result is stated without proof in \cite[Subsection 0.11.9.]{Barwick2018}. We will give a proof here.\\
Clearly, the functor $u^{*}\circ j_{\mathcal{D}}\colon \mathcal{D} \rightarrow \Pro(\mathcal{D})\rightarrow \Fun(\cat, \Ani)^{\mathrm{op}}$ is given by the assignment stated above. As $u$ in the second argument of the hom-space preserves finite limits by assumption, the same is true for $$c\mapsto \Hom_{\mathcal{D}}(d, u(c)).$$ Thus, $u^{*}\circ j_{\mathcal{D}}$ factors through $\Pro(\cat)$. The left adjointness can be easily shown by using the presentation of pro-$\infty$-objects as cofiltered limits and the fact that $U$ preserves these limits:
One has for some $d \in \mathcal{D}$ and $C=\{C_\alpha\}_{\alpha \in A} \in \Pro(\cat)$
\begin{align*}
\Hom_{\mathcal{D}}(d, U(C))&=\lim_{\alpha \in A} \Hom_{\mathcal{D}}(d, U(C_{\alpha}))\\
&=\lim_{\alpha \in A} \Hom_{\mathcal{D}}(d, u(C_{\alpha}))\\
&=\lim_{\alpha \in A} \Hom_{\Pro(\cat)}(\Hom_{\mathcal{D}}(d, u(\cdot)), \Hom_{\cat}(C_{\alpha}, \cdot))\\
&=\Hom_{\Pro(\cat)}(\Hom_{\mathcal{D}}(d, u(\cdot)), \lim_{\alpha \in A}\Hom_{\cat}(C_{\alpha}, \cdot))\\
&=\Hom_{\Pro(\cat)}(\Hom_{\mathcal{D}}(d, u(\cdot)), C),
\end{align*}
where the third equivalence is just the $\infty$-categorical Yoneda lemma.
\end{proof}
\section{The \texorpdfstring{$\infty$}{infinity}-category of pro-anima}
\begin{definition}{\textbf{Pro-anima}} \\
A \textit{pro-anima} is a pro-object of the $\infty$-category of anima, i.e. an
accessible left-exact functor from the $\infty$-category $\Ani$ to itself. We will refer to the $\infty$-category $\Pro(\Ani) \subset \mathrm{Fun}(\Ani, \Ani)^{\mathrm{op}}$
as the \textit{$\infty$-category of pro-anima}.
\end{definition}
\begin{remark}
By Proposition \ref{Yoneda}, for any anima $A$ the associated pro-anima is given as the functor $$\Ani \rightarrow \Ani, \ X \mapsto \Hom_{\Ani}(A, X)$$ represented by $A$.
\end{remark}
\begin{proposition}\label{proaniprofin}
Every profinite set $S=\{S_i\}_{i \in I}$ can be identified with a pro-anima given by
\begin{align}\label{eq2}
\Ani \rightarrow \Ani, \ A\mapsto\mathrm{lim}_{i \in I}\Hom_{\Ani}(v(S_{i}),A)
\end{align}
where $v\colon \Set \hookrightarrow \Ani$ is the embedding. By this, one can view $\Pro(\Set^{\mathrm{fin}})$ as a full subcategory of $\Pro(\Ani)$ and the embedding preserves cofiltered limits.
\end{proposition}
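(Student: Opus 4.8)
The plan is to realise the claimed embedding as the functor $\Pro(v|_{\SFin})$ furnished by Corollary~\ref{funcacc}, applied to the restriction $v|_{\SFin}\colon \SFin \to \Ani$ of the fully faithful embedding $v\colon \Set \hookrightarrow \Ani$ of Proposition~\ref{truncationprop}. First I would check the hypotheses of Corollary~\ref{funcacc}: the $\infty$-category $\Ani$ is presentable, hence accessible, and admits all limits; and the (nerve of the) category $\SFin$ admits finite limits, since finite sets are closed under finite products and equalizers, and it is accessible --- indeed it is $\omega_1$-accessible, because any $\omega_1$-filtered colimit of finite sets computed in $\Set$ is again finite, as a countable subset of such a colimit is already hit by a single finite stage. (Alternatively one may simply invoke the well-definedness of $\Pro(\SFin)$ already used in Example~\ref{newpro}.) Finally, $v|_{\SFin}$ is fully faithful, being the restriction of the fully faithful $v$ to a full subcategory.

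Corollary~\ref{funcacc} then produces a fully faithful functor $\Pro(v|_{\SFin})\colon \Pro(\SFin) \to \Pro(\Ani)$ that preserves small cofiltered limits and satisfies $\Pro(v|_{\SFin})\circ j_{\SFin} = j_{\Ani}\circ v|_{\SFin}$. Under the identification $\Pro(\SFin) = \PFin$ of Example~\ref{newpro}, this is exactly a fully faithful, cofiltered-limit-preserving embedding $\PFin \hookrightarrow \Pro(\Ani)$, which gives the last two assertions of the proposition.

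It remains to compute the underlying functor. For a profinite set $S = \{S_i\}_{i \in I}$, Corollary~\ref{cofilli} presents it as $S = \lim_{i \in I} j_{\SFin}(S_i)$ in $\Pro(\SFin)$. Since $\Pro(v|_{\SFin})$ preserves cofiltered limits and intertwines the Yoneda embeddings with $v$, I obtain
\[
\Pro(v|_{\SFin})(S) \;=\; \lim_{i\in I} j_{\Ani}(v(S_i)) \qquad\text{in } \Pro(\Ani).
\]
By Proposition~\ref{Yoneda}, $j_{\Ani}(v(S_i))$ is the pro-anima corepresented by $v(S_i)$, namely $A \mapsto \Hom_{\Ani}(v(S_i),A)$; hence $\Pro(v|_{\SFin})(S)$ is the pro-anima $A \mapsto \lim_{i\in I}\Hom_{\Ani}(v(S_i),A)$, the limit being taken in $\Pro(\Ani)$, which is the formula~\eqref{eq2}.

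No step is genuinely hard: this is a direct combination of Corollaries~\ref{funcacc} and~\ref{cofilli} with Proposition~\ref{Yoneda}. The only points needing a moment of care are the verification that $\SFin$ satisfies the standing accessibility hypothesis (equivalently that of Definition~\ref{defpro}), and the bookkeeping of the arrow directions --- reading the ``$\lim$'' in \eqref{eq2} as the limit in $\Pro(\Ani)$ of corepresentables, in the sense of Corollary~\ref{cofilli}, rather than as a limit of anima.
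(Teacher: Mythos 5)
Your proposal is correct and follows essentially the same route as the paper's proof: apply Corollary~\ref{funcacc} to the fully faithful restriction $v|_{\SFin}\colon \SFin\to\Ani$ to get a fully faithful, cofiltered-limit-preserving functor $\Profin\to\Pro(\Ani)$, then use Corollary~\ref{cofilli} to identify the image of $S=\lim_{i\in I}S_i$ with the functor $A\mapsto\lim_{i\in I}\Hom_{\Ani}(v(S_i),A)$. Your additional verification of the accessibility hypothesis for $\SFin$ and the explicit appeal to Proposition~\ref{Yoneda} are just more careful bookkeeping of what the paper leaves implicit.
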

\begin{proof}
Every set can be regarded as an anima by identifying it with a discrete anima. Hence, there exists a pro-anima associated to every profinite set. In particular, one has a fully faithful functor $v|_{\Set^{\mathrm{fin}}}\colon\Set^{\mathrm{fin}}\hookrightarrow \Ani$ preserving finite limits as right adjoint by Proposition \ref{truncationprop}. This functor induces a fully faithful functor 
$$\Pro(v)\colon \Profin \rightarrow \Pro(\Ani)$$ by Corollary \ref{funcacc}. As $\Pro(v)$ preserves small cofiltered limits, the pro-anima corresponding to a profinite set $S=\mathrm{lim}_{i \in I} S_i$ written as small cofiltered limit of finite (discrete) sets is given as a functor 
\begin{align}
\Ani \rightarrow \Ani, \ A\mapsto\mathrm{lim}_{i \in I}\Hom_{\Ani}(v(S_{i}),A)
\end{align} in $\Pro(\Ani)$ by Corollary \ref{cofilli}.\\
\end{proof}
\begin{remark}\label{profinproobj}
Let $\cat$ be an $\infty$-category such that $\Pro(\cat)$ is defined and there exists a fully faithful functor $v_{\cat}\colon\Set^{\mathrm{fin}} \rightarrow \cat$ preserving finite limits. Then the proposition above can be stated more generally:\\
Every profinite set $S=\{S_i\}_{i \in I}$ can be identified with a pro-object given by
\begin{align*}
\cat \rightarrow \Ani, \ A\mapsto\mathrm{lim}_{i \in I}\Hom_{\cat}(v_{\cat}(S_{i}),A)
\end{align*}
in $\Pro(\cat)$. By this, one can view $\Pro(\Set^{\mathrm{fin}})$ as a full subcategory of $\Pro(\cat)$ and the embedding preserves cofiltered limits.
\end{remark}

\chapter{Condensed anima}\label{condani}
In the following chapter, we want to expand the notion of condensed sets according to our new setting of $\infty$-categories. There is a general notion of condensed objects of an $\infty$-category.
The natural expansion of condensed sets is given by the $\infty$-category of condensed anima. In Theorem \ref{thmAniCond} we will explore how this is related to what we have done in Chapter \ref{animation}.\\
The content of this chapter is highly inspired by the work of Peter Haine and Clark Barwick on \textit{Pyknotic spaces}. Corresponding results can be found in \cite{Barwick2018, Barwick2019} and were also presented by Peter Haine during his talks at MSRI in March 2020 \cite{Haine2020a, Haine2020}. Condensed anima were also part of \href{https://www.youtube.com/channel/UCuDrbQmLiT0jkSlMKPN-fsQ/videos}{the Sessions 9\&10} held by Peter Scholze at the \textit{Masterclass in Condensed Mathematics} in November 2020 \cite{Clausen}.
\section{Condensed anima as \texorpdfstring{$\infty$}{infinity}-sheaves}\label{condob}
\begin{definition}{\textbf{Presheaves of anima}}\label{pre}\\
Let $\cat$ be an $\infty$-category which admits finite limits.
We define a presheaf of anima to be a functor
$$F \colon \cat^{\mathrm{op}} \rightarrow \Ani.$$ We will denote the $\infty$-category $\Fun(\cat^{\mathrm{op}}, \Ani)$ by $\mathcal{PS}\mathrm{h}(\cat)$ and refer to it as the \textit{$\infty$-category of presheaves (of anima) on $\cat$}.
\end{definition}
\begin{proposition}\label{extpresh} 
Let $\cat$ be an accessible $\infty$-category which admits finite limits.
There is an equivalence of $\infty$-categories 
\begin{align*}
\mathcal{PS}\mathrm{h}'(\Pro(\cat))\rightarrow \mathcal{PS}\mathrm{h}(\cat),
\end{align*}
where the left $\infty$-category is the full-$\infty$-category of presheaves preserving cofiltered limits of $\Pro(\cat)$. The equivalence is given by composition with the opposite functor $\iota_{\cat}\colon \cat^{\mathrm{op}}\rightarrow \Pro(\cat)^{\mathrm{op}}$ of the Yoneda embedding $j_{\cat}$. Its inverse is given by forming left Kan extensions along $\iota_{\cat}$ .
\end{proposition}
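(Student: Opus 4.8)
The plan is to deduce the statement from the universal property of $\Pro(\cat)$ (Proposition~\ref{equiv}) applied to the target $\infty$-category $\mathcal{D}=\Ani^{\mathrm{op}}$, followed by passage to opposite $\infty$-categories. Since $\Ani$ is presentable it admits all small colimits, so $\Ani^{\mathrm{op}}$ admits small cofiltered limits, and the hypotheses on $\cat$ (accessible, finite limits) are exactly those of Proposition~\ref{equiv}. Thus precomposition with the Yoneda embedding $j_{\cat}\colon\cat\to\Pro(\cat)$ gives an equivalence
$$\Fun'(\Pro(\cat),\Ani^{\mathrm{op}})\ \xrightarrow{\ \sim\ }\ \Fun(\cat,\Ani^{\mathrm{op}}),$$
where $\Fun'$ denotes the full subcategory of functors preserving small cofiltered limits.

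Next I would apply $(\cdot)^{\mathrm{op}}$ to both sides, using $\Fun(\mathcal{A},\mathcal{B})^{\mathrm{op}}\simeq\Fun(\mathcal{A}^{\mathrm{op}},\mathcal{B}^{\mathrm{op}})$ and that $(\cdot)^{\mathrm{op}}$ carries a full subcategory to a full subcategory and an equivalence to an equivalence. On the right this yields $\Fun(\cat^{\mathrm{op}},\Ani)=\mathcal{PS}\mathrm{h}(\cat)$. On the left, $\Fun(\Pro(\cat),\Ani^{\mathrm{op}})^{\mathrm{op}}\simeq\Fun(\Pro(\cat)^{\mathrm{op}},\Ani)=\mathcal{PS}\mathrm{h}(\Pro(\cat))$, and one must check that $\Fun'$ goes over to $\mathcal{PS}\mathrm{h}'(\Pro(\cat))$: for $F\colon\Pro(\cat)\to\Ani^{\mathrm{op}}$ with associated presheaf $G:=F^{\mathrm{op}}\colon\Pro(\cat)^{\mathrm{op}}\to\Ani$ and a cofiltered diagram $\{X_\alpha\}_{\alpha\in A}$ in $\Pro(\cat)$ with limit $X$, the limit $\lim_{\alpha\in A}F(X_\alpha)$ in $\Ani^{\mathrm{op}}$ is the colimit $\colim_{\alpha\in A^{\mathrm{op}}}G(X_\alpha)$ in $\Ani$ (a limit over $A$ in $\mathcal{E}^{\mathrm{op}}$ is a colimit over $A^{\mathrm{op}}$ in $\mathcal{E}$, and $A$ cofiltered means $A^{\mathrm{op}}$ filtered), so $F$ preserves cofiltered limits exactly when $G$ carries cofiltered limits in $\Pro(\cat)$ to filtered colimits in $\Ani$, which is the defining condition of $\mathcal{PS}\mathrm{h}'(\Pro(\cat))$. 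Finally, precomposition with $j_{\cat}$ dualizes to precomposition with $\iota_{\cat}=j_{\cat}^{\mathrm{op}}\colon\cat^{\mathrm{op}}\to\Pro(\cat)^{\mathrm{op}}$, so the induced equivalence $\mathcal{PS}\mathrm{h}'(\Pro(\cat))\to\mathcal{PS}\mathrm{h}(\cat)$ is restriction along $\iota_{\cat}$, as claimed.

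It then remains to identify the inverse with left Kan extension along $\iota_{\cat}$. Dually to the universal property of ind-$\infty$-categories (cf.\ Remark~\ref{indcat}), the inverse of precomposition with $j_{\cat}$ on $\Fun'(\Pro(\cat),\Ani^{\mathrm{op}})$ is the right Kan extension along $j_{\cat}$: a cofiltered-limit-preserving functor into an $\infty$-category with cofiltered limits is determined by its restriction via the pointwise formula, which on a pro-object presented as $X=\lim_{\alpha\in A}j_{\cat}(X_\alpha)$ (Corollary~\ref{cofilli}) reads $X\mapsto\lim_{\alpha\in A}u(X_\alpha)$, and this is exactly the right Kan extension because the comma category indexing the pointwise extension at $X$ admits the cofiltered system $\{X_\alpha\}$ as a cofinal subdiagram. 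Taking opposites turns a right Kan extension along $j_{\cat}$ into a left Kan extension along $\iota_{\cat}$; equivalently, for $G_0\in\mathcal{PS}\mathrm{h}(\cat)$ the extension is forced to send $X=\lim_{\alpha\in A}j_{\cat}(X_\alpha)$ to $\colim_{\alpha\in A^{\mathrm{op}}}G_0(X_\alpha)$, and this colimit computes $(\mathrm{Lan}_{\iota_{\cat}}G_0)(X)$ by the same cofinality.

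The main obstacle I expect is bookkeeping rather than mathematics: keeping straight the three layers of opposite-taking — the $(\cdot)^{\mathrm{op}}$ built into $\Pro(\cat)=\Fun^{\mathrm{lex}}(\cat,\Ani)^{\mathrm{op}}$, the choice of target $\Ani^{\mathrm{op}}$, and the final $(\cdot)^{\mathrm{op}}$ on functor $\infty$-categories — and checking that under all of them ``preserves small cofiltered limits'' becomes ``sends small cofiltered limits to small filtered colimits'' and that a right Kan extension along $j_{\cat}$ flips to a left Kan extension along $\iota_{\cat}$. The only genuinely non-formal input is the cofinality of the presentation $\{X_\alpha\}$ inside the relevant comma category, which rests on the description of pro-objects as small cofiltered limits of corepresentables from Corollary~\ref{cofilli}.
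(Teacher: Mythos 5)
Your proof is correct, and it reaches the statement by a route that is dual to, but not identical with, the paper's. The paper invokes Lurie's universal property of ind-$\infty$-categories (HTT, Proposition 5.3.5.10) directly for $\mathcal{I}\mathrm{nd}(\cat^{\mathrm{op}})\simeq\Pro(\cat)^{\mathrm{op}}$ with target $\Ani$, so that no opposite-taking on functor $\infty$-categories is required, extends it from small to accessible $\infty$-categories via Remark \ref{indcat}, and then cites HTT, Lemma 5.3.5.8 for the identification of the inverse with left Kan extension along $\iota_{\cat}$. You instead reuse the pro-version of the universal property already recorded as Proposition \ref{equiv}, applied with target $\mathcal{D}=\Ani^{\mathrm{op}}$, and then dualize; your bookkeeping of how ``preserves cofiltered limits'' transforms under the various layers of $(\cdot)^{\mathrm{op}}$ is correct, as is the observation that precomposition with $j_{\cat}$ dualizes to precomposition with $\iota_{\cat}$. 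For the inverse you replace the citation of HTT 5.3.5.8 by a hands-on argument: the unique cofiltered-limit-preserving extension must send $X=\mathrm{lim}_{\alpha\in A}j_{\cat}(X_{\alpha})$ to $\colim_{\alpha\in A^{\mathrm{op}}}G_0(X_{\alpha})$, and this agrees with the pointwise left Kan extension by cofinality of the presentation inside the relevant comma category --- which is exactly the content of the lemma the paper cites, so you have correctly isolated the one non-formal input. What your approach buys is self-containedness within the paper's own toolkit (Proposition \ref{equiv} and Corollary \ref{cofilli}), and it sidesteps the slightly delicate small-to-accessible extension step, since Proposition \ref{equiv} is already stated at the required level of generality; what the paper's version buys is brevity, at the cost of two external references.
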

\begin{proof}
By \cite[Proposition 5.3.5.10.]{Lurie2009}, for a small $\infty$-category $\cat$ and a regular cardinal $\kappa$ the composition with the Yoneda embedding 
$\cat^{\mathrm{op}} \rightarrow \mathcal{I}\mathrm{nd}_{\kappa}(\cat^{\mathrm{op}})$ induces an equivalence of $\infty$-categories
$$\Fun_{\kappa}'(\mathcal{I}\mathrm{nd}_{\kappa}(\cat^{\mathrm{op}}), \Ani)\rightarrow \Fun(\cat^{\mathrm{op}}, \Ani),$$
where the left hand side denotes the $\infty$-category of all functors from $\mathcal{I}\mathrm{nd}_{\kappa}(\cat^{\mathrm{op}})$ to $\Ani$ preserving $\kappa$-filtered colimits. By Remark \ref{indcat}, there is a natural extension of this result to accessible categories similar to Proposition \ref{equiv}. As the ind-$\infty$-category $\mathcal{I}\mathrm{nd}(\cat^{\mathrm{op}})$ coincides with $\Pro(\cat)^{\mathrm{op}}$, the equivalence follows. By \cite[Lemma 5.3.5.8.]{Lurie2009}, the inverse of the equivalence is given by forming the left Kan extensions along $\iota_{\cat}\colon \cat^{\mathrm{op}} \rightarrow \mathrm{Pro}(\cat)^{\mathrm{op}}$.
\end{proof}
The following lemma presents how the inverse of the equivalence can be constructed explicitly.
\begin{lemma}\label{constrext}
Let $\cat$ be given as in Proposition \ref{extpresh} and $F \in \mathcal{PS}\mathrm{h}(\cat)$. The by Proposition \ref{extpresh} to $F$ associated presheaf in $\mathcal{PS}\mathrm{h}'(\Pro(\cat))$ is defined by the formula
$$\widetilde{F}\colon X=\{X_i\}_{i \in I} \rightarrow \colim_{i \in I^{\mathrm{op}}}F(X_i),$$ where the colimit is filtered.
\end{lemma}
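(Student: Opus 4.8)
The plan is to read off the formula directly from the two facts that Proposition \ref{extpresh} already provides: that $\widetilde F$ is a left Kan extension, hence lies in $\mathcal{PS}\mathrm{h}'(\Pro(\cat))$, and that restricting $\widetilde F$ along $\iota_{\cat}$ returns $F$.

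First, by Proposition \ref{extpresh} (and the cited \cite[Lemma 5.3.5.8.]{Lurie2009}) the presheaf $\widetilde F$ associated with $F$ is the left Kan extension of $F$ along $\iota_{\cat}\colon\cat^{\mathrm{op}}\to\Pro(\cat)^{\mathrm{op}}$. Since the equivalence of Proposition \ref{extpresh} is given by restriction along $\iota_{\cat}$ and $\widetilde F$ is the image of $F$ under its inverse, we have $\widetilde F\circ\iota_{\cat}\simeq F$, that is, $\widetilde F(j_{\cat}(X_i))\simeq F(X_i)$ for every $X_i\in\cat$. Moreover $\widetilde F$ lies in $\mathcal{PS}\mathrm{h}'(\Pro(\cat))$, so the functor $\widetilde F\colon\Pro(\cat)^{\mathrm{op}}\to\Ani$ carries cofiltered limits in $\Pro(\cat)$ to the corresponding (filtered) colimits in $\Ani$.

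Now take $X=\{X_i\}_{i\in I}\in\Pro(\cat)$. By Corollary \ref{cofilli}, $X$ is the cofiltered limit $\lim_{i\in I}j_{\cat}(X_i)$ of corepresentables in $\Pro(\cat)$, and $I^{\mathrm{op}}$ is filtered because $I$ is cofiltered. Combining the two observations of the previous paragraph,
$$\widetilde F(X)\ \simeq\ \widetilde F\bigl(\lim_{i\in I}j_{\cat}(X_i)\bigr)\ \simeq\ \colim_{i\in I^{\mathrm{op}}}\widetilde F(j_{\cat}(X_i))\ \simeq\ \colim_{i\in I^{\mathrm{op}}}F(X_i),$$
which is exactly the asserted formula, with the colimit filtered as claimed; functoriality of the assignment $X\mapsto\colim_{i\in I^{\mathrm{op}}}F(X_i)$ is inherited from that of $\widetilde F$.

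The only points that require care are bookkeeping ones. One must check that the defining clause of $\mathcal{PS}\mathrm{h}'(\Pro(\cat))$ in Proposition \ref{extpresh}, read through the identification $\Pro(\cat)^{\mathrm{op}}\simeq\mathrm{Ind}(\cat^{\mathrm{op}})$, genuinely says that $\widetilde F$ turns the cofiltered-limit presentation of $X$ from Corollary \ref{cofilli} into the filtered colimit above (tracking the variance so that the index $I$ becomes $I^{\mathrm{op}}$); and that $\widetilde F\circ\iota_{\cat}\simeq F$, which holds because $\iota_{\cat}$ is fully faithful (Proposition \ref{Yoneda}), so that left Kan extending along it and then restricting is the identity. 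Alternatively, one could bypass the ``preserves cofiltered limits'' clause and compute the pointwise left Kan extension colimit over the comma $\infty$-category $\iota_{\cat}\downarrow X$ directly, verifying via Proposition \ref{Homsets} and the cocompactness of corepresentables (Remark \ref{pres}) that the canonical functor $I^{\mathrm{op}}\to\iota_{\cat}\downarrow X$ is cofinal; that cofinality check would be the main obstacle along this more computational route.
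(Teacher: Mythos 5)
Your proof is correct, but it runs in the opposite logical direction from the paper's. The paper works bottom-up: it takes the formula $X\mapsto\colim_{i\in I^{\mathrm{op}}}F(X_i)$ as a candidate, spends most of the proof verifying by hand that this assignment is actually functorial in $X$ (constructing $\widetilde F(f)$ for a morphism $f$ of pro-objects out of the $\lim\colim$ description of $\Hom_{\Pro(\cat)}$), checks that the candidate preserves cofiltered limits and restricts to $F$ along $\iota_{\cat}$, and then concludes that it must agree with $\widetilde F$ because the preimage under an equivalence is unique. You instead work top-down: you take the $\widetilde F$ already furnished by Proposition \ref{extpresh}, use the two properties that proposition guarantees (membership in $\mathcal{PS}\mathrm{h}'(\Pro(\cat))$ and $\widetilde F\circ\iota_{\cat}\simeq F$), and simply evaluate $\widetilde F$ on the cofiltered presentation $X=\lim_{i\in I}j_{\cat}(X_i)$ of Corollary \ref{cofilli}. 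This buys you functoriality for free, which is exactly the part the paper has to labor over, and your variance bookkeeping (cofiltered limit in $\Pro(\cat)$ becoming a filtered colimit over $I^{\mathrm{op}}$ in $\Ani$) is handled correctly. The trade-off is that your argument leans on the "preserves cofiltered limits" clause of $\mathcal{PS}\mathrm{h}'(\Pro(\cat))$ as an input about $\widetilde F$, whereas the paper only invokes Proposition \ref{extpresh} for uniqueness at the very end; both uses are legitimate readings of that proposition, so there is no gap. Your closing remark about the alternative pointwise-Kan-extension computation and the cofinality of $I^{\mathrm{op}}\to\iota_{\cat}\downarrow X$ is a fair assessment of where the harder work would sit on that route.
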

\begin{proof}
First one has to clarify that the given construction defines a contravariant functor into the category of anima. We have $$\Hom_{\mathrm{Pro}(\cat)}(X, Y) \coloneqq \mathrm{lim}_{i \in I} \colim_{j \in J} \Hom_{\cat}(X_i, Y_j)$$ for morphisms of pro-$\infty$-objects and for every morphism $X_i\rightarrow Y_j$ of objects in $\cat$ we obtain a morphism $F(Y_j)\rightarrow F(X_i)$ of anima. This induces a map
 $$\Hom_{\mathrm{Pro}(\cat)}(X, Y) \rightarrow \colim_{i \in I} \mathrm{lim}_{j \in J} \Hom_{\Ani}(F(Y_j), F(X_i)).$$ As one can pull the limit into the first argument of the hom-space and there is a natural map of hom-spaces
$$ \colim_{i \in I} \Hom_{\Ani}(\colim_{j \in J} F(Y_j), F(X_i)) \rightarrow   \Hom_{\Ani}(\colim_{j \in J}F(Y_j), \colim_{i \in I}F(X_i)),$$ every morphism $f\colon X\rightarrow Y$ of objects in $\Pro(\cat)$ can be associated with a morphism $\widetilde{F}(f)\colon F(Y) \rightarrow F(X)$. Thus $\widetilde{F}$ defines a contravariant functor.
It preserves cofiltered limits of $\Pro(\cat)$ by construction. Its composition with the opposite of the Yoneda embedding $j_{\cat}$ obviously coincides with $F$. Thus it is contained in $\mathcal{PS}\mathrm{h}'(\Pro(\cat))$ and presents a preimage of $F$ under composition with the Yoneda embedding. But as one has an equivalence of $\infty$-categories, the preimage is uniquely determined up to equivalence.
\end{proof}
To define the notion of sheaves of anima, we will restrict ourselves to sites in the usual categorical setting as this is all we need for our purposes. For a discussion on Grothendieck topologies and sheaves in the $\infty$-setting see \cite[Section 6.2.2.]{Lurie2009}.
\begin{definition}{\textbf{Sheaves of anima}}\\
Let $\cat$ be a site with finite limits in the usual categorical setting  viewed as $\infty$-category. A \textit{sheaf (of anima)} is a presheaf $F$ of anima such that for all coverings $(f_i\colon X_i \rightarrow X)_i$ the map of anima
\begin{align*}
F(X) \rightarrow \lim (\prod_i F(X_i) \rightrightarrows \prod_{i,j} F(X_i \times_X X_j) \substack{\rightarrow\\[-1em] \rightarrow \\[-1em] \rightarrow} \prod_{i,j,k} F(X_i \times_X X_j \times_X X_k) \dots),
\end{align*}
where the limit is taken over an infinite diagram, is an equivalence.
We denote the $\infty$-category of all sheaves of anima on $\cat$ by $\mathcal{S}\mathrm{h}(\cat)$.
\end{definition}
\begin{definition}{\textbf{Condensed anima}}\label{condanidef}\\
The $\infty$-category $\CondAni$ of \textit{condensed anima} is defined as the $\infty$-category of functors
$$\extdis^{\mathrm{op}} \rightarrow \Ani$$
in $\mathcal{PS}\mathrm{h}(\extdis)$ taking finite disjoint unions to finite products. As for condensed sets this is obtained by restricting sheaves on the site of profinite sets to $\extdis$.
\end{definition}
\begin{remark}{\textbf{Condensed objects}}\\
More generally, one can define for some $\infty$-category $\mathcal{D}$ admitting certain limits and colimits the category of \textit{condensed objects of $\mathcal{D}$} denoted by $\mathcal{C}\mathrm{ond}(\mathcal{D})$. Its objects are, according to the definitions above, given by contravariant functors from extremally disconnected profinite sets to $\mathcal{D}$ that take finite disjoint unions to finite products. As for condensed sets, this definition has some cardinality issues. See \cite[Definition 11.7]{Scholze2019a} for a precise definition avoiding these issues.
\end{remark}
\begin{lemma}\label{exttoconani}
Every presheaf $F\colon (\Set^{\mathrm{fin}})^{\mathrm{op}}\rightarrow \Ani$ sending finite disjoint unions to finite products can be extended to a unique condensed anima preserving cofiltered limits.
\end{lemma}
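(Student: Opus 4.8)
The plan is to extend $F$ first along $\Set^{\mathrm{fin}}\hookrightarrow\PFin$ to a cofiltered-limit-preserving presheaf of anima on all profinite sets, to observe that this extension automatically preserves finite products, and finally to restrict it to $\extdis$; uniqueness will then be forced by the fact that every extremally disconnected set is a cofiltered limit of finite discrete sets.

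First I would record the inclusions $\Set^{\mathrm{fin}}\subset\extdis\subset\PFin$ (finite discrete spaces are extremally disconnected) and recall from Example \ref{newpro} that $\Pro(\Set^{\mathrm{fin}})=\PFin$. Applying Proposition \ref{extpresh} with $\cat=\Set^{\mathrm{fin}}$, composition with the Yoneda embedding identifies the $\infty$-category $\mathcal{PS}\mathrm{h}'(\PFin)$ of cofiltered-limit-preserving presheaves of anima on $\PFin$ with $\mathcal{PS}\mathrm{h}(\Set^{\mathrm{fin}})$; let $\widetilde F\in\mathcal{PS}\mathrm{h}'(\PFin)$ be the presheaf corresponding to $F$. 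By Lemma \ref{constrext} it is computed on $X=\{X_i\}_{i\in I}$ by the filtered colimit $\widetilde F(X)=\colim_{i\in I^{\mathrm{op}}}F(X_i)$, and it preserves cofiltered limits.

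Next I would verify that $\widetilde F$ sends finite coproducts of profinite sets to finite products of anima. For $X=\{X_i\}_{i\in I}$ and $Y=\{Y_j\}_{j\in J}$, Corollary \ref{collev} gives $X\amalg Y=\{X_i\amalg Y_j\}_{(i,j)\in I\times J}$, so, using that $F$ takes finite disjoint unions to finite products and that filtered colimits commute with finite products in $\Ani$ (Remark \ref{inftopo}),
\[
\widetilde F(X\amalg Y)=\colim_{(i,j)}F(X_i\amalg Y_j)\simeq\colim_{(i,j)}\bigl(F(X_i)\times F(Y_j)\bigr)\simeq\widetilde F(X)\times\widetilde F(Y).
\]
Since $\extdis$ is closed under finite coproducts, the restriction of $\widetilde F$ along $\extdis^{\mathrm{op}}\hookrightarrow\PFin^{\mathrm{op}}$ therefore takes finite disjoint unions to finite products, i.e.\ it is a condensed anima, and it still preserves cofiltered limits (for any presentation of $X\in\extdis$ as a cofiltered limit of finite discrete sets the formula $X\mapsto\colim_i F(X_i)$ holds). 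This establishes existence.

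For uniqueness, let $G$ be any condensed anima preserving cofiltered limits with $G|_{(\Set^{\mathrm{fin}})^{\mathrm{op}}}\simeq F$. By Proposition \ref{profinlim} and the remark following it, every $X\in\extdis$ is a cofiltered limit $X=\lim_{i\in I}X_i$ of finite discrete sets, so $G(X)\simeq\colim_{i\in I^{\mathrm{op}}}G(X_i)\simeq\colim_{i\in I^{\mathrm{op}}}F(X_i)\simeq\widetilde F(X)$ naturally in $X$, whence $G\simeq\widetilde F|_{\extdis^{\mathrm{op}}}$. The one genuine subtlety is to upgrade these pointwise identifications to a coherent equivalence of $\infty$-functors independent of the chosen presentations — but this is precisely what the equivalence $\mathcal{PS}\mathrm{h}'(\PFin)\simeq\mathcal{PS}\mathrm{h}(\Set^{\mathrm{fin}})$ of Proposition \ref{extpresh} (made explicit by Lemma \ref{constrext}) provides, so no additional work is needed. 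This is the step I would expect to require the most care.
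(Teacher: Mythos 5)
Your proposal is correct and follows essentially the same route as the paper: extend $F$ along $\Set^{\mathrm{fin}}\hookrightarrow\Pro(\Set^{\mathrm{fin}})=\PFin$ via Proposition \ref{extpresh} and Lemma \ref{constrext}, check compatibility with finite coproducts using Corollary \ref{collev} together with the commutation of filtered colimits and finite products in $\Ani$, and restrict to $\extdis$. Your more explicit treatment of uniqueness (reducing it to the equivalence $\mathcal{PS}\mathrm{h}'(\PFin)\simeq\mathcal{PS}\mathrm{h}(\Set^{\mathrm{fin}})$) is a welcome elaboration of what the paper leaves implicit, but it is not a different argument.
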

\begin{proof}
The $\infty$-category of finite sets satisfies the conditions of Proposition \ref{extpresh}. Thus, we obtain a unique presheaf $\widetilde{F} \in \mathcal{PS}\mathrm{h}(\Pro(\cat))$ by the formula
$$X=\{X_i\}_{i \in I} \rightarrow \colim_{i \in I^{\mathrm{op}}}F(X_i)$$ preserving cofiltered limits. One has $X \amalg Y =\{X_i \amalg Y_j\}_{(i,j) \in I \times J}$ for a disjoint union of profinite sets by Corollary \ref{collev}. Combining this with the assumption of $F$ sending finite disjoint unions to finite products and the fact that filtered colimits commute with finite products in $\Ani$ by Remark \ref{inftopo}, one can see that $\widetilde{F}$ sends finite disjoint unions to finite products as well. Hence, its restriction to $\extdis^{\mathrm{op}}$ defines a condensed anima.
\end{proof}
\section{Discrete condensed anima}\label{condaninew}
In Definition \ref{anicond} we already defined $\AniCond$ as the full-$\infty$-subcategory of
$$\mathrm{Fun}(\extdis^{\mathrm{op}}, \Ani),$$ generated under sifted colimits by the Yoneda image of $\extdis$. The results of this section give rise to a more precise description of these functors. Furthermore, Theorem \ref{thmAniCond} shows how $\AniCond$ unites topological and homotopical structures at the same time.
\begin{lemma}\label{constinC}
Let $\cat$ be an $\infty$-category and $v_{\cat}\colon \Set^{\mathrm{fin}} \rightarrow \cat$ a fully faithful functor. To every object $A\in \cat$, one can assign a presheaf of anima $$A^{\mathrm{disc}}\colon\Pro(\Set^{\mathrm{fin}})^{\mathrm{op}} \rightarrow \Ani$$ preserving cofiltered limits in $\Profin$ and given by the formula
\begin{align}\label{form}
X=\{X_i\}_{i \in I} \mapsto \colim_{i \in I^{\mathrm{op}}}\Hom_{\cat}(v_{\cat}(X_i),A),
\end{align}
where the colimit is filtered.
\end{lemma}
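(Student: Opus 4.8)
The plan is to obtain $A^{\mathrm{disc}}$ as the unique cofiltered-limit-preserving extension, supplied by Proposition \ref{extpresh} and described explicitly in Lemma \ref{constrext}, of a suitable presheaf of anima on the $\infty$-category $\Set^{\mathrm{fin}}$ of finite sets. Note that only $\Set^{\mathrm{fin}}$ needs to be well-behaved here; no hypothesis on $\cat$ beyond being an $\infty$-category equipped with $v_{\cat}$ will be used.

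First I would record the relevant presheaf. For $A\in\cat$, set
$$F_A\colon (\Set^{\mathrm{fin}})^{\mathrm{op}}\xrightarrow{\ v_{\cat}^{\mathrm{op}}\ }\cat^{\mathrm{op}}\xrightarrow{\ \Hom_{\cat}(-,A)\ }\Ani,$$
so that $F_A(S)=\Hom_{\cat}(v_{\cat}(S),A)$ for a finite set $S$. As a composite of functors this is a functor into $\Ani$, hence $F_A\in\mathcal{PS}\mathrm{h}(\Set^{\mathrm{fin}})$.

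Next I would invoke the extension machinery. As already observed in the proof of Lemma \ref{exttoconani}, the $\infty$-category $\Set^{\mathrm{fin}}$ is (small, hence) accessible and admits finite limits, so it satisfies the hypotheses of Proposition \ref{extpresh}. Applying that proposition to $F_A$ yields a presheaf $\widetilde{F_A}\in\mathcal{PS}\mathrm{h}'(\Pro(\Set^{\mathrm{fin}}))$, i.e.\ a presheaf of anima on $\Pro(\Set^{\mathrm{fin}})$ preserving cofiltered limits, whose restriction along the Yoneda embedding $j_{\Set^{\mathrm{fin}}}$ recovers $F_A$, and which is unique with these properties up to equivalence. By the explicit description of the inverse equivalence in Lemma \ref{constrext}, $\widetilde{F_A}$ is computed by
$$\widetilde{F_A}\colon X=\{X_i\}_{i\in I}\mapsto\colim_{i\in I^{\mathrm{op}}}F_A(X_i)=\colim_{i\in I^{\mathrm{op}}}\Hom_{\cat}(v_{\cat}(X_i),A),$$
the colimit being filtered since $I$ is cofiltered. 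Setting $A^{\mathrm{disc}}\coloneqq\widetilde{F_A}$ then gives exactly the presheaf of anima asserted in the lemma, preserving cofiltered limits in $\Profin$ and given by the claimed formula.

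I do not expect a real obstacle: the only points needing care are the verification that $F_A$ is genuinely a functor into $\Ani$ (immediate, as a composite) and the bookkeeping that the formula returned by Lemma \ref{constrext} matches the one in the statement (it does, verbatim, after unwinding the definition of $F_A$). If one additionally wants functoriality of $A\mapsto A^{\mathrm{disc}}$ — needed for the subsequent construction of discrete condensed anima — it follows formally: $A\mapsto F_A$ is functorial, being $\Hom_{\cat}(v_{\cat}(-),-)$ regarded as a functor $\cat\to\mathcal{PS}\mathrm{h}(\Set^{\mathrm{fin}})$, and the equivalence of Proposition \ref{extpresh} is natural, so post-composing with it transports this functoriality to $\mathcal{PS}\mathrm{h}'(\Profin)$.
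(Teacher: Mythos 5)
Your proposal is correct and follows essentially the same route as the paper: the paper's proof likewise obtains $A^{\mathrm{disc}}$ as the extension of the presheaf $\Hom_{\cat}(v_{\cat}(\cdot),A)$ on $\Set^{\mathrm{fin}}$ via Lemma \ref{constrext}, with cofiltered-limit preservation coming from Proposition \ref{extpresh}. Your added remarks on the (unused) hypotheses on $\cat$ and on functoriality in $A$ go slightly beyond the paper's two-line proof but are accurate.
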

\begin{proof}
The presheaf is the extension of the presheaf of anima $$\Hom_{\cat}(v_{\cat}(\cdot),A)\colon (\Set^{\mathrm{fin}})^{\mathrm{op}}\rightarrow \Ani$$ to $\Profin$ as given in Lemma \ref{constrext}. With regards to Proposition \ref{extpresh}, it preserves cofiltered limits in $\Profin$.
\end{proof}
\begin{proposition}\label{constsheaf} 
If additionally $\cat$ admits finite coproducts and $v_{\cat}$ preserves this kind of coproducts, the restriction of $A^{\mathrm{disc}}$ to the subcategory  $\extdis^{\mathrm{op}}$ defines a condensed anima.
\end{proposition}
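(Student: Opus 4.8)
The plan is to reduce the statement to Lemma \ref{exttoconani}. Consider the presheaf of anima
$$F \colon (\Set^{\mathrm{fin}})^{\mathrm{op}} \rightarrow \Ani, \qquad X \mapsto \Hom_{\cat}(v_{\cat}(X), A),$$
i.e. the presheaf whose extension to $\Profin$ is, by Lemma \ref{constrext}, exactly $A^{\mathrm{disc}}$. The first step is to check that $F$ sends finite disjoint unions to finite products, and this is precisely where the new hypotheses enter: since $v_{\cat}$ preserves finite coproducts, for finite sets $X, Y$ one has $v_{\cat}(X \amalg Y) \simeq v_{\cat}(X) \amalg v_{\cat}(Y)$ in $\cat$, whence the universal property of the coproduct gives a natural equivalence $\Hom_{\cat}(v_{\cat}(X) \amalg v_{\cat}(Y), A) \simeq \Hom_{\cat}(v_{\cat}(X), A) \times \Hom_{\cat}(v_{\cat}(Y), A)$; iterating covers arbitrary finite families, and the empty family gives $F(\emptyset) \simeq \Hom_{\cat}(v_{\cat}(\emptyset), A) \simeq \ast$ because $v_{\cat}(\emptyset)$, as the image of the initial object, is initial in $\cat$.

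Having verified this hypothesis, I would invoke Lemma \ref{exttoconani}: it produces a unique condensed anima extending $F$ and preserving cofiltered limits, and by the construction in its proof (via Proposition \ref{extpresh} and Lemma \ref{constrext}) this extension is given on a profinite set $X = \{X_i\}_{i \in I}$ by $X \mapsto \colim_{i \in I^{\mathrm{op}}} F(X_i) = \colim_{i \in I^{\mathrm{op}}} \Hom_{\cat}(v_{\cat}(X_i), A)$. This is exactly the formula defining $A^{\mathrm{disc}}$ in Lemma \ref{constinC}, so by the uniqueness clause in Proposition \ref{extpresh} the cofiltered-limit-preserving extension $A^{\mathrm{disc}}$ of $F$ coincides with the one supplied by Lemma \ref{exttoconani}. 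Hence $A^{\mathrm{disc}}|_{\extdis^{\mathrm{op}}}$ is a condensed anima, where one also uses — as in Lemma \ref{exttoconani}, and ultimately Lemma \ref{compstabprop} — that $\extdis$ is closed under finite coproducts inside $\PFin$, so that the disjoint-union condition even makes sense after restriction.

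I do not expect a genuine obstacle here: the whole content is the reduction, and the one computation — commuting $\Hom_{\cat}(-, A)$ past finite coproducts in the source — is formal. The only place wanting a little care is the bookkeeping in the second paragraph, namely correctly using the uniqueness in Proposition \ref{extpresh} to identify $A^{\mathrm{disc}}$ with the extension produced by Lemma \ref{exttoconani}, rather than re-deriving the disjoint-union-to-product property for $A^{\mathrm{disc}}$ from scratch (which one could alternatively do directly, using Corollary \ref{collev} to compute coproducts of pro-objects levelwise and Remark \ref{inftopo} that filtered colimits commute with finite products in $\Ani$).
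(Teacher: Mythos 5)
Your proposal is correct and follows essentially the same route as the paper: verify that $\Hom_{\cat}(v_{\cat}(\cdot),A)$ on $(\Set^{\mathrm{fin}})^{\mathrm{op}}$ sends finite disjoint unions to finite products (using that $v_{\cat}$ preserves finite coproducts and that $\Hom_{\cat}(-,A)$ turns colimits into limits), then reduce to Lemma \ref{exttoconani}. Your extra bookkeeping — invoking the uniqueness in Proposition \ref{extpresh} to identify $A^{\mathrm{disc}}$ with the extension produced by Lemma \ref{exttoconani}, and checking the empty coproduct — only makes explicit what the paper compresses into ``by restriction we just obtain the construction of Lemma \ref{exttoconani}.''
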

\begin{proof}
As $v_{\cat}$ preserves finite coproducts by assumption and one has the functorial equivalence $$\Hom_{\cat}(\colim_{i \in I} B_i,A) = \mathrm{lim}_{i \in I} \Hom_{\cat}(B_i,A)$$ of hom-spaces, the presheaf of anima $\Hom_{\cat}(v_{\cat}(\cdot),A)\colon (\Set^{\mathrm{fin}})^{\mathrm{op}}\rightarrow \Ani$ sends finite disjoint unions to finite products. By restriction we just obtain the construction of Lemma \ref{exttoconani}.
\end{proof}
We will use the results above to define a condensed anima associated to an anima. In Proposition \ref{proright} we will obtain by the same formula a condensed anima associated to a pro-anima.
\begin{definition}{\textbf{Discrete condensed anima}}\label{discr}\\
Let $\cat\coloneqq \Ani$ be the category of animated sets. For an anima $A$ we call the condensed anima obtained by Proposition \ref{constsheaf} the \textit{discrete condensed anima} attached to $A$ and denote it by $\underline{A}^{\mathrm{disc}}$. The functor
$$\underline{(\cdot)}^{\mathrm{disc}}\colon \Ani \rightarrow \CondAni, \ A\mapsto \underline{A}^{\mathrm{disc}}$$ is called \textit{discrete functor} or \textit{constant sheaf functor}.
\end{definition}
We have another functor involving the categories $\Ani$ and $\CondAni$ which is nicely connected to the discrete functor as we will see.
\begin{definition}{\textbf{Global sections functor}}\label{global}\\
The \textit{global sections functor} on condensed anima is defined as the functor
\begin{align}
\Gamma\colon \CondAni \rightarrow \Ani, \ F \mapsto F(\ast)
\end{align}
sending a condensed anima to its evaluation on the final object $\ast$ of $\Profin$.\\ (See Definition \ref{finaldef} for a precise definition of final objects in the $\infty$-setting).
\end{definition}
Recall that every discrete condensed anima is by definition and Proposition \ref{extpresh} (a restriction of) a left Kan extension along $\iota_{\SFin}\colon (\SFin)^{\mathrm{op}} \rightarrow \Pro(\Set^{\mathrm{fin}})^{\mathrm{op}}$. The following proposition can be proven in an abstract way by applying the fact that forming the left Kan extension along some morphism $G$ is precisely the left adjoint $\infty$-functor of precomposition with this $G$.  Note the subsequent lemma for a reference of this fact. Nevertheless, in the proof of the proposition we want to explicitly define how the claimed adjunction of the global sections functor and the discrete functor can be obtained.
\begin{proposition}\label{leftadjoint}
The global sections functor provides a right adjoint to the discrete functor, i.e. one has a functorial equivalence of hom-spaces
\begin{align}\label{adjun}
\Hom_{\CondAni}(\underline{A}^{\mathrm{disc}}, F)\simeq \Hom_{\Ani}(A, F(\ast))
\end{align}
for all $A\in \Ani$ and $F\in \CondAni$.
\end{proposition}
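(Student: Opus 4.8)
The plan is to recognise $\underline{A}^{\mathrm{disc}}$ as a left Kan extension from the finite sets, and then to read off the adjunction from the standard left–Kan-extension/restriction adjunction together with the observation that finite-product-preserving presheaves of anima on $\Set^{\mathrm{fin}}$ are nothing but anima.

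First I would unwind the construction of $\underline{A}^{\mathrm{disc}}$. By Definition \ref{discr}, Proposition \ref{constsheaf} and Lemma \ref{constinC} it is the restriction to $\extdis^{\mathrm{op}}$ of the presheaf $A^{\mathrm{disc}}\in\Fun(\Pro(\Set^{\mathrm{fin}})^{\mathrm{op}},\Ani)$, and by Lemma \ref{constrext} together with Proposition \ref{extpresh} the latter is the left Kan extension along $\iota_{\Set^{\mathrm{fin}}}\colon(\Set^{\mathrm{fin}})^{\mathrm{op}}\to\Pro(\Set^{\mathrm{fin}})^{\mathrm{op}}$ of $e_A\coloneqq\Hom_{\Ani}(v(\cdot),A)\colon(\Set^{\mathrm{fin}})^{\mathrm{op}}\to\Ani$. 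Since a finite set, viewed as a profinite set, is a finite discrete space and hence extremally disconnected, $\iota_{\Set^{\mathrm{fin}}}$ factors as $(\Set^{\mathrm{fin}})^{\mathrm{op}}\xrightarrow{\ j\ }\extdis^{\mathrm{op}}\xrightarrow{\ k\ }\Pro(\Set^{\mathrm{fin}})^{\mathrm{op}}$ with $k$ the inclusion of a full subcategory. Transitivity of left Kan extensions gives $A^{\mathrm{disc}}\simeq\mathrm{Lan}_k(\mathrm{Lan}_j(e_A))$, and because $k$ is fully faithful the unit $\mathrm{Lan}_j(e_A)\to k^{*}\mathrm{Lan}_k(\mathrm{Lan}_j(e_A))$ is an equivalence; hence $\underline{A}^{\mathrm{disc}}=k^{*}A^{\mathrm{disc}}\simeq\mathrm{Lan}_j(e_A)$.

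Next, using that $\CondAni$ is a full subcategory of $\Fun(\extdis^{\mathrm{op}},\Ani)$ and the adjunction $\mathrm{Lan}_j\dashv j^{*}$ (left Kan extension along $j$ is left adjoint to precomposition with $j$), I obtain natural equivalences
\begin{align*}
\Hom_{\CondAni}(\underline{A}^{\mathrm{disc}},F)\ \simeq\ \Hom_{\Fun(\extdis^{\mathrm{op}},\Ani)}(\mathrm{Lan}_j(e_A),F)\ \simeq\ \Hom_{\Fun((\Set^{\mathrm{fin}})^{\mathrm{op}},\Ani)}\bigl(e_A,\ F|_{(\Set^{\mathrm{fin}})^{\mathrm{op}}}\bigr).
\end{align*}
Now $e_A$ takes finite disjoint unions to finite products, since $v$ preserves finite coproducts and $\Hom_{\Ani}$ turns coproducts in its first variable into products, and $F|_{(\Set^{\mathrm{fin}})^{\mathrm{op}}}$ does so because $F$ is a condensed anima; thus both lie in the full subcategory $\Fun^{\times}((\Set^{\mathrm{fin}})^{\mathrm{op}},\Ani)$ of finite-product-preserving functors, and the last hom-space may be computed there. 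Finally, since $(\Set^{\mathrm{fin}})^{\mathrm{op}}$ is freely generated under finite products by the one-point set (equivalently, $\Set^{\mathrm{fin}}$ is the free category with finite coproducts on one object), evaluation at $\ast$ is an equivalence $\Fun^{\times}((\Set^{\mathrm{fin}})^{\mathrm{op}},\Ani)\xrightarrow{\ \sim\ }\Ani$ with inverse $B\mapsto\Hom_{\Ani}(v(\cdot),B)$; under it $e_A\mapsto e_A(\ast)\simeq A$ and $F|_{(\Set^{\mathrm{fin}})^{\mathrm{op}}}\mapsto F(\ast)$, so the last hom-space is $\Hom_{\Ani}(A,F(\ast))$. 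Composing these equivalences, each manifestly natural in $A\in\Ani$ and $F\in\CondAni$, yields (\ref{adjun}), i.e.\ $\underline{(\cdot)}^{\mathrm{disc}}\dashv\Gamma$.

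I expect the delicate points to be the two identifications rather than the adjunction bookkeeping: that $\underline{A}^{\mathrm{disc}}$ really is $\mathrm{Lan}_j(e_A)$ along the inclusion $j\colon(\Set^{\mathrm{fin}})^{\mathrm{op}}\hookrightarrow\extdis^{\mathrm{op}}$ — which rests on the cofinality already built into Lemma \ref{constrext} and on fullness of $\extdis\hookrightarrow\Pro(\Set^{\mathrm{fin}})$ — and the equivalence $\Fun^{\times}((\Set^{\mathrm{fin}})^{\mathrm{op}},\Ani)\simeq\Ani$. A more hands-on alternative, closer to what the preamble of the proposition promises, is to write down the unit $\eta_A\colon A\to\Gamma(\underline{A}^{\mathrm{disc}})=\underline{A}^{\mathrm{disc}}(\ast)$, which is an equivalence because $\ast$ is already finite so the defining filtered colimit is trivial, and the counit $\epsilon_F\colon\underline{F(\ast)}^{\mathrm{disc}}\to F$, which on $S=\mathrm{lim}_{i\in I}S_i$ is the canonical map $\colim_{i\in I^{\mathrm{op}}}F(S_i)\to F(S)$ (using $F(S_i)\simeq F(\ast)^{S_i}$ since $S_i$ is finite discrete), and then to verify the two triangle identities directly; there the only real work is checking $\epsilon_{\underline{A}^{\mathrm{disc}}}\circ\underline{\eta_A}^{\mathrm{disc}}\simeq\mathrm{id}$.
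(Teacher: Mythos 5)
Your argument is correct, and it is precisely the ``abstract way'' that the paper flags in the paragraph preceding Proposition \ref{leftadjoint} but deliberately does not carry out. Both proofs share the same two-step skeleton: first reduce maps out of $\underline{A}^{\mathrm{disc}}$ to maps out of $\overline{A}=\Hom_{\Ani}(v(\cdot),A)$ on $\Set^{\mathrm{fin}}$, then identify finite-product-preserving presheaves of anima on $(\Set^{\mathrm{fin}})^{\mathrm{op}}$ with anima via evaluation at $\ast$. Where you differ is in the first step: the paper constructs the equivalence $\Hom_{\CondAni}(\underline{A}^{\mathrm{disc}},F)\simeq\Hom_{\mathcal{PS}\mathrm{h}(\Set^{\mathrm{fin}})}(\overline{A},F|_{\Set^{\mathrm{fin}}})$ by hand, extending a given $h\colon\overline{A}\to F|_{\Set^{\mathrm{fin}}}$ over each $X=\{X_i\}_{i\in I}$ via the universal property of the defining filtered colimit, whereas you invoke the general adjunction $\mathrm{Lan}_j\dashv j^{*}$ (the paper records exactly this as the lemma immediately following the proposition) after first showing, via transitivity of Kan extensions and full faithfulness of $\extdis\hookrightarrow\Pro(\Set^{\mathrm{fin}})$, that $\underline{A}^{\mathrm{disc}}\simeq\mathrm{Lan}_j(e_A)$ along $j\colon(\Set^{\mathrm{fin}})^{\mathrm{op}}\hookrightarrow\extdis^{\mathrm{op}}$. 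That factorisation is a genuine (small) addition, since the paper only exhibits $A^{\mathrm{disc}}$ as a Kan extension to $\Pro(\Set^{\mathrm{fin}})^{\mathrm{op}}$ and then restricts. Your route buys brevity and makes the naturality in $F$ automatic; the paper's explicit construction buys a concrete description of the extended morphism, which it reuses in spirit elsewhere (e.g.\ in Lemma \ref{constrext}). Your phrasing of the second step via the universal property of $\Set^{\mathrm{fin}}$ as the free finite-coproduct completion of a point is also slightly cleaner than the paper's ``determined by its value at $\ast$'' argument, since it makes the essential surjectivity and the equivalence of mapping spaces explicit rather than implicit.
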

\begin{proof}
First, we want to show the equivalence
\begin{align}\label{first}
\Hom_{\CondAni}(\underline{A}^{\mathrm{disc}}, F)\simeq \Hom_{\mathcal{PS}\mathrm{h}(\SFin)}(\overline{A}, F|_{\SFin})
\end{align}
where $\overline{A}\coloneqq \Hom_{\Ani}(v_{\SFin}(\cdot),A)$ is the representable functor of $A\in \Ani$ restricted to $\SFin$.\\
In order to do so, we want to show that for every $F\in \CondAni$, any morphism $h\colon \overline{A} \rightarrow F|_{\SFin}$ corresponds to a unique morphism $\underline{A}^{\mathrm{disc}} \rightarrow F$ of condensed anima, s.t. restriction to $\SFin$ yields the morphism we started with. Note that $\underline{A}^{\mathrm{disc}}|_{\SFin}$ is just given by the representable functor $\overline{A}=\Hom_{\Ani}(v_{\SFin}(\cdot),A)$ on $\SFin$. For every extremally disconnected profinite set $X=\{X_i\}_{i \in I}$, one has maps $X \rightarrow X_i$ and thus maps $F(X_i) \rightarrow F(X)$ and $\overline{A}(X_i)\rightarrow F(X_i)$  being compatible with all arrows in the diagram $\{X_i\}_{i \in I}$. Hence, one obtains for every such $X$ a unique map $$\colim_{i \in I^{\mathrm{op}}} \overline{A}(X_i) \rightarrow F(X)$$ by the universal property of colimits. The resulting map $N \colon \underline{A}^{\mathrm{disc}} \rightarrow F$ is a natural transformation extending $h$ in the required way. As every other morphism $\underline{A}^{\mathrm{disc}} \rightarrow F$ extending $h$ has to satisfy the same conditions as we used for our construction, it has to be equivalent to $N$. It remains to show the equivalence
\begin{align}\label{secon}
\Hom_{\mathcal{PS}\mathrm{h}(\SFin)}(\overline{A}, F|_{\SFin})\simeq \Hom_{\Ani}(A, F(\ast)).
\end{align}
Every finite set can be expressed as a finite disjoint union of one point sets. As $\overline{A}$ and $F|_{\SFin}$ define presheaves on $\SFin$ which send finite disjoint unions to finite products, every morphism $\overline{A} \rightarrow F|_{\SFin}$ is fully determined by its evaluation on the finite set $\{\ast\}$ which is the final object $\ast$ in $\Profin$. Hence, the morphism $\overline{A} \rightarrow F|_{\SFin}$ can be uniquely identified with a morphism $\overline{A}(\ast) \rightarrow F(\ast)$ of anima where $\overline{A}(\ast)=\Hom_{\Ani}(\{\ast\},A)=A$. This proves (\ref{secon}) which combined with (\ref{first}) gives the adjunction (\ref{adjun}).
\end{proof}
As mentioned, the adjunction (\ref{first}) can be formulated more generally for left Kan extensions.
\begin{lemma}
Let $\cat$, $\widetilde{\cat}$ and $\mathcal{D}$ be $\infty$-categories.
For every left Kan extension $LS\colon \widetilde{\cat} \rightarrow \mathcal{D}$ of a morphism $S\colon \cat \rightarrow \mathcal{D}$ along some $G\colon \cat \rightarrow \widetilde{\cat}$, one has the equivalence
\begin{align*}
\Hom_{\Fun(\widetilde{\cat}, \mathcal{D})}(LS, F)\simeq \Hom_{\Fun(\cat, \mathcal{D})}(S, F\circ G),
\end{align*}
natural in $F \in \Fun(\widetilde{\cat}, \mathcal{D})$. 
\end{lemma}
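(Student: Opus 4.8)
The statement is exactly the assertion that forming the left Kan extension along $G$ is left adjoint to the precomposition (restriction) functor
$$G^{*}\colon \Fun(\widetilde{\cat},\mathcal{D})\longrightarrow \Fun(\cat,\mathcal{D}),\qquad F\longmapsto F\circ G,$$
evaluated at the single object $S$; so the plan is to read it off directly from the universal property built into the definition of a left Kan extension. First I would recall that, by definition, to say that $LS$ is a left Kan extension of $S$ along $G$ is to say that it comes equipped with a unit natural transformation $\eta\colon S\to (LS)\circ G = G^{*}(LS)$ exhibiting $LS$ as such an extension in the sense of \cite[\S 4.3.2]{Lurie2009} (equivalently in the language of \cite{Kero}); in particular the existence of $LS$ that the lemma presupposes is all that will be used, no cocompleteness of $\mathcal{D}$ being required.

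Next I would invoke the standard reformulation of this universal property in terms of mapping anima: $\eta$ exhibits $LS$ as a left Kan extension of $S$ along $G$ precisely when, for every $F\in\Fun(\widetilde{\cat},\mathcal{D})$, the composite
\begin{align*}
\Hom_{\Fun(\widetilde{\cat},\mathcal{D})}(LS,F)&\xrightarrow{\ G^{*}\ }\Hom_{\Fun(\cat,\mathcal{D})}\bigl(G^{*}(LS),\,G^{*}(F)\bigr)\\
&\xrightarrow{\ (-)\circ\eta\ }\Hom_{\Fun(\cat,\mathcal{D})}(S,\,F\circ G)
\end{align*}
is an equivalence of anima (see \cite[\S 4.3.2--4.3.3]{Lurie2009}). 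This composite sends $\alpha$ to $(G^{*}\alpha)\circ\eta$; it is manifestly functorial in $F$, being obtained by whiskering with $G$ and then composing with the \emph{fixed} transformation $\eta$, both of which are operations natural in the target functor. Hence the equivalence is natural in $F$, which is the content of the lemma. Equivalently, whenever all left Kan extensions along $G$ exist, the units $\eta$ and the corresponding counits assemble into an adjunction $\mathrm{Lan}_{G}\dashv G^{*}$, and the displayed equivalence is just its adjunction equivalence, natural in both variables by construction.

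The only real point requiring care is the level at which the statement lives: one must exhibit an equivalence of mapping \emph{anima}, together with $\infty$-categorical naturality in $F$, rather than a mere bijection on connected components — but this is exactly how the universal property of left Kan extensions is packaged in \cite{Lurie2009, Kero}, so no genuine obstacle arises. I would explicitly avoid the more computational alternative, namely assuming $\mathcal{D}$ cocomplete, using the pointwise colimit formula for $(LS)(\tilde c)$ and expressing $\Hom_{\Fun(\cat,\mathcal{D})}(S,F\circ G)$ as a limit over a twisted arrow category and then matching the two sides: this reproves the same fact under stronger hypotheses and with considerably more bookkeeping. (As a sanity check, the abstract argument specialises to the equivalence (\ref{first}) used in Proposition \ref{leftadjoint}, taking $G=\iota_{\SFin}$.)
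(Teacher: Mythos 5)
Your proof is correct and follows essentially the same route as the paper, whose entire proof is a one-line citation of \cite[Proposition 4.3.3.7]{Lurie2009} (the adjunction between left Kan extension along $G$ and the restriction functor $G^{*}$). You simply unpack the same universal property from \cite[\S 4.3.2--4.3.3]{Lurie2009} in terms of the unit $\eta$ and the mapping anima, which is a faithful elaboration of what the cited result says.
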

\begin{proof}
This follows by \cite[Proposition 4.3.3.7.]{Lurie2009}.
\end{proof}
\section{Topological spaces as condensed anima}\label{topconani}
Recall the fully faithful embedding $$\mathcal{CG} \hookrightarrow \Cond,\ X \mapsto \Hom_{\Top}(\cdot, X)$$ as mentioned in Proposition \ref{equivcomphaus}. Using this and part (c) of the following theorem, we can view compactly generated topological spaces as condensed anima. Moreover, the theorem summarizes important relations of the $\infty$-categories of anima, condensed anima and animated condensed sets.
\begin{theorem}\label{thmAniCond}
With the definitions above, the following are true:
\begin{enumerate}
\item[(a)] There is a natural equivalence of the $\infty$-categories 
$$\mathrm{Cond(}\mathcal{A}\mathrm{ni(}\Set))\cong \AniCond$$
of condensed anima and animated condensed sets.
\item[(b)] The $\infty$-category $\Ani$ embeds fully faithfully into $\AniCond$ by the discrete functor $\underline{(\cdot)}^{\mathrm{disc}}\colon \Ani \hookrightarrow \CondAni$ and
\item[(c)] the fully faithful embedding of the category $\Cond$ into $\AniCond$ of Proposition \ref{truncationprop2} is given by composition of a condensed set with the fully faithful embedding $v\colon\Set \hookrightarrow \Ani$. 
\end{enumerate}
\end{theorem}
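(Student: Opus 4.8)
The plan is to treat the three parts in turn; all of them concern full subcategories of $\Fun(\extdis^{\mathrm{op}},\Ani)$, and in each case the argument reduces to unwinding the relevant definitions and feeding the outcome into an adjunction already at hand. For part (a) I would first record that $\extdis$ has finite coproducts: compact Hausdorff spaces are closed under finite coproducts by Lemma~\ref{compstabprop}, and a finite disjoint union of extremally disconnected spaces is extremally disconnected (the closure of an open subset is the disjoint union of the closures of its pieces, each open). By Definitions~\ref{defani} and~\ref{anicond} together with Theorem~\ref{thmcompro} ($\Cond^{\mathrm{cp}}=\extdis$), the $\infty$-category $\AniCond=\Ani(\Cond)$ is the full subcategory of $\Fun(\extdis^{\mathrm{op}},\Ani)$ generated under sifted colimits by the Yoneda image of $\extdis$, i.e.\ Lurie's nonabelian derived $\infty$-category $\mathcal{P}_{\Sigma}(\extdis)$. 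One inclusion is immediate: every representable $\Hom_{\extdis}(\cdot,S)$ takes finite disjoint unions to finite products, and this property is stable under sifted colimits since sifted colimits commute with finite products in $\Ani$; hence $\AniCond\subseteq\CondAni$ in the sense of Definition~\ref{condanidef}. For the reverse inclusion I would invoke the description of $\mathcal{P}_{\Sigma}$ in \cite[\S 5.5.8]{Lurie2009}: for a small $\infty$-category with finite coproducts, $\mathcal{P}_{\Sigma}$ is exactly the full subcategory of presheaves preserving finite products. This identifies $\AniCond$ with $\CondAni$, naturally in the input.

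For part (b) I would use Proposition~\ref{leftadjoint}: the global sections functor $\Gamma$ is right adjoint to $\underline{(\cdot)}^{\mathrm{disc}}$, with adjunction equivalence~(\ref{adjun}). Evaluating~(\ref{adjun}) at $F=\underline{B}^{\mathrm{disc}}$ and using that $\underline{B}^{\mathrm{disc}}|_{\SFin}$ is the representable presheaf $\Hom_{\Ani}(v(\cdot),B)$, so that $\underline{B}^{\mathrm{disc}}(\ast)=\Hom_{\Ani}(\{\ast\},B)\simeq B$ (as computed in the proof of Proposition~\ref{leftadjoint}), one obtains
\[
\Hom_{\CondAni}(\underline{A}^{\mathrm{disc}},\underline{B}^{\mathrm{disc}})\ \simeq\ \Hom_{\Ani}\bigl(A,\underline{B}^{\mathrm{disc}}(\ast)\bigr)\ \simeq\ \Hom_{\Ani}(A,B)
\]
for all $A,B\in\Ani$. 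Since a left adjoint is fully faithful precisely when its unit is an equivalence, and here the unit $A\to\Gamma(\underline{A}^{\mathrm{disc}})=\underline{A}^{\mathrm{disc}}(\ast)$ is exactly the equivalence just produced, this gives (b).

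For part (c), Theorem~\ref{thmcompro} gives $\Cond^{\mathrm{cp}}=\extdis$, so the embedding of Proposition~\ref{truncationprop2} is a functor $\iota\colon\Cond\hookrightarrow\AniCond\subseteq\Fun(\extdis^{\mathrm{op}},\Ani)$ whose left adjoint is $\lambda\colon G\mapsto\pi_0\circ G$. On the other hand, the concrete functor $\Phi:=v\circ(-)\colon\Cond\to\Fun(\extdis^{\mathrm{op}},\Ani)$ lands in $\CondAni=\AniCond$ (because $v$, being right adjoint to $\pi_0$ by Proposition~\ref{truncationprop}, preserves finite products) and moreover in the $0$-truncated objects (because $v$ identifies $\Set$ with the $0$-truncated anima and truncation in a functor $\infty$-category is computed pointwise). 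Postcomposition carries the adjunction $\pi_0\dashv v$ to an adjunction $\lambda\dashv\Phi$ between $\AniCond$ and $\Cond$ (one uses here that $\pi_0\circ G\in\Cond$ for $G\in\AniCond$, as in Proposition~\ref{truncationprop2}, and $v\circ T\in\AniCond$ for $T\in\Cond$, as just noted); by uniqueness of right adjoints, $\Phi\simeq\iota$, which is the assertion of (c). As a sanity check one may also unwind directly: under the identification of $\Cond$ with the product-preserving presheaves on $\extdis$ (Proposition~\ref{condsetequi}(ii)) and the computation $\Hom_{\Cond}(\Hom_{\Top}(\cdot,S),T)\simeq T(S)$ (cf.\ the proof of Proposition~\ref{extdiscompro}), the restricted Yoneda embedding underlying $\iota$ is the tautological inclusion, so $\iota(T)=v\circ T$.

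The hard part will be the reverse inclusion in (a): recognizing the sifted-colimit closure of the representables as the full subcategory of product-preserving presheaves. This is exactly where Lurie's theory of the nonabelian derived $\infty$-category $\mathcal{P}_{\Sigma}$ is needed, and the only place where the finite-coproduct closure of $\extdis$ enters. Parts (b) and (c) are essentially formal given Propositions~\ref{leftadjoint} and~\ref{truncationprop2}; the one subtlety in (c) is that Proposition~\ref{truncationprop2} pins down the embedding only abstractly (as the inclusion of $0$-truncated objects), so the identification with $v\circ(-)$ has to go through uniqueness of adjoints rather than a direct comparison of formulas.
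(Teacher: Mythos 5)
Your proofs of parts (b) and (c) follow essentially the same route as the paper: (b) is obtained by specializing the adjunction of Proposition \ref{leftadjoint} to $F=\underline{B}^{\mathrm{disc}}$ and using $\underline{B}^{\mathrm{disc}}(\ast)\simeq B$, and (c) identifies $v\circ(-)$ with the abstract embedding of Proposition \ref{truncationprop2} by exhibiting it as a right adjoint to $\pi_0\circ(-)$ and invoking uniqueness of adjoints, exactly as in the paper. The genuine difference is in part (a): the paper simply cites Lemma 11.8 of \cite{Scholze2019a} and gives no argument, whereas you supply one, namely that $\extdis$ is closed under finite coproducts (via Lemma \ref{compstabprop} plus the observation about closures of opens in a disjoint union), that representables preserve finite products and this is stable under sifted colimits since these commute with finite products in $\Ani$, and that conversely Lurie's description of the nonabelian derived $\infty$-category $\mathcal{P}_{\Sigma}$ of an $\infty$-category with finite coproducts as the product-preserving presheaves gives the reverse inclusion. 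This is in substance the proof of the cited lemma, so your writeup is more self-contained than the paper's; what it buys is making visible exactly where the finite-coproduct closure of $\extdis$ (and hence Theorem \ref{thmcompro}) enters. Two small caveats: $\extdis$ is not small, so the appeal to \cite[\S 5.5.8]{Lurie2009} needs the cardinal-cutoff convention of Warning \ref{ignorsetiss}, which you should at least flag; and in (b) the statement that the unit $A\to\Gamma(\underline{A}^{\mathrm{disc}})$ ``is exactly the equivalence just produced'' deserves a sentence tracing the identity morphism through the adjunction of Proposition \ref{leftadjoint} to see that it evaluates at $\ast$ to the identity of $A$ --- though the paper's own proof is no more explicit on this point.
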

\begin{proof}
\begin{enumerate}
\item[(a)] This is Lemma 11.8 in \cite{Scholze2019a}.
\item[(b)] We can conclude the statement directly as a corollary of Proposition \ref{leftadjoint} by choosing $F\coloneqq \underline{B}^{\mathrm{disc}}$ for some anima $B \in \Ani$ and noting that $\underline{B}^{\mathrm{disc}}(\ast)=B$.

\item[(c)] First of all, we note that for every $F\in \Cond$ the composition $v\circ F$ is a condensed anima by $(a)$. Indeed, $F$ sends finite disjoint unions to finite products by definition and the embedding $v$ preserves limits as it is a right adjoint by Proposition \ref{truncationprop}. By Proposition \ref{truncationprop2}, we already know that there is a fully faithful embedding $\Cond \hookrightarrow \AniCond$ that identifies $\Cond$ with the $0$-truncated objects and is right adjoint to the composition with $\pi_0\colon \Ani \rightarrow \Set$. It is enough to prove that the composition with the embedding $v\colon \Set \hookrightarrow \Ani$ is also right adjoint to the composition with $\pi_0$. Then, the fully faithfulness follows by uniqueness of adjoint functors. By the adjointness in Proposition \ref{truncationprop} one has
$$\Hom_{\Ani}(v\circ F(X), G(X)) \cong \Hom_{\Set}(F(X), \pi_0\circ G(X))$$ for all $F\in \Cond$, $G \in \AniCond$ and $X\in \extdis$. This immediately implies
$$\Hom_{\CondAni}(v\circ F, G) \cong \Hom_{\Cond}(F, \pi_0\circ G).$$
\end{enumerate}
\end{proof}
\begin{definition}
For a compactly generated topological space $X$, we call the condensed anima $\Hom_{\Top}(\cdot,X)\colon \extdis^{\mathrm{op}} \rightarrow \Ani$ obtained by the fully faithful embedding of Theorem \ref{thmAniCond} (c) the \textit{condensed anima corresponding to X} or \textit{represented by X}.
\end{definition}
\begin{proposition}\label{discfin}
Regarding $\SFin$ as a sub-$\infty$-category of $\Ani$ via the fully faithful embedding $v \colon \Set \hookrightarrow \Ani$ one can deduce the following:
\begin{enumerate}
\item For any finite set $S$, the discrete condensed anima $\underline{S}^{\mathrm{disc}}$ is the condensed anima $\Hom_{\Top}(\cdot, S)$ represented by $S$ considered as a finite discrete space.
\item  If $\{X_i\}_{i \in I}$ is a cofiltered system of finite sets, the limit $\mathrm{lim}_{i \in I} \underline{X_i}^{\mathrm{disc}}$ is the condensed anima $\Hom_{\Top}(\cdot, \mathrm{lim}_{i \in I} X_i)$ represented by the profinite set $\mathrm{lim}_{i \in I} X_i$.
\end{enumerate}
\end{proposition}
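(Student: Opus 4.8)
The plan is to unwind the defining formula of the discrete condensed anima (Lemma \ref{constinC}, Definition \ref{discr}) and reduce both assertions to the morphism formula for pro-objects together with the identification of profinite sets with Stone spaces (Proposition \ref{profinlim}); part (2) will then follow from part (1) by computing the limit in $\CondAni$ pointwise.

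For (1), fix a finite set $S$ and regard it inside $\Ani$ via the fully faithful embedding $v\colon\Set\hookrightarrow\Ani$. Let $Y\in\extdis$ and write it as a cofiltered limit $Y=\{Y_j\}_{j\in J}$ of finite discrete sets. By Definition \ref{discr} and the formula of Lemma \ref{constinC},
$$\underline{S}^{\mathrm{disc}}(Y)\;\simeq\;\colim_{j\in J^{\mathrm{op}}}\Hom_{\Ani}(v(Y_j),v(S)).$$
Since $v$ is fully faithful and $Y_j,S$ are finite, $\Hom_{\Ani}(v(Y_j),v(S))\simeq v\bigl(\Hom_{\Set}(Y_j,S)\bigr)$; and since $v$ preserves filtered colimits (homotopy groups commute with filtered colimits in $\Ani$, so a filtered colimit of discrete anima is again discrete), the displayed colimit equals $v\bigl(\colim_{j\in J^{\mathrm{op}}}\Hom_{\Set}(Y_j,S)\bigr)$. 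Now $\colim_{j\in J^{\mathrm{op}}}\Hom_{\Set}(Y_j,S)$ is exactly $\Hom_{\Top}(Y,S)$: a continuous map from the Stone space $Y=\lim_j Y_j$ to the finite discrete space $S$ factors through some finite quotient $Y_j$, which is the content of the pro-object morphism formula applied with $S$ corepresentable, combined with Proposition \ref{profinlim}. Hence $\underline{S}^{\mathrm{disc}}(Y)\simeq v\bigl(\Hom_{\Top}(Y,S)\bigr)$, naturally in $Y$, and by Theorem \ref{thmAniCond}(c) the right-hand side is precisely the value at $Y$ of the condensed anima represented by $S$. (One may instead argue by the uniqueness in Proposition \ref{extpresh}: both sides, viewed as presheaves on all profinite sets, preserve cofiltered limits and restrict on finite sets to $v\circ\Hom_{\Set}(\cdot,S)$, hence agree.)

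For (2), by part (1) we have $\underline{X_i}^{\mathrm{disc}}\simeq\Hom_{\Top}(\cdot,X_i)$ in $\CondAni$ for each $i$, so it suffices to identify $\lim_{i\in I}\Hom_{\Top}(\cdot,X_i)$ with $\Hom_{\Top}\bigl(\cdot,\lim_{i\in I}X_i\bigr)$, where the limit over $I$ is taken in $\CondAni$ and $\lim_{i\in I}X_i$ is the profinite set formed in $\Top$. Limits in $\CondAni$ are computed pointwise: $\CondAni$ is the full subcategory of $\Fun(\extdis^{\mathrm{op}},\Ani)$ of functors sending finite disjoint unions to finite products, and this subcategory is closed under limits because limits commute with finite products. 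Thus for $Y\in\extdis$,
$$\Bigl(\lim_{i\in I}\Hom_{\Top}(\cdot,X_i)\Bigr)(Y)\;\simeq\;\lim_{i\in I}v\bigl(\Hom_{\Top}(Y,X_i)\bigr)\;\simeq\;v\Bigl(\lim_{i\in I}\Hom_{\Top}(Y,X_i)\Bigr)\;\simeq\;v\bigl(\Hom_{\Top}(Y,\textstyle\lim_{i\in I}X_i)\bigr),$$
using that $v$ is a right adjoint (Proposition \ref{truncationprop}) and that $\Hom_{\Top}(Y,-)$ preserves limits. Since $v\circ\Hom_{\Top}(\cdot,\lim_{i\in I}X_i)$ is by Theorem \ref{thmAniCond}(c) the condensed anima represented by $\lim_{i\in I}X_i$, the claim follows.

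The main obstacle is not conceptual but bookkeeping: one must remember that ``the condensed anima represented by $X$'' means the condensed set $\Hom_{\Top}(\cdot,X)$ post-composed with $v\colon\Set\hookrightarrow\Ani$, so that every mapping anima appearing above is discrete, and one must check that the chain of equivalences in (1) is natural in $Y$ — the cleanest route being the uniqueness statement of Proposition \ref{extpresh} rather than a hands-on naturality verification. The only genuinely topological input is that a continuous map from a profinite set to a finite discrete set factors through a finite quotient, which is exactly the pro-object morphism formula together with Proposition \ref{profinlim}; no further point-set topology is needed.
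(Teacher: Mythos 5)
Your proposal is correct and follows essentially the same route as the paper: unwind the colimit formula for $\underline{S}^{\mathrm{disc}}$, use fully faithfulness of $v$ to reduce to sets, and use cocompactness of finite sets in $\Profin$ (your ``continuous maps to finite discrete spaces factor through a finite quotient'') to turn the filtered colimit into $\Hom_{\Top}(Y,S)$, with part (2) reduced to part (1) by pulling the cofiltered limit inside the Hom. Your explicit checks that $v$ preserves filtered colimits and that limits in $\CondAni$ are computed pointwise are details the paper leaves implicit, but the argument is the same.
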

\begin{proof}
\begin{enumerate}
\item Let $S$ be a finite set and $v\colon\Set \hookrightarrow \Ani$ the embedding of sets to animated sets. Then, the discrete condensed anima associated to $S$ is given by the formula
$$X=\{X_i\}_{i \in I}\mapsto \colim_{i \in I^{\mathrm{op}}}\Hom_{\Ani}(v(X_i),v(S))$$
with $X$ extremally disconnected profinite set. As $v$ and the Yoneda embedding $\Set^{\mathrm{fin}}\hookrightarrow \Profin$ of finite sets into profinite sets are fully faithful, we can deduce
\begin{align*}
\colim_{i \in I^{\mathrm{op}}}\Hom_{\Ani}(v(X_i),v(S))
&=\colim_{i \in I^{\mathrm{op}}}\Hom_{\Set}(X_i, S)\\
&=\colim_{i \in I^{\mathrm{op}}}\Hom_{\Profin}(X_i, S),
\end{align*}
using that $S$ and $X_i$ are finite sets for all $i \in I$.
Moreover, every finite set is cocompact in $\Profin$ 
as stated in Remark \ref{pres}. Therefore, the filtered colimit can be computed as cofiltered limit inside the hom-space.
It follows that the discrete condensed anima can be stated by the formula
$$X=\{X_i\}_{i \in I}\mapsto \Hom_{\Profin}(\mathrm{lim}_{i \in I} X_i, S)=\Hom_{\Top}(X, S),$$
as we can view $\Profin$ as a full sub-$\infty$-category of (the nerve of) $\Top$ (since this is true in the ordinary categorical setting). This is what we wanted to prove.
\item Let $\{X_i\}_{i \in I}$ be a cofiltered system of finite sets. As the limit $\mathrm{lim}_{i \in I} \underline{X_i}^{\mathrm{disc}}$ can be written as
$$\mathrm{lim}_{i \in I} \Hom_{\Top}(\cdot, X_i)=\Hom_{\Top}(\cdot, \mathrm{lim}_{i \in I}X_i),$$
the statement directly follows from (1.). 
\end{enumerate}
\end{proof}
\begin{remark}
By Theorem \ref{thmAniCond} one obtains two natural functors from CW-spaces to condensed anima. On the one hand, there is a fully faithful
functor via the full subcategory of condensed sets. On the other hand, there is a non-faithful functor factoring over the full-$\infty$-subcategory of anima as $\CW \rightarrow \Ani$ is non-faithful. By Proposition \ref{discfin}, the images under the two functors coincide if 
the CW-space is a discrete finite set.  
\end{remark}

\chapter{Homotopy theory for condensed sets}\label{homotopy}
The aim of this chapter is to develop an option of doing homotopy theory with condensed sets in a meaningful way.\\
In his post \cite{Clausena} at April 5, 2020 in the \textit{$n$-Category caf\'{e}},  Clausen claims that one can regard $\AniCond\simeq \CondAni$ as an "$\infty$-topos modulo cardinality issues". If we believe this statement for a moment without demanding a proof, we can think about categorical homotopy groups as they are defined for $\infty$-topoi \cite[Definition 6.5.1.1.]{Lurie2009}. Then by \cite[Proposition 6.5.1.7.]{Lurie2009}, for all $0$-truncated objects the non-zero homotopy groups vanish. Hence, by considering $\Cond$ as the full sub-$\infty$-category of $0$-truncated objects of $\AniCond$ by Proposition \ref{truncationprop2}, this approach of defining a kind of homotopy groups for condensed sets is not very meaningful.
\newline
Another, more promising, approach is to investigate to what extent the embedding $\Ani \hookrightarrow \CondAni$ of Theorem \ref{thmAniCond} has a left adjoint. By Lemma 11.9 in \cite{Scholze2019a}, there exists at least a left adjoint if we restrict the functor to the full subcategory of CW-spaces regarded as condensed anima. The functor is given by sending the condensed anima corresponding to a CW-space to the singular simplicial complex of the CW-space as we will see in Proposition \ref{CW}. In fact, by Section \ref{example} we cannot hope that this partially left adjoint can be extended to the whole category $\CondAni$. Nevertheless, the construction of pro-$\infty$-categories allows us to define a left adjoint if we replace $\Ani$ by its pro-$\infty$-category $\Pro(\Ani)$. Then, as we will see,
one can define homotopy pro-groups.
\section{Pointed \texorpdfstring{$\infty$}{infinity}-categories}
In order to define homotopy groups, we have to define a notion of pointed objects of an $\infty$-category. We follow the definition in \cite[Section 7.2.2]{Lurie2009}.
\begin{definition}{\textbf{The $\infty$-category of pointed objects}}\label{finaldef}\\
Let $\cat$ be an $\infty$-category.
\begin{itemize}
\item[1.] We call an object $X\in \cat$ final if $\Hom_{\cat}(Y,X)$ is a contractible simplicial set for all $Y \in \cat$, i.e. it is weakly equivalent to the point. We denote a final object by $\ast_{\cat}$.
\item[2.] A pointed object of $\cat$ is a morphism $y\colon \ast_{\cat} \rightarrow Y$ in $\cat$. The full subcategory of $\cat_1=\Fun(\triangle^1,\cat)$ spanned by pointed objects is denoted by $\cat_{*}$ and called pointed $\infty$-category of $\cat$. 
\end{itemize}
\end{definition}
\begin{remark}\label{element}
For simplicity, we write $x$ for an object of $\cat_{*}$ meaning the morphism $x \colon \ast_{\cat} \rightarrow X$ and call $x$ an \textit{element of $X$} denoted by $x\in X$.
\end{remark}
\begin{proposition}
Let $\cat$ be an $\infty$-category and let $\cat'$ be the full subcategory of final objects of $\cat$. Then $\cat'$ is either empty or a contractible Kan complex.
\end{proposition}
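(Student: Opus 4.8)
The plan is to show that the full subcategory $\cat'$ of final objects of $\cat$, if non-empty, is a Kan complex in which every vertex is a final object, and then argue that it is contractible. First I would recall the characterization of final objects: $X \in \cat$ is final precisely when $\Hom_{\cat}(Y,X)$ is a contractible Kan complex for every $Y \in \cat$. Equivalently, in the quasi-category language, $X$ is final if and only if every map $\partial\Delta^n \to \cat$ carrying the final vertex $n$ to $X$ extends to $\Delta^n$, for all $n \geq 1$. This latter lifting formulation is the one I would carry through the argument.

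Assume $\cat'$ is non-empty. The key claim is that the inclusion $\cat' \hookrightarrow \cat$ is such that every object of $\cat'$ is still final when regarded \emph{in} $\cat$, and moreover that $\cat'$ is a Kan complex. To see $\cat'$ is a Kan complex, I would verify the extension property against all horns $\Lambda^n_i \hookrightarrow \Delta^n$, including the outer ones. Given a horn $\sigma\colon \Lambda^n_i \to \cat'$, compose with the inclusion to get a horn in $\cat$; since $\cat$ is a quasi-category we can fill inner horns, but for outer horns we must use that the relevant vertex maps to a final object. Concretely: for the horn $\Lambda^n_n$ (missing the last face), the vertex $n$ is final, and finality gives a filler in $\cat$; a symmetric argument using that final objects absorb maps handles $\Lambda^n_0$ once one checks the edge from $0$ to $n$ is an equivalence — which it is, since any morphism between final objects is an equivalence (both $\Hom(X',X)$ and $\Hom(X,X')$ are contractible). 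One then checks the resulting filler lands in $\cat'$, i.e. all its vertices are final; but every vertex of such a filler is equivalent to a final object, and being final is invariant under equivalence, so this holds. Hence $\cat'$ is a Kan complex.

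Finally, I would show the Kan complex $\cat'$ is contractible, i.e. weakly equivalent to a point. It suffices to show all its homotopy groups vanish at every basepoint and that it is connected. Connectedness: any two final objects are equivalent, and in a Kan complex equivalent vertices are joined by an edge, so $\pi_0(\cat') = \ast$. For the higher homotopy groups, fix a final object $X \in \cat'$ as basepoint; a class in $\pi_n(\cat', X)$ is represented by a map $\Delta^n/\partial\Delta^n \to \cat'$, equivalently a map $\Delta^n \to \cat'$ sending $\partial\Delta^n$ to $X$. Composing into $\cat$ and using that $X$ is final (so that maps $\partial\Delta^{n+1}\to\cat$ hitting the cone point $X$ extend), one produces a nullhomotopy inside $\cat$, which again lands in $\cat'$ since all its vertices are automatically final. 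Thus $\pi_n(\cat', X) = \ast$ for all $n$, and by Whitehead's theorem for Kan complexes $\cat'$ is contractible.

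The main obstacle I anticipate is the careful bookkeeping in the outer-horn-filling step: one must correctly invoke finality (which a priori only controls simplices whose \emph{last} vertex is final) together with the fact that morphisms of final objects are equivalences, in order to also fill $\Lambda^n_0$, and then confirm that the chosen fillers have all vertices final. A cleaner route that sidesteps some of this is to observe that $\cat'$ is, by a standard argument, equivalent as an $\infty$-category to the nerve of an \emph{indiscrete} (chaotic) groupoid on its set of objects — every hom-space is contractible — and the nerve of a non-empty indiscrete groupoid is a contractible Kan complex; I would present the horn-filling argument but could fall back on this description if the combinatorics become unwieldy.
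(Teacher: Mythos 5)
Your argument is essentially correct, but it is a genuinely different route from the paper's, which contains no argument at all: the paper simply cites \cite[Proposition 1.2.12.9]{Lurie2009} and records that its definition of a final object agrees with Lurie's. Your reconstruction works, but it is longer than it needs to be. The one-step proof (which is also Lurie's) is to check the extension property of $\cat'$ against the boundary inclusions $\partial\Delta^n \hookrightarrow \Delta^n$ for all $n \geq 1$ directly: a map $\partial\Delta^n \to \cat'$ composed into $\cat$ carries the vertex $n$ to a final object, so the lifting characterization of finality extends it over $\Delta^n$, and since every vertex of $\Delta^n$ already lies in $\partial\Delta^n$ the filler factors through the full subcategory $\cat'$; a non-empty simplicial set with this extension property is automatically a contractible Kan complex. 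This subsumes in one stroke both your horn-filling analysis and your $\pi_n$-plus-Whitehead computation. Two small inaccuracies in your version are worth flagging, though neither is fatal: (i) Joyal's special outer horn theorem requires the \emph{initial} edge $\Delta^{\{0,1\}}$ of a $\Lambda^n_0$-horn to be an equivalence, not the edge from $0$ to $n$ --- immaterial here because every edge of your horn is a morphism between final objects and hence an equivalence; (ii) the claim that finality ``gives a filler'' for $\Lambda^n_n$-horns does not follow directly from the $\partial\Delta^n$-lifting property you quoted, but rather from the equivalent characterization that $\cat_{/X} \to \cat$ is a trivial fibration (via the join/slice adjunction), so you would need to invoke that equivalence explicitly.
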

\begin{proof}
See \cite[Proposition 1.2.12.9.]{Lurie2009} and notice the equivalence of our definition of a final object with the definition provided in \cite[Definition 1.2.12.1.]{Lurie2009} by \cite[Proposition 1.2.12.4., Corollary 1.2.12.5. and Corollary 4.2.1.8.]{Lurie2009}.
\end{proof}
By the proposition, we can assume the final object of an $\infty$-category to be unique (up to a contractible choice). In the ordinary categorical setting, there is a similar statement for final objects. By the next remark, the notions are connected in a natural way just as one wishes for an appropriate definition.
\begin{remark}
For an ordinary category $\cat$ an object is final in the nerve $N(\cat)$ if and only if it is final in $\cat$ in the usual sense by \cite[Example 1.2.12.7.]{Lurie2009} .
\end{remark}
\begin{example}{\textbf{Final anima}}\\
The (up to a contractible choice unique) final object $\ast_{\Ani}$ in the $\infty$-category $\Ani$ is given by the final Kan complex which itself corresponds to the final CW complex, i.e. it is given by the $0$-standard simplex $\triangle^0=\{\ast\}$. An element $a$ of an anima $A$ can be identified with a map $a\colon \triangle^{0} \rightarrow A$ via the Yoneda embedding. This justifies our simplification of Remark \ref{element}.
\end{example}
\begin{proposition}\label{properties}
Let $\cat$ be an $\infty$-category and $\cat_{*}$ its $\infty$-category of pointed objects. Then, the following is true:
\begin{itemize}
\item[1.] If $\cat$ is accessible, $\cat_{*}$ is accessible as well.
\item[2.] The limit over a diagram $\{\ast_{\cat} \rightarrow C_i\}_{i \in I}$ exists in $\cat_{*}$ whenever the limit over the induced diagram $\{C_i\}_{i \in I}$ in $\cat$ exists and it coincides with the unique morphism $\ast_{\cat} \rightarrow \lim_{i \in I} C_i$ induced by the universal property of the limit.
\end{itemize}
\end{proposition}
\begin{proof}
By \cite[Lemma 7.2.2.8., Proposition 4.2.1.5.]{Lurie2009}, the category $\cat_{*}$ can be identified with the under-$\infty$-category $\cat_{*/}$. Thus, the first statement on accessibility follows by \cite[Corollary 5.4.5.16.]{Lurie2009}. The second statement follows by the general fact \cite[Corollary 5.1.2.3.]{Lurie2009} that limits in functor-$\infty$-categories can be computed objectwise .
\end{proof}
\begin{definition}{\textbf{Morphisms of pointed $\infty$-categories}}\label{morpoint}\\
Let $F\colon \cat \rightarrow \mathcal{D}$ be a morphism of $\infty$-categories that  preserves final objects, i.e. the image $F(\ast_{\cat})$ can be identified with the final object $\ast_{\mathcal{D}} \in \mathcal{D}_{*}$. Then, one can define a morphism $F_{\ast}\colon \cat_{\ast}\rightarrow\mathcal{D}_{\ast}$ of pointed $\infty$-categories which sends an object $\ast_{\cat} \rightarrow X\in \cat_{*}$ to an object $\ast_{\mathcal{D}} \rightarrow F(X)\in \mathcal{D}_{\ast}$.
\end{definition}
\begin{proposition}{\textbf{Final objects in functor categories}}\label{finalobj}\\
Let $\ast_{\cat}$ be the final object in an $\infty$-category $\cat$. Then, one has the following:
\begin{itemize}
\item[1.] If the $\infty$-category $\Pro(\cat)$ is defined, it has a final object given by the corepresentable functor $\Hom_{\cat}(\ast_{\cat}, \cdot)$.
\item[2.] For every $\infty$-category $\mathcal{D}$,  the $\infty$-category $\Fun(\mathcal{D}, \cat)$ has a final object given by the functor with constant image $\ast_{\cat}$.
\end{itemize}
\end{proposition}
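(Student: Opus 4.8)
The plan is to handle the two assertions separately; both come down to the way limits are controlled in the $\infty$-categories involved, namely objectwise in functor categories and by the Hom-formula of Chapter~\ref{procatbase} in pro-categories.

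For (2) I would argue that a final object is precisely a limit over the empty diagram, and limits in functor $\infty$-categories are computed objectwise --- this is the fact already used in Proposition~\ref{properties} (cf.\ \cite[Corollary 5.1.2.3.]{Lurie2009}). Hence the empty limit in $\Fun(\mathcal{D},\cat)$ is the functor whose value at each object of $\mathcal{D}$ is the empty limit $\ast_{\cat}$ in $\cat$, i.e.\ the constant functor with image $\ast_{\cat}$; so this constant functor is final. (Equivalently, for any $F\in\Fun(\mathcal{D},\cat)$ the space of natural transformations $F\to \mathrm{const}_{\ast_{\cat}}$ is contractible, its value at each $D\in\mathcal{D}$ being $\Hom_{\cat}(F(D),\ast_{\cat})\simeq\ast$.)

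For (1) I would show directly that $\Hom_{\Pro(\cat)}(X, j_{\cat}(\ast_{\cat}))\simeq\ast$ for every $X\in\Pro(\cat)$, where $j_{\cat}(\ast_{\cat})=\Hom_{\cat}(\ast_{\cat},\cdot)$ is a legitimate object of $\Pro(\cat)$ since corepresentable functors are accessible and left-exact by Proposition~\ref{Yoneda}. By Corollary~\ref{cofilli} write $X=\{X_{\alpha}\}_{\alpha\in A}$ with $A$ cofiltered. Applying Proposition~\ref{Homsets} with the second pro-object taken to be the corepresentable $j_{\cat}(\ast_{\cat})$ (indexed by a one-point category) yields $\Hom_{\Pro(\cat)}(X, j_{\cat}(\ast_{\cat}))\simeq\colim_{\alpha\in A}\Hom_{\cat}(X_{\alpha},\ast_{\cat})$. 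Since $\ast_{\cat}$ is final in $\cat$, every $\Hom_{\cat}(X_{\alpha},\ast_{\cat})$ is contractible, so this diagram $A\to\Ani$ is equivalent to the constant diagram at $\ast$, and $\colim_{\alpha\in A}\ast$ is contractible because $A$, being cofiltered, is weakly contractible (its opposite is filtered, hence weakly contractible). Therefore $\Hom_{\Pro(\cat)}(X, j_{\cat}(\ast_{\cat}))\simeq\ast$ for all $X$, i.e.\ $j_{\cat}(\ast_{\cat})$ is final in $\Pro(\cat)$.

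The one step deserving care in (1) is the final identification: that the colimit over the cofiltered index $A$ of a pointwise-contractible diagram is again contractible, which rests on cofiltered $\infty$-categories being weakly contractible; everything else is bookkeeping with the formulas of Chapter~\ref{procatbase}. If one prefers to avoid this, the same conclusion follows from Remark~\ref{indcat}: $\Pro(\cat)^{\mathrm{op}}$ is the ind-$\infty$-category $\mathcal{I}\mathrm{nd}(\cat^{\mathrm{op}})$, the Yoneda embedding $\cat^{\mathrm{op}}\to\mathcal{I}\mathrm{nd}(\cat^{\mathrm{op}})$ preserves finite colimits, and $\ast_{\cat}$ is the initial object of $\cat^{\mathrm{op}}$, so $\Hom_{\cat}(\ast_{\cat},\cdot)$ is initial in $\mathcal{I}\mathrm{nd}(\cat^{\mathrm{op}})$, hence final in $\Pro(\cat)$.
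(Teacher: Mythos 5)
Your proposal is correct and follows essentially the same route as the paper: part (2) by observing that the relevant mapping spaces are objectwise contractible, and part (1) by computing $\Hom_{\Pro(\cat)}(X, j_{\cat}(\ast_{\cat}))$ via the Hom-formula as a filtered colimit of contractible anima. The only difference is that you explicitly justify the last step of (1) — that a filtered colimit of contractible anima is contractible because filtered $\infty$-categories are weakly contractible — which the paper's proof passes over silently; that is a worthwhile clarification but not a different argument.
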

\begin{proof}
\begin{itemize}
\item[1.] By the formula for morphisms in a pro-$\infty$-category, one has  for every $Y=\{Y_i\}_{i \in I}\in \Pro(\cat)$ the following identifications
\begin{align*}
\Hom_{\Pro(\cat)}(Y, \Hom_{\cat}(\ast_{\cat}, \cdot))&=\colim_{i \in I^{\mathrm{op}}}\Hom_{\cat}(Y_i, \ast_{\cat})\\
&=\colim_{i \in I^{\mathrm{op}}} \ast_{\Ani}= \ast_{\Ani}
\end{align*}
where we used for the second equality the property of $\ast_{\cat}$ to be final in $\cat$.
\item[2.] Let $c_{\ast}$ denote the functor with constant image $\ast_{\cat}$ and $F$ another functor in $\Fun(\mathcal{D},\cat)$. For all $X\in \mathcal{D}$ the hom-spaces
$$\Hom_{\cat}(F(X), \ast_{\cat})$$ are contractible by definition of $\ast_{\cat}$. Thus, the same holds for $$\Hom_{\Fun(\mathcal{D},\cat)}(F, c_{\ast})$$ and $c_{*}$ is final in $\Fun(\mathcal{D}, \cat)$.
\end{itemize}
\end{proof}
\section{Homotopy pro-groups for \texorpdfstring{$\CondAni$}{condensed anima} }\label{groups}
As stated in Chapter $4$, one can define homotopy groups for animated sets by the construction of simplicial homotopy groups (cf.\ Def.\ \ref{simphomgr}). In order to define homotopy groups for condensed sets viewed as condensed anima, we can use the results of the preceding section to extend the construction of simplicial homotopy groups first to pro-anima and then to condensed anima as we will see in the following.
\begin{lemma}\label{proright}
There exists a unique functor $$R_{\Ani}\colon \Pro(\Ani) \rightarrow \CondAni$$ preserving small cofiltered limits in $\Pro(\Ani)$, which sends a pro-anima $A$ to a condensed anima given by the assignment
\begin{align}\label{form2}
X=\{X_i\}_{i \in I} \mapsto \colim_{i \in I^{\mathrm{op}}}\Hom_{\Pro(\Ani)}(v_{\Pro(\Ani)}(X_i),A),
\end{align}
where the colimit is filtered and the composition of the canonical embeddings $$v_{\Pro(\Ani)}\coloneqq j_{\Ani}\circ v_{\Ani}\colon \SFin \hookrightarrow \Ani \hookrightarrow \Pro(\Ani)$$ is fully faithful. The condensed anima defined by the formula (\ref{form2}) preserves cofiltered limits in $\Profin$ and the restriction of $R_{\Ani}$ to $\Ani$ coincides with the discrete functor $\underline{(\cdot)}^{\mathrm{disc}}$.
\end{lemma}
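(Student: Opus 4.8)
The plan is to build $R_{\Ani}$ by applying the universal property of $\Pro(\Ani)$ (Proposition \ref{equiv}) to a suitable functor $\Ani \to \CondAni$, and then to identify its value on a pro-anima with the explicit formula \eqref{form2}. First I would observe that the discrete functor $\underline{(\cdot)}^{\mathrm{disc}}\colon \Ani \to \CondAni$ of Definition \ref{discr} is the natural candidate for the restriction of $R_{\Ani}$ to $\Ani$. To invoke Proposition \ref{equiv} with $\cat = \Ani$ and $\mathcal{D} = \CondAni$, I need $\Ani$ to be accessible and to admit finite limits (both hold, since $\Ani$ is presentable), and I need $\CondAni$ to admit small cofiltered limits. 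The latter holds because $\CondAni$ is a full subcategory of $\Fun(\extdis^{\mathrm{op}}, \Ani)$ closed under the relevant limits: limits in a functor $\infty$-category are computed objectwise (\cite[Corollary 5.1.2.3.]{Lurie2009}), and a cofiltered limit of functors each sending finite disjoint unions to finite products again has this property, since finite limits commute with cofiltered (indeed all) limits in $\Ani$. Hence Proposition \ref{equiv} yields a unique cofiltered-limit-preserving functor $R_{\Ani}\colon \Pro(\Ani) \to \CondAni$ whose restriction along $j_{\Ani}$ is $\underline{(\cdot)}^{\mathrm{disc}}$.

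Next I would verify the explicit formula \eqref{form2}. By Corollary \ref{cofilli}, every pro-anima $A$ can be written as a cofiltered limit $A = \mathrm{lim}_{\alpha} j_{\Ani}(A_\alpha)$ of corepresentables. Since $R_{\Ani}$ preserves cofiltered limits and restricts to $\underline{(\cdot)}^{\mathrm{disc}}$ on $\Ani$, we get $R_{\Ani}(A) = \mathrm{lim}_{\alpha} \underline{A_\alpha}^{\mathrm{disc}}$, and by the formula \eqref{form} defining the discrete condensed anima together with the fact that cofiltered limits in $\CondAni$ are computed objectwise, evaluation at $X = \{X_i\}_{i \in I}$ gives
\[
R_{\Ani}(A)(X) = \mathrm{lim}_{\alpha} \colim_{i \in I^{\mathrm{op}}} \Hom_{\Ani}(v_{\Ani}(X_i), A_\alpha).
\]
To match \eqref{form2} I would rewrite $\mathrm{lim}_{\alpha} \Hom_{\Ani}(v_{\Ani}(X_i), A_\alpha) \simeq \Hom_{\Pro(\Ani)}(j_{\Ani}(v_{\Ani}(X_i)), A) = \Hom_{\Pro(\Ani)}(v_{\Pro(\Ani)}(X_i), A)$, using the Yoneda embedding $j_{\Ani}$ and Corollary \ref{cofilli} once more; the colimit over $I^{\mathrm{op}}$ and the limit over $\alpha$ can be exchanged, or rather the identification is made directly at the level of the corepresentable presentation. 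This also shows $v_{\Pro(\Ani)} = j_{\Ani} \circ v_{\Ani}$ is fully faithful as a composite of fully faithful functors (Proposition \ref{Yoneda} and Proposition \ref{truncationprop}).

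Finally I would record that $R_{\Ani}(A)$, defined by \eqref{form2}, indeed lands in $\CondAni$ and preserves cofiltered limits in $\Profin$: this is precisely Lemma \ref{constinC} and Proposition \ref{constsheaf} applied with $\cat = \Pro(\Ani)$ and $v_{\cat} = v_{\Pro(\Ani)}$, once one checks that $\Pro(\Ani)$ admits finite coproducts and that $v_{\Pro(\Ani)}$ preserves them --- the first follows from Corollary \ref{collev}, and the second since $v_{\Ani}$ preserves finite coproducts (as finite sets, being $0$-truncated, have the expected coproducts in $\Ani$) and $j_{\Ani}$ preserves colimits that exist by the reasoning in Proposition \ref{Yoneda}. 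The statement that the restriction of $R_{\Ani}$ to $\Ani$ is $\underline{(\cdot)}^{\mathrm{disc}}$ is then immediate from the construction. I expect the main obstacle to be the bookkeeping in the interchange of the filtered colimit over $I^{\mathrm{op}}$ with the cofiltered limit presenting $A$, and making sure that the functor produced abstractly by Proposition \ref{equiv} genuinely agrees objectwise with the hands-on formula \eqref{form2} rather than merely on corepresentables; this is handled by noting both are cofiltered-limit-preserving and agree on the essential image of $j_{\Ani}$, so uniqueness in Proposition \ref{equiv} forces them to coincide.
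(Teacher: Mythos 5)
Your proposal follows essentially the same route as the paper: construct $R_{\Ani}$ abstractly by applying the universal property of $\Pro(\Ani)$ (Proposition \ref{equiv}) to the discrete functor, then identify it with the explicit formula (\ref{form2}) by checking that the formula defines a cofiltered-limit-preserving functor into $\CondAni$ agreeing with $\underline{(\cdot)}^{\mathrm{disc}}$ on corepresentables and invoking uniqueness, with the verification that (\ref{form2}) lands in $\CondAni$ delegated to Lemma \ref{constinC}, Proposition \ref{constsheaf} and Corollary \ref{collev}. The colimit--limit interchange you flag is indeed the delicate point, and your resolution via uniqueness of the cofiltered-limit-preserving extension is exactly how the paper's argument handles it as well.
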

\begin{proof}
Consider the $\infty$-category $\CondAni\simeq \AniCond$ of condensed anima and the $\infty$-category $\Ani$ of animated sets. Both categories are presentable and complete by \ref{presen}. In particular, they are accessible $\infty$-categories that admit finite limits. Hence, the hypotheses of Proposition \ref{equiv} are satisfied.\\
Let $\underline{(\cdot)}^{\mathrm{disc}}\colon \Ani \rightarrow \CondAni$ be the discrete functor that assigns to every animated set $A$ the discrete condensed anima $\underline{A}^{\mathrm{disc}}$ as in \ref{discr}. Then, by the universal property of pro-$\infty$-objects, it extends to a unique functor $$R_{\Ani}\colon \Pro(\Ani) \rightarrow \CondAni$$ preserving small cofiltered limits. The mapping $\Pro(\Ani) \rightarrow \mathcal{PS}\mathrm{h}(\extdis)$ given by sending a pro-anima $A$ to the presheaf defined by (\ref{form2}) preserves cofiltered limits as the right hand side given by $\Hom_{\Pro(\Ani)}(v_{\Pro(\Ani)}(X_i),A)$ commutes with limits in $A$. By fully faithfulness of the Yoneda embedding $j_{\Ani}$, it clearly extends the discrete functor. If we can argue that (\ref{form2}) defines a condensed anima, the claim follows from the uniqueness of the extension $R_{\Ani}$. As $v_{\Ani}$ and $j_{\Ani}$  preserve finite coproducts (cf. Cor. \ref{collev}), the same is true for their composition $v_{\Pro(\Ani)}$. Then, we can conclude by the same argument as in Proposition \ref{constsheaf}.
\end{proof}
Note that the condensed anima corresponding to some pro-anima under the functor $R_{\Ani}$ is the same as one obtains by the construction of Lemma \ref{constinC} and Proposition \ref{constsheaf} which we also used to define discrete condensed anima.
\begin{proposition}\label{proleft}
There exists a left adjoint functor to $R_{\Ani}$ denoted by $$L_{\Ani}\colon \CondAni \rightarrow \Pro(\Ani)$$ which preserves colimits. For every discrete condensed anima $\underline{A}^{\mathrm{disc}}$ the corresponding pro-anima coincides with the corepresentable pro-anima $\Hom_{\Ani}(A,\cdot)\colon \Ani \rightarrow \Ani$ obtained by the Yoneda embedding $j_{\Ani}\colon \Ani \rightarrow \Pro(\Ani)$.
\end{proposition}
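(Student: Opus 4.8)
The plan is to realise $L_{\Ani}$ as the pro-existent left adjoint furnished by Proposition \ref{proext}, applied with $\cat := \Ani$, $\mathcal{D} := \CondAni$ and $u := \underline{(\cdot)}^{\mathrm{disc}} \colon \Ani \to \CondAni$ the discrete functor. First I would check the hypotheses of that proposition. Both $\Ani$ and $\CondAni$ are presentable by Proposition \ref{presen}, hence accessible and (co)complete; in particular they admit finite limits, and $\CondAni$ admits small cofiltered limits. The only non-formal point is that the discrete functor is left exact, i.e.\ preserves finite limits. I would argue this as follows: finite limits in $\CondAni$ are computed pointwise in $\mathcal{PS}\mathrm{h}(\extdis)$, so it suffices to fix an extremally disconnected $X = \{X_i\}_{i\in I}$ and check that $A \mapsto \underline{A}^{\mathrm{disc}}(X) = \colim_{i \in I^{\mathrm{op}}} \Hom_{\Ani}(X_i, A)$ preserves finite limits; but each $\Hom_{\Ani}(X_i,\cdot)$ preserves all limits, and filtered colimits commute with finite limits in the $\infty$-topos $\Ani$ (cf.\ Remark \ref{inftopo}), so the composite does as well.

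With the hypotheses in place, Proposition \ref{proext} yields a functor $U \colon \Pro(\Ani) \to \CondAni$ preserving small cofiltered limits with $U \circ j_{\Ani} \simeq \underline{(\cdot)}^{\mathrm{disc}}$, together with a functor $L := u^{*} \circ j_{\CondAni} \colon \CondAni \to \Pro(\CondAni) \to \Pro(\Ani)$ that is left adjoint to $U$. Since $R_{\Ani}$ of Lemma \ref{proright} also preserves small cofiltered limits and restricts to $\underline{(\cdot)}^{\mathrm{disc}}$ along $j_{\Ani}$, the uniqueness in the universal property of $\Pro(\Ani)$ (Proposition \ref{equiv}) forces $U \simeq R_{\Ani}$. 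Thus $L =: L_{\Ani}$ is a left adjoint to $R_{\Ani}$, and being a left adjoint it preserves all small colimits (by the proposition on adjoint functors in Section \ref{forminf}).

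It remains to identify $L_{\Ani}$ on discrete condensed anima. By the explicit description in Proposition \ref{proext}, $L_{\Ani}(F)$ is the pro-anima $A \mapsto \Hom_{\CondAni}(F, \underline{A}^{\mathrm{disc}})$. Taking $F := \underline{A_0}^{\mathrm{disc}}$ and applying the adjunction of Proposition \ref{leftadjoint} together with $\underline{A}^{\mathrm{disc}}(\ast) = A$ gives $L_{\Ani}(\underline{A_0}^{\mathrm{disc}})(A) \simeq \Hom_{\Ani}(A_0, \underline{A}^{\mathrm{disc}}(\ast)) = \Hom_{\Ani}(A_0, A)$, naturally in $A$; that is, $L_{\Ani}(\underline{A_0}^{\mathrm{disc}})$ is exactly the corepresentable pro-anima $\Hom_{\Ani}(A_0, \cdot) = j_{\Ani}(A_0)$, as claimed. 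The only step with genuine content is the left exactness of the discrete functor; the remainder is a formal application of Proposition \ref{proext}, the uniqueness clause in the universal property of pro-$\infty$-categories, and the discrete/global-sections adjunction of Proposition \ref{leftadjoint}. A minor point to handle carefully is the interplay between the pointwise computation of limits in $\CondAni$ and the commutation of filtered colimits with finite limits in $\Ani$, but this is standard $\infty$-topos theory.
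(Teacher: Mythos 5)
Your proposal is correct and follows essentially the same route as the paper: both realise $L_{\Ani}$ as the pro-existent left adjoint of Proposition \ref{proext} applied to the discrete functor, note that $R_{\Ani}$ is the resulting extension $U$ by uniqueness, and identify $L_{\Ani}(\underline{A}^{\mathrm{disc}})$ via the discrete/global-sections adjunction (which is exactly what underlies the paper's appeal to fully faithfulness of the discrete functor). Your explicit verification that $\underline{(\cdot)}^{\mathrm{disc}}$ is left exact is a welcome addition, since the paper only gestures at the hypotheses with ``the same argument as in the preceding lemma''; just note that Remark \ref{inftopo} as stated only covers commutation of filtered colimits with finite \emph{products}, so for finite limits you should cite the general $\infty$-topos fact directly.
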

\begin{proof}
With the same argument as in the preceding lemma, the preliminaries of Proposition \ref{proext} are satisfied. Thus, there is a pro-existent left adjoint to $R_{\Ani}$ given by the functor
\begin{align*}
L_{\Ani}\colon \CondAni \rightarrow \Pro(\Ani) 
\end{align*}
sending a condensed anima $C$ to the assignment
$$B\mapsto \Hom_{\CondAni}(C,\underline{B}^{\mathrm{disc}})$$ with $B\in \Ani$. The functor $L_{\Ani}$ preserves colimits as it is a left adjoint. If $C$ is a discrete condensed anima given by $C=\underline{A}^{\mathrm{disc}}$ for some anima $A$, this reduces to the assignment
$$B\mapsto \Hom_{\Ani}(A,B)$$
by fully faithfulness of the discrete functor.
\end{proof}
\begin{corollary}
The final object in $\CondAni$ can be represented by the discrete condensed anima
\begin{align}\label{ass}\{X_i\}_{i \in I}\mapsto \colim_{i \in I^{\mathrm{op}}}\Hom_{\Ani}(v_{\Ani}(X_i), \ast_{\Ani})\cong \ast_{\Ani}
\end{align}
corresponding to $\ast_{\Ani}$ and the functor $R_{\Ani}\colon \Pro(\Ani)\rightarrow \CondAni$ preserves final objects. 
\end{corollary}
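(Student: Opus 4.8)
The plan is to combine the explicit descriptions of final objects from Proposition \ref{finalobj} with the adjunction of Proposition \ref{proleft}. First I would recall that, by Proposition \ref{finalobj} (1), the final object of $\Pro(\Ani)$ is the corepresentable functor $\Hom_{\Ani}(\ast_{\Ani},\cdot)$, which is precisely the image $j_{\Ani}(\ast_{\Ani})$ of the final anima under the Yoneda embedding. On the other side, by Proposition \ref{finalobj} (2) the final object of $\Fun(\extdis^{\mathrm{op}},\Ani)$ is the functor with constant value $\ast_{\Ani}$; since finite products of $\ast_{\Ani}$ are again $\ast_{\Ani}$, this constant functor takes finite disjoint unions to finite products and hence lies in the full subcategory $\CondAni$, so it is also the final object of $\CondAni$.

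Next I would identify this constant functor with $\underline{\ast_{\Ani}}^{\mathrm{disc}}$. By Lemma \ref{constinC}, applied with the ambient $\infty$-category taken to be $\Ani$ and the fully faithful functor $v_{\Ani}\colon\SFin\hookrightarrow\Ani$, the discrete condensed anima attached to $\ast_{\Ani}$ is given on $X=\{X_i\}_{i\in I}\in\extdis$ by the filtered colimit $\colim_{i\in I^{\mathrm{op}}}\Hom_{\Ani}(v_{\Ani}(X_i),\ast_{\Ani})$. Since $\ast_{\Ani}$ is final, each mapping anima $\Hom_{\Ani}(v_{\Ani}(X_i),\ast_{\Ani})$ is canonically equivalent to $\ast_{\Ani}$, and a filtered colimit of the constant diagram at $\ast_{\Ani}$ over the nonempty filtered category $I^{\mathrm{op}}$ is again $\ast_{\Ani}$; this gives the equivalence displayed in (\ref{ass}) and shows that $\underline{\ast_{\Ani}}^{\mathrm{disc}}$ is (equivalent to) the constant functor at $\ast_{\Ani}$, hence the final object of $\CondAni$.

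Finally, for the preservation statement I would invoke Proposition \ref{proleft}: the functor $R_{\Ani}$ admits the left adjoint $L_{\Ani}$, so it preserves all small limits and in particular the final object, which is the limit of the empty diagram. Together with the identity $R_{\Ani}(j_{\Ani}(\ast_{\Ani}))=\underline{\ast_{\Ani}}^{\mathrm{disc}}$, which holds because the restriction of $R_{\Ani}$ to $\Ani$ is the discrete functor (Lemma \ref{proright}), this shows that $R_{\Ani}$ sends the final object $j_{\Ani}(\ast_{\Ani})$ of $\Pro(\Ani)$ to the final object $\underline{\ast_{\Ani}}^{\mathrm{disc}}$ of $\CondAni$. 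I do not expect a serious obstacle here; the only point needing a word of care is the claim that a filtered colimit of the constant diagram at the final anima is again the final anima, which is a standard consequence of the weak contractibility of filtered $\infty$-categories — alternatively it follows a posteriori from the fact that limits in $\CondAni\subset\Fun(\extdis^{\mathrm{op}},\Ani)$ are computed pointwise, so the final object of $\CondAni$ must have pointwise value the final anima.
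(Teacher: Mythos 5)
Your proposal is correct and follows essentially the same route as the paper's proof: reduce the formula (\ref{ass}) to the constant functor at $\ast_{\Ani}$, identify it as final in $\CondAni$ via Proposition \ref{finalobj}~(2) and fullness of $\CondAni\subset\Fun(\extdis^{\mathrm{op}},\Ani)$, and then observe that $R_{\Ani}$ sends $j_{\Ani}(\ast_{\Ani})$ to exactly this object by Lemma \ref{proright}. Your additional remark that $R_{\Ani}$ preserves final objects simply because it is a right adjoint (Proposition \ref{proleft}) is a valid shortcut the paper does not use, and your explicit justification that a filtered colimit of the constant diagram at $\ast_{\Ani}$ is again $\ast_{\Ani}$ fills in a step the paper leaves implicit.
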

\begin{proof}
As $\ast_{\Ani}$ is the final object in $\Ani$, for every $X=\{X_i\}_{i \in I} \in \extdis$ the assignment (\ref{ass}) reduces to the functor $c_{\ast}\in \Fun(\extdis^{\mathrm{op}}, \Ani)$ with constant image $\ast_{\Ani}$. 
By the Proposition \ref{finalobj} $(2.)$ and as $\CondAni$ is a full subcategory of $\Fun(\extdis^{\mathrm{op}}, \Ani)$, the functor $c_{\ast}\in \CondAni$ is also final in $\CondAni$ and can be represented by the discrete condensed anima given by (\ref{ass}). As the final object in $\Pro(\Ani)$ is simply given as the image of the final object in $\Ani$ under $j_{\Ani}$, its image under $R_{\Ani}$ is by Lemma $\ref{proright}$ given by the discrete condensed anima corresponding to $\ast_{\Ani}$ and the claim follows.
\end{proof}
\begin{corollary}
The functor $L_{\Ani}\colon \CondAni \rightarrow \Pro(\Ani)$ preserves final objects. Thus, we obtain a functor
$$(L_{\Ani})_{\ast}\colon \CondAni_{\ast} \rightarrow \Pro(\Ani)_{*}$$ of pointed $\infty$-categories given by the assignment
$$c \mapsto L_{\Ani}(c),$$
where $ L_{\Ani}(c)$ denotes the image of the morphism $c\colon \ast_{\CondAni} \rightarrow C$ under $L_{\Ani}$.
\end{corollary}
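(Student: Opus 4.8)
The plan is to read off the preservation of final objects from the explicit descriptions already obtained, rather than from abstract adjoint-functor considerations: a left adjoint such as $L_{\Ani}$ preserves colimits (hence initial objects) but need not preserve limits, so preservation of the terminal object has to be verified by hand.

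First I would recall from the preceding corollary that the final object $\ast_{\CondAni}$ of $\CondAni$ is the discrete condensed anima $\underline{\ast_{\Ani}}^{\mathrm{disc}}$ attached to the final anima $\ast_{\Ani}$. Applying Proposition \ref{proleft}, which computes $L_{\Ani}$ on discrete condensed anima, I obtain
\[
L_{\Ani}(\ast_{\CondAni}) \simeq L_{\Ani}\bigl(\underline{\ast_{\Ani}}^{\mathrm{disc}}\bigr) \simeq j_{\Ani}(\ast_{\Ani}) = \Hom_{\Ani}(\ast_{\Ani},\cdot),
\]
the pro-anima corepresented by $\ast_{\Ani}$. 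By Proposition \ref{finalobj}(1), this corepresentable functor is precisely the final object $\ast_{\Pro(\Ani)}$ of $\Pro(\Ani)$. Hence $L_{\Ani}(\ast_{\CondAni})$ can be identified with $\ast_{\Pro(\Ani)}$, i.e. $L_{\Ani}$ preserves final objects. (Alternatively, once one knows that $R_{\Ani}$ is fully faithful, one could instead use that the counit $L_{\Ani}R_{\Ani}\to \mathrm{id}$ is an equivalence together with $R_{\Ani}(\ast_{\Pro(\Ani)})\simeq \ast_{\CondAni}$ from the previous corollary.)

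Granting this, the remaining assertion is a direct instance of Definition \ref{morpoint}: a morphism of $\infty$-categories preserving final objects induces a morphism of the associated pointed $\infty$-categories. Applying this to $F = L_{\Ani}$ yields $(L_{\Ani})_{\ast}\colon \CondAni_{\ast}\to \Pro(\Ani)_{\ast}$, which carries a pointed object $c\colon \ast_{\CondAni}\to C$ to the morphism $\ast_{\Pro(\Ani)}\simeq L_{\Ani}(\ast_{\CondAni})\xrightarrow{L_{\Ani}(c)} L_{\Ani}(C)$, that is, to $L_{\Ani}(c)$ in the notation of the statement. There is no genuine obstacle in the argument; the only subtlety worth stressing is the one already mentioned, namely that preservation of the terminal object cannot be inferred formally from adjointness and must pass through the concrete identification of $\ast_{\CondAni}$ with a discrete condensed anima.
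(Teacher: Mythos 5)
Your argument is correct and follows the paper's own proof essentially verbatim: identify $\ast_{\CondAni}$ with the discrete condensed anima attached to $\ast_{\Ani}$ via the preceding corollary, compute $L_{\Ani}$ on it using Proposition \ref{proleft} to obtain the corepresentable functor $\Hom_{\Ani}(\ast_{\Ani},\cdot)$, recognize this as final in $\Pro(\Ani)$ by Proposition \ref{finalobj}, and conclude via Definition \ref{morpoint}. The additional remark that preservation of the terminal object cannot be deduced from left adjointness alone is a worthwhile clarification, but the substance of the proof is the same.
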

\begin{proof}
By the preceding corollary and Proposition \ref{proleft}, the image of the final object in $\CondAni$ under the left adjoint $L_{\Ani}$ is given by the copresentable functor $\Hom_{\Ani}(\ast,\cdot) \in \Pro(\Ani)$, which is exactly the final object in the pro-$\infty$-category by Proposition \ref{finalobj}. Thus, the functor $(L_{\Ani})_{\ast}$ can be defined as described in Definition \ref{morpoint}.
\end{proof}
\begin{remark}
The preceding results can be summarized as follows: One obtains a commutative diagram 
\begin{equation*}
\begin{tikzcd}[row sep=huge]
&
\Ani \arrow[hookrightarrow,dl,swap, "\underline{(\cdot)}^{\mathrm{disc}}"] \arrow[hookrightarrow,dr, "j_{\Ani}"] & 
\\ 
\CondAni \arrow[rr, yshift=0.7ex, "L_{\Ani}"] & & \Pro(\Ani) \arrow[ll, yshift=-0.7ex, "R_{Ani}"]
\end{tikzcd}
\end{equation*}
which can be extended to a commutative diagram on the corresponding pointed $\infty$-categories as all involved functors preserve final objects.
\end{remark}
\begin{proposition}
If the pro-$\infty$-category $\Pro(\cat)$ is defined, then the $\infty$-category $\Pro(\cat_{*})$ is defined as well and one has an equivalence of $\infty$-categories
$$\Pro(\cat_{*})=\Pro(\cat)_{*}$$
induced by the map $f\colon\Fun(\triangle^1, \cat)\rightarrow \Fun(\triangle^1, \Pro(\cat))$, which is itself induced by the Yoneda embedding $j_{\cat}\colon \cat \rightarrow \Pro(\cat)$.
\end{proposition}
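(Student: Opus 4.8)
The plan is to show that the functor $f\colon\Fun(\triangle^1,\cat)\to\Fun(\triangle^1,\Pro(\cat))$ induced by postcomposition with $j_\cat$ restricts, on pointed objects, to a functor that exhibits $\Pro(\cat)_*$ as the pro-$\infty$-category of $\cat_*$; the asserted equivalence is then the comparison functor supplied by the universal property of Proposition \ref{equiv}. First I would record that $\Pro(\cat_*)$ is defined at all: since $\Pro(\cat)$ is defined, $\cat$ is accessible and admits finite limits, and then Proposition \ref{properties}(1) gives that $\cat_*$ is accessible while Proposition \ref{properties}(2) gives that $\cat_*$ admits finite limits (a finite limit of pointed objects being the canonical map out of the underlying limit). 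Next, the Yoneda embedding $j_\cat$ preserves the final object, since $j_\cat(\ast_\cat)=\Hom_\cat(\ast_\cat,\cdot)$ is the final object of $\Pro(\cat)$ by Proposition \ref{finalobj}(1); hence, as in Definition \ref{morpoint}, it induces a functor $g\coloneqq(j_\cat)_*\colon\cat_*\to\Pro(\cat)_*$, which is exactly the restriction of $f$ to pointed objects. Because $j_\cat$ is fully faithful, the functor $f$ is fully faithful, and since $\cat_*$ and $\Pro(\cat)_*$ are full subcategories of the arrow $\infty$-categories, $g$ is fully faithful as well.

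The main work is to check that $g$ satisfies the three properties characterising a pro-$\infty$-category listed after Proposition \ref{equiv}: the functor $g$ itself settles (1); for (2), $\Pro(\cat)$ admits small cofiltered limits by Remark \ref{pres}, so by Proposition \ref{properties}(2) so does $\Pro(\cat)_*$, and --- importantly for what follows --- these cofiltered limits are computed on the underlying objects of $\Pro(\cat)$ together with the induced base point. Property (3), namely that precomposition with $g$ is an equivalence $\mathrm{Fun}'(\Pro(\cat)_*,\mathcal{D})\to\mathrm{Fun}(\cat_*,\mathcal{D})$ for every $\mathcal{D}$ admitting small cofiltered limits, is the crux. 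Here I would use Corollary \ref{cofilli} to write an arbitrary pointed pro-object $\ast_{\Pro(\cat)}\to Y$ with $Y=\{Y_\beta\}_{\beta}$ as the cofiltered limit, formed in $\Pro(\cat)_*$, of the pointed corepresentables $g(\ast_\cat\to Y_\beta)$: the compatible family of base points $\ast_\cat\to Y_\beta$ is extracted from the given map $j_\cat(\ast_\cat)\to\{Y_\beta\}_\beta$ via the morphism formula of Proposition \ref{Homsets} and full faithfulness of $j_\cat$, and the fact that this limit recovers $\ast_{\Pro(\cat)}\to Y$ is again Proposition \ref{properties}(2). Thus $\cat_*$ generates $\Pro(\cat)_*$ under small cofiltered limits, and since by Remark \ref{pres} the image of $g$ consists of cocompact objects of $\Pro(\cat)_*$ (their mapping spaces out of cofiltered limits being controlled by those in $\Pro(\cat)$ together with the base-point datum), a cofiltered-limit-preserving functor on $\Pro(\cat)_*$ is determined by and freely recoverable from its restriction along $g$ --- which is property (3), ultimately reduced to the universal property of $\Pro(\cat)$ applied to the underlying diagrams.

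Granting (1)--(3), the universal property forces a canonical equivalence $\Pro(\cat_*)\simeq\Pro(\cat)_*$ under $\cat_*$, and by construction it is the unique cofiltered-limit-preserving extension of $g$, i.e. the functor induced by $f$. I expect the base-point bookkeeping in step (3) to be the genuine obstacle: one must verify that the mapping spaces of $\Pro(\cat)_*$, which sit in fibre sequences over those of $\Pro(\cat)$ indexed by the choices of base point, interact correctly with the ``small cofiltered limit of corepresentables'' presentation of pro-objects. An alternative, more computational route avoids the abstract universal property entirely: define the inverse equivalence by hand, sending a pointed pro-object $\ast_{\Pro(\cat)}\to\{Y_\beta\}_\beta$ to the cofiltered system $\{\ast_\cat\to Y_\beta\}_\beta$ of pointed objects of $\cat_*$, and check directly, using Proposition \ref{Homsets} for the hom-spaces on both sides, that this functor and $g$ (extended to $\Pro(\cat_*)$ by cofiltered limits) are mutually inverse up to coherent equivalence.
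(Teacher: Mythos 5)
Your proposal is correct and follows essentially the same route as the paper's proof: both establish that $\cat_{*}$ is accessible with finite limits via Proposition \ref{properties}, observe that $j_{\cat}$ preserves final objects so that $f$ restricts to a fully faithful functor $\cat_{*}\rightarrow\Pro(\cat)_{*}$, and both conclude by writing every pointed pro-object as a cofiltered limit of pointed corepresentables and invoking the universal property of $\Pro(\cat_{*})$. If anything, you are slightly more explicit than the paper at the final step, where you name the cocompactness of the corepresentable generators in $\Pro(\cat)_{*}$ on which the paper's closing ``but this is clear'' silently relies.
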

\begin{proof}
The $\infty$-category $\mathcal{D}_{*}$ admits finite limits and is accessible whenever this is true for $\mathcal{D}$ by Proposition \ref{properties}. This proves the first claim. As $j_{\cat}$ preserves final objects, the same is true for the functor $f$. Thus, the restriction $f'$ of $f$ to $\cat_{*}$ lands in $\Pro(\cat)_{*}$. This restriction $f'$ can by Proposition \ref{equiv} be extended to a unique functor 
$$\Pro(f')\colon\Pro(\cat_{*})\rightarrow \Pro(\cat)_{*},$$
preserving cofiltered limits as $\Pro(\cat)_{*}$ admits cofiltered limits by Proposition \ref{properties}.
We want to show that this indeed defines an equivalence of $\infty$-categories.
First, we note that an object in $\Pro(\cat)_{*}$ is given as a morphism $$\ast \rightarrow \mathrm{lim}_{i \in I}j_{\cat}(X_i),$$ where $\mathrm{lim}_{i \in I}j_{\cat}(X_i)$ is a pro-object in $\cat$ represented as a cofiltered limit as usual. This coincides with $\mathrm{lim}_{i \in I}(\ast \rightarrow j_{\cat}(X_i))$ by Proposition \ref{properties}. Thus, every object in $\Pro(\cat)_{*}$ is a cofiltered limit of objects $\ast\rightarrow j_{\cat}(X_i)$. As we know that objects in $\Pro(\cat_{*})$ can be written as cofiltered limits of objects $j_{\cat_{*}}(\ast'\rightarrow X_i)$, where $\ast'$ is the final object in $\cat_{*}$, it is enough to show the equivalence of the full subcategories given by the objects $\ast\rightarrow j_{\cat}(X_i)$ in $\Pro(\cat)_{*}$ and the objects $j_{\cat_{*}}(\ast'\rightarrow X_i)$ in $\Pro(\cat_{*})$. But this is clear, as the subcategory of objects $j_{\cat_{*}}(\ast'\rightarrow X_i)$ in $\Pro(\cat_{*})$ is equivalent to $\cat_{*}$ by fully faithfulness of $j_{\cat_{*}}$ and $\cat_{*}$ itself is equivalent to the subcategory of objects $\ast \rightarrow j_{\cat}(X_i)$ by fully faithfulness of the functor $j_{\cat}$.
\end{proof}
Now, we have collected all the necessary components to extend the notion of homotopy groups to pro-$\infty$-categories and to use this to define homotopy pro-groups for condensed anima.
\begin{proposition}\label{progroups}
There exist unique functors
\begin{align*}
&\Pro(\pi_0)\colon \Pro(\Ani) \ \rightarrow \Pro(\Set)\\
&\Pro(\pi_1)\colon \Pro(\Ani)_{*} \rightarrow \Pro(\Grp)\\
&\Pro(\pi_n) \colon \Pro(\Ani)_{*} \rightarrow \Pro(\AbGrp)
\end{align*}
with $n\geq 2$ preserving small cofiltered limits, s.t. for every anima $A$ regarded as an object in $\Pro(\Ani)$ and $a \in A$ the image coincides with the corresponding simplicial homotopy group $\pi_n(A,a)$ or the set of path components $\pi_0(A)$, respectively.
\end{proposition}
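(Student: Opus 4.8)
The plan is to obtain all three functors as instances of the pro-extension construction of Corollary~\ref{funcacc}, applied to the homotopy group functors on $\Ani$ and on $\Ani_{*}$. Once the hypotheses of that corollary are checked, the existence, the preservation of small cofiltered limits, the uniqueness (among cofiltered-limit-preserving functors restricting correctly) and the compatibility with simplicial homotopy groups are all read off directly from its statement, so essentially all the work lies in verifying that the source and the target of each of $\pi_0$, $\pi_1$, $\pi_n$ is an accessible $\infty$-category admitting finite limits.

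First I would check these hypotheses. The $\infty$-category $\Ani$ is presentable by Proposition~\ref{presen} (it is the animation of $\Set$), hence accessible, and complete, hence finitely complete. For the pointed version, Proposition~\ref{properties}~(1) gives that $\Ani_{*}$ is accessible, and Proposition~\ref{properties}~(2), applied to finite index categories, shows that $\Ani_{*}$ admits finite limits, computed as in $\Ani$. On the target side, each of $\Set$, $\Grp$ and $\AbGrp$ is a locally finitely presentable ordinary category, so its nerve is an accessible $\infty$-category, and each admits all small limits and in particular finite limits. In particular $\Pro(\Set)$, $\Pro(\Grp)$ and $\Pro(\AbGrp)$ are defined in the sense of Definition~\ref{defpro}, and by Remark~\ref{pres} they admit small cofiltered limits.

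Next I would apply Corollary~\ref{funcacc} to $\pi_0\colon \Ani \rightarrow \Set$, to $\pi_1\colon \Ani_{*} \rightarrow \Grp$ and, for each $n \geq 2$, to $\pi_n\colon \Ani_{*} \rightarrow \AbGrp$. This yields functors $\Pro(\pi_0)\colon \Pro(\Ani) \rightarrow \Pro(\Set)$, $\Pro(\pi_1)\colon \Pro(\Ani_{*}) \rightarrow \Pro(\Grp)$ and $\Pro(\pi_n)\colon \Pro(\Ani_{*}) \rightarrow \Pro(\AbGrp)$, each preserving small cofiltered limits and uniquely characterised by this property together with agreement with $j \circ \pi_k$ after restriction along the Yoneda embedding. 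Using the equivalence $\Pro(\Ani_{*}) \simeq \Pro(\Ani)_{*}$ of the immediately preceding proposition, I would regard $\Pro(\pi_1)$ and $\Pro(\pi_n)$ as functors on $\Pro(\Ani)_{*}$ as in the statement. The relation $\Pro(\pi_k) \circ j_{\Ani} \simeq j \circ \pi_k$ (resp.\ $\Pro(\pi_k) \circ j_{\Ani_{*}} \simeq j \circ \pi_k$) provided by the corollary is exactly the desired compatibility: for an anima $A$ viewed in $\Pro(\Ani)$ via Yoneda, and $a \in A$, the value $\Pro(\pi_k)(A,a)$ is the corepresentable pro-object associated to $\pi_k(A,a)$, and likewise $\Pro(\pi_0)(A)$ corresponds to $\pi_0(A)$.

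I do not anticipate a genuine obstacle; the argument is bookkeeping with three applications of Corollary~\ref{funcacc}. The one point worth flagging --- and perhaps mildly surprising --- is that Corollary~\ref{funcacc} demands only accessibility and the existence of finite limits in the source and target, not left-exactness of the functor being extended: this is what makes the construction legitimate even though $\pi_0$ and $\pi_n$ do not preserve finite limits. The remaining minor subtleties are the verification that $\Ani_{*}$ is accessible and finitely complete, for which Proposition~\ref{properties} is tailor-made, and the identification $\Pro(\Ani_{*}) \simeq \Pro(\Ani)_{*}$, which is the content of the preceding proposition.
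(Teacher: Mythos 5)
Your proposal is correct and follows essentially the same route as the paper: the paper's proof likewise invokes Corollary~\ref{funcacc} applied to $\pi_0$, $\pi_1$, $\pi_n$, checks accessibility and finite limits via Proposition~\ref{properties}, and transports the pointed cases along the identification $\Pro(\Ani_{*})\simeq\Pro(\Ani)_{*}$ from the preceding proposition. Your write-up is merely more explicit about the hypotheses on the target categories $\Set$, $\Grp$, $\AbGrp$ and about why no left-exactness of the $\pi_k$ is needed, which the paper leaves implicit.
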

\begin{proof}
One has functors
\begin{align*}
&\pi_0\colon \Ani \ \rightarrow \Set\\
&\pi_1\colon \Ani_{*} \rightarrow \Grp\\
&\pi_n \colon \Ani_{*} \rightarrow \AbGrp
\end{align*}
with $n\geq 2$ by Definition \ref{simphomgr} and Remark \ref{simphomgrre}. As all included $\infty$-categories are accessible and admit finite limits by Proposition \ref{properties}, we can apply Corollary \ref{funcacc} to obtain the corresponding pro-functors with the claimed properties by using the identification $\Pro(\Ani_{\ast})=\Pro(\Ani)_{\ast}$ which is true by the preceding proposition. 
\end{proof}
\begin{definition}{\textbf{Homotopy pro-groups for condensed anima}}\label{homo}\\
For every condensed anima $C\in \CondAni$ we define the pro-set of path components by $$\widetilde{\pi}_0(C)\coloneqq \pi_0(L_{\Ani}(C))$$ and for some $c \in C$
the $n$th homotopy pro-group for $n\geq 1$ by $$\widetilde{\pi}_n(C,c)\coloneqq \pi_n((L_{\Ani})_{\ast}(c)).$$
\end{definition}
We end this section by showing that our definition of homotopy pro-groups is a proper extension of the notion of homotopy groups on topological spaces in the sense that it deals with the class of CW-spaces properly.
\begin{proposition}\label{CW}
For any CW-space $X$, the homotopy pro-groups of the condensed anima $C\coloneqq \Hom_{\Top}(\cdot,X)$ represented by $X$ coincide with the usual topological homotopy groups.
\end{proposition}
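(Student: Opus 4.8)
The plan is to compute the pro-anima $L_{\Ani}(C)\in\Pro(\Ani)$ explicitly and to observe that it lies in the essential image of the Yoneda embedding $j_{\Ani}\colon\Ani\hookrightarrow\Pro(\Ani)$, namely that it is the pro-anima corepresented by the singular simplicial complex $\mathrm{Sing}(X)$. By Proposition \ref{proleft}, the left adjoint $L_{\Ani}$ carries $C=\Hom_{\Top}(\cdot,X)$ to the pro-anima $B\mapsto\Hom_{\CondAni}(C,\underline{B}^{\mathrm{disc}})$, $B\in\Ani$. On the other hand, \cite[Lemma 11.9]{Scholze2019a} provides a left adjoint to the discrete functor $\underline{(\cdot)}^{\mathrm{disc}}\colon\Ani\hookrightarrow\CondAni$ restricted to the full subcategory of condensed anima represented by CW-spaces, and this left adjoint sends $C$ to $\mathrm{Sing}(X)$; equivalently, there is an equivalence $\Hom_{\CondAni}(C,\underline{B}^{\mathrm{disc}})\simeq\Hom_{\Ani}(\mathrm{Sing}(X),B)$ natural in $B\in\Ani$. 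Comparing the two descriptions would give $L_{\Ani}(C)\simeq j_{\Ani}(\mathrm{Sing}(X))$, consistent with the computation of $L_{\Ani}$ on discrete condensed anima in Proposition \ref{proleft}. Moreover $L_{\Ani}$ preserves final objects and $\ast_{\CondAni}=\underline{\ast_{\Ani}}^{\mathrm{disc}}$; by the adjunction of Proposition \ref{leftadjoint} a basepoint $c\colon\ast_{\CondAni}\to C$ corresponds to a point $x$ of the underlying set $C(\ast)=\Hom_{\Top}(\ast,X)$ of $X$, and $(L_{\Ani})_{*}$ sends $c$ to the morphism $j_{\Ani}(\ast_{\Ani})\to j_{\Ani}(\mathrm{Sing}(X))$ which, by full faithfulness of $j_{\Ani}$, is the point $x\in\mathrm{Sing}(X)$.

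Granting this, the homotopy pro-groups can be read off directly. By Definition \ref{homo} one has $\widetilde{\pi}_0(C)=\Pro(\pi_0)(L_{\Ani}(C))$ and $\widetilde{\pi}_n(C,c)=\Pro(\pi_n)((L_{\Ani})_{*}(c))$ for $n\geq 1$. Substituting the results of the first step, Proposition \ref{progroups} — which asserts that on an anima regarded, via $j_{\Ani}$, as an object of $\Pro(\Ani)$, the functors $\Pro(\pi_n)$ and $\Pro(\pi_0)$ return the ordinary simplicial homotopy groups $\pi_n(A,a)$ and $\pi_0(A)$, viewed as constant pro-objects — yields $\widetilde{\pi}_0(C)=\pi_0(\mathrm{Sing}(X))$ and $\widetilde{\pi}_n(C,c)=\pi_n(\mathrm{Sing}(X),x)$ for $n\geq 1$. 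Finally, by Remark \ref{simphomgrre} the simplicial homotopy groups of $\mathrm{Sing}(X)$ agree with the topological homotopy groups of $X$, and $\pi_0(\mathrm{Sing}(X))=\pi_0(X)$; hence $\widetilde{\pi}_n(C,c)$ is the constant pro-group on $\pi_n(X,x)$ (and $\widetilde{\pi}_0(C)$ the constant pro-set on $\pi_0(X)$), which is the assertion.

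The main obstacle is the first step: one has to check that the left adjoint supplied by \cite[Lemma 11.9]{Scholze2019a} genuinely takes values in $\Ani$, that is, produces corepresentable pro-anima rather than merely objects of $\Pro(\Ani)$, so that $L_{\Ani}(C)$ is corepresentable, and one has to match this honest left adjoint, defined only on the subcategory of condensed anima represented by CW-spaces, with the pro-existent left adjoint $L_{\Ani}$ of Proposition \ref{proext}. Once $L_{\Ani}(C)\simeq j_{\Ani}(\mathrm{Sing}(X))$ is established, the remaining work — carrying the basepoint through $\underline{(\cdot)}^{\mathrm{disc}}$, $L_{\Ani}$, $(L_{\Ani})_{*}$ and $\Pro(\pi_n)$, and invoking the classical comparison of $\mathrm{Sing}(X)$ with the topological homotopy groups of $X$ — is routine.
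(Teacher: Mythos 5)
Your proposal is correct and follows essentially the same route as the paper: both arguments rest on Lemma 11.9 of \cite{Scholze2019a} to identify $L_{\Ani}(C)$ with the corepresentable pro-anima on $\mathrm{Sing}(X)$ (the paper phrases this via uniqueness of adjoints to the discrete functor, you via direct comparison of the two corepresenting formulas, which amounts to the same chain of equivalences $\Hom_{\Ani}(\mathrm{Sing}(X),B)\simeq\Hom_{\CondAni}(C,\underline{B}^{\mathrm{disc}})$), and then both conclude by applying $\Pro(\pi_n)$ and Remark \ref{simphomgrre}. The ``main obstacle'' you flag is exactly what that cited lemma supplies, and your explicit treatment of basepoints is a small refinement the paper leaves implicit.
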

\begin{proof}
Recall the adjunction
\[
 \begin{tikzcd}[column sep = huge]
            \sSet \arrow[r, shift left=1ex, "|\cdot|"] & \Top, \arrow[l, shift left=.5ex, "\mathrm{Sing}(\cdot)"]
           \end{tikzcd}
    \]
which induces an equivalence on the (homotopy) categories of of CW-spaces and of Kan complexes. Let $A$ denote the anima corresponding to a CW-space $X$ through the (non fully faithful) map $\CW \rightarrow \Ani$ induced by $\mathrm{Sing}(\cdot)$. We want to conclude that our left adjoint $L_{\Ani}$ restricted to CW-spaces coincides with the map sending the condensed anima $C\coloneqq \Hom_{\Top}(\cdot,X)$ corresponding to X to the anima $A$. By Lemma 11.9. in \cite{Scholze2019a}, there is a natural map $C \rightarrow \underline{A}^{\mathrm{disc}}$ of condensed anima which is universal in this sense. Thus, one can deduce the following chain of equivalences 
$$\Hom_{\Ani}(A,B)\cong\Hom_{\CondAni}(\underline{A}^{\mathrm{disc}}, \underline{B}^{\mathrm{disc}})\cong\Hom_{\CondAni}(C, \underline{B}^{\mathrm{disc}})$$
by fully faithfulness of the discrete functor $\underline{(\cdot)}^{\mathrm{disc}}\colon \Ani \rightarrow \CondAni$ and universality of the map $C\rightarrow \underline{A}^{\mathrm{disc}}$. This directly implies left adjointness of the functor $\widetilde{L}\colon\CondAni \supset\CW \rightarrow \Ani, \ C \mapsto A$ to the discrete functor. By uniqueness of adjoint functors, the functor $\widetilde{L}$ has to coincide with the restriction of our left adjoint $L_{\Ani}$ to CW-spaces. Hence, as stated in Proposition \ref{progroups}, the homotopy pro-groups of $C$ defined in Definition \ref{homo} coincide with the simplicial homotopy groups of $A$. But, by Remark \ref{simphomgrre} these groups coincide with the topological homotopy groups of $X$.
\end{proof}
\section{Profinite sets as an example}\label{example}
In the proof of Proposition \ref{CW}, we have seen that there is a partially left adjoint to $\Ani \hookrightarrow \CondAni$ defined on those condensed anima which are represented by CW-spaces. This partially left adjoint induced the equivalence of the different notions of homotopy (pro-)groups on CW-spaces. In this last section, we want to determine the homotopy pro-groups of all condensed anima that can be represented by a profinite set. These illustrate a class of condensed anima justifying our approach of introducing pro-anima. Indeed, we will see that $\Ani \hookrightarrow \CondAni$ does not preserve limits and hence cannot admit a left adjoint. This fact is stated in more detail in Remark \ref{noleftadj}. 
\begin{proposition}\label{notdisc}
For every condensed anima represented by a profinite set, the corresponding pro-anima under the functor $L_{\Ani}$ of Proposition \ref{proleft} is given by the profinite set itself and thus corresponds to an anima if and only if
the profinite set is finite. 
\end{proposition}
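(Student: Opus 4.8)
The plan is to prove two things: that $L_{\Ani}$ carries the condensed anima $C\coloneqq\Hom_{\Top}(\cdot,S)$ represented by a profinite set $S=\mathrm{lim}_{i\in I}S_i$ (the $S_i$ finite discrete, Prop.\ \ref{profinlim}) to the pro-anima $\widetilde S\in\Pro(\Ani)$ attached to $S$ as in Proposition \ref{proaniprofin} --- concretely, by Lemma \ref{constinC} and Corollary \ref{cofilli}, the functor $B\mapsto\colim_{i\in I^{\mathrm{op}}}\Hom_{\Ani}(v(S_i),B)$ --- and that $\widetilde S$ lies in the essential image of $j_{\Ani}\colon\Ani\hookrightarrow\Pro(\Ani)$ exactly when $S$ is finite, which is what ``corresponds to an anima'' means.

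For the first claim, recall from Proposition \ref{proleft} that $L_{\Ani}(C)$, viewed as a functor $\Ani\to\Ani$, is the assignment $B\mapsto\Hom_{\CondAni}(C,\underline{B}^{\mathrm{disc}})$. I would compute this by the Yoneda lemma on the site of profinite sets: under the equivalence between $\CondAni$ and sheaves of anima on $\Profin$ given by restriction to $\extdis$ (the anima-enriched analogue of Prop.\ \ref{condsetequi}), the object $C$ corresponds to the sheaf on $\Profin$ represented by $S$ --- it is a sheaf there because surjections of profinite sets are effective epimorphisms (Example \ref{cohcompH}), and it is the right Kan extension of its restriction to $\extdis$, cf.\ Corollary \ref{exttoall} --- while $\underline{B}^{\mathrm{disc}}$ corresponds to the constant sheaf $\widehat B\colon X=\{X_i\}\mapsto\colim_{i\in I^{\mathrm{op}}}\Hom_{\Ani}(v(X_i),B)$ of Lemma \ref{constinC}. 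Hence, naturally in $B$,
\[
\Hom_{\CondAni}(C,\underline{B}^{\mathrm{disc}})\ \simeq\ \widehat B(S)\ =\ \colim_{i\in I^{\mathrm{op}}}\Hom_{\Ani}(v(S_i),B)\ =\ \widetilde S(B),
\]
and since a pro-anima is determined by its underlying functor this gives $L_{\Ani}(C)\simeq\widetilde S$ in $\Pro(\Ani)$. If one wishes to avoid the $\Profin$-site one can instead choose $T_0\in\extdis$ with $T_0\twoheadrightarrow S$ and $T_1\in\extdis$ with $T_1\twoheadrightarrow T_0\times_S T_0$ (Corollaries \ref{Surjext}, \ref{limitcomp} and Theorem \ref{coeqext}), present $C$ through the \v{C}ech descent diagram of this cover, and use that $\widehat B$ preserves cofiltered limits to collapse the totalization back to $\widehat B(S)$.

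For the second claim, if $S$ is finite then $S=v(S)\in\SFin\subset\Ani$ and $\widetilde S=j_{\Ani}(v(S))$ by Corollary \ref{funcacc}, which is corepresentable. Conversely, assume $\widetilde S\simeq j_{\Ani}(A)$ with $A\in\Ani$. First I would observe that $C=R_{\Ani}(\widetilde S)$: by Proposition \ref{discfin}(2) one has $C=\mathrm{lim}_{i\in I}\underline{S_i}^{\mathrm{disc}}$ in $\CondAni$, and since $R_{\Ani}$ preserves cofiltered limits with $R_{\Ani}(j_{\Ani}(v(S_i)))=\underline{S_i}^{\mathrm{disc}}$ (Lemma \ref{proright}), applying $R_{\Ani}$ to the presentation $\widetilde S=\mathrm{lim}_i j_{\Ani}(v(S_i))$ returns $C$. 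Applying $R_{\Ani}$ to $\widetilde S\simeq j_{\Ani}(A)$ then gives $C\simeq R_{\Ani}(j_{\Ani}(A))=\underline{A}^{\mathrm{disc}}$, and applying the global sections functor $\Gamma$ (Definition \ref{global}), with $\Gamma(\underline{A}^{\mathrm{disc}})=A$ (Proposition \ref{leftadjoint}) and $\Gamma(C)=\Hom_{\Top}(\ast,S)=v(|S|)$ the underlying set of $S$ with the discrete anima structure, yields $A\simeq v(|S|)$, hence $C\simeq\underline{v(|S|)}^{\mathrm{disc}}$. But the discrete condensed set on a set is the condensed set represented by that set equipped with the discrete topology (a routine extension of Proposition \ref{discfin}(1), writing $|S|$ as a filtered union of finite subsets and using that $\underline{(\cdot)}^{\mathrm{disc}}$ preserves colimits), so $\Hom_{\Top}(\cdot,S)\simeq\Hom_{\Top}(\cdot,|S|_{\mathrm{disc}})$ in $\Cond$, both sides being $0$-truncated. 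Full faithfulness of $\mathcal{CG}\hookrightarrow\Cond$ (Proposition \ref{equivcomphaus}(2)) forces $S\cong|S|_{\mathrm{disc}}$ as topological spaces, and a space that is simultaneously compact and discrete is finite.

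The main obstacle is the Hom-space identification in the first claim --- that the condensed anima represented by a profinite set corepresents evaluation at that set on all of $\CondAni$. This uses the anima-enriched form of Proposition \ref{condsetequi} together with the fact that $\underline{(\cdot)}^{\mathrm{disc}}$, once extended to $\Profin$ by the colimit formula of Lemma \ref{constinC}, is actually a sheaf there; both points are used tacitly elsewhere in the text and deserve an explicit word here, the $\extdis$-only descent computation sketched above being the alternative.
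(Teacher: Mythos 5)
Your proof of the main identification --- that $L_{\Ani}$ carries $\Hom_{\Top}(\cdot,S)$ to the pro-anima $B\mapsto\colim_{i\in I^{\mathrm{op}}}\Hom_{\Ani}(v(S_i),B)$ --- is essentially the paper's own argument: both of you view $\Hom_{\Top}(\cdot,S)$ and $A^{\mathrm{disc}}$ as presheaves on all of $\Profin$ and then apply the $\infty$-categorical Yoneda lemma to get $\Hom_{\CondAni}(C,\underline{B}^{\mathrm{disc}})\simeq A^{\mathrm{disc}}(S)$. Your explicit justification for why the Hom may be computed on the $\Profin$-site (sheafiness of both objects there, right Kan extension from $\extdis$) is a point the paper leaves tacit, and your \v{C}ech-descent alternative is a reasonable fallback. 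Where you genuinely go beyond the paper is the second clause of the statement: the paper's proof ends once the two functors are identified and never argues that the resulting pro-anima is corepresentable only when $S$ is finite. Your converse --- applying $R_{\Ani}$ to a hypothetical equivalence $\widetilde S\simeq j_{\Ani}(A)$ to get $C\simeq\underline{A}^{\mathrm{disc}}$, extracting $A\simeq v(|S|)$ with the global sections functor, identifying $\underline{v(|S|)}^{\mathrm{disc}}$ with the condensed set of the discrete space $|S|$, and invoking full faithfulness of $\mathcal{CG}\hookrightarrow\Cond$ to conclude that $S$ is compact and discrete, hence finite --- is correct and supplies a part of the proposition that the paper's proof leaves unaddressed.
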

\begin{proof}
The pro-anima corresponding to a profinite set $S=\mathrm{lim}_{i \in I} S_i$ written as small cofiltered limit of finite (discrete) sets is given as a functor 
\begin{align}\label{eq22}
\Ani \rightarrow \Ani, \ A\mapsto\mathrm{lim}_{i \in I}\Hom_{\Ani}(v(S_{i}),A)
\end{align} in $\Pro(\Ani)$ by Proposition \ref{proaniprofin}.\\
On the other hand, the image of the condensed anima $S^{*}\coloneqq\Hom_{\Top}(\cdot, S)\in \CondAni$ represented by the profinite set $S$ under the pro-existent left adjoint $L_{\Ani}\colon \CondAni \rightarrow \Pro(\Ani)$ of Proposition \ref{proleft} is given by the pro-anima 
\begin{align}\label{eq3}
\Ani \rightarrow \Ani, \ A\mapsto\Hom_{\CondAni}(S^{*}, \underline{A}^{\mathrm{disc}}).
\end{align}
In the following, it is our goal to show that the functors (\ref{eq3}) and (\ref{eq22}) coincide.\\
We can view $S^{*}$ and $\underline{A}^{\mathrm{disc}}$ as presheaves from all profinite sets to anima. This is possible as both condensed anima are given as a restriction of the contravariant functor that was defined on all profinite sets by the same formula $\Hom_{\Top}(\cdot,S)$ (cf.\ Cor.\ \ref{exttoall}) or $\colim_{i \in I^{\mathrm{op}}}\Hom_{\Ani}(v(\cdot),A)$ (cf.\ Prop.\ \ref{constsheaf}), respectively. For $\underline{A}^{\mathrm{disc}}$ we denoted this functor by $A^{\mathrm{disc}}$ in Lemma \ref{constinC}. Then, one can conclude by the $\infty$-categorical Yoneda Lemma
\begin{align*}
\Hom_{\mathcal{PS}\mathrm{h}(\Pro(\Set^{\mathrm{fin}}))}(\Hom_{\Top}(\cdot, S), A^{\mathrm{disc}})
\simeq & A^{\mathrm{disc}}(S)
\end{align*}
and moreover
\begin{align*}
 A^{\mathrm{disc}}(S)
= \colim_{i \in I^{\mathrm{op}}}\Hom_{\Ani}(v(S_i),A)
\end{align*}
as in Lemma \ref{constinC}.
We obtain a functor 
\begin{align*}
\Ani \rightarrow \Ani, \ A\mapsto A^{\mathrm{disc}}(S)
\end{align*} in $\Pro(\Ani)^{\mathrm{op}}$. Thus, by regarding it as a functor in $\Pro(\Ani)$, i.e. turning the filtered colimit into a cofiltered limit, one can see that the functor  (\ref{eq3}) is the same as (\ref{eq22}).
\end{proof}
\begin{remark}\label{noleftadj}
In Proposition \ref{discfin}, we have seen that the limit $\mathrm{lim}_{i \in I} \underline{X_i}^{\mathrm{disc}}$ of discrete condensed anima defined by finite sets $X_i$ is given by $\Hom_{\Top}(\cdot, \mathrm{lim}_{i \in I} X_i)$, i.e. coincides with the condensed anima represented by the profinite set $\mathrm{lim}_{i \in I} X_i$. By Proposition \ref{notdisc},
this condensed anima is not discrete if the profinite set is not a finite set itself. Hence, the fully faithful embedding $\Ani \hookrightarrow \CondAni$ does not preserve cofiltered limits. As every right adjoint preserves limits, one cannot construct a left adjoint to the embedding. This justifies our approach of passing to the category of pro-anima.
\end{remark}
\begin{corollary}
For every condensed anima represented by a profinite set the pro-set of path components is given by the profinite set itself and all homotopy pro-groups are trivial.
\end{corollary}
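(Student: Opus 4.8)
The plan is to reduce everything to Proposition~\ref{notdisc} together with the cofiltered-limit preservation built into the pro-homotopy-group functors of Proposition~\ref{progroups}. Fix a profinite set $S=\lim_{i\in I}S_i$ written as a small cofiltered limit of finite discrete sets, and let $S^{*}\coloneqq\Hom_{\Top}(\cdot,S)\in\CondAni$ be the condensed anima it represents. By Proposition~\ref{notdisc}, together with Proposition~\ref{proaniprofin}, the pro-anima $L_{\Ani}(S^{*})$ is the image of $S$ under $\Pro(v)\colon\Profin\hookrightarrow\Pro(\Ani)$; that is, it is the pro-anima $\{v(S_i)\}_{i\in I}$, which by Corollary~\ref{cofilli} is the cofiltered limit in $\Pro(\Ani)$ of the corepresentables $j_{\Ani}(v(S_i))$. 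First I would record this presentation, and then evaluate $\Pro(\pi_0)$ and $\Pro(\pi_n)$ on it, exploiting that each $v(S_i)$ is a discrete, hence $0$-truncated, anima.

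For the pro-set of path components, Definition~\ref{homo} gives $\widetilde{\pi}_0(S^{*})=\Pro(\pi_0)(L_{\Ani}(S^{*}))=\Pro(\pi_0)(\{v(S_i)\}_{i\in I})$. Since $\Pro(\pi_0)$ preserves small cofiltered limits and restricts on $\Ani$ to $j_{\Set}\circ\pi_0$ by Corollary~\ref{funcacc}, while $\pi_0(v(S_i))\cong S_i$ by Proposition~\ref{truncationprop}, I would conclude $\widetilde{\pi}_0(S^{*})=\{S_i\}_{i\in I}$, which is precisely the profinite set $S$ regarded as an object of $\PFin\simeq\Profin$. This settles the first assertion.

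For the homotopy pro-groups, fix a basepoint $c\in S^{*}$; by the adjunction of Proposition~\ref{leftadjoint} together with the description of the final object of $\CondAni$ as $\underline{\ast_{\Ani}}^{\mathrm{disc}}$, this amounts to an element $s$ of the underlying set of $S$, with images $s_i\in S_i$. Since $L_{\Ani}$ preserves final objects, the functor $(L_{\Ani})_{\ast}$ of Definition~\ref{morpoint} sends $c$ to the pointed pro-anima $L_{\Ani}(c)\colon\ast\to\{v(S_i)\}_{i\in I}$, which by Proposition~\ref{properties}~(2) is the cofiltered limit $\lim_{i\in I}(\ast\to v(S_i))$ of the finite discrete anima $v(S_i)$ pointed at $s_i$. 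Applying the cofiltered-limit preserving functor $\Pro(\pi_n)$ (Proposition~\ref{progroups}) and using Definition~\ref{homo}, I obtain $\widetilde{\pi}_n(S^{*},c)\simeq\lim_{i\in I}\pi_n(v(S_i),s_i)$ for each $n\geq1$. But $v(S_i)$ is discrete, so $\pi_n(v(S_i),s_i)$ is the trivial group; and a cofiltered pro-system all of whose terms are the trivial group is terminal in $\Pro(\Grp)$, resp.\ $\Pro(\AbGrp)$ --- indeed, by Proposition~\ref{Homsets}, $\Hom_{\Pro(\Grp)}(X,\{1\}_{i\in I})\simeq\lim_{i\in I}\colim_{\alpha}\Hom_{\Grp}(X_\alpha,1)$ is a one-point set for every $X$ --- hence $\widetilde{\pi}_n(S^{*},c)$ is the trivial pro-group.

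The argument is essentially formal bookkeeping. The one place carrying genuine content is the identification $L_{\Ani}(S^{*})\simeq\{v(S_i)\}_{i\in I}$, which is exactly Proposition~\ref{notdisc}; the only other point worth a line is the observation that a cofiltered pro-object of trivial groups represents the trivial pro-group. I do not anticipate any real obstacle beyond correctly carrying the pointed structure through $(L_{\Ani})_{\ast}$ and $\Pro(\pi_n)$.
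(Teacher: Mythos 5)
Your proof is correct and follows essentially the same route as the paper's: reduce via Proposition~\ref{notdisc} to the profinite set viewed as a pro-anima, use that the $\Pro(\pi_n)$ preserve cofiltered limits to compute termwise on the finite discrete anima $v(S_i)$, and conclude that $\widetilde{\pi}_0$ recovers $\{S_i\}_{i\in I}$ while the higher groups form a cofiltered system of trivial groups. Your extra verification that a pro-system of trivial groups is terminal in $\Pro(\Grp)$ (via Proposition~\ref{Homsets}) only makes explicit what the paper asserts in one line.
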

\begin{proof}
Let $X=\{X_i\}_{i \in I}$ be the profinite set representing the condensed anima. By Proposition \ref{notdisc} and Definition \ref{homo} the statement reduces to computing the images of the (pointed) profinite set regarded as (pointed) pro-anima $S=\{S_i\}_{i \in I}$ under the functors of Proposition \ref{progroups}, i.e. the anima $S_i$ referring to the finite sets $X_i$. Since the functors $\Pro(\pi_n)$ preserve small cofiltered limits, one can identify the image of the profinite set $S$, respectively of the pointed profinite set $(S,s)$, under $\Pro(\pi_0)$, respectively some $\Pro(\pi_n)$ for $n\geq 1$, with a small cofiltered limit of the images $\Pro(\pi_0(S_i))$, respectively $\Pro(\pi_n(S_i, s_i))$, where $s_i$ denotes the image of $s$ under the natural map $S\rightarrow S_i$. Moreover, Proposition \ref{progroups} states that for every anima $S_i$ these images coincide with the simplicial homotopy groups $\pi_0(S_i)$, $\pi_n(S_i,s_i)$ which in turn coincide with the topological homotopy groups of the corresponding finite set $X_i$ considered as finite discrete space. 
Thus, the set $\pi_0(S_i)$ of path components is the finite set $X_i$ and every homotopy group $\pi_n(S_i,s_i)$ is trivial. Altogether, the set of path components of $S$ is the cofiltered limit of the finite sets $X_i$ and thus the profinite set $X$ itself. The homotopy pro-groups are trivial, as limits of trivial groups are trivial.
\end{proof}
\addcontentsline{toc}{chapter}{References}
%
%
\nocite{*}
\printbibliography[title={References}]
%
%
\end{document}